%
%
%
%
%
%
%
\documentclass[
jmp
 amsmath,amssymb,
reprint, onecolumn 
]{revtex4-1}

\usepackage{graphicx}
\usepackage{dcolumn}
\usepackage{bm}

\usepackage[utf8]{inputenc}
\usepackage[T1]{fontenc}
\usepackage{mathptmx}
\usepackage{etoolbox}

\usepackage{amsmath,amssymb,comment,amsthm,enumerate}

\makeatletter
\def\@email#1#2{%
 \endgroup
 \patchcmd{\titleblock@produce}
  {\frontmatter@RRAPformat}
  {\frontmatter@RRAPformat{\produce@RRAP{*#1\href{mailto:#2}{#2}}}\frontmatter@RRAPformat}
  {}{}
}%
\makeatother

\newtheorem*{Thm*}{Theorem}
\newtheorem{Thm}{Theorem}[section]
\newtheorem{Cor}[Thm]{Corollary}
\newtheorem{Prop}[Thm]{Proposition}
\newtheorem{Lemma}[Thm]{Lemma}

\theoremstyle{definition}
\newtheorem{Defn}[Thm]{Definition}
\newtheorem{Notation}[Thm]{Notation}

\newtheorem{Remark}[Thm]{Remark}
\newtheorem{Example}[Thm]{Example}
\newtheorem{Construction}[Thm]{Construction}

\newcommand{\mf}[1]{\mathbb{#1}}
\newcommand{\mc}[1]{\mathcal{#1}}
\newcommand{\mb}[1]{\mathbf{#1}}

\DeclareMathOperator{\Part}{\mathcal{P}}
\DeclareMathOperator{\Int}{\mathit{Int}}
\DeclareMathOperator{\tr}{\mathrm{tr}}
\DeclareMathOperator{\Tr}{\mathrm{Tr}}
\DeclareMathOperator{\cyc}{\mathrm{cyc}}

\DeclareMathOperator{\Par}{\mathrm{Par}}
\DeclareMathOperator{\End}{\mathrm{End}}

\renewcommand{\phi}{\varphi}

\newcommand{\Exp}[1]{\mathbb{E} \left[ #1 \right]}
\newcommand{\norm}[1]{\left\Vert#1\right\Vert}
\newcommand{\abs}[1]{\left\vert#1\right\vert}
\newcommand{\chf}[1]{\mathbf{1}_{#1}}
\newcommand{\set}[1]{\left\{#1\right\}}
\newcommand{\ip}[2]{\left \langle #1, #2 \right \rangle}
\newcommand{\Span}[1]{\mathrm{Span} \left( #1 \right)}
\newcommand{\Alg}[1]{\mathrm{Alg} \left( #1 \right)}
\newcommand{\state}[1]{\varphi \left[ #1 \right]}
\newcommand{\Sing}[1]{\mathrm{Sing} \left( #1 \right)}
\newcommand{\Pair}[1]{\mathrm{Pair} \left( #1 \right)}
\newcommand{\W}[1]{\mathnormal{W} \left( #1 \right)}
\newcommand{\T}[1]{T \left( #1 \right)}
\newcommand{\supp}[1]{\mathrm{supp} \left( #1 \right)}
\newcommand{\Wick}[1]{: \!\! #1 \!\! :}

\begin{document}

\preprint{AIP/123-QED}

\title[Hermite trace polynomials]{Hermite trace polynomials and chaos decompositions \\ for the Hermitian Brownian motion}
\author{Michael Anshelevich}
\author{David Buzinski}%
 \email{manshel@tamu.edu, davidbuzinski@gmail.com}
\affiliation{Department of Mathematics, Texas A\&M University, College Station, TX 77843-3368}%

\date{\today}

\begin{abstract}
For a non-zero parameter $q$, we define Hermite trace polynomials, which are multivariate polynomials indexed by permutations. We prove several combinatorial properties for them, such as expansions and product formulas. The linear functional determined by these trace polynomials is a state for $q = \frac{1}{N}$ for $N$ a non-zero integer. For such $q$, Hermite trace polynomials of different degrees are orthogonal. The product formulas extend to the closure with respect to the state. The state can be identified with the expectation induced by the $N \times N$ Hermitian Brownian motion. Hermite trace polynomials are martingales for this Brownian motion, while the elements in the closure can be interpreted as stochastic integrals with respect to it. Using the grading on the algebra, we prove several chaos decompositions for such integrals, as well as analyze the corresponding creation and annihilation operators. In the univariate, pure trace polynomial case, trace Hermite polynomials can be identified with the Hermite polynomials of matrix argument.
\end{abstract}

\maketitle

\section{Introduction}

Let $\set{B(t) : t \geq 0}$ be the Brownian motion. It is well known that there exist polynomials $H_n(x,t)$ (the Hermite polynomials) such that for each $n$, $H_n(B(t), t)$ is a martingale with respect to the filtration induced by $\set{B(t)}$. These polynomials have numerous other familiar properties. The list of properties relevant to this article includes:
\begin{itemize}
\item
\textbf{Orthogonality}: $\mf{E}[H_n(B(t), t) H_k(B(t), t)] = 0$ for $n \neq k$;
\item
\textbf{Three-term recursion}: $x H_n(x,t) = H_{n+1}(x,t) + n t H_{n-1}(x,t)$;
\item
\textbf{Expansion}: $H_n(x,t) = \sum_{j=0}^{[n/2]} (-1)^j \frac{n!}{(n - 2j)! 2^j j!} t^j x^{n - 2 j}$, where the coefficients are related to the number of matchings of $n$ objects;
\item
\textbf{Product formulas}: $H_n(x,t) H_k(x,t) = \sum_{j=0}^{n \wedge k} \binom{n}{j} \binom{k}{j} j! t^j H_{n + k - 2 j}(x,t)$, where the coefficients are related to the number of inhomogeneous matchings of $n$ objects;
\item
\textbf{Solution of the heat equation}: $\partial_t H_n(x,t) + \frac{1}{2} \partial_x^2 H_n(x,t) = 0$ with the initial condition $H_n(x,0) = x^n$;
\item
\textbf{Stochastic integral representation}: $H_n(B(t), t) = \int_{[0,t)^n} \,dB(t_1) \ldots \,dB(t_n)$.
\end{itemize}
Moreover, some of these results, notably the product formulas, extend to more general stochastic integrals
\[
\int_{[0,t)^n} f(t_1, \ldots, t_n) \,dB(t_1) \ldots \,dB(t_n)
\]
for $f \in L^2$.

Now let $\set{X(t) : t \geq 0}$ be the $N \times N$ Hermitian Brownian motion: Hermitian random matrices with jointly complex Gaussian entries and the covariance function $\Exp{X(s)_{ij} X(t)_{k \ell}} = \frac{1}{N} \delta_{i=\ell} \delta_{j=k} (s \wedge t)$. Then as is also well-known by now, there is no polynomial $P(x,t)$ of degree greater than $2$ such that $P(X(t), t)$ is a martingale. For example, a simple calculation shows that for $s < t$,
\[
\Exp{ X(t)^3 | \leq s} = X(s)^3 + (t-s) \left(2 X(s) + \frac{1}{N} \Tr[X(s)] I_N \right),
\]
where the conditional expectation is taken onto the subalgebra generated by the matrix entries up to time $s$. Thus to have a family closed under conditional expectations, the algebra of polynomials needs to be replaced by a larger algebra of trace polynomials. Here a trace polynomial in some collection of variables $\set{x_i : i \in S}$ is, roughly speaking, a polynomial in these variables as well as in the values of a tracial functional $\tr$ applied to these variables. See \cite{Cebron-Free-convolution} for a formal definition using a universal property. For example, $x^3 + 2 x + \tr[x]$ (as above) and 
\begin{equation}
    \label{Eq:Trace-ply-example}
    x_2 x_4 \tr[x_1 x_3 x_7] \tr[x_5 x_6]
\end{equation}
are trace polynomials. See Section~\ref{Subsec:Trace-background} for details. An incomplete but representative list of related work involving trace polynomials includes the study of random matrices \cite{Cebron-Free-convolution, Driver-Hall-Kemp,Kemp-large-N-GL,Kemp-Heat-kernel}, noncommutative functions \cite{Klep-Spenko-Free-function-theory,Klep-Pascoe-Volcic}, and operator algebras \cite{Dabrowski-Guionnet-Shl-Convex,Jekel-Elementary,Jekel-Li-Shl-Wasserstein}. Other related work includes \cite{Rains,Graczyk-Vostrikova,Levy-Schur-Weyl,Huber-trace}.

In this article, we study the star-algebra of trace polynomials, the state on it induced by the Hermitian Brownian motion, the corresponding basis of Hermite trace polynomials, and the larger algebra obtained as its completion. However, we keep the context somewhat more general.  First, a homogeneous trace monomial is naturally encoded by a permutation. Let $\alpha \in S_0(n)$, that is, a permutation of the set $\set{0, 1, \ldots, n}$. Denote
\[
\Tr_\alpha[x_1, \ldots, x_n]
= \prod_{i \text{ in the cycle starting with $0$}} x_i \prod_{\text{other cycles}} \Tr\left[ \prod_{i \text{ in the cycle}} x_i \right].
\]
Compare with Notation~22.29 in \cite{Nica-Speicher-book}. For example, the trace monomial \eqref{Eq:Trace-ply-example} corresponds to $\alpha = (024)(137)(56)$.

Second, the reciprocal matrix dimension $1/N$ is replaced by a general real parameter $q$. Finally, rather than considering variables indexed by time (or, extending linearly, by $L^2(\mf{R}_+, dx)$), we index them by a general real Hilbert space $\mc{H}_{\mf{R}}$. Thus, for each $n$, $\alpha \in S_0(n)$ and $h_1, \ldots, h_n \in \mc{H}_{\mf{R}}$, we will consider symbols $\T{\alpha \otimes (h_1 \otimes \ldots \otimes h_n)}$ subject to the symmetry relation
\[
\T{\alpha \otimes_s (h_1 \otimes \ldots \otimes h_n)} = \T{(\sigma \alpha \sigma^{-1}) \otimes_s U_\sigma (h_1 \otimes \ldots \otimes h_n)}
\]
for any $\sigma \in S(n)$, where
\[
U_\sigma (h_1 \otimes \ldots \otimes h_n) = h_{\sigma^{-1}(1)} \otimes \ldots \otimes h_{\sigma^{-1}(n)}.
\]
Extending linearly in both arguments, we obtain
\[
\mc{TP}(\mc{H}_{\mf{R}}) = \Span{\set{\T{\eta \otimes_s F} : n \geq 0, \eta \in \mf{C}[S_0(n)], F \in \mc{H}_{\mf{R}}^{\odot n}}},
\]
where for now we are considering the algebraic tensor product of Hilbert spaces. We now define a star-algebra structure on $\mc{TP}(\mc{H}_{\mf{R}})$ by
\[
\T{\alpha \otimes_s F} \T{\beta \otimes_s G} = \T{(\alpha \cup \beta) \otimes_s (F \odot G)}
\]
and
\[
\T{\eta \otimes_s F}^\ast
= \T{\eta^\ast \otimes_s F}.
\]
Here for $\alpha \in S_0(n)$, $\beta \in S_0(k)$, the permutation $\alpha \cup \beta \in S_0(n+k)$ is obtained by shifting the cycles of $\beta$ by $n$, and merging the cycles of $\alpha$ and $\beta$ containing $0$. See Notation~\ref{Notation:Union}.

Next, let $q$ be a non-zero parameter. For a transposition $\tau$, we define the contraction $C_\tau$ as follows. For each $n$ and $\alpha \in S_0(n)$, $C_\tau(\alpha)$ is obtained by: multiplying $\tau \alpha$; erasing the support of $\tau$ and shifting to obtain a permutation in $S_0(n-2)$; and multiplying by a weight depending on $q$ and the number of cycles in $\tau \alpha$. See Definition~\ref{Defn:Contraction-gr}. Then as usual,  for each $n$, we define the Laplacian on $\mf{C}[S_0(n)]$ as the sum over transpositions $\mc{L} = \sum_{\tau \in S(n)}  C_\tau$, and the Hermite trace polynomial as $\W{\eta \otimes_s F} = \T{e^{-\mc{L}}(\eta \otimes_s F)}$. Here, the contraction on the tensor part of the argument is the usual tensor contraction. Hermite trace polynomials satisfy several properties which parallel those of ordinary Hermite polynomials.

We may now define a linear functional $\phi$ on $\mc{TP}(\mc{H}_{\mf{R}})$ by requiring it to be unital and zero on each $\W{\eta \otimes_s F}$. Note that with respect to the spanning set $\set{\T{\eta \otimes_s F}}$, the coefficients for multiplication do not depend on $q$, while those of the linear functional $\phi$ do. On the other hand, if we use $\set{\W{\eta \otimes_s F}}$ as a spanning set, the coefficients for multiplication depend on $q$, while those of the linear functional $\phi$ do not. $\phi$ is positive if and only if $q$ is zero or of the form $\pm \frac{1}{N}$ for $N \in \mf{N}$  \cite{Gnedin-Gorin-Kerov-Block-characters,Kostler-Nica-CLT-S-infty}. For such $q$, we can define the GNS Hilbert space $\mc{F}_q(\mc{H})$ as the completion of the quotient $\mc{TP}_q(\mc{H}_{\mf{R}})$ of $\mc{TP}(\mc{H}_{\mf{R}})$ with respect to the seminorm induced by $\phi$. The inner product takes the form
\[
\ip{(\eta \otimes_s F)}{(\zeta \otimes_s G)}_q = \delta_{n=k} \sum_{\sigma \in S(n)} \chi_q[\eta \sigma \zeta^\ast \sigma^{-1}] \ip{F}{U_\sigma G}_{\mc{H}^{\otimes n}}.
\]
Here $\chi_q$ is the normalized character $\chi_q(\alpha) = q^{\abs{\alpha}}$, where $\abs{\alpha}$ is the standard length function on the symmetric group.

Using the seminorm induced by $\phi$ allows us to extend the star-algebra structure to
\[
\overline{\mc{TP}}(\mc{H}_{\mf{R}}) = \Span{\set{\W{\eta \otimes_s F} : n \geq 0, \eta \in \mf{C}[S_0(n)], F \in \mc{H}_{\mf{R}}^{\otimes n}}},
\]
where $F$ may now lie in the Hilbert space tensor product. In the case $\mc{H} = L^2(\mf{R}_+, dx)$, $\W{\eta \otimes_s F}$ may then be interpreted as a stochastic integral of $F$. We thus obtain contraction, product, and conditional expectation formulas for such integrals. Note that $\T{\eta \otimes_s F}$ is in general not defined for $F \in \mc{H}_{\mf{R}}^{\otimes n}$.

Since $\mc{TP}(\mc{H}_{\mf{R}})$ is naturally graded, we may interpret $\mc{F}_q(\mc{H})$ as a Fock space. There are several natural choices of creation (and annihilation) operators which we describe. The corresponding field operators live in smaller subalgebras of $\mc{TP}(\mc{H}_{\mf{R}})$. There is an interesting connection with a different Fock space construction from \cite{Bozejko-Guta} which we describe in some detail. Incidentally, another article exploring the connection between the characters of the symmetric groups and GUE matrices is \cite{Kostler-Nica-CLT-S-infty}. We have not elucidated the connection between their work and ours.

Finally, we show that, as expected, for $q = \pm \frac{1}{N}$ the structures above are isomorphic to the algebra of trace polynomials, and equivariant square-integrable matrices, in a Hermitian Brownian motion. Various corollaries follow. In particular, we obtain several versions of the chaos decomposition for this process. For $q=0$, with a different scaling, one obtains objects related to free probability.

We thus may interpret Hermite trace polynomials as elements $\W{\alpha \otimes_s (h_1 \otimes \ldots \otimes h_n)}$ in the algebra $\overline{\mc{TP}}(\mc{H}_{\mf{R}})$ with the linear functional $\phi$; or the corresponding trace polynomials in formal variables; or (for $q = 1/N$) the corresponding trace polynomials in Gaussian random matrices $\set{X(h_i)}$. In this last interpretation, we prove that the matrix entries of Hermite trace polynomials are themselves (multivariate) Hermite polynomials in the matrix entries of $\set{X(h_i)}$.

To the best of our knowledge, the Hermite trace polynomials $\W{\eta \otimes_s F}$ are new even in the univariate case $\mc{H} = \mf{C}$. However, if we further restrict to $\eta \in \mf{C}[S(n)]$ (rather than $S_0(n)$), the corresponding objects have appeared in the literature. Indeed, denoting $\chi^\lambda$ the character of the irreducible representation indexed by the partition $\lambda$, $\W{\chi^\lambda}$ is closely related to the corresponding Hermite polynomial of matrix argument. In particular, all of these elements are orthogonal with respect to $\phi$. Moreover, one can form a more general set of characters $\chi^{\lambda, \lambda'}$ indexed by pairs of partitions which differ by one box, such that $\set{\W{\chi^{\lambda, \lambda'}}}$ form an orthogonal basis for $\mc{F}_{1/N}(\mf{C})$. This collection of trace polynomials is clearly deserving of additional study.

The paper is organized as follows. In the background Section~\ref{Sec:Prelim}, we review in particular key properties of the symmetric group, and Gaussian Hilbert spaces. In Section~\ref{Sec:Group} we discuss the kernel of the character $\chi_q$, define the multiplication and contractions on the tensor algebra of symmetric groups, and study their properties. In Section~\ref{Sec:Fock}, we define the Fock space $\mc{F}_q(\mc{H})$, describe the kernel of the inner product, and list three chaos decompositions for this space for different choices of $\mc{H}$. In Section~\ref{Sec:Algebra}, we upgrade the algebra structure from $\mc{TP}(\mc{H}_{\mf{R}})$ to $\overline{\mc{TP}}(\mc{H}_{\mf{R}})$, define the Hermite trace polynomials $\W{\eta \otimes_s F}$ and the functional $\phi$, prove conditional expectation and product formulas, and note that the case $q = - \frac{1}{N}$ is isomorphic to that for $q = \frac{1}{N}$. We also study creation and annihilation decompositions on the Fock space, and three subalgebras which arise: the Gaussian subalgebra, the commutative subalgebra corresponding to pure trace polynomials, and the subalgebra corresponding to pure polynomials, which is not closed under conditional expectations. We finish the section by describing the relation to a construction by Bo\.{z}ejko and Gu\c{t}\u{a}. Finally, in Section~\ref{Sec:GUE}, we give some background on trace polynomials and the Hermitian Brownian motion, prove the isomorphism with the random matrix picture for $q = \frac{1}{N}$, including the description of the matrix entries of Hermite trace polynomials in terms of ordinary Hermite polynomials, and list several corollaries. For $q=0$, we note an isomorphism involving the free Fock space. We also describe the relation with Hermite polynomials of matrix argument.

\section{Preliminaries}
\label{Sec:Prelim}

\subsection{Permutations and partitions}
\label{Subsec:Permutations}
Denote $[n] = \set{1, \ldots, n}$ and $[0, n] = \set{0, 1, \ldots, n}$.

A permutation in $S(n)$ induces a (cycle) set partition in $\Part(n)$. Conversely, a partition $\pi \in \Part(n)$ will be identified with a permutation the elements of whose cycles are ordered in increasing order. In particular, a partition whose blocks are pairs and singletons will be identified with the corresponding involutive permutation. For such a partition, we denote by $\Sing{\pi}$ its single-element blocks, and $\Pair{\pi}$ its two-element blocks.  

Similarly, a set partition $\pi$ induces a (number) partition whose parts are the sizes of blocks of $\pi$. Conversely, a partition $\lambda \in \Par(n; k)$ with parts $\lambda_1 \geq \lambda_2 \geq \ldots \geq \lambda_k$ induces a set partition whose blocks are intervals of size $\lambda_1, \ldots, \lambda_k$. $\Par(n+1;\leq N)$ are the partitions of $n+1$ with at most $N$ parts.

Denote by $S_0(n)$ the permutations of $[0, n]$. $S(n)$ is a subgroup of $S_0(n)$, and so acts on $S_0(n)$ by conjugation. The equivalence classes under this action are subsets of the standard conjugacy classes where the number of elements of the cycle containing $0$ is preserved. So they are in a natural bijection with number partitions of $n+1$ with a marked element. Equivalently, they are indexed by pairs of partitions $(\lambda, \lambda')$, where $\lambda \in \Par(n)$, $\lambda' \in \Par(n + 1)$, and the corresponding diagrams differ by one box.

For $\alpha \in S_0(n)$, denote $\cyc_0(\alpha) = \cyc(\alpha) - 1$, where $\cyc(\alpha)$ is the number of cycles of $\alpha$. In other words, $\cyc_0(\alpha)$ is the number of cycles of $\alpha$ not containing $0$. Denote
\[
\abs{\alpha} = (n+1) - \cyc(\alpha) = n - \cyc_0(\alpha),
\]
which is the usual length function on the symmetric group on $[0, n]$. For a partition $\pi \in \Part_{1,2}(n)$ with only blocks of size $1$ and $2$, we will also denote by $\abs{\pi} = \abs{\Pair{\pi}}$ the distance from the identity for the corresponding permutation (rather than the number of blocks of $\pi$).

For $n < m$, we may sometimes identify the element $\alpha \in S_0(n)$ with the corresponding element of $S_0(m)$ under the natural inclusion $[0,n] \subset [0,m]$ (so that the image of $\alpha$ fixes the elements of $[n+1,m]$). Note that $\abs{\alpha}$ is preserved by this identification.

A \emph{partial permutation} $\alpha \in \mc{PS}_0(n)$ is a bijection from a subset of $[0, n]$ onto a subset of $[0, n]$; proper subsets and the empty subset are allowed. Orbits of a partial permutation fall into two types. Cyclic orbits are cycles in the usual permutation sense. Linear orbits have an initial and a final element. Note that a linear orbit has at least two elements. It is convenient to abuse the terminology and consider elements of $[0, n]$ which do not belong to any orbit of $\alpha$ as single-element linear orbits of $\alpha$. Denote $\mc{PS}_0(n, N)$ the set of partial permutations of $[0, n]$ with $N$ linear orbits.

\subsection{Structure theory of the symmetric group}
\label{Sec:Centralizer}
See \cite{Gill-Rep} for basic representation theory background. For a partition $\lambda \in \Par(n+1)$, denote by $\chi^\lambda$ the character of the irreducible representation of $S_0(n)$ corresponding to $\lambda$. We will identify $\chi^\lambda$ with the element
\[
\sum_{\sigma \in S_0(n)} \chi^\lambda(\sigma) \sigma \in \mf{C}[S_0(n)]
\]
(recall that the characters of the symmetric group are real-valued). These elements span the center $Z(\mf{C}[S_0(n)])$, and are orthogonal, in the sense that for $\lambda \neq \mu$,
\[
\left( \sum_{\sigma \in S_0(n)} \chi^\lambda(\sigma) \sigma \right) \left( \sum_{\tau \in S_0(n)} \chi^\mu(\tau) \tau \right)
= \sum_{\rho \in S_0(n)} (\chi^\lambda \ast \chi^\mu)(\rho) \rho = 0.
\]
In particular, for any character $\chi$, $\chi^\lambda$ and $\chi^\mu$ are orthogonal with respect to the inner product induced by $\chi$.

The centralizer of $\mf{C}[S(n)]$ in $\mf{C}[S_0(n)]$ is
\[
Z(\mf{C}[S_0(n)] : \mf{C}[S(n)]) = \set{\eta \in \mf{C}[S_0(n)] : {\sigma^{-1}} \eta \sigma = \eta \text{ for all } \sigma \in S(n)}.
\]
The following are well-known \cite{Okounkov-Vershik}, \cite{Gill-Rep}.
\begin{itemize}
\item
$Z(\mf{C}[S_0(n)] : \mf{C}[S(n)])$ is generated (as an algebra) by $Z(\mf{C}[S(n)])$ and the Jucys–Murphy element ${(01)} + \ldots + {(0n)}$.
\item
For $\lambda \in \Par(n)$, write $\lambda' = \lambda + \square$ if the Young diagram for $\lambda$ is obtained by removing one of the boxes from the Young diagram for $\lambda'$. Denote $\chi^{\lambda' : \lambda}$ the character of the compression of the $\lambda'$-irreducible representation of $S_0(n)$ to the (unique) component giving a $\lambda$-irreducible representation of $S(n)$. Then $\set{\chi^{\lambda' : \lambda} : \lambda \in \Par(n), \lambda' = \lambda + \square}$ are orthogonal and span $Z(\mf{C}[S_0(n)] : \mf{C}[S(n)])$.
\end{itemize}

Let $\mc{W}$ be the Wedderburn isomorphism
\[
\mc{W} : \sum_{\lambda \in \Par(n+1)} M_{d_\lambda}(\mf{C}) \rightarrow \mf{C}[S_0(n)],
\]
where $d_\lambda$ is the dimension of the irreducible representation of $S_0(n)$ corresponding to $\lambda$. Then $\mc{W}(M_{d_\lambda}(\mf{C}))$ is the ideal generated by (any one of) the Young symmetrizer(s) $c_\lambda$, and is spanned by these symmetrizers (for different choices of the tableau corresponding to $\lambda$). In particular,  $\mc{W}(I_{M_{d_\lambda}(\mf{C})}) = \frac{d_\lambda}{(n+1)!} \chi^\lambda$, which can be characterized as minimal central projections.

Denote
\[
\mf{C}[S_0(n)]_{\leq N}
= \mc{W} \left(\sum_{\lambda \in \Par(n+1; \leq N)} M_{d_\lambda}(\mf{C}) \right)
\]
and
\[
\mf{C}[S_0(n)]_{> N}
= \mc{W} \left(\sum_{\lambda \in \Par(n+1; \geq N+1)} M_{d_\lambda}(\mf{C}) \right).
\]

Let $\chi$ be a character of $S_0(n)$. Then $\chi \circ \mc{W}$ is a trace on $\sum_{\lambda \in \Par(n+1)} M_{d_\lambda}(\mf{C})$, and so has the form
\[
\chi \circ \mc{W} = \sum_{\lambda \in \Par(n+1)} n_{\lambda} \Tr_{M_{d_\lambda}(\mf{C})}.
\]
Here
\[
n_\lambda = \frac{\sum_\alpha \chi[\alpha] \chi^\lambda[\alpha]}{\sum_\alpha \chi^\lambda[\alpha] \chi^\lambda[\alpha]}
= \frac{1}{(n+1)!} \sum_{\alpha \in S_0(n)} \chi[\alpha] \chi^\lambda[\alpha].
\]
Denote $E_{ij}^\lambda$ the matrix units in $M_{d_\lambda}(\mf{C})$. Then for any $\chi$,
\[
\set{\mc{W}(E_{ij}^\lambda): \lambda \in \Par(n+1), 1 \leq i, j \leq d_\lambda}
\]
span $\mf{C}[S_0(n)]$, and are orthogonal with respect to the (typically degenerate) inner product induced by $\chi$,
\[
\chi[\mc{W}(E_{ij}^\lambda) \mc{W}(E_{ij}^\mu)^\ast]
= \delta_{\lambda=\mu} \chi[\mc{W}(E_{ii}^\lambda)]
= \delta_{\lambda=\mu} n_\lambda.
\]
See for example \cite{Structure-symmetric} for a detailed exposition and explicit expressions for $\mc{W}(E_{ij}^\lambda)$.

\subsection{Algebraic conditional expectation}

\begin{Prop}
\label{Prop:Algebraic-CE}
Let $\mc{A}$ be a unital star-algebra, $\mc{B}$ a unital star-subalgebra, and $\phi$ a faithful, tracial state on $\mc{A}$, where positivity means that $\phi[a^\ast a] \geq 0$ for any $a \in \mc{A}$. Denote by $L^2(\mc{A}, \phi)$ and $L^2(\mc{B}, \phi)$ the corresponding GNS Hilbert spaces, with the common state vector $\Omega$. Suppose $F : \mc{A} \rightarrow \mc{B}$ is a function such that the map $a \Omega \mapsto F(a) \Omega$ extends to the orthogonal projection $P : L^2(\mc{A}, \phi) \rightarrow L^2(\mc{B}, \phi)$. Then
\begin{itemize}
\item
$\phi[F(a)] = \phi[a]$.
\item
$F$ is a $\mc{B}$-bimodule map.
\item
For any $a \in \mc{A}$, the operator on $L^2(\mc{B}, \phi)$ induced by $F(a)$ is $P a P$. In particular, the operator induced by $F(a^\ast a)$ is positive.
\end{itemize}
We call such a map $F$ an algebraic conditional expectation.
\end{Prop}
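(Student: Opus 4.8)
The plan is to extract everything from the single hypothesis that $a\Omega \mapsto F(a)\Omega$ extends to the orthogonal projection $P : L^2(\mc{A},\phi) \to L^2(\mc{B},\phi)$, using faithfulness of $\phi$ to pass between elements of the algebra and vectors in the GNS space. Throughout, I will write $b \mapsto b\Omega$ for the (injective, since $\phi$ is faithful) embedding of $\mc{B}$ into $L^2(\mc{B},\phi) \subseteq L^2(\mc{A},\phi)$, and I will repeatedly use that $\Omega$ lies in $L^2(\mc{B},\phi)$, so $P\Omega = \Omega$.

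For the first bullet, $\phi[F(a)] = \ip{F(a)\Omega}{\Omega} = \ip{P(a\Omega)}{\Omega} = \ip{a\Omega}{P\Omega} = \ip{a\Omega}{\Omega} = \phi[a]$, using self-adjointness of $P$ and $P\Omega = \Omega$.

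For the bimodule property, fix $b_1, b_2 \in \mc{B}$ and $a \in \mc{A}$; I want $F(b_1 a b_2) = b_1 F(a) b_2$. Since $\phi$ is faithful, it suffices to check that both sides induce the same vector in $L^2$, i.e.\ that $P\big((b_1 a b_2)\Omega\big) = b_1\big(F(a)\big)b_2\,\Omega$. Now left multiplication $L_{b_1}$ and right multiplication $R_{b_2}$ by elements of $\mc{B}$ are bounded operators on $L^2(\mc{A},\phi)$ that preserve the closed subspace $L^2(\mc{B},\phi)$ (here tracicality of $\phi$ is what makes $R_{b_2}$ bounded, with $\|R_{b_2}\| = \|L_{b_2}\|$); moreover $L_{b_1}$ and $R_{b_2}$ commute with each other. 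A standard fact is that a bounded operator $T$ preserving a closed subspace $K$, whose adjoint $T^\ast$ also preserves $K$, commutes with the orthogonal projection onto $K$. Since $L_{b_1}^\ast = L_{b_1^\ast}$ and $R_{b_2}^\ast = R_{b_2^\ast}$ also preserve $L^2(\mc{B},\phi)$, we get $P L_{b_1} R_{b_2} = L_{b_1} R_{b_2} P$. Applying both sides to $a\Omega$ gives $P\big((b_1 a b_2)\Omega\big) = b_1 R_{b_2}(F(a)\Omega) = b_1 F(a) b_2\,\Omega$, as desired.

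For the third bullet, let $a \in \mc{A}$ and consider the operator on $L^2(\mc{B},\phi)$ given by left multiplication by $F(a)$. For $b \in \mc{B}$, the bimodule property gives $F(a)\cdot b = F(ab)$ (taking $b_1 = 1$, $b_2 = b$), so $L_{F(a)}(b\Omega) = F(ab)\Omega = P(ab\,\Omega) = P L_a (b\Omega)$. Since $b\Omega$ ranges over a dense subset of $L^2(\mc{B},\phi)$ and $L_a$ is bounded (tracicality again), this shows $L_{F(a)} = P L_a|_{L^2(\mc{B},\phi)} = P L_a P$ as operators on $L^2(\mc{B},\phi)$. Finally, for $a^\ast a$ we get $L_{F(a^\ast a)} = P L_{a^\ast a} P = P L_a^\ast L_a P = (L_a P)^\ast (L_a P) \geq 0$.

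The main obstacle I anticipate is the bimodule step, and specifically the care needed to justify that right multiplication by elements of $\mc{B}$ is a well-defined bounded operator on the GNS space that restricts to $L^2(\mc{B},\phi)$ — this is exactly where the tracial hypothesis on $\phi$ is used, and the rest of the argument is formal manipulation of the projection $P$ together with the density of $\mc{A}\Omega$ (resp.\ $\mc{B}\Omega$) and faithfulness of $\phi$.
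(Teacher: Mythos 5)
Your overall strategy matches the paper's, and the first bullet is fine, but the bimodule step as written rests on a claim that fails in the stated generality: you assert that left multiplication $L_{b_1}$ and right multiplication $R_{b_2}$ (and later $L_a$) are \emph{bounded} operators on $L^2(\mc{A},\phi)$, and you then invoke the standard fact that a bounded operator preserving a closed subspace along with its adjoint commutes with the orthogonal projection. In a general unital star-algebra with a state, the GNS multiplication operators are only densely defined and need not be bounded; your formula $\norm{R_{b_2}} = \norm{L_{b_2}}$ is vacuous when $\norm{L_{b_2}} = \infty$. This is not a hypothetical worry here: the proposition is applied in this paper to the algebra of trace polynomials, where even the basic generators $X(h)$ are unbounded (the paper only proves they are essentially self-adjoint), and the remark following the proposition makes clear that the $C^\ast$ case is the special case, not the general one. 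So the ``commutes with $P$'' lemma you cite is not available as stated, and the adjoint manipulations $(L_aP)^\ast(L_aP)$ in your third bullet inherit the same problem.

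The gap is repairable, and the repair is exactly the paper's argument: work entirely with inner products against vectors $b\Omega$, $b \in \mc{B}$, where everything is algebraic. The hypothesis on $P$ gives $\phi[b^\ast a] = \phi[b^\ast F(a)]$ for all $b \in \mc{B}$, and this condition determines $F(a)$ by faithfulness. Then for the right module property one computes, using traciality, $\phi[b^\ast F(ab')] = \phi[b^\ast a b'] = \phi[b' b^\ast a] = \phi[b' b^\ast F(a)] = \phi[b^\ast F(a) b']$, so $F(ab') = F(a)b'$ by uniqueness; the left action is similar and needs no traciality. Likewise $\ip{b\Omega}{F(a)b'\Omega} = \ip{b\Omega}{ab'\Omega} = \ip{b\Omega}{PaPb'\Omega}$ identifies the induced operator on the dense domain $\mc{B}\Omega$, and $\ip{b\Omega}{F(a^\ast a)b\Omega} = \ip{ab\Omega}{ab\Omega} \geq 0$ gives positivity, with no boundedness or adjoint of an unbounded operator ever needed. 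Your proof becomes correct if you replace the operator-norm argument by these direct computations.
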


If $\mc{A}$ is a $C^\ast$ algebra, it follows that $F$ is a genuine $\phi$-preserving conditional expectation.

\begin{proof}
By assumption, for any $a \in \mc{A}$ and $b \in \mc{B}$,
\[
\phi[b^\ast a] = \phi[b^\ast F(a)],
\]
and $F(a)$ is uniquely determined by this condition. By taking $b = 1$, we get the first property. Next, for $b' \in \mc{B}$, using the fact that $\phi$ is tracial,
\[
\phi[b^\ast F(a b')] = \phi[b^\ast a b'] = \phi[b' b^\ast a] = \phi[b' b^\ast F(a)] = \phi[b^\ast F(a) b'],
\]
so $F$ is a right $\mc{B}$-module map. The proof for the left action is similar, and does not require the tracial property. Finally,
\[
\ip{b \Omega}{F(a) b' \Omega}_\phi = \ip{b \Omega}{a b' \Omega}_\phi = \ip{b \Omega}{P a P b' \Omega}_\phi
\]
and
\[
\ip{b \Omega}{F(a^\ast a) b \Omega}_\phi = \ip{a b \Omega}{a b \Omega}_\phi \geq 0. \qedhere
\]
\end{proof}

\subsection{Gaussian Hilbert spaces}
\label{Section:Gaussian-Hilbert}

See \cite{Janson-GHS} for proofs and more details.

Let $\mc{H}_{\mf{R}}$ be a real Hilbert space. A Gaussian Hilbert space indexed by $\mc{H}_{\mf{R}}$ is a jointly Gaussian family of random variables $X(\mc{H}_{\mf{R}}) = \set{X(h) : h \in \mc{H}_{\mf{R}}}$ on the same probability space, such that the map $h \mapsto X(h)$ is real linear, each $X(h)$ is centered, and their covariance is $\mf{E}[X(f) X(g)] = \ip{f}{g}$.

For each $n \geq 0$, denote
\[
\mc{P}_n(\mc{H}_{\mf{R}}) = \set{\text{Polynomials in the variables $X(\mc{H}_{\mf{R}})$ of degree $\leq n$}}
\]
and $\mc{P}(\mc{H}_{\mf{R}}) = \bigcup_{n=0}^\infty \mc{P}_n(\mc{H}_{\mf{R}}) = $ all such polynomials. Denote $\overline{\mc{P}}_n(\mc{H}_{\mf{R}})$ the closure of $\mc{P}_n(\mc{H}_{\mf{R}})$ in $L^2(\mc{P}(\mc{H}_{\mf{R}}), \mf{E})$ and $\mc{H}_n = \overline{\mc{P}}_n(\mc{H}_{\mf{R}}) \ominus \overline{\mc{P}}_{n-1}(\mc{H}_{\mf{R}})$, the orthogonal complement taken in $L^2(\mc{P}(\mc{H}_{\mf{R}}), \mf{E})$. Denote $p \mapsto \Wick{p}$ the orthogonal projection from each $\overline{\mc{P}}_n(\mc{H}_{\mf{R}})$ onto $\mc{H}_n$; note that it maps each $\mc{P}_n(\mc{H}_{\mf{R}})$ to itself. Define the Wick product map $W$ from the algebraic symmetric Fock space $\bigoplus_{n=0}^\infty \mc{H}_{\mf{R}}^{\odot_s n}$ into $L^2(\mc{P}(\mc{H}_{\mf{R}}), \mf{E})$ by the linear extension of
\[
\W{f_1 \otimes \ldots \otimes f_n} = \Wick{X(f_1) \ldots X(f_n)}.
\]
Explicitly $W$ is determined by the recursion $\W{\emptyset} = 1$, $\W{f_1} = X(f_1)$,
\[
\W{f_0 \otimes \ldots \otimes f_n} = X(f_0) \W{f_1, \ldots, f_n} - \sum_{i=1}^n \ip{f_0}{f_i} \W{f_1 \ldots \hat{f}_i \ldots f_n}.
\]
We call $\W{f_1 \otimes \ldots \otimes f_n}$ a multivariate Hermite polynomial. It also has an explicit expansion
\begin{equation}
\label{Eq:Wick-Gaussian}
\W{f_1 \otimes \ldots \otimes f_n} = \sum_{\pi \in \mc{P}_{1,2}(n)} (-1)^{\abs{\pi}} \prod_{\set{i,j} \in \Pair{\pi}} \ip{f_i}{f_j} \prod_{k \in \Sing{\pi}} X(f_k).
\end{equation}
$W$ extends to the isomorphism between the symmetric Fock space $\mc{F}_s(\mc{H})$ and $L^2(\mc{P}(\mc{H}_{\mf{R}}), \mf{E})$.

In the complex case, there are two natural types of Gaussian Hilbert spaces.
\begin{itemize}
\item
We may take $\mc{H} = \set{f + i g : f, g \in \mc{H}_{\mf{R}}}$ to be the complexification of $\mc{H}_{\mf{R}}$, with $X(f + i g) = X(f) + i X(g)$. Then each $X(h)$ is centered complex Gaussian, and their covariance is $\mf{E}[X(h) X(k)] = \ip{h}{\bar{k}}_{\mc{H}}$.
\item
Alternatively, we may take $\mc{K} = \set{ h + i h : h \in \mc{H}_{\mf{R}}}$, with $Z(h + i h) = X(h,0) + i X(0, h)$, where $(h, 0), (0, h) \in \mc{H}_{\mf{R}} \oplus \mc{H}_{\mf{R}}$. Then each $Z(h)$ is centered complex Gaussian, and their covariance is $\mf{E}[Z(h) Z(k)] = 0$, $\mf{E}[Z(h) \overline{Z(k)}] = 2 \ip{h}{k}_{\mc{H}_{\mf{R}}}$.
\end{itemize}

If $\set{\xi_i : i \in \Xi}$ is an orthonormal basis for $\mc{H}_{\mf{R}}$, then 
\[
\set{\W{\xi_{u(1)}, \ldots, \xi_{u(n)}} : n \geq 0, \mb{u} \in \Delta(\Xi^n)}
\]
is an orthogonal basis for $L^2(\mc{P}(\mc{H}_{\mf{R}}), \mf{E})$. Here $\Delta(\Xi^n) = \set{\mb{u} \in \Xi^n : u(1) \leq u(2) \leq \ldots \leq u(n)}$. Similarly,
\[
\set{Z(\xi_{u(1)}) \ldots Z(\xi_{u(n)}) : n \geq 0, \mb{u} \in \Delta(\Xi^n)}
\]
is an orthogonal basis for $L^2(\mc{P}(\mc{K}), \mf{E})$. Thus the linear extension of the map $\mc{S} : \W{h_1, \ldots, h_n} \mapsto Z(h_1) \ldots Z(h_n)$ (the Segal-Bargmann transform) is an isomorphism between these spaces.

\section{The tensor algebra of symmetric groups}
\label{Sec:Group}

\subsection{A function on the symmetric group}

On the symmetric group $S_0(n)$, consider the function $\chi^{n+1}_q: \alpha \mapsto q^{\abs{\alpha}}$, and extend it linearly to a function on the group algebra $\mf{C}[S_0(n)]$. As is well-known (see, for example, \cite{Gnedin-Gorin-Kerov-Block-characters,Kostler-Nica-CLT-S-infty}), this function is positive semi-definite for
\[
q \in \mc{Z}_{n+1} = \left[ - \frac{1}{n}, \frac{1}{n} \right] \cup \set{\pm \frac{1}{N} : 1 \leq N \leq n-1}
\]
and is not positive semi-definite for other values of $q$. It follows that these functions are positive definite for all $n$ if
\[
q \in \mc{Z} = \cap_n \mc{Z}_n = \set{0} \cup \set{\pm \frac{1}{N} : N \in \mf{N}}.
\]
We will typically omit the superscript on $\chi^{n+1}_q$. For $q \in \mc{Z}$, the positivity of $\chi_q$ follows from the fact that $\chi_{1/N}$ is the normalized character of the standard representation
\[
\pi_{n, 1/N} : \mf{C}[S_0(n)] \rightarrow \End \left((\mf{C}^N)^{\otimes (n+1)} \right),
\]
while $\chi_0$ is the normalized character of its regular representation (and also the standard trace on $\mf{C}[S_0(n)]$).

It is well-known \cite{Gnedin-Gorin-Kerov-Block-characters} (see also \cite{Biane-Approx-factorization-characters,Kerov-book}) that
\[
\chi_{1/N}^{n+1} = \frac{1}{N^{n+1}} \sum_{\lambda \in \Par(n+1)} \abs{SS_N(\lambda)} \chi^\lambda,
\]
where $SS_N(\lambda)$ is the number of semistandard Young tableaux of shape $\lambda$ with entries belonging to the set $\set{1, \ldots, N}$. In particular, the coefficients are zero if $\lambda$ has more than $N$ parts, and non-zero if it has at most $N$ parts. Also
\[
\chi_0^{n+1} = \frac{1}{(n+1)!} \sum_{\lambda \in \Par(n+1)} d_\lambda \chi^\lambda.
\]

We now discuss the kernel of the normalized character $\chi_q$. See Section~4 of \cite{Procesi} for a related discussion.

\begin{Prop}
\label{Prop:Kernel-rep}
Let $q \in \mc{Z}$. Denote
\[
\mc{N}_{gr, q, n} = \set{\eta \in \mf{C}[S_0(n)] : \chi_q[\eta \eta^\ast] = 0} = \set{\eta \in \mf{C}[S_0(n)] : \pi_{n, q}(\eta) = 0}
\]
and
\[
\mc{N}_{gr, q} = \bigoplus_{n=0}^\infty \mc{N}_{gr, q, n} \subset \bigoplus_{n=0}^\infty \mf{C}[S_0(n)].
\]
\begin{enumerate}
\item
$\mc{N}_{gr, q, n} = \set{0}$ for $q = 0$ or for $q = \frac{1}{N}$, $n+1 \leq N$.
\item
The following are equivalent descriptions of $\mc{N}_{gr, 1/N, n}$ for $n+1 > N$.
\begin{itemize}
\item
$\mc{N}_{gr, 1/N, n}$ is the ideal of the group algebra of $S_0(n)$ spanned by the Young symmetrizers corresponding to diagrams with at least $N + 1$ rows.
\item
$\mc{N}_{gr, 1/N, n} = \mf{C}[S_0(n)]_{> N}$, so that on $\mf{C}[S_0(n)]_{\leq N}$, $\chi_q$ is faithful.
\item
More explicitly, $\mc{N}_{gr, 1/N, n}$ is the ideal generated by
\[
\sigma_n^{(1/N)} = \sum_{\sigma \in S_0(N-1)} (-1)^{\abs{\sigma}} \sigma
\]
under the natural embedding of $S_0(N-1)$ into $S_0(n)$. Similarly, $\mc{N}_{gr, -1/N, n}$ is the ideal generated by
\[
\sigma_n^{(-1/N)} = \sum_{\sigma \in S_0(N-1)} \sigma.
\]
\item
Let $\pi \in \mc{PS}_0(n)$ be a partial permutation of $[0, n]$ with $N$ linear orbits. Denote $a_0, \ldots, a_{N-1}$ and $b_0, \ldots, b_{N-1}$ the initial, respectively, final elements of these orbits (recall that if an element does not belong to any actual orbit of $\pi$, we consider it as a single-element linear orbit, in which case $a_i = b_i$). Here $a_0, b_0$ belong to the orbit containing $0$. For a permutation $\sigma \in S_0(N-1)$, define $\sigma \circ \pi \in S_0(n)$ by
\[
(\sigma \circ \pi)(x) =
\begin{cases}
    a_{\sigma(i)}, & x = b_i, \\
    \pi(x), & \text{otherwise}.
\end{cases}
\]
In other words, we use the cycles of $\sigma$ to concatenate the linear orbits of $\pi$. Then
\[
\mc{N}_{gr, 1/N, n} = \Span{\sum_{\sigma \in S_0(N-1)} (-1)^{\abs{\sigma}} \sigma \circ \pi : \pi \in \mc{PS}_0(n, N)}
\]
and
\[
\mc{N}_{gr, -1/N, n} = \Span{\sum_{\sigma \in S_0(N-1)} \sigma \circ \pi : \pi \in \mc{PS}_0(n, N)}.
\]
\end{itemize}
\end{enumerate}
\end{Prop}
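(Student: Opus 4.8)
The plan is to reduce the whole statement to the identity $\mc{N}_{gr,1/N,n} = \mf{C}[S_0(n)]_{>N}$ and then extract each concrete description using the structure theory of Section~\ref{Sec:Centralizer}. To begin, the two displayed descriptions of $\mc{N}_{gr,q,n}$ coincide and the set is a two-sided ideal: since $\chi_q$ is the normalized trace of the $\ast$-representation $\pi_{n,q}$, one has $\chi_q[\eta\eta^\ast] = \frac{1}{N^{n+1}}\norm{\pi_{n,q}(\eta)}_{HS}^2$, which vanishes exactly when $\pi_{n,q}(\eta)=0$; thus $\mc{N}_{gr,q,n} = \ker\pi_{n,q}$, a two-sided ideal. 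Writing $\chi_q\circ W = \sum_\lambda n_\lambda\Tr_{M_{d_\lambda}(\mf{C})}$ and using the orthogonal spanning set $\set{W(E_{ij}^\lambda)}$ of Section~\ref{Sec:Centralizer}, whose members have squared $\chi_q$-norm $n_\lambda$, the radical of $\eta\mapsto\chi_q[\eta\eta^\ast]$ is $\bigoplus_{n_\lambda=0}W(M_{d_\lambda}(\mf{C}))$. For $q=1/N$ the expansion $\chi_{1/N} = \frac{1}{N^{n+1}}\sum_\lambda\abs{SS_N(\lambda)}\chi^\lambda$ together with the stated equivalence $\abs{SS_N(\lambda)}=0 \iff \lambda$ has more than $N$ rows identifies this radical with $\mf{C}[S_0(n)]_{>N}$; this is the description via $\mf{C}[S_0(n)]_{>N}$, and faithfulness on $\mf{C}[S_0(n)]_{\leq N}$ is immediate. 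The faithfulness claim is the special case in which no partition of $n+1$ has more than $N$ rows (for $q=1/N$, $n+1\leq N$), together with the fact that all coefficients in $\chi_0 = \frac{1}{(n+1)!}\sum_\lambda d_\lambda\chi^\lambda$ are nonzero. The Young-symmetrizer description then follows because, by Section~\ref{Sec:Centralizer}, each $W(M_{d_\lambda}(\mf{C}))$ is the ideal generated by, and spanned by, the Young symmetrizers of shape $\lambda$.

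For the single-generator description I would prove $\langle\sigma_n^{(1/N)}\rangle = \mf{C}[S_0(n)]_{>N}$. The inclusion "$\subseteq$" is a direct check that $\pi_{n,1/N}(\sigma_n^{(1/N)}) = 0$: $\sigma_n^{(1/N)}$ is a scalar multiple of a column Young symmetrizer, which $\pi_{n,1/N}$ maps to a scalar multiple of the projection onto an exterior power of $\mf{C}^N$ of degree larger than $N$ (sitting inside some of the tensor factors of $(\mf{C}^N)^{\otimes(n+1)}$), hence to $0$; since $\mc{N}_{gr,1/N,n}$ is a two-sided ideal, the inclusion follows. For "$\supseteq$", $\sigma_n^{(1/N)}$ is (up to a scalar) the projector onto the sign-isotypic component of the embedded symmetric subgroup, so the left ideal $\mf{C}[S_0(n)]\,\sigma_n^{(1/N)}$ is the associated induced sign module; by Frobenius reciprocity and the iterated branching rule its irreducible constituents $V_\lambda$ are precisely those with more than $N$ rows, and therefore the two-sided ideal it generates is all of $\mf{C}[S_0(n)]_{>N}$. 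The $q=-1/N$ statement I would obtain by applying the automorphism $\sigma\mapsto(-1)^{\abs{\sigma}}\sigma$ of $\mf{C}[S_0(n)]$, which conjugates $\pi_{n,1/N}$ to $\pi_{n,-1/N}$, carries $\mc{N}_{gr,1/N,n}$ to $\mc{N}_{gr,-1/N,n}$, and sends the column antisymmetrizer to the row symmetrizer.

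For the partial-permutation description: since $\mc{N}_{gr,1/N,n} = \langle\sigma_n^{(1/N)}\rangle$ is two-sided, it is spanned by the elements $\alpha\,\sigma_n^{(1/N)}\,\beta$, and writing $\sigma_n^{(1/N)} = A_{T_0}$ for the antisymmetrizer over its support $T_0$ and using $\alpha A_{T_0}\beta = A_{\alpha(T_0)}\cdot(\alpha\beta)$, this equals $\Span{A_T\gamma : \abs{T} = \abs{T_0},\ \gamma\in S_0(n)}$. On the other hand I would check the identity $\sum_\sigma(-1)^{\abs{\sigma}}(\sigma\circ\pi) = A_{\set{a_i}}\cdot\pi_{\mathrm{cl}}$, where $\pi_{\mathrm{cl}}\in S_0(n)$ closes each linear orbit of $\pi$ into a cycle and $\set{a_i}$ are the initial points: concatenating the linear orbits according to $\sigma$ becomes, after closing, left multiplication by the permutation of the $a_i$ induced by $i\mapsto\sigma(i)$, which has the same length as $\sigma$; this gives "$\subseteq$" for the span. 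For the reverse, given $A_T\gamma$ I would exploit the absorption relation $A_T\rho = (-1)^{\abs{\rho}}A_T$ (for $\rho$ permuting $T$) to replace $\gamma$ by any representative of the coset $S_T\gamma$, and a short combinatorial argument — transposing two elements of $T$ lying in a common cycle strictly refines the partition of $T$ cut out by the cycle structure — yields a $\gamma'$ in that coset whose cycles each meet $T$ in at most one point, whereupon $A_T\gamma'$ is visibly $\sum_\sigma(-1)^{\abs{\sigma}}\sigma\circ\pi$ for the partial permutation $\pi$ obtained by opening $\gamma'$ at the points of $T$. The $q=-1/N$ case is identical with all signs dropped. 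I expect the bookkeeping in this last part — keeping the initial and final endpoints and the orbit containing $0$ straight through the opening/closing operations and the coset choice — to be the only genuinely delicate point; everything else follows quickly from the character expansion and the structure theory of Section~\ref{Sec:Centralizer}.
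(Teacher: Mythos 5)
Your overall route matches the paper's. For the last bullet of (b) -- the only part the paper proves in detail -- your argument is essentially identical: the forward inclusion via the identity $\sum_\sigma (-1)^{\abs{\sigma}}\,\sigma\circ\pi = A_{\set{a_i}}\bar{\pi}$ (the paper writes this as $\alpha^{-1}\sigma\alpha\bar{\pi} = \sigma\circ\pi$ with $\alpha(a_i)=i$), and the reverse inclusion by normalizing the coset representative modulo the subgroup absorbed by the antisymmetrizer until the marked points lie in distinct cycles, then opening those cycles (this is exactly the paper's ``replacing $\beta$ by its multiple by appropriate transpositions,'' following Procesi). Your first paragraph fills in, correctly and by the standard structure-theoretic argument, the items the paper dismisses as well-known: the identification $\chi_q[\eta\eta^\ast] = N^{-(n+1)}\norm{\pi_{n,q}(\eta)}_{HS}^2$, the radical computation via $\chi_q\circ W = \sum_\lambda n_\lambda \Tr$, and the vanishing criterion for $\abs{SS_N(\lambda)}$.

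There is, however, one concrete problem in your second paragraph. You assert that $\pi_{n,1/N}(\sigma_n^{(1/N)})=0$ because it is ``the projection onto an exterior power of $\mf{C}^N$ of degree larger than $N$,'' and later that the induced sign module has constituents exactly the $\lambda$ with more than $N$ rows. Both claims require the antisymmetrizer to run over $N+1$ points. But $S_0(N-1)$ permutes $[0,N-1]$, a set of $N$ points: the antisymmetrizer over $N$ points is \emph{not} annihilated by $\pi_{n,1/N}$ (it projects onto the nonzero line $\Lambda^N\mf{C}^N$ in the relevant tensor slots), and by your own branching argument it generates the strictly larger ideal spanned by the $W(M_{d_\lambda})$ with at least $N$ rows, not $\mf{C}[S_0(n)]_{>N}$. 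This is an off-by-one inconsistency already present between the first and third bullets of the statement (and it propagates to the fourth bullet, where one needs $N+1$ linear orbits), so it is not purely your error -- but a correct proof must either flag it or consistently prove the corrected version with $S_0(N)$ in place of $S_0(N-1)$; as written, your step ``$\pi_{n,1/N}(\sigma_n^{(1/N)})=0$'' is false for the element the statement actually defines.
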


\begin{proof}
(a) and the equivalence between the first three entries in (b) are well-known. For the equivalence between the final two entries, denote by $\bar{\pi}$ the permutation obtained by ``closing'' the orbits of $\pi$; that is, $\bar{\pi}(b_i) = a_i$ and $\bar{\pi}(x) = \pi(x)$ otherwise. Also let $\alpha \in S(n)$ be defined by $\alpha(a_i) = i$ and arbitrarily otherwise; thus, $\alpha$ maps $\set{a_0, \ldots, a_{N-1}}$ bijectively onto $[0, N-1]$. Then a calculation shows that $\alpha^{-1} \sigma \alpha \bar{\pi} = \sigma \circ \pi$. Therefore
\[
\sum_{\sigma \in S(N-1)} (-1)^{\abs{\sigma}} \sigma \circ \pi = \sum_{\sigma \in S(N-1)} (-1)^{\abs{\sigma}} \alpha^{-1} \sigma \alpha \bar{\pi}
\]
is in the ideal generated by $\sigma_n^{(1/N)}$. The argument for the opposite inclusion is very close to the proof of Theorem~4.5 in \cite{Procesi}, in a somewhat different language. The ideal is spanned by the elements of the form $\alpha^{-1} \sigma_n^{(1/N)} \beta \alpha$  for $\alpha, \beta \in S_0(n)$. Possibly by replacing $\beta$ by its multiple by appropriate transpositions, we may assume that $0, 1, \ldots, N-1$ lie in different cycles of $\beta$. Denote $a_i = \alpha^{-1}(i)$ for $0 \leq i \leq N-1$. Then $a_0, \ldots, a_{N-1}$ lie in different cycles of $\alpha^{-1} \beta \alpha$. So $\alpha^{-1} \beta \alpha = \bar{\pi}$, where linear orbits of $\pi$ are cycles of $\alpha^{-1} \beta \alpha$ with initial elements $a_0, a_1, \ldots, a_{N-1}$, and cyclic orbits of $\pi$ are the remaining cycles of $\alpha^{-1} \beta \alpha$. It follows that
\[
(\alpha^{-1} \sigma_n^{(1/N)} \alpha) (\alpha^{-1} \beta \alpha) = \sum_{\sigma \in S(N-1)} (-1)^{\abs{\sigma}} \sigma \circ \pi.
\]
The argument for $q = -1/N$ is similar.
\end{proof}

\subsection{Algebra structure}
\label{Subsec:Algebra}

\begin{Notation}
\label{Notation:Union}
Let $\alpha \in S_0(n)$, $\beta \in S_0(k)$. Define $\sigma_{n, k} \in S(n+k)$ by
\[
\sigma_{n, k}(i) =
\begin{cases}
i + k, & 1 \leq i \leq n, \\
i - n, & n+1 \leq i \leq n+k.
\end{cases}
\]
Thus in word notation, $\sigma_{n,k} = k+1, k+2, \ldots, k + n, 1, 2, \ldots, k$. Note for future reference that $\sigma_{k, n} = \sigma_{n, k}^{-1}$.

Identify $\alpha, \beta$ with elements in $S_0(n+k)$ by letting them act on the first $(n+1)$, respectively $(k+1)$ elements. Define $\alpha \cup \beta \in S_0(n + k)$ by
\begin{equation}
\label{Eq:Defn:cup}
\alpha \cup \beta = \sigma_{n, k}^{-1} \beta \sigma_{n, k} \alpha.
\end{equation}
That is, $\alpha \cup \beta$ is obtained by: combining the cycles of $\alpha$ and $\beta$ containing $0$ into a cycle
\[
(0, \alpha(0), \ldots, \alpha^{-1}(0), \beta(0) + n, \ldots, \beta^{-1}(0) + n),
\]
keeping the remaining cycles of $\alpha$, and letting the remaining cycles of $\beta$ act on the shifted set $\set{n+1, \ldots, n + k}$.
\end{Notation}

We will now define a version of tensor multiplication on $\bigoplus_{n=0}^\infty \mf{C}[S_0(n)]$ and its quotient by $\mc{N}_{gr, q}$. To distinguish this algebra structure from the usual multiplication on the group algebra, we will denote $\alpha$ by $\T{\alpha}$. We will use this identification to talk about $\chi_q$, $\mc{N}_{gr, q}$, etc., as applied to $\T{\eta}$.

\begin{Defn}
\label{Defn:T-multiplication}
Define the multiplication on $\bigoplus_{n=0}^\infty \mf{C}[S_0(n)]$ by the linear extension of
\[
\T{\alpha} \T{\beta} = \T{{\alpha \cup \beta}}.
\]
We use the ordinary adjoint on the group algebra, defined by the anti-linear extension of the relation $\T{\alpha}^\ast = \T{{\alpha^{-1}}}$.
\end{Defn}

\begin{Remark}
The subalgebra $\bigoplus_{n=0}^\infty \mf{C}[S(n)]$ is called the generic tensor algebra in \cite{Raicu-Generic-tensor}.
\end{Remark}

\begin{Prop}
\label{Prop:Multiplication-group}
\

\begin{enumerate}
\item
The multiplication in Definition~\ref{Defn:T-multiplication} is associative.
\item
${(0)}$ is the identity.
\item
$\mc{N}_{gr, q}$ is an ideal for this multiplication. Consequently, for $q \in \mc{Z}$, the multiplication factors through to the quotient
\[
\mc{TP}_q = \left(\bigoplus_{n=0}^\infty \mf{C}[S_0(n)] \right) / \mc{N}_{gr, q} \simeq \bigoplus_{n=0}^\infty \left(\mf{C}[S_0(n)] / \mc{N}_{gr, q, n} \right).
\]
For $q = \frac{1}{N}$,
\[
\mc{TP}_q = \bigoplus_{n=0}^{N-1} \mf{C}[S_0(n)] \oplus \bigoplus_{n=N}^\infty \mf{C}[S_0(n)]_{\leq N}.
\]
\end{enumerate}
\end{Prop}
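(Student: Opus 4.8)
The plan is to verify the three assertions in turn, deriving everything from the formula $\alpha \cup \beta = \sigma_{n,k}^{-1}\beta\sigma_{n,k}\alpha$ of Notation~\ref{Notation:Union} and from Proposition~\ref{Prop:Kernel-rep}. For associativity, I would take $\alpha \in S_0(n)$, $\beta \in S_0(k)$, $\gamma \in S_0(m)$ and compute both $(\alpha\cup\beta)\cup\gamma$ and $\alpha\cup(\beta\cup\gamma)$ as explicit permutations of $[0,n+k+m]$. Each is obtained by the same recipe: keep the non-zero-cycles of $\alpha$ on $\{1,\dots,n\}$, the non-zero cycles of $\beta$ shifted to $\{n+1,\dots,n+k\}$, the non-zero cycles of $\gamma$ shifted to $\{n+k+1,\dots,n+k+m\}$, and form the single $0$-cycle by concatenating, in order, the $0$-arcs of $\alpha$, of $\beta$ (shifted by $n$), and of $\gamma$ (shifted by $n+k$). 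Since this description is manifestly independent of the bracketing, the two agree; alternatively one writes it out with the shift permutations and checks $\sigma_{n+k,m}^{-1}\gamma\sigma_{n+k,m}(\sigma_{n,k}^{-1}\beta\sigma_{n,k}\alpha)$ equals the analogous expression for the other grouping, using $\sigma_{a,b}=\sigma_{b,a}^{-1}$ and that $\sigma_{n,k}$ commutes with any permutation supported on $\{n+k+1,\dots\}$. Part (b) is immediate: for $\beta\in S_0(k)$, $(0)\cup\beta$ merges the trivial $0$-cycle of $(0)\in S_0(0)$ with the $0$-cycle of $\beta$, leaving $\beta$ unchanged (and $\sigma_{0,k}$ is the identity), and similarly $\alpha\cup(0)=\alpha$; for the $\ast$-structure one checks $(\alpha\cup\beta)^{-1}=\alpha^{-1}\cup\beta^{-1}$ directly from~\eqref{Eq:Defn:cup}.

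The substantive part is (c): that $\mc{N}_{gr,q}$ is a two-sided ideal for $\T{\cdot}$-multiplication. For $q=0$ or $q=\frac1N$ with $n+1\le N$ there is nothing to prove since the relevant components are $\{0\}$ by Proposition~\ref{Prop:Kernel-rep}(a). For the remaining components I would use the last description in Proposition~\ref{Prop:Kernel-rep}(b): $\mc{N}_{gr,1/N}$ is spanned by the elements $s_\pi := \sum_{\sigma\in S_0(N-1)}(-1)^{\abs\sigma}\,\sigma\circ\pi$ as $\pi$ ranges over partial permutations of $[0,n]$ with exactly $N$ linear orbits (and the analogous statement with all signs $+1$ for $q=-1/N$). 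I would then show that $\T{s_\pi}\T{\gamma}$ and $\T{\gamma}\T{s_\pi}$ are again of this form, i.e.\ linear combinations of $s_{\pi'}$ for suitable partial permutations $\pi'$ of $[0,n+k]$ with $N$ linear orbits. The key point is that the union operation $\alpha\cup\gamma$ acts on the orbit structure by a controlled modification: merging $\gamma$ into $\alpha$ affects only the orbit of $\alpha$ containing $0$, and the antisymmetrizer over $S_0(N-1)$ in $s_\pi$ ``sees'' exactly the concatenation action on the $N$ linear orbits of $\pi$. Concretely, taking $\pi'$ to be the partial permutation on $[0,n+k]$ obtained by splicing the non-zero part of $\gamma$ (shifted) onto $\pi$ and adjusting the distinguished orbit, one gets $s_\pi\cup\gamma = \pm\, s_{\pi'}$ (up to sign bookkeeping from $(-1)^{\abs\sigma}$ versus the embedding $S_0(N-1)\hookrightarrow S_0(N-1+\dots)$), using the identity $\alpha^{-1}\sigma\alpha\,\bar\pi=\sigma\circ\pi$ from the proof of Proposition~\ref{Prop:Kernel-rep}. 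Equivalently, and perhaps more cleanly, one can argue representation-theoretically: $\mc{N}_{gr,1/N,n}=\ker\pi_{n,1/N}$, and $\T{\eta}\mapsto$ its action is compatible with the tensor structure in the sense that $\pi_{n,1/N}(\eta)=0$ forces $\pi_{n+k,1/N}(\eta\cup\gamma)=0$, because $\eta\cup\gamma = \sigma_{n,k}^{-1}\gamma\sigma_{n,k}\eta$ still contains $\eta$ as a right factor in $\mf C[S_0(n+k)]$ after the inclusion $S_0(n)\subset S_0(n+k)$, and the standard representation restricts compatibly; the left ideal property is symmetric. This second route avoids all sign bookkeeping and reduces (c) to the functoriality of $\pi_{n,q}$ under the inclusions $[0,n]\subset[0,n+k]$, which is essentially immediate from the definition of the standard representation on $(\mf C^N)^{\otimes(\bullet+1)}$.

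Finally, the explicit form of $\mc{TP}_q$ for $q=\frac1N$ is just reassembling: $\mf C[S_0(n)]/\mc N_{gr,1/N,n}$ equals $\mf C[S_0(n)]$ when $n+1\le N$ by part (a) of Proposition~\ref{Prop:Kernel-rep}, and equals $\mf C[S_0(n)]/\mf C[S_0(n)]_{>N}\cong\mf C[S_0(n)]_{\leq N}$ when $n+1>N$ by the second bullet of Proposition~\ref{Prop:Kernel-rep}(b), since $\mf C[S_0(n)]=\mf C[S_0(n)]_{\leq N}\oplus\mf C[S_0(n)]_{>N}$ as a direct sum of two-sided ideals. Taking the direct sum over $n$ gives the displayed formula. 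I expect the main obstacle to be the ideal property in (c): making the orbit-splicing argument fully rigorous requires careful tracking of which orbit of $\pi$ carries the distinguished point $0$ and how the shift $\sigma_{n,k}$ interacts with the concatenation action $\sigma\circ\pi$; the representation-theoretic shortcut is the safer path and is the one I would write up, falling back on the combinatorial description only if a self-contained (representation-free) argument is desired.
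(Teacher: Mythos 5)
Your preferred route for part (c) is exactly the paper's argument: the paper disposes of (c) in one line by observing that $\alpha\cup\beta=\sigma_{n,k}^{-1}\beta\sigma_{n,k}\alpha$ exhibits $\eta\cup\gamma$ and $\gamma\cup\eta$ as ordinary group-algebra products involving $\eta$ (or a conjugate of it), so membership in the two-sided ideal $\mc{N}_{gr,q,n+k}$ follows once one knows $\mc{N}_{gr,q,n}\subset\mc{N}_{gr,q,n+k}$ under the inclusion $S_0(n)\subset S_0(n+k)$ --- which is your ``standard representation restricts compatibly'' remark. The combinatorial orbit-splicing alternative is unnecessary, as you correctly suspect, and the rest (associativity, identity, the explicit form of $\mc{TP}_{1/N}$) matches the paper's ``immediate'' claims.

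One side remark in your write-up is false, though it is not needed for the proposition as stated: $(\alpha\cup\beta)^{-1}\neq\alpha^{-1}\cup\beta^{-1}$ in general. From \eqref{Eq:Defn:cup}, $(\alpha\cup\beta)^{-1}=\alpha^{-1}\sigma_{n,k}^{-1}\beta^{-1}\sigma_{n,k}$, whereas $\alpha^{-1}\cup\beta^{-1}=\sigma_{n,k}^{-1}\beta^{-1}\sigma_{n,k}\alpha^{-1}$; the two factors both move $0$ and do not commute. (Concretely, for $\alpha=\beta=(01)$ one gets $\alpha\cup\beta=(012)$, so $(\alpha\cup\beta)^{-1}=(021)$, while $\alpha^{-1}\cup\beta^{-1}=(012)$.) The correct statement is $(\alpha\cup\beta)^{-1}=\sigma_{n,k}^{-1}(\beta^{-1}\cup\alpha^{-1})\sigma_{n,k}$, i.e.\ the adjoint is an anti-homomorphism only up to conjugation by $\sigma_{n,k}$, which is why the paper proves the anti-homomorphism property only later, in the symmetrized algebra $\mc{TP}(\mc{H}_{\mf{R}})$ where that conjugation is quotiented out. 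Drop that identity from part (b); the identity element claim itself is fine.
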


\begin{proof}
Associativity of multiplication follows from the description of the operation $\cup$ after equation~\eqref{Eq:Defn:cup}.  (b) is immediate. For (c), since $\mc{N}_{gr, q}$ is an ideal for the usual group algebra multiplication, it follows from \eqref{Eq:Defn:cup} that it is also an ideal for the Definition~\ref{Defn:T-multiplication} multiplication. The final expression follows from Proposition~\ref{Prop:Kernel-rep}.
\end{proof}

\begin{Notation}
\label{Notation:Restriction}
For $\alpha \in S_0(n)$ and $S \subset [n]$, the restriction $\alpha|_{S^c}$ of a permutation is the permutation on $[0, n] \setminus S$ defined by $\alpha|_{S^c}(x) = \alpha^m(x)$, where $m = \min \set{k > 0 \ |\ \alpha^k(x) \in S^c}$.
\end{Notation}

\begin{Notation}
For $A, B \subseteq \mf{Z}$, $\abs{A} = \abs{B}$, denote $P^A_B$ the unique order-preserving bijection from $A$ to $B$. By abuse of notation, we will also denote $P^A_B$ the corresponding bijection between the collections of permutations $S(A)$ and $S(B)$ (it should really be $\sigma \mapsto (P^A_B) \sigma (P^A_B)^{-1}$).
\end{Notation}

\begin{Example}
For $\alpha = (13524)$ and $S = \set{2,5}$, $\alpha|_{S^c} = (134)$ and $P^{[5] \setminus S}_{[3]} \alpha|_{S^c} = (1 2 3)$.
\end{Example}

For $\pi \in \mc{P}_{1,2}(n)$, denote $\supp{\pi} = [n] \setminus \Sing{\pi}$.

\begin{Defn}
\label{Defn:Contraction-gr}
Let $q \neq 0$. For a transposition $\tau = (ij) \in S(n)$ and $\alpha \in S_0(n)$, define the $\tau$-contraction by the linear extension of
\[
C_\tau (\alpha)
= q^{\cyc_0((\tau \alpha)|_{\supp{\tau}^c}) - \cyc_0(\tau \alpha) + 1} {P^{[0, n] \setminus \set{i, j}}_{[0, n-2]} (\tau \alpha)|_{\supp{\tau}^c}}
\]
More generally, for $\pi \in \Part_{1,2}(n)$, define the contraction
\[
C_\pi (\alpha)
= q^{\cyc_0((\pi \alpha)|_{\supp{\pi}^c}) - \cyc_0(\pi \alpha) + \ell}
{P^{[0, n] \setminus \supp{\pi}}_{[0, n-2 \ell]} (\pi \alpha)|_{\supp{\pi}^c}},
\]
where $\ell = \abs{\Pair{\pi}} = \abs{\pi}$. Extend $C_\pi$ linearly to $\mf{C}[S_0(n)]$.
\end{Defn}

\begin{Remark}
It is easy to check that for a transposition $\tau = (ij)$,
\[
q^{\cyc_0((\tau \alpha)|_{\supp{\tau}^c}) - \cyc_0(\tau \alpha) + 1} =
\begin{cases}
q^{-1}, & (ij) \text{ is a cycle in } \alpha, \\
1, & (i), (j) \text{ are cycles in } \alpha, \\
1,  & i, j \text{ are consecutive elements} \\
& \text{in the same cycle of $\alpha$ of length at least } 3, \\
q, & \text{otherwise.}
\end{cases}
\]
In particular, $C_\tau$ is defined for $q=0$ unless $(ij)$ is a cycle in $\alpha$. See Section~\ref{Subsec:q=0}.
\end{Remark}

\begin{Lemma}
\label{Lemma:Contraction-kernel-gr}
Let $q \in \mc{Z} \setminus \set{0}$, and $\tau \in S(n)$ a transposition. Each $C_\tau$ maps $\mc{N}_{gr, q, n}$ to $\mc{N}_{gr, q, n-2}$. 
\end{Lemma}

\begin{proof}
We consider the case $q = \frac{1}{N}$; for $q = - \frac{1}{N}$, the argument is similar. We will use the representation in Proposition~\ref{Prop:Kernel-rep}. Let $\eta \in \mc{N}_{gr, q, n}$. Since $\mc{N}_{gr, q, n}$ is an ideal, $\tau \eta \in \mc{N}_{gr, q, n}$. So it suffices to show that if $\pi \in \mc{PS}(n)$ and $\eta = \sum_{\sigma \in S(N)} (-1)^{\abs{\sigma}} \sigma \circ \pi$, then for any $S \subset [0, n]$,
\[
\sum_{\sigma \in S(N)} q^{\cyc_0((\sigma \circ \pi)|_{S^c}) - \cyc_0(\sigma \circ \pi)} (-1)^{\abs{\sigma}} (\sigma \circ \pi)|_{S^c} \in \mc{N}_{gr, q, n-2}.
\]
By induction, it suffices to take $S$ to be a singleton, $S = \set{s}$. We consider two cases.

Suppose $s = a_i = b_i$. Then $(\sigma \circ \pi)|_{\set{s}^c} = (\sigma|_{\set{i}^c}) \circ (\pi|_{\set{s}^c})$ and $\cyc_0((\sigma \circ \pi)|_{\set{s}^c}) = \cyc_0(\sigma|_{\set{i}^c})$. Each $\sigma' \in S(N-1)$ appears as $\sigma|_{\set{i}^c}$ $N$ times, once when $(i)$ is a cycle in $\sigma$ (so that $\sigma'$ has one less cycle than $\sigma$), and $N-1$ times corresponding to $\sigma(i) = j$ for each $j \in [N] \setminus \set{i}$ (so that $\sigma'$ has the same number of cycles as $\sigma$). Thus
\[
\begin{split}
\sum_{\sigma \in S(N)} q^{\cyc_0((\sigma \circ \pi)|_{S^c}) - \cyc_0(\sigma \circ \pi)} (-1)^{\abs{\sigma}} (\sigma \circ \pi)|_{S^c}
& = \sum_{\sigma' \in S(N-1)} (N-1 - q^{-1}) (-1)^{\abs{\sigma'}} \sigma' \circ (\pi|_{\set{s}^c}) \\
& = - \sum_{\sigma' \in S(N-1)} (-1)^{\abs{\sigma'}} \sigma' \circ (\pi|_{\set{s}^c})
\in \mc{N}_{gr, q, n-2}.
\end{split}
\]
If $\set{s}$ is not a single-element linear orbit of $\pi$, then $(\sigma \circ \pi)|_{\set{s}^c} = \sigma \circ (\pi|_{\set{s}^c})$, and has the same number of cycles as $\sigma \circ \pi$.
\end{proof}

\begin{Notation}
\label{Notation:Laplacian}
By the preceding lemma, the operator
\[
\mc{L}_n = \sum_{\tau \text{ a transposition in } S(n)} C_\tau
\]
is well defined as a map from $\mf{C}[S_0(n)] / \mc{N}_{gr, q, n}$ to $\mf{C}[S_0(n-2)] / \mc{N}_{gr, q, n-2}$. Denote by $\mc{L}$ the direct sum of these operators, which we may consider as a map from $\mc{TP}_q$ to itself. Note that for $\pi \in \Part_{1,2}(n)$, $C_\pi$ is a product of several transposition-type contractions, and
\[
\mc{L}^\ell_n = \ell! \sum_{\substack{\pi \in \Part_{1,2}(n) \\ \abs{\pi} = \ell}} C_\pi.
\]
Denote also
\[
\Part_{1,2}(n, k) = \set{\pi \in \Part_{1,2}(n+k) : \text{ if } (ij) \in \pi, i < j, \text{ then } i \leq n < j}
\]
the inhomogeneous partitions, and
\[
\mc{L}_{n, k} = \sum_{\substack{(ij) \in S(n+k) \\ i \leq n < j}} C_{(ij)}.
\]
Finally, for $\ell \leq n \wedge k$, denote
\[
\mc{L}_{n, k}^{(\ell)} = \ell! \sum_{\substack{\pi \in \Part_{1,2}(n,k) \\ \abs{\pi} = \ell}} C_\pi.
\]
\end{Notation}

\begin{Defn}
\label{Defn:I-gr}
Let $\eta \in \bigoplus_{n=0}^\infty \mf{C}[S_0(n)]$. By analogy with equation~\ref{Eq:Wick-Gaussian} from Section~\ref{Section:Gaussian-Hilbert}, define the Wick product
\[
\W{\eta}
= \T{e^{- \mc{L}} \eta}
= \sum_{\ell=0}^\infty (-1)^\ell \frac{1}{\ell!} \T{\mc{L}^\ell (\eta)}
= \sum_{\pi \in \Part_{1,2}(n)} (-1)^{\abs{\pi}} \T{C_\pi (\eta)}.
\]
For $q \in \mc{Z} \setminus \set{0}$, $\W{\eta}$ is also well-defined for $\eta \in \mc{TP}_q$.
\end{Defn}

\begin{Prop}
\label{Prop:T-I-gr}
\

\begin{enumerate}
\item
\[
\T{\eta}
= \W{e^{\mc{L}} \eta}
= \sum_{\ell=0}^\infty \frac{1}{\ell!} \W{\mc{L}^\ell (\eta)}
= \sum_{\pi \in \Part_{1,2}(n)} \W{C_\pi (\eta)}.
\]
\item
For $\alpha \in S_0(n)$, $\beta \in S_0(k)$,
\[
\begin{split}
\W{\alpha} \W{\beta}
& = \W{{\alpha \cup \beta}} + \sum_{\ell=1}^{\min(n, k)} \frac{1}{\ell!} \W{\mc{L}_{n,k}^{(\ell)} (\alpha \cup \beta)} \\
& = \sum_{\pi \in \Part_{1,2}(n, k)} \W{C_\pi({\alpha \cup \beta})}.
\end{split}
\]
\end{enumerate}
\end{Prop}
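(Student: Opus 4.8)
The plan is to deduce both statements from Proposition~\ref{Prop:T-I-gr}(a) together with the definition $\I{\eta} = \T{e^{-\mc{L}}\eta}$ and the multiplicativity of $\T{\cdot}$. For part (a), the identity $\I{\eta} = \T{e^{-\mc{L}}\eta}$ says precisely that the map $\eta \mapsto \I{\eta}$ is (the image under $\T{\cdot}$ of) the operator $e^{-\mc{L}}$ acting on $\bigoplus_n \mf{C}[S_0(n)]$. Since $\mc{L}$ is nilpotent on each graded piece $\mf{C}[S_0(n)]$ (it lowers the degree $n$ by at least $1$, as $C_\pi$ maps $S_0(n)$ into $S_0(n-2\ell)$), $e^{-\mc{L}}$ is invertible with inverse $e^{\mc{L}}$, and the finite sums are literally finite. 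Applying $\T{\cdot}$ — which is linear — to $e^{\mc{L}}\eta$ and using $\T{e^{\mc{L}}\eta} = \I{\eta}$... wait, one must be a little careful: $\I{\cdot}$ is itself defined as $\T{e^{-\mc{L}}(\cdot)}$, so $\sum_\ell \frac{1}{\ell!}\I{\mc{L}^\ell \eta} = \sum_\ell \frac{1}{\ell!}\T{e^{-\mc{L}}\mc{L}^\ell\eta} = \T{e^{-\mc{L}}e^{\mc{L}}\eta} = \T{\eta}$, where the interchange of the two power series is justified by nilpotency. The last equality in (a) is just the rewriting $e^{\mc{L}} = \sum_\ell \frac{1}{\ell!}\mc{L}^\ell$ and the identity $\mc{L}^\ell = \ell!\sum_{\abs{\pi}=n-\ell}C_\pi$ from Notation~\ref{Notation:Laplacian}, so that $e^{\mc{L}}\eta = \sum_{\pi \in \Part_{1,2}(n)} C_\pi(\eta)$.

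For part (b), I would start from the product rule $\I{\alpha}\I{\beta} = \T{e^{-\mc{L}}\alpha}\,\T{e^{-\mc{L}}\beta}$ and use multiplicativity of $\T{\cdot}$ (Definition~\ref{Defn:T-multiplication}) to write this as $\T{(e^{-\mc{L}}\alpha) \cup (e^{-\mc{L}}\beta)}$. Here $\cup$ is bilinear, so I need to understand how the ``disjoint union'' interacts with the contraction exponentials on each factor. The key structural observation is that a contraction $C_\tau$ applied to $\alpha \cup \beta$ for a transposition $\tau = (ij)$ with both $i,j \leq n$ agrees with $(C_\tau \alpha)\cup\beta$ (the transposition only sees the cycles coming from $\alpha$, and erasing its support and renumbering commutes with the union construction), and symmetrically for $n < i,j$; the cross terms, with $i \leq n < j$, are exactly the contractions indexed by $\Part_{1,2}(n,k)$. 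Consequently $\mc{L}_{n+k}$ splits, on the image of $\cup$, as $\mc{L}_n \otimes \mathrm{id} + \mathrm{id}\otimes\mc{L}_k + \mc{L}_{n,k}$ (in an appropriate sense), and these three pieces commute with one another because disjoint transpositions have commuting contractions and $C_\tau$ for $\tau \subseteq [n]$ commutes past the renumbering of the $\beta$-block. Therefore
\[
e^{-\mc{L}_{n+k}}(\alpha \cup \beta) = e^{-\mc{L}_{n,k}}\bigl((e^{-\mc{L}_n}\alpha) \cup (e^{-\mc{L}_k}\beta)\bigr),
\]
which after applying $\T{\cdot}$ and using $\T{e^{-\mc{L}_{n,k}}\gamma} = \sum_{\pi\in\Part_{1,2}(n,k)} \I{C_\pi\gamma}$... no — rather, I want to re-expand in the $\I{\cdot}$ basis. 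Since $\T{\alpha \cup \beta} = \I{e^{\mc{L}}(\alpha\cup\beta)}$ by part (a), I have $\I{\alpha}\I{\beta} = \T{e^{-\mc{L}}\alpha}\T{e^{-\mc{L}}\beta} = \T{(e^{-\mc{L}}\alpha)\cup(e^{-\mc{L}}\beta)}$; applying the splitting the other way, $(e^{-\mc{L}}\alpha)\cup(e^{-\mc{L}}\beta) = e^{-\mc{L}_n - \mc{L}_k}(\alpha\cup\beta) = e^{\mc{L}_{n,k}}e^{-\mc{L}_{n+k}}(\alpha\cup\beta)$, so $\I{\alpha}\I{\beta} = \T{e^{-\mc{L}_{n+k}}(\alpha\cup\beta)} \cdot$-corrected $= \I{e^{\mc{L}_{n,k}}(\alpha\cup\beta)}$ after pulling the remaining $e^{-\mc{L}_{n+k}}$ inside the $\I{\cdot}$. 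Expanding $e^{\mc{L}_{n,k}} = \sum_\ell \frac{1}{\ell!}\mc{L}_{n,k}^\ell$ and using $\mc{L}_{n,k}^{\ell} = \mc{L}_{n,k}^{(\ell)}$ after accounting for the $\ell!$ and the identification with $\sum_{\abs{\pi}=n-\ell,\,\pi\in\Part_{1,2}(n,k)} C_\pi$ gives exactly the two displayed forms.

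The main obstacle is making the ``splitting'' $\mc{L}_{n+k}|_{\mathrm{im}(\cup)} = \mc{L}_n + \mc{L}_k + \mc{L}_{n,k}$ and the commutativity of the three summands precise, including checking that contractions indexed by transpositions inside one block genuinely commute with $\cup$ and with the renumbering operators $P^A_B$. This is a bookkeeping argument about cycle structures — one must track how $\supp{\tau}^c$, the order-preserving relabelling, and the merging-of-$0$-cycles in $\cup$ interact — but it is essentially the same combinatorial fact that underlies the classical product formula for Hermite polynomials, and no genuinely new idea is needed beyond careful notation. A clean way to organize it is to prove the single-contraction identities $C_\tau(\alpha\cup\beta) = (C_\tau\alpha)\cup\beta$ for $\tau\subseteq[n]$ and the analogue for $\tau\subseteq[n+1,n+k]$, and $[C_\sigma, C_\tau] = 0$ for disjoint $\sigma,\tau$, and then assemble the exponential identity formally, since all operators in sight are nilpotent on each graded component.
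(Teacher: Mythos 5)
Your argument is correct and reaches the same conclusion, but it organizes part (b) differently from the paper. The paper follows the de Sainte-Catherine--Viennot pattern: expand $\I{\alpha}$ and $\I{\beta}$ as signed sums of $\T{C_{\sigma_1}(\alpha)}$, $\T{C_{\sigma_2}(\beta)}$, multiply using multiplicativity of $\T{\cdot}$, re-expand each resulting $\T{\cdot}$ in the $\I{\cdot}$ basis via part (a), and then observe that after re-indexing the triples $(\sigma_1,\sigma_2,\tau)$ by a single $\pi \in \Part_{1,2}(n+k)$ together with subsets $S_1 \subset \Pair{\pi|_{[n]}}$, $S_2 \subset \Pair{\pi|_{[n+1,n+k]}}$, the alternating sum over $S_1, S_2$ kills every $\pi$ with a homogeneous pair. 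Your proof packages exactly that cancellation into the operator identity $e^{-\mc{L}_n-\mc{L}_k} = e^{\mc{L}_{n,k}}e^{-\mc{L}_{n+k}}$, which is cleaner and makes the analogy with the classical heat-semigroup proof for Hermite polynomials transparent; what it costs you is that $\mc{L}_{n,k}$ must be iterable as an operator, which requires the contracted element to remember which of the surviving positions came from the $\alpha$-block and which from the $\beta$-block after the relabelling $P^{[0,n+k]\setminus\supp{\pi}}_{[0,n+k-2\ell]}$ --- i.e.\ you are really working on a bigraded space, whereas the paper avoids ever iterating $\mc{L}_{n,k}$ by defining $\mc{L}_{n,k}^{(\ell)}$ directly as $\ell!\sum_\pi C_\pi$ over inhomogeneous $\pi$ with $\ell$ pairs. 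Both routes rest on the same two bookkeeping lemmas that you correctly isolate and that the paper also uses without detailed proof (implicitly in Notation~\ref{Notation:Laplacian} and in the step $\T{C_{\sigma_1}(\alpha)}\T{C_{\sigma_2}(\beta)} = \T{C_{\sigma_1}(\alpha)\cup C_{\sigma_2}(\beta)}$ followed by re-expansion): that $C_\tau(\alpha\cup\beta) = (C_\tau\alpha)\cup\beta$ when $\supp{\tau}\subset[n]$ (and symmetrically), including the equality of the $q$-weights computed in $\alpha$ versus in $\alpha\cup\beta$, and that contractions with disjoint supports commute and compose to $C_{\sigma\tau}$. Two trivial slips: $C_\tau$ lowers the degree by exactly $2$, not ``at least $1$''; and in your final step you silently use that $\mc{L}_{n,k}$ commutes with $\mc{L}_{n+k}$ to move $e^{\mc{L}_{n,k}}$ inside $\I{\cdot}$ --- this follows from the disjoint-support commutation (the overlapping terms never occur in a composition, since a contraction erases its support), but it deserves a sentence.
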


\begin{proof}
(a) follows by composing (terminating) power series in $\mc{L}$. For (b), the argument is very similar to the standard one for Hermite polynomials \cite{dSCViennot}. We expand
\[
\begin{split}
\W{\alpha} \W{\beta}
& = \sum_{{\sigma_1} \in \Part_{1,2}(n)} (-1)^{\abs{{\sigma_1}}} \T{C_{\sigma_1} (\alpha)} \sum_{{\sigma_2} \in \Part_{1,2}(k)} (-1)^{\abs{{\sigma_2}}} \T{C_{\sigma_2} (\beta)} \\
& = \sum_{{\sigma_1} \in \Part_{1,2}(n)} \sum_{{\sigma_2} \in \Part_{1,2}(k)} (-1)^{\abs{{\sigma_1}} + \abs{{\sigma_2}}} \T{C_{\sigma_1} (\alpha) \cup C_{\sigma_2} (\beta)} \\
& = \sum_{{\sigma_1} \in \Part_{1,2}(n)} \sum_{{\sigma_2} \in \Part_{1,2}(k)} (-1)^{\abs{{\sigma_1}} + \abs{{\sigma_2}}} \sum_{\tau \in \Part_{1,2}([\abs{\Sing{{\sigma_1}}} + \abs{\Sing{{\sigma_2}}}])} \W{C_\tau (C_{\sigma_1} (\alpha) \cup C_{\sigma_2} (\beta))} \\
& = \sum_{\pi \in \Part_{1,2}(n+k)} \W{C_\pi({\alpha \cup \beta})} \sum_{\substack{S_1 \subset \Pair{\pi|_{[n]}} \\ S_2 \subset \Pair{\pi|_{[n+1,n+k]}}}} (-1)^{\abs{S_1} + \abs{S_2}}.
\end{split}
\]
The final sum is zero unless both $\Pair{\pi|_{[n]}} = \emptyset = \Pair{\pi|_{[n+1,n+k]}}$, in which case $\pi \in \Part_{1,2}(n,k)$.
\end{proof}

\section{Fock space}
\label{Sec:Fock}

\begin{Construction}
Let $\mc{H}_{\mf{R}}$ be a real Hilbert space, and $\mc{H}$ its complexification. We will denote by $\mc{H}^{\odot n}$ the algebraic tensor product, which is spanned by simple tensors, and by $\mc{H}^{\otimes n}$ the Hilbert space tensor product. For each $n$, form the algebraic Fock space
\[
\mf{C} {(0)} \oplus \bigoplus_{n=1}^\infty \left(\mf{C}[S_0(n)] \otimes \mc{H}^{\otimes n} \right).
\]
On each component of this space, we have a natural action of $S(n)$: for $\sigma \in S(n)$
\[
\alpha \otimes F \mapsto {\sigma \alpha \sigma^{-1}} \otimes U_\sigma F,
\]
where
\[
U_\sigma (h_1 \otimes \ldots \otimes h_n) = h_{\sigma^{-1}(1)} \otimes \ldots \otimes h_{\sigma^{-1}(n)}
\]
extends to $\mc{H}^{\otimes n}$ as an isometry. We denote by
\[
\overline{\mc{TP}}(\mc{H}) = \mf{C} {(0)} \oplus \bigoplus_{n=1}^\infty \left(\mf{C}[S_0(n)] \otimes_s \mc{H}^{\otimes n} \right)
\]
the direct sum of vector space quotients under these actions, which may be identified with (the direct sum of) the fixed point subspace(s), the image of the direct sum of the projections
\begin{equation}
\label{Eq:Symm-proj}
P_n : \alpha \otimes F \mapsto \frac{1}{n!} \sum_{\sigma \in S(n)} {\sigma \alpha \sigma^{-1}} \otimes U_\sigma F
\end{equation}
Thus
\begin{equation}
\label{Eq:Quotient}
\mf{C}[S_0(n)] \otimes_s \mc{H}^{\otimes n} = \set{\sum_{\alpha \in S_0(n)} \alpha \otimes F_\alpha \in \mf{C}[S_0(n)] \otimes \mc{H}^{\otimes n} \ :\ \frac{1}{n!} \sum_{\sigma \in S(n)} U_\sigma F_{\sigma^{-1} \alpha \sigma} = F_\alpha}.
\end{equation}
To simplify notation, we will denote
\[
\alpha \otimes_s F = P_n (\alpha \otimes F) = \frac{1}{n!} \sum_{\sigma \in S(n)} {\sigma \alpha \sigma^{-1}} \otimes U_\sigma F.
\]

On $\mf{C} {(0)} \oplus \bigoplus_{n=1}^\infty \left(\mf{C}[S_0(n)] \otimes \mc{H}^{\otimes n} \right)$, we have the canonical (``free'') inner product
\begin{equation}
\label{Eq:Zero-inner-product}
\ip{\alpha \otimes F}{\beta \otimes G}_f
= \delta_{n=k} \chi_0[\alpha \beta^{-1}] \ip{F}{G}_{\mc{H}^{\otimes n}}
= \delta_{n=k} \delta_{\alpha = \beta} \ip{F}{G}_{\mc{H}^{\otimes n}},
\end{equation}
where $\chi_0$ is the canonical trace on each $\mf{C}[S_0(n)]$. On this space, define the operator
\begin{equation}
\label{Eq:K}
\mc{K}_q \left( \alpha \otimes F \right) = n! \sum_{\beta \in S_0(n)} \chi_q[\alpha \beta^{-1}] \beta \otimes F,
\end{equation}
for $\alpha \in S_0(n)$ and $F \in \mc{H}^{\otimes n}$. Note that $\mc{K}_0$ is the identity operator. It is easy to check that for $q \in \mc{Z}$, $\mc{K}_q$ is a positive operator. Moreover,
\[
\begin{split}
\mc{K}_q P_n (\alpha \otimes F)
& = \sum_{\beta \in S_0(n)} \sum_{\sigma \in S(n)} \chi_q[\sigma \alpha \sigma^{-1} \beta^{-1}] \beta \otimes U_\sigma F \\
& = \sum_{\beta \in S_0(n)} \sum_{\sigma \in S(n)} \chi_q[\alpha \beta^{-1}] {\sigma \beta \sigma^{-1}} \otimes U_\sigma F \\
& = P_n \mc{K}_q (\alpha \otimes F),
\end{split}
\]
so $\mc{K}_q$ restricts to the subspace $\overline{\mc{TP}}(\mc{H})$. We thus define a (degenerate) inner product on $\mf{C} {(0)} \oplus \bigoplus_{n=1}^\infty \left(\mf{C}[S_0(n)] \otimes \mc{H}^{\otimes n} \right)$ by
\begin{equation}
\label{Eq:Unsymmetrized-IP}
\begin{split}
\ip{\alpha \otimes F}{\beta \otimes G}_q
& = \ip{\alpha \otimes F}{\mc{K}_q P(\beta \otimes G)}_f \\
& = \delta_{n=k} \sum_{\sigma \in S(n)} \chi_q(\alpha \sigma \beta^{-1} \sigma^{-1}) \ip{F}{U_\sigma G}_{\mc{H}^{\otimes n}}
\end{split}
\end{equation}
for $F \in \mc{H}^{\otimes n}$ and $G \in \mc{H}^{\otimes k}$. The map $P_n$ is an orthogonal projection for this inner product, but (because of the degeneracy) it is also an isometry. The resulting inner product on $\overline{\mc{TP}}(\mc{H})$ takes a simpler form
\begin{equation}
\label{Eq:IP-centralizer}
\begin{split}
\ip{\sum_{\alpha \in S_0(n)} \alpha \otimes F_\alpha}{\sum_{\beta \in S_0(n)} \beta \otimes G_\beta}_q
& = n! \sum_{\alpha, \beta \in S_0(n)} \chi_q(\alpha \beta^{-1}) \ip{F_{\alpha}}{G_{\beta}},
\end{split}
\end{equation}
where we recall that elements of $\overline{\mc{TP}}(\mc{H})$ satisfy \eqref{Eq:Quotient}. The inner product \eqref{Eq:IP-centralizer} is positive semi-definite on $\overline{\mc{TP}}(\mc{H})$ for $q \in \mc{Z}$. It is not in general positive definite. However, for $q=0$,
\begin{equation}
\label{Eq:q=0}
\ip{\sum_{\alpha \in S_0(n)} \alpha \otimes F_\alpha}{\sum_{\beta \in S_0(n)} \beta \otimes G_\beta}_0
= n! \sum_{\alpha \in S_0(n)} \ip{F_{\alpha}}{G_{\alpha}}.
\end{equation}
and so the $0$-inner product on $\overline{\mc{TP}}(\mc{H})$ is positive definite.
\end{Construction}

\begin{Remark}
In \cite{Guta-Maassen-BM}, Gu\c{t}\u{a} and Maassen considered a general Fock space construction. Let $\mc{H}$ and $\set{V_n : n \in \mf{N}}$ be Hilbert spaces, such that $S(n)$ acts unitarily on $V_n$. Then one can define $V_n \otimes_s \mc{H}^{\otimes n}$ as the fixed point subspace of the action
\[
v \otimes F \mapsto (\sigma \cdot v) \otimes U_\sigma F,
\]
and a symmetrized Fock space as the orthogonal sum of these subspaces. In our case, $V_n = \mf{C}[S_0(n)]$, with the inner product induced by $\chi_q$, on which $S(n)$ acts by conjugation (which preserves $\chi_q$). One can then define the creation operator based on a sequence of maps $j_n : V_n \rightarrow V_{n+1}$ which commute with the action of $S(n)$; the annihilation operator as its adjoint; and study the algebra generated by field operators. In our setting, there are several natural equivariant choices of the map $j_n$. One possibility is the standard embedding $\mf{C}[S_0(n)] \hookrightarrow \mf{C}[S_0(n+1)]$. Another possibility is the linear extension of the map $\alpha \mapsto (0 \ n+1) \alpha$. See Sections~\ref{Subsec:Gaussian} and \ref{Subsec:Polynomial-aubalgebra}. The algebra considered throughout most of the article is much larger than the subalgebras generated by these field operators; in fact, the vacuum vector is cyclic for it.
\end{Remark}

\begin{Prop}
\label{Prop:kernel-vs}
For $q \in \mc{Z}$, the kernel of the inner product is
\[
\mc{N}_{vs, q}
= \set{\xi \in \overline{\mc{TP}}(\mc{H}) : \ip{\xi}{\xi}_q = 0}
= \Span{\eta \otimes_s F : \eta \in \mc{N}_{gr, q}, F \in \mc{F}_{f}(\mc{H})},
\]
where $\mc{F}_{f}(\mc{H})$ is the full Fock space of $\mc{H}$. Here $\mc{N}_{gr, q}$ is the kernel of the inner product induced by $\chi_q$ on the (direct sum of) group algebra(s) $\bigoplus_{n=0}^\infty \mf{C}[S_0(n)]$, while $\mc{N}_{vs, q}$ is the kernel of the $q$-inner product on the vector space $\overline{\mc{TP}}(\mc{H})$.
\end{Prop}

\begin{proof}
The kernel of $\mc{K}_q$ as an operator on $\mf{C}[S_0(n)] \otimes \mc{H}^{\otimes n}$ with the (tensor) inner product \eqref{Eq:Zero-inner-product} is clearly $\mc{N}_{gr, q, n} \otimes \mc{H}^{\otimes n}$. The kernel of the inner product on $\overline{\mc{TP}}(\mc{H})$ is the intersection of the kernel of $\mc{K}_q$ and $\overline{\mc{TP}}(\mc{H})$.
\end{proof}

\begin{Defn}
\label{Defn:Fock-space}
For $q \in \mc{Z}$, denote
\[
\overline{\mc{TP}}_q(\mc{H}) = \overline{\mc{TP}}(\mc{H}) /\mc{N}_{vs, q}.
\]
In particular,
\[
\overline{\mc{TP}}_{1/N}(\mc{H})
= \bigoplus_{n=0}^{N-1} \left( \mf{C}[S_0(n)] \otimes_s \overline{\mc{H}^{\otimes n}} \right) \oplus \bigoplus_{n=N}^\infty \left( \mf{C}[S_0(n)]_{\leq N} \otimes_s \overline{\mc{H}^{\otimes n}} \right).
\]
Denote by $\mc{F}_q(\mc{H})$ the completion of $\overline{\mc{TP}}_q(\mc{H})$ with respect to the inner product $\ip{\cdot}{\cdot}_q$.

Note that $\overline{\mc{TP}}_q(\mf{C})$ is not $\mc{TP}_q$ from Proposition~\ref{Prop:Multiplication-group}, but its symmetrized version,
\[
\overline{\mc{TP}}_q(\mf{C}) = \bigoplus_{n=0}^\infty Z(\mf{C}[S_0(n)] : \mf{C}[S(n)])/\mc{N}_{gr, q}.
\]
\end{Defn}

\begin{Lemma}
\label{Lemma:L2-approximation}
\

\begin{enumerate}
\item
If $F_i, F \in \mc{H}^{\otimes n}$ and $\norm{F_i - F} \rightarrow 0$, then $\norm{(\alpha \otimes F_i) - (\alpha \otimes F)}_q^2 \rightarrow 0$.
\item
If $\mc{H}$ is infinite-dimensional, the linear span of the elements of the form $\alpha \otimes (h_1 \otimes \ldots \otimes h_n)$ for mutually orthogonal $h_1, \ldots, h_n$ is dense in $\mc{F}_q(\mc{H})$.
\end{enumerate}
\end{Lemma}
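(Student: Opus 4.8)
For (a) the plan is a one-line estimate from the explicit inner product. Since $(\alpha \otimes F_i) - (\alpha \otimes F) = \alpha \otimes (F_i - F)$, formula~\eqref{Eq:Unsymmetrized-IP} gives
\[
\norm{\alpha \otimes (F_i - F)}_q^2 = \sum_{\sigma \in S(n)} \chi_q(\alpha \sigma \alpha^{-1} \sigma^{-1}) \ip{F_i - F}{U_\sigma (F_i - F)}_{\overline{\mc{H}^{\otimes n}}},
\]
and the symmetrized version~\eqref{Eq:IP-centralizer} is dominated by the same quantity since $P_n$ is a contraction for $\ip{\cdot}{\cdot}_q$. Now $\abs{\chi_q(\rho)} = \abs{q}^{\abs{\rho}} \leq 1$ because $\abs{q} \leq 1$ for $q \in \mc{Z}$, and $\abs{\ip{F_i - F}{U_\sigma(F_i - F)}} \leq \norm{F_i - F}^2$ since each $U_\sigma$ is an isometry; hence the sum is at most $n! \norm{F_i - F}^2 \to 0$.

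For (b): by Definition~\ref{Defn:Fock-space}, $\mc{F}_q(\mc{H})$ is the closed span of the vectors $\alpha \otimes_s F = P_n(\alpha \otimes F)$ with $F \in \overline{\mc{H}^{\otimes n}}$; since $P_n$ is an isometric projection for~\eqref{Eq:Unsymmetrized-IP} and sends a simple tensor with orthogonal legs to a finite combination of such, it suffices to approximate each $\alpha \otimes F$ in the unsymmetrized $q$-norm by combinations of $\alpha \otimes (h_1 \otimes \cdots \otimes h_n)$ with $h_1, \dots, h_n$ mutually orthogonal. As $\mf{C}[S_0(n)]$ is finite-dimensional, and using (a) to replace $F$ by an element of the algebraic tensor product, this reduces to the Hilbert-space statement: if $\dim \mc{H} = \infty$, the simple tensors with pairwise orthogonal legs span a dense subspace of $\overline{\mc{H}^{\otimes n}}$. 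Fixing an orthonormal basis $\set{e_k}$ of $\mc{H}$, the basis tensors $e_{k_1} \otimes \cdots \otimes e_{k_n}$ with pairwise distinct indices already have orthogonal legs, so the only point is to approximate a basis tensor with repeated legs. Grouping its positions by the common value of the leg and choosing, for each group of coinciding legs, a fresh batch of orthonormal vectors orthogonal to all the other $e_{k_j}$ and to the previously chosen ones (possible since $\mc{H}$ is infinite-dimensional), the problem reduces to a tensor product of the following sublemma.

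\emph{Sublemma.} If $e$ is a unit vector in an infinite-dimensional Hilbert space and $r \geq 1$, then $e^{\otimes r}$ is a norm limit of combinations of simple tensors with mutually orthogonal legs, which may moreover be taken orthogonal to any prescribed finite-dimensional subspace. To prove it, for each $m$ let $V_m \ni e$ be an $r$-dimensional space spanned by $e$ together with $r-1$ fresh orthonormal vectors, take the discrete Fourier orthonormal basis $g^{(m)}_1, \dots, g^{(m)}_r$ of $V_m$ (so that $\ip{g^{(m)}_j}{e} = r^{-1/2}$ for every $j$), and set $\Theta_m = \frac{1}{r!} \sum_{\sigma \in S(r)} g^{(m)}_{\sigma(1)} \otimes \cdots \otimes g^{(m)}_{\sigma(r)}$, which is a combination of orthogonal-leg simple tensors. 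Then $\ip{\Theta_m}{e^{\otimes r}} = r^{-r/2}$, while the vectors $\Theta_m - r^{-r/2} e^{\otimes r}$ lie in $\overline{V_m^{\otimes r}}$, are pairwise orthogonal for distinct $m$ (disjoint batches of fresh vectors), and have norm at most $2$; hence $\frac{1}{M} \sum_{m=1}^M \Theta_m \to r^{-r/2} e^{\otimes r}$ as $M \to \infty$.

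I expect this sublemma to be the only genuine difficulty. A naive recursion that resolves two coinciding legs at a time does not work, since it creates new pairs of non-orthogonal legs, and infinite-dimensionality of $\mc{H}$ is essential: in finite dimensions the totally symmetric vector $\sum_k e_k \otimes \cdots \otimes e_k$ is orthogonal to every simple tensor with orthogonal legs and obstructs any such expansion. Everything else is bookkeeping, relying only on finite-dimensionality of the group algebras, the isometry property of $P_n$, and density of simple tensors.
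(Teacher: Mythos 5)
Your proof is correct and follows essentially the same route as the paper: part (a) is the identical $n!\,\norm{F_i-F}^2$ estimate, and part (b) is reduced, as in the paper, to the density in $\overline{\mc{H}^{\otimes n}}$ of the span of simple tensors with mutually orthogonal legs. The only difference is that the paper simply cites that density statement as a known fact, whereas you actually prove it via the averaging sublemma with fresh orthonormal batches — a genuine and correct addition of detail (your closing remark about the obstruction $\sum_k e_k\otimes\cdots\otimes e_k$ in finite dimensions is accurate for the real span, though for complex spans with complex-orthogonal legs the finite-dimensional picture is more subtle; this does not affect the proof).
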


\begin{proof}
For part (a),  note that
\[
\norm{(\alpha \otimes F)}_q^2
= \sum_{\sigma \in S(n)} \chi_q(\alpha \sigma \alpha^{-1} \sigma^{-1}) \ip{F}{U_\sigma F}
\leq n! \norm{F}^2.
\]
Part (b) follows from the fact that for infinite-dimensional $\mc{H}$, the linear span of the elements of the form $h_1 \otimes \ldots \otimes h_n$ for mutually orthogonal $h_1, \ldots, h_n$ is dense in $\mc{H}^{\otimes n}$ (for $\mc{H} = L^2([0,1], dx)$, this is a well-known fact in stochastic analysis).
\end{proof}

\begin{Remark}
Recall that we denote by ${\mc{H}^{\odot n}}$ the algebraic tensor product. Then we could equally well consider the Fock space
\[
\mf{C} {(0)} \oplus \bigoplus_{n=1}^\infty \left(\mf{C}[S_0(n)] \otimes {\mc{H}^{\odot n}} \right),
\]
its symmetrized subspace
\[
{\mc{TP}}(\mc{H}) = \mf{C} {(0)} \oplus \bigoplus_{n=1}^\infty \left(\mf{C}[S_0(n)] \otimes_s {\mc{H}^{\odot n}} \right),
\]
and its quotient ${\mc{TP}}_q(\mc{H})$ by the kernel of $\mc{K}_q$ for $q \in \mc{Z}$. By the preceding lemma, $\mc{TP}(\mc{H})$ is dense in $\overline{\mc{TP}}(\mc{H})$ (with respect to the seminorm), and $\mc{TP}_q(\mc{H})$ is dense in $\overline{\mc{TP}}_q(\mc{H})$. In particular, $\mc{F}_q(\mc{H})$ is the completion of either set.
\end{Remark}
\begin{Notation}
Let  $\pi = (I_1, \ldots, I_k) \in \Int(n)$ be an interval partition of $n$ elements. Denote $S(I_j)$ the permutations of the set $I_j$. We can then identify $S(I_1) \times \ldots \times S(I_k)$ with a subgroup of $S(n)$, and so also of $S_0(n)$. Its centralizer, which we denote
\[
\begin{split}
Z(\mf{C}[S_0(n)] : \pi)
& = Z(\mf{C}[S_0(n)] : S(I_1) \times \ldots \times S(I_k)),
\end{split}
\]
consists of all elements of $S_0(n)$ commuting with this subgroup.
\end{Notation}

\begin{Thm}[Chaos decomposition I]
Let $\set{\xi_i : i \in \Xi}$ be an orthonormal basis for $\mc{H}$, where $\Xi = [d]$ or $\Xi = \mf{N}$. Recall the notation
\[
\Delta(\Xi^n) = \set{\mb{u} \in \Xi^n : u(1) \leq u(2) \leq \ldots \leq u(n)}.
\]
For $\mb{u} \in \Xi^n$, denote $\ker(\mb{u})$ the interval partition $\pi = (I_1, \ldots, I_k) \in \Int(n)$ such that $u(i) = u(j) \Leftrightarrow i \stackrel{\pi}{\sim} j$, and $Z(\mf{C}[S_0(n)] : \pi)$ the centralizer of the corresponding subgroup.
\begin{enumerate}
\item
We have a decomposition
\begin{equation}
\label{Eq:Chaos-I}
\overline{\mc{TP}}(\mc{H})
= \bigoplus_{n = 0}^\infty \bigoplus_{\mb{u} \in \Delta(\Xi^n)} Z(\mf{C}[S_0(n)] : \ker(\mb{u})) \otimes_s (\xi_{u(1)} \otimes \ldots \otimes \xi_{u(n)}).
\end{equation}
If $q \in \mc{Z}$, this decomposition is orthogonal with respect to any $q$-inner product.
\item
Any $A \in \mc{F}_{1/N}(\mc{H})$ has a unique decomposition
\[
A = \sum_{n=0}^\infty \sum_{\mb{u} \in \Delta(\Xi^n)} \eta_{\mb{u}} \otimes_s \xi_{\mb{u}},
\]
where $\eta_{\mb{u}} \in Z(\mf{C}[S_0(n)] : \ker(\mb{u}))$ for $n < N$ and $\eta_{\mb{u}} \in Z(\mf{C}[S_0(n)] : \ker(\mb{u})) \cap \mf{C}[S_0(n)]_{\leq N}$ for $n \geq N$, and
\begin{equation}
    \label{Eq:q-norm-decomposition}
\ip{A}{A}_q = \sum_{n=0}^\infty \sum_{\mb{u} \in \Delta(\Xi^n)} \ker(\mb{u})! \chi_q[\eta_{\mb{u}}^\ast \eta_{\mb{u}}] < \infty,
\end{equation}
where as usual $\pi! = \prod_{V \in \pi} \abs{V}!$.
\item
For each $n$, for sufficiently large $N$, $\mf{C}[S_0(n)] \otimes_s \mc{H}^{\otimes n}$ is complete with respect to the $\frac{1}{N}$-norm.
\end{enumerate}
\end{Thm}

\begin{proof}
The span of the vectors of the form $\eta \otimes (\xi_{u(1)} \otimes \ldots \otimes \xi_{u(n)})$ is dense in the left-hand side of equation~\eqref{Eq:Chaos-I}. Using invariance~\eqref{Eq:Quotient}, we first see that we may take $u(1) \leq u(2) \leq \ldots \leq u(n)$. Choosing $\pi = \ker(\mb{u})$, we further see that $\xi_{\mb{u}}$ is invariant under the action of $S(I_1) \times \ldots \times S(I_k)$. So we may take $\eta$ to be invariant under the corresponding action.

For orthogonality, we observe that if $\mb{u}, \mb{v} \in \Delta(\Xi^n)$ and $\eta, \zeta \in Z(\mf{C}[S_0(n)] : \ker(\mb{u}))$,
\[
\begin{split}
\ip{\eta \otimes \xi_{\mb{u}}}{\zeta \otimes \xi_{\mb{v}}}_q
& = \sum_{\sigma \in S(n)} \chi_q[\eta \sigma \zeta^\ast {\sigma^{-1}}] \ip{\xi_{u(1)} \otimes \ldots \otimes \xi_{u(n)}}{\xi_{v(\sigma^{-1}(1))} \otimes \ldots \otimes \xi_{v(\sigma^{-1}(n))}} \\
& = \delta_{\mb{u} = \mb{v}} \sum_{\sigma \in S(I_1) \times \ldots \times S(I_k)} \chi_q[\eta \sigma \zeta^\ast {\sigma^{-1}}] \\
& = \ker(\mb{u})! \chi_q[\eta \zeta^\ast].
\end{split}
\]
Here the second equality holds because for any $\sigma$ exchanging elements of different $I_i$ and $I_j$, the inner product $\ip{\xi_{\mb{u}}}{U_\sigma \xi_{\mb{v}}}$ is zero. The third equality follows from the choice that $\zeta$ is in the centralizer. 

Part (b)  for $\overline{\mc{TP}}_{1/N}(\mc{H})$ follows from part (a) and Definition~\ref{Defn:Fock-space}.  To extend the decomposition to $\mc{F}_{1/N}(\mc{H})$, we note that each subspace $Z(\mf{C}[S_0(n)] : \ker(\mb{u})) \otimes (\xi_{u(1)} \otimes \ldots \otimes \xi_{u(n)})$ is finite dimensional, and thus closed, and the norm-squared \eqref{Eq:q-norm-decomposition} is the sum of norms-squared on these subspaces.

 For (c), note that
\[
\begin{split}
\frac{1}{n!} \norm{\sum_{\alpha \in S_0(n)} \alpha \otimes_s F_\alpha}_q^2
& = \sum_{\alpha, \beta \in S_0(n)} \chi_q[\alpha \beta^{-1}] \ip{F_\alpha}{F_\beta} \\
& \geq \sum_{\alpha \in S_0(n)} \norm{F_\alpha}^2 - \abs{q} \sum_{\alpha \neq \beta \in S_0(n)} \norm{F_\alpha} \norm{F_\beta} \\
& \geq (1 - \abs{q} (n+1)!) \sum_{\alpha \in S_0(n)} \norm{F_\alpha}^2 + \frac{1}{2} \abs{q} \sum_{\alpha, \beta \in S_0(n)} \left( \norm{F_\alpha} - \norm{F_\beta} \right)^2,
\end{split}
\]
and so for $\abs{q} < \frac{1}{(n+1)!}$, the norm $\norm{\sum_{\alpha \in S_0(n)} \alpha \otimes_s F_\alpha}_q$ is equivalent to $\sqrt{\sum_{\alpha \in S_0(n)} \norm{F_\alpha}^2}$.
\end{proof}

In two special cases, we have alternative chaos decompositions. The first one follows from the comments in Section~\ref{Sec:Centralizer}.

\begin{Prop}[Chaos decomposition II]
\label{Prop:Chaos-II}
Let $\mc{H} = \mf{C}$, so that $\mc{TP}(\mf{C}) = Z(\mf{C}[S_0(n)] : \mf{C}[S(n)])$. Then
\[
\set{\chi^{\lambda' : \lambda} : \lambda \in \Par(n), \lambda' = \lambda + \square}
\]
are orthogonal and span $\mc{TP}(\mf{C})$. Moreover,
\[
\set{\chi^{\lambda' : \lambda} : \lambda' = \lambda + \square, \lambda' \in \Par(n+1; \leq N)}
\]
is an orthogonal basis for $\mc{F}_{1/N}(\mf{C})$.
\end{Prop}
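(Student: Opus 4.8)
The plan is to transport everything through the structural facts about $\mf{C}[S_0(n)]$ recalled in Section~\ref{Sec:Centralizer}. First I would identify $\mc{TP}(\mf{C})$ with $\bigoplus_{n=0}^\infty Z(\mf{C}[S_0(n)] : \mf{C}[S(n)])$: indeed $\mc{H} = \mf{C}$ makes $U_\sigma$ the identity, so the symmetrization projection $P_n$ of \eqref{Eq:Symm-proj} becomes the averaging $\alpha \mapsto \frac{1}{n!}\sum_{\sigma \in S(n)} \sigma\alpha\sigma^{-1}$, whose range is exactly the $S(n)$-conjugation-invariant elements, i.e. the centralizer $Z(\mf{C}[S_0(n)] : \mf{C}[S(n)])$. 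Then the second bullet recalled from \cite{Okounkov-Vershik, Gill-Rep} asserts precisely that $\set{\chi^{\lambda':\lambda} : \lambda \in \Par(n),\ \lambda' = \lambda + \square}$ is an orthogonal spanning set for this centralizer, giving the first assertion immediately.

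Next I would pin down orthogonality and positivity in the $q$-inner product. With $\mc{H} = \mf{C}$, the inner product \eqref{Eq:IP-centralizer} restricted to the centralizer collapses to $\ip{\eta}{\zeta}_q = c\,\chi_q[\eta\zeta^\ast]$ for a positive combinatorial constant $c$ (from averaging over $S(n)$), so it is a scalar multiple of the inner product induced by the character $\chi_q$. Now invoke the general orthogonality principle stated in Section~\ref{Sec:Centralizer}: the elements $\chi^{\lambda':\lambda}$ are images under $W$ of central projections in distinct matrix blocks $M_{d_{\lambda'}}(\mf{C})$ compressed to a $\lambda$-isotypic subspace for $S(n)$, so $\chi^{\lambda':\lambda}(\chi^{\mu':\mu})^\ast$ lands in the product of two distinct blocks when $(\lambda,\lambda') \neq (\mu,\mu')$, hence $\chi_q$ annihilates it. This gives $q$-orthogonality for every $q \in \mc{Z}$, not just $1/N$.

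Finally I would account for the kernel. By Proposition~\ref{Prop:kernel-vs} (and Proposition~\ref{Prop:Kernel-rep}(b)), in the $q = 1/N$ case the kernel of the seminorm on $\overline{\mc{TP}}(\mf{C})$ is $\mc{N}_{gr,1/N} = \mf{C}[S_0(n)]_{>N}$, which is spanned by the Young symmetrizers for diagrams with at least $N+1$ rows. So passing to the quotient $\mc{F}_{1/N}(\mf{C})$ kills exactly those $\chi^{\lambda':\lambda}$ with $\lambda'$ having more than $N$ rows, i.e. $\lambda' \notin \Par(n+1;\leq N)$; the surviving $\chi^{\lambda':\lambda}$ with $\lambda' \in \Par(n+1;\leq N)$ are nonzero modulo the kernel (since $\chi_{1/N}$ is faithful on $\mf{C}[S_0(n)]_{\leq N}$) and remain orthogonal and spanning. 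Completeness of the resulting orthogonal system in $\mc{F}_{1/N}(\mf{C})$ follows because, using Chaos Decomposition I with $\Xi$ a point (the $\mc{H} = \mf{C}$ case), each graded piece is finite-dimensional, hence already closed, so the span is all of $\mc{F}_{1/N}(\mf{C})$.

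The only mildly delicate point is bookkeeping which $(\lambda,\lambda')$ pairs survive the quotient: one must check that "$\lambda'$ has at most $N$ rows" is the correct survival condition (the constraint is on $\lambda' \in \Par(n+1)$, the diagram containing $0$'s cycle, not on $\lambda \in \Par(n)$), which is exactly the content of the explicit description of $\mc{N}_{gr,1/N}$ in Proposition~\ref{Prop:Kernel-rep}(b) combined with the branching description of $\chi^{\lambda':\lambda}$. Everything else is a direct reading-off from Section~\ref{Sec:Centralizer}, so I would expect the write-up to be short.
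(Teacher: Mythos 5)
Your proposal is correct and takes essentially the same route as the paper, which gives no argument beyond ``follows from the comments in Section~\ref{Sec:Centralizer}''; you have simply filled in those details explicitly (the identification of $\mc{TP}(\mf{C})$ with $\bigoplus_n Z(\mf{C}[S_0(n)] : \mf{C}[S(n)])$, the cited orthogonality and spanning of the $\chi^{\lambda' : \lambda}$, the description of the kernel via $\mf{C}[S_0(n)]_{> N}$, and completeness from the finite-dimensionality of each graded piece). One minor imprecision worth fixing: when $\lambda' = \mu'$ but $\lambda \neq \mu$ the two elements lie in the \emph{same} block $W(M_{d_{\lambda'}}(\mf{C}))$, and their product vanishes because the compressions are onto orthogonal $S(n)$-isotypic components of that block rather than because the blocks differ --- the conclusion is unaffected.
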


\begin{Notation}
In the case $\mc{H}_{\mf{R}} = L^2(\mf{R}_+, dx)$, denote
\[
\Delta(\mf{R}^n_+) = \set{(t_1, \ldots, t_n) \in \mf{R}^n : t_1 \leq t_2 \leq \ldots \leq t_n}.
\]
\end{Notation}

\begin{Prop}[Chaos decomposition III]
\label{Chaos:III}
Let $\mc{H} = L^2(\mf{R}_+, dx)$.
\begin{enumerate}
\item
We have a decomposition
\[
\overline{TP}(\mc{H}) = \bigoplus_{n=0}^\infty \bigoplus_{\lambda \in \Par(n+1)} \bigoplus_{i, j = 1}^{d_\lambda} \mc{W}(E_{ij}^\lambda) \otimes_s L^2(\Delta(\mf{R}_+^n), dx^{\otimes n})
\]
which is orthogonal with respect to any $q$-inner product. Here we use the notation from Section~\ref{Sec:Centralizer}.
\item
Any $A \in \mc{F}_{1/N}(\mc{H})$ has a unique decomposition
\[
A = \sum_{n=0}^\infty \sum_{\lambda \in \Par(n+1; \leq N)} \sum_{i, j = 1}^{d_\lambda} \mc{W}(E_{ij}^\lambda) \otimes F_{i j}^\lambda,
\]
where $F_{ij}^\lambda \in L^2(\Delta(\mf{R}_+^n), dx^{\otimes n})$ and
\[
\sum_{n=0}^\infty \sum_{\lambda \in \Par(n+1; \leq N)} n_\lambda \sum_{i, j = 1}^{d_\lambda} \norm{F_{i j}^\lambda}^2 < \infty
\]
for $n_\lambda = \frac{\abs{SS_N(\lambda)}}{N^{n+1}}$. For $A \in \mc{F}_0(\mc{H})$ the same decomposition holds with no restrictions on $\lambda$ and $n_\lambda = \frac{d_\lambda}{(n+1)!}$. However, in that case we also have the simpler isometry~\eqref{Eq:q=0}.
\end{enumerate}
\end{Prop}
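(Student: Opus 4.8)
The plan is to fix $n$, establish the stated decomposition for the single component $\mf{C}[S_0(n)] \otimes_s \overline{\mc{H}^{\otimes n}}$, and then pass to $\mc{F}_{1/N}(\mc{H})$ by taking the Hilbert-space direct sum over $n$. The geometric input is that the simplex $\Delta(\mf{R}_+^n)$ is, up to a Lebesgue-null set (the diagonals and its boundary), a fundamental domain for the coordinate-permutation action of $S(n)$, so that $\overline{\mc{H}^{\otimes n}} = L^2(\mf{R}_+^n, dx^{\otimes n}) = \bigoplus_{\sigma \in S(n)} U_\sigma L^2(\Delta(\mf{R}_+^n), dx^{\otimes n})$ is an orthogonal decomposition, each $U_\sigma$ being an isometry of $L^2(\Delta(\mf{R}_+^n))$ onto $L^2(\sigma \Delta(\mf{R}_+^n))$. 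Using this I would introduce the map $\Phi_n$ obtained by applying the symmetrization projection $P_n$ of \eqref{Eq:Symm-proj} to the inclusion $\mf{C}[S_0(n)] \otimes L^2(\Delta(\mf{R}_+^n)) \hookrightarrow \mf{C}[S_0(n)] \otimes \overline{\mc{H}^{\otimes n}}$ (extend functions by zero off the simplex). It is injective because the supports $\sigma \Delta(\mf{R}_+^n)$ are essentially disjoint, and surjective onto $\mf{C}[S_0(n)] \otimes_s \overline{\mc{H}^{\otimes n}}$ by restricting the tensor-factor functions $F_\alpha$ of a symmetric vector $\sum_\alpha \alpha \otimes F_\alpha$ to the simplex and recovering the whole vector from the invariance relation~\eqref{Eq:Quotient}.

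For the inner product, note that if $F,G$ are supported on $\Delta(\mf{R}_+^n)$ then $\ip{F}{U_\sigma G} = 0$ for $\sigma \neq e$ (since $\Delta(\mf{R}_+^n) \cap \sigma\Delta(\mf{R}_+^n)$ is null), so by \eqref{Eq:Unsymmetrized-IP}, $\ip{\alpha \otimes F}{\zeta \otimes G}_q = \chi_q[\alpha \zeta^\ast]\,\ip{F}{G}_{L^2(\Delta(\mf{R}_+^n))}$. Since $P_n$ is an isometric projection for the unsymmetrized inner product \eqref{Eq:Unsymmetrized-IP}, and since a short reindexing using the symmetry relation shows that \eqref{Eq:Unsymmetrized-IP} agrees with the $q$-inner product \eqref{Eq:IP-centralizer} on the symmetric subspace, $\Phi_n$ is an isometry of $\mf{C}[S_0(n)] \otimes L^2(\Delta(\mf{R}_+^n))$ — with the inner product twisted by $\chi_q$ in the first leg — onto $\mf{C}[S_0(n)] \otimes_s \overline{\mc{H}^{\otimes n}}$ with its $q$-inner product.

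Now I invoke Section~\ref{Sec:Centralizer}: $\set{W(E_{ij}^\lambda)}$ is a linear basis of $\mf{C}[S_0(n)]$, orthogonal for the (possibly degenerate) $\chi_q$-inner product, with $\chi_q[W(E_{ij}^\lambda) W(E_{ij}^\mu)^\ast] = \delta_{\lambda=\mu} n_\lambda$ and $n_\lambda = \frac{1}{(n+1)!}\sum_{\alpha}\chi_q[\alpha]\chi^\lambda[\alpha]$. For $q = 1/N$ the expansion $\chi_{1/N}^{n+1} = N^{-(n+1)}\sum_\lambda \abs{SS_N(\lambda)}\chi^\lambda$ gives $n_\lambda = \abs{SS_N(\lambda)}/N^{n+1}$, which is positive precisely when $\lambda$ has at most $N$ parts and zero otherwise; for $q = 0$ it gives $n_\lambda = d_\lambda/(n+1)!$. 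Carrying the orthogonal splitting $\mf{C}[S_0(n)] = \bigoplus_{\lambda, i, j} \mf{C}\,W(E_{ij}^\lambda)$, tensored with $L^2(\Delta(\mf{R}_+^n))$, through $\Phi_n$ and summing over $n$ yields (a). For the quotient defining $\mc{F}_{1/N}(\mc{H})$, the summands with $n_\lambda = 0$, i.e. $\lambda \in \Par(n+1; \geq N+1)$, are exactly those lying in the kernel $\mc{N}_{vs, 1/N}$ by Propositions~\ref{Prop:Kernel-rep} and \ref{Prop:kernel-vs}, so they drop out.

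Finally, for (b): on each surviving summand the $q$-norm of $W(E_{ij}^\lambda)\otimes_s F$ equals $\sqrt{n_\lambda}\,\norm{F}_{L^2(\Delta(\mf{R}_+^n))}$ with $n_\lambda > 0$, so the summand is complete, hence closed in $\mc{F}_{1/N}(\mc{H})$; as $\mc{F}_{1/N}(\mc{H})$ is by definition the completion of the object in (a), it is the Hilbert-space orthogonal direct sum of these closed summands, which is exactly the statement that each $A$ has a unique expansion $A = \sum W(E_{ij}^\lambda) \otimes F_{ij}^\lambda$ with $\sum_n \sum_{\lambda \in \Par(n+1; \leq N)} n_\lambda \sum_{i,j}\norm{F_{ij}^\lambda}^2 < \infty$. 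The case $q=0$ is identical with $n_\lambda = d_\lambda/(n+1)!$ and no restriction on $\lambda$, and the alternative isometry~\eqref{Eq:q=0} is the one already recorded for the spanning set $\set{\alpha \otimes_s F_\alpha}$. I expect the main obstacle to be, conceptually, the fundamental-domain identification of $L^2(\mf{R}_+^n)$ as an induced $S(n)$-module (and the attendant null-set bookkeeping), together with pinning down the exact constant $\sqrt{n_\lambda}$ via the collapse of \eqref{Eq:Unsymmetrized-IP} on simplex-supported functions; everything else is a formal consequence of Section~\ref{Sec:Centralizer}.
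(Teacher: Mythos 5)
Your proposal is correct and follows essentially the same route as the paper's proof: the paper likewise reduces to tensor factors supported on the simplex $\Delta(\mf{R}_+^n)$, so that $\ip{F}{U_\sigma G}=0$ for $\sigma\neq e$ and the inner product collapses to $\chi_q[W(E_{ij}^\lambda) W(E_{k\ell}^\mu)^\ast]\ip{F}{G}=\delta_{\lambda=\mu}\delta_{i=k}\delta_{j=\ell}\, n_\lambda \ip{F}{G}$, and then invokes the orthogonality of the $W(E_{ij}^\lambda)$ from Section~\ref{Sec:Centralizer}. You merely spell out the fundamental-domain bookkeeping and the closedness of the surviving summands (needed for part (b), since here the tensor factors are infinite-dimensional), which the paper leaves implicit.
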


\begin{proof}
Every element in the $n$'th component of $\overline{\mc{TP}}(\mc{H})$ is equivalent to a unique element of the form
\[
\sum_{\lambda \in \Par(n+1)} \sum_{i, j = 1}^{d_\lambda} \mc{W}(E_{ij}^\lambda) \otimes_s F_{i j}^\lambda
\]
for some $F_{i j}^\lambda \in L^2(\Delta(\mf{R}_+^n), dx^{\otimes n})$. For $F, G \in L^2(\Delta(\mf{R}_+^n), dx^{\otimes n})$,
\[
\begin{split}
\ip{\mc{W}(E_{ij}^\lambda) \otimes_s F}{\mc{W}(E_{k \ell}^\mu) \otimes_s G}
& = \sum_{\sigma \in S(n)} \chi_q[\mc{W}(E_{ij}^\lambda) \sigma \mc{W}(E_{k \ell}^\mu)^\ast \sigma^{-1}] \ip{F}{U_\sigma G} \\
& = \chi_q[\mc{W}(E_{ij}^\lambda) \mc{W}(E_{k \ell}^\mu)^\ast] \ip{F}{G} \\
& = \delta_{i = k} \delta_{j = \ell} \delta_{\lambda = \mu} n_\lambda \ip{F}{G}. \qedhere
\end{split}
\]
\end{proof}

\section{The operator algebra}
\label{Sec:Algebra}

We now define a star-algebra structure on $\mc{TP}(\mc{H}_{\mf{R}})$, and eventually on $\overline{\mc{TP}}(\mc{H}_{\mf{R}})$. To distinguish the elements of the algebra from the corresponding elements of the inner product space, we will denote the algebra element $\W{\alpha \otimes_s F}$ to emphasize its interpretation as a Wick product. Note that this identification differs from the one in Section~\ref{Subsec:Algebra}. In Remark~\ref{Remark:GNS} we will interpret the Fock space vector $\alpha \otimes_s F$ as the evaluation of the operator $\W{\alpha \otimes_s F}$ on the vacuum vector in the appropriate GNS representation.

\begin{Defn}
For a transposition $\tau = (ij) \in S(n)$, define the $\tau$-contraction on $\mc{H}_{\mf{R}}^{\odot n}$ by the linear extension of
\[
C_\tau(h_1 \otimes \ldots \otimes h_n) = \ip{h_i}{h_j} h_1 \otimes \ldots \otimes \hat{h}_i \otimes \ldots \otimes \hat{h}_j \otimes \ldots \otimes h_n,
\]
More generally, for $\pi \in \Part_{1,2}(n)$,
\[
\pi = \set{(v_1, w_1), \ldots, (v_\ell, w_\ell), (u_1) \ldots (u_{n - 2 \ell})},
\]
with $u_1 < u_2 < \ldots < u_{n - 2 \ell}$, define the contraction
\[
C_\pi(h_1 \otimes \ldots \otimes h_n) = \prod_{i=1}^\ell \ip{h_{v(i)}}{h_{w(i)}} h_{u_1} \otimes \ldots \otimes h_{u_{n - 2 \ell}}
\]
For $q \neq 0$, denote $C_\tau(\alpha \otimes F) = C_\tau(\alpha) \otimes C_\tau(F)$ following Definition~\ref{Defn:Contraction-gr}.
\end{Defn}

\begin{Remark}
The (tensor) contraction is not a bounded operator and so does not extend to the Hilbert space tensor product $\mc{H}_{\mf{R}}^{\otimes n}$. But if $\pi \in \Part_{1,2}(n,k)$ with $\Pair{\pi} = \ell$, it is easy to check that
\[
C_\pi : \mc{H}_{\mf{R}}^{\otimes n} \times \mc{H}_{\mf{R}}^{\otimes k} \rightarrow \mc{H}_{\mf{R}}^{\otimes (n+k-2 \ell)}
\]
is a contraction. Indeed, we have the following. 
\end{Remark}

\begin{Lemma}
Let $F \in \mc{H}_{\mf{R}}^{\otimes n}$ and $G \in \mc{H}_{\mf{R}}^{\otimes k}$. Then
\[
\norm{C_\pi(F \otimes G)} \leq \norm{F} \ \norm{G}.
\]
\end{Lemma}

\begin{proof}
We will prove the result for the transposition $\pi = (1, n+1)$; the general result is similar with more indices. Let $\set{\xi_i : i \in \Xi}$ be an orthonormal basis for $\mc{H}_{\mf{R}}$, so that $\set{\xi_\mb{u} = \otimes_{k=1}^n \xi_u(k) : \mb{u} \in \Xi^n}$ is an orthonormal basis for $\mc{H}_{\mf{R}}^{\otimes n}$. Let $F = \sum_{i, \mb{u}} f_{i, \mb{u}} \xi_i \otimes \xi_{\mb{u}}$ and $G = \sum_{j, \mb{v}} g_{j, \mb{v}} \xi_j \otimes \xi_{\mb{v}}$. Then by the Cauchy-Schwarz inequality,
\[
\begin{split}
\norm{C_\pi(F \otimes G)}^2 
& = \norm{\sum_{i, \mb{u}, \mb{v}} f_{i, \mb{u}} g_{i, \mb{v}}\xi_{\mb{u}} \otimes \xi_{\mb{v}}}^2 \\
& = \sum_{\mb{u}, \mb{v}} \left( \sum_{i} f_{i, \mb{u}} g_{i, \mb{v}} \right)^2
\leq \sum_{\mb{u}, \mb{v}} \sum_i f_{i, \mb{u}}^2 \sum_j g_{j, \mb{v}}^2
= \norm{F} \ \norm{G}.
\end{split}
\]
\end{proof}

Next we note that contraction is compatible with the action of the symmetric group.

\begin{Lemma}
\label{Lemma:Conjugation}
Let $\alpha \in S_0(n)$, $\sigma \in S(n)$, $F \in \mc{H}_{\mf{R}}^{\odot n}$ and $\pi \in \mc{P}_{1,2}(n)$, with $\abs{\Pair{\pi}} = \ell$. Then
\[
C_\pi(\sigma \alpha \sigma^{-1} \otimes U_\sigma F) = \tilde{\sigma} C_{\tilde{\pi}}(\alpha) \tilde{\sigma}^{-1} \otimes U_{\tilde{\sigma}} C_{\tilde{\pi}} (F),
\]
where $\tilde{\pi} = \sigma^{-1} \pi \sigma$ and
\[
\tilde{\sigma} = P^{[0, n] \setminus \supp{\tilde{\pi}}}_{[0, n-2 \ell]} \sigma|_{[0,n] \setminus \supp{\tilde{\pi}}} \in S(n - 2 \ell).
\]
\end{Lemma}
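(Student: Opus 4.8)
The plan is to verify the identity by tracking separately the effect of simultaneous conjugation/relabeling on the three ingredients of a contraction: the choice of which indices are contracted (encoded by $\pi$), the weight produced on the permutation part, and the order-preserving relabeling map $P^{[0,n]\setminus\supp{\pi}}_{[0,n-2\ell]}$ used to land back in $S_0(n-2\ell)$. First I would set $\tilde\pi=\sigma^{-1}\pi\sigma$ and observe that $\supp{\tilde\pi}=\sigma^{-1}(\supp{\pi})$, so that $\sigma$ restricts to a bijection $[0,n]\setminus\supp{\tilde\pi}\to[0,n]\setminus\supp{\pi}$; the map $\tilde\sigma$ in the statement is precisely this restriction, conjugated by the two relabeling bijections so as to become an honest element of $S(n-2\ell)$. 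The key algebraic point is the relation
\[
(\pi\,(\sigma\alpha\sigma^{-1}))\big|_{\supp{\pi}^c} = \sigma\,\bigl((\tilde\pi\,\alpha)\big|_{\supp{\tilde\pi}^c}\bigr)\,\sigma^{-1},
\]
which holds because $\pi\sigma\alpha\sigma^{-1}=\sigma(\tilde\pi\alpha)\sigma^{-1}$ and conjugation by $\sigma$ maps the ``first return to $\supp{\tilde\pi}^c$'' dynamics of $\tilde\pi\alpha$ to that of $\pi(\sigma\alpha\sigma^{-1})$. Applying $P^{[0,n]\setminus\supp{\pi}}_{[0,n-2\ell]}$ to the left side and inserting $P^{[0,n-2\ell]}_{[0,n]\setminus\supp{\tilde\pi}}P^{[0,n]\setminus\supp{\tilde\pi}}_{[0,n-2\ell]}=\mathrm{id}$ on the right then produces exactly $\tilde\sigma\,\bigl(P^{[0,n]\setminus\supp{\tilde\pi}}_{[0,n-2\ell]}(\tilde\pi\alpha)|_{\supp{\tilde\pi}^c}\bigr)\,\tilde\sigma^{-1}$, i.e.\ the permutation part of $\tilde\sigma C_{\tilde\pi}(\alpha)\tilde\sigma^{-1}$, once one checks the $q$-weights agree.

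For the weights, I would note that $\cyc_0$ is a conjugation invariant, so $\cyc_0(\pi\sigma\alpha\sigma^{-1})=\cyc_0(\tilde\pi\alpha)$, and likewise $\cyc_0\bigl((\pi\sigma\alpha\sigma^{-1})|_{\supp{\pi}^c}\bigr)=\cyc_0\bigl((\tilde\pi\alpha)|_{\supp{\tilde\pi}^c}\bigr)$ by the displayed conjugacy above (the relabeling $P$ is a bijection of index sets and so does not change the cycle count). Hence the exponent of $q$ in $C_\pi(\sigma\alpha\sigma^{-1})$ equals that in $C_{\tilde\pi}(\alpha)$, and conjugating a group-algebra element by $\tilde\sigma$ does not alter its $q$-weight, so the full permutation-part identity
\[
C_\pi(\sigma\alpha\sigma^{-1}) = \tilde\sigma\,C_{\tilde\pi}(\alpha)\,\tilde\sigma^{-1}
\]
follows, first for $\alpha$ a single permutation and then by linearity for $\alpha\in\mf{C}[S_0(n)]$.

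The tensor part is the easy half: for a simple tensor $F=h_1\otimes\cdots\otimes h_n$ one has $U_\sigma F = h_{\sigma^{-1}(1)}\otimes\cdots\otimes h_{\sigma^{-1}(n)}$, and pairing the slots indexed by a block $(v,w)$ of $\pi$ against $U_\sigma F$ is the same as pairing the slots $(\sigma^{-1}(v),\sigma^{-1}(w))$—i.e.\ a block of $\tilde\pi$—against $F$; collecting the surviving slots and reindexing them in increasing order yields precisely $U_{\tilde\sigma}C_{\tilde\pi}(F)$, with the same $\tilde\sigma$ as before because the surviving index sets are $[0,n]\setminus\supp\pi$ and $[0,n]\setminus\supp{\tilde\pi}$. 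Combining the permutation and tensor halves via $C_\pi(\alpha\otimes F)=C_\pi(\alpha)\otimes C_\pi(F)$ gives the claim. The main obstacle is purely bookkeeping: one must be careful that the induced relabeling on the surviving coordinates is literally the same map $\tilde\sigma$ for both the group-algebra factor and the Hilbert-space factor, which is why it is worth isolating the identity $\tilde\sigma = P^{[0,n]\setminus\supp\pi}_{[0,n-2\ell]}\,\sigma\,P^{[0,n-2\ell]}_{[0,n]\setminus\supp{\tilde\pi}}$ at the outset and using it uniformly; everything else is a routine consequence of conjugation-invariance of $\cyc_0$ and of the definitions of $C_\pi$ and $U_\sigma$.
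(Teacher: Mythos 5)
Your proposal is correct and follows essentially the same route as the paper's proof: rewrite $\pi\sigma\alpha\sigma^{-1}$ as $\sigma\tilde{\pi}\alpha\sigma^{-1}$, use that restriction to $\supp{\pi}^c$ intertwines with conjugation by (the restriction of) $\sigma$ and that $\cyc_0$ is conjugation-invariant to match the $q$-weights, and verify the tensor half directly on simple tensors. The only difference is that you spell out the intermediate identity $(\pi\sigma\alpha\sigma^{-1})|_{\supp{\pi}^c}=\sigma\,(\tilde{\pi}\alpha)|_{\supp{\tilde{\pi}}^c}\,\sigma^{-1}$ and the weight comparison more explicitly than the paper does.
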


\begin{proof}
The relation
\[
C_\pi(U_\sigma F) =  U_{\tilde{\sigma}} C_{\sigma^{-1} \pi \sigma} F
\]
is not hard to check. For the second relation, we compute
\[
\begin{split}
C_\pi(\sigma \alpha \sigma^{-1})
& = q^{\cyc_0((\pi \sigma \alpha \sigma^{-1})|_{\supp{\pi}^c}) - \cyc_0(\pi \sigma \alpha \sigma^{-1}) + \ell}
P^{[0, n] \setminus \supp{\pi}}_{[0, n-2 \ell]} (\pi \sigma \alpha \sigma^{-1})|_{\supp{\pi}^c} \\
& = q^{\cyc_0((\sigma \tilde{\pi} \alpha \sigma^{-1})|_{\supp{\pi}^c}) - \cyc_0(\sigma \tilde{\pi} \alpha \sigma^{-1}) + \ell}
P^{[0, n] \setminus \supp{\pi}}_{[0, n-2 \ell]} (\sigma \tilde{\pi} \alpha \sigma^{-1})|_{\supp{\pi}^c} \\
& = q^{\cyc_0((\tilde{\pi} \alpha)|_{\supp{\tilde{\pi}}^c}) - \cyc_0(\tilde{\pi} \alpha) + \ell}
\tilde{\sigma} P^{[0, n] \setminus \supp{\tilde{\pi}}}_{[0, n-2 \ell]} (\tilde{\pi} \alpha)|_{\supp{\tilde{\pi}}^c} \tilde{\sigma}^{-1} \\
& = \tilde{\sigma} C_{\tilde{\pi}}(\alpha) \tilde{\sigma}^{-1}.
\end{split}
\]
\end{proof}

\begin{Lemma}
We keep the notation $\mc{L}_n$, $\mc{L}$, $\mc{L}_{n, k}$, $\mc{L}_{n, k}^{(\ell)}$ as in Notation~\ref{Notation:Laplacian}. Then $\mc{L}$ descends to a map on $\mc{TP}(\mc{H}_{\mf{R}})$, and $\mc{L}_{n, k}^{(\ell)}$ to a map
\[
\left( \mf{C}[S_0(n)] \otimes_s \mc{H}_{\mf{R}}^{\odot n} \right) \times \left( \mf{C}[S_0(k)] \otimes_s \mc{H}_{\mf{R}}^{\odot k} \right) \rightarrow \mf{C}[S_0(n+k- 2\ell)] \otimes_s \mc{H}_{\mf{R}}^{\odot n+k- 2\ell}.
\]
\end{Lemma}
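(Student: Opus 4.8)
The plan is to deduce the result from Lemma~\ref{Lemma:Conjugation}; everything else is bookkeeping. Recall that in degree $n$ the space $\mf{C}[S_0(n)] \otimes_s \mc{H}_{\mf{R}}^{\otimes n}$ is the quotient of $\mf{C}[S_0(n)] \otimes \mc{H}_{\mf{R}}^{\otimes n}$ by the subspace $\mc{R}_n$ spanned by the vectors $\alpha \otimes F - (\sigma \alpha \sigma^{-1}) \otimes U_\sigma F$, $\sigma \in S(n)$. Since $\mc{L}_n = \sum_\tau C_\tau$ (sum over transpositions $\tau \in S(n)$) is linear and lowers the degree by $2$, it will descend to a self-map of $\mc{TP}(\mc{H}_{\mf{R}})$ as soon as I verify $\mc{L}_n(\mc{R}_n) \subseteq \mc{R}_{n-2}$; similarly $\mc{L}_{n,k}^{(\ell)}$ will descend once I show it does not change, modulo the degree-$(n+k-2\ell)$ relations, when a representative of one of its arguments is replaced by an $S(n)$- or $S(k)$-conjugate. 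So the whole proof is a ``congruence modulo relations'' computation.

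First I would treat $\mc{L}$. Fix $\sigma \in S(n)$, $\alpha \in S_0(n)$, $F \in \mc{H}_{\mf{R}}^{\otimes n}$. Applying Lemma~\ref{Lemma:Conjugation} term by term, for each transposition $\tau$ one has $C_\tau\big((\sigma \alpha \sigma^{-1}) \otimes U_\sigma F\big) = \tilde\sigma\, C_{\tilde\tau}(\alpha)\, \tilde\sigma^{-1} \otimes U_{\tilde\sigma} C_{\tilde\tau}(F)$, with $\tilde\tau = \sigma^{-1} \tau \sigma$ again a transposition and $\tilde\sigma \in S(n-2)$; hence the right-hand side is congruent to $C_{\tilde\tau}(\alpha) \otimes C_{\tilde\tau}(F)$ modulo $\mc{R}_{n-2}$. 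Summing over all transpositions and using that $\tau \mapsto \sigma^{-1}\tau\sigma$ permutes them, I obtain $\mc{L}_n\big((\sigma \alpha \sigma^{-1}) \otimes U_\sigma F\big) \equiv \mc{L}_n(\alpha \otimes F) \pmod{\mc{R}_{n-2}}$, i.e. $\mc{L}_n(\mc{R}_n) \subseteq \mc{R}_{n-2}$. (Equivalently this says $\mc{L}_n P_n = P_{n-2} \mc{L}_n$ for the symmetrizing projections of \eqref{Eq:Symm-proj}, but the quotient formulation is cleaner here.)

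Next I would treat $\mc{L}_{n,k}^{(\ell)}$ in exactly the same way, after recording one preliminary: the $\cup$-product is equivariant for the symmetric group actions. Precisely, for $\sigma \in S(n)$, $\rho \in S(k)$, letting $\sigma \sqcup \rho \in S(n+k)$ denote the permutation equal to $\sigma$ on $[n]$ and to the $n$-shift of $\rho$ on $[n+1,n+k]$, one checks from $\alpha \cup \beta = \sigma_{n,k}^{-1} \beta \sigma_{n,k} \alpha$ that $(\sigma\alpha\sigma^{-1}) \cup (\rho\beta\rho^{-1}) = (\sigma\sqcup\rho)(\alpha\cup\beta)(\sigma\sqcup\rho)^{-1}$ and $U_\sigma F \otimes U_\rho G = U_{\sigma\sqcup\rho}(F\otimes G)$, and moreover that conjugation by $\sigma\sqcup\rho$ maps the set $\Part_{1,2}(n,k)$ of inhomogeneous partitions onto itself. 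Granting this, I apply Lemma~\ref{Lemma:Conjugation} to each $C_\pi$, $\pi \in \Part_{1,2}(n,k)$ with $\ell$ pairs: the residual permutation now lies in $S(n+k-2\ell)$ and is absorbed by the quotient, and re-indexing by $\tilde\pi = (\sigma\sqcup\rho)^{-1}\pi(\sigma\sqcup\rho)$ leaves the sum unchanged modulo the degree-$(n+k-2\ell)$ relations. Hence $\mc{L}_{n,k}^{(\ell)}$ descends to the asserted map on $\big(\mf{C}[S_0(n)] \otimes_s \mc{H}_{\mf{R}}^{\otimes n}\big) \times \big(\mf{C}[S_0(k)] \otimes_s \mc{H}_{\mf{R}}^{\otimes k}\big)$.

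The descent argument itself is routine once Lemma~\ref{Lemma:Conjugation} is in hand; the main obstacle is the preliminary equivariance of $\cup$ together with the stability of $\Part_{1,2}(n,k)$. These are direct index chases through $\alpha\cup\beta = \sigma_{n,k}^{-1}\beta\sigma_{n,k}\alpha$, but the distinguished role of the index $0$ --- the cycles through $0$ being merged rather than shifted --- makes them the place where an off-by-a-shift slip is most likely, so I would carry them out in full detail.
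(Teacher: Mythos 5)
Your proof is correct and follows the same route as the paper, whose entire argument is the one-line observation that $\mc{L}$ is invariant under the $S(n)$-action and $\mc{L}_{n,k}$ under the $S(n)\times S(k)$-action; you have simply expanded that invariance into the explicit congruence-modulo-relations computation via Lemma~\ref{Lemma:Conjugation}, the bijectivity of $\tau\mapsto\sigma^{-1}\tau\sigma$ on transpositions, and the stability of $\Part_{1,2}(n,k)$ under conjugation by $\sigma\sqcup\rho$.
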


\begin{proof}
$\mc{L}$ is invariant under the action of $S(n)$, and $\mc{L}_{n,k}$ under the action of $S(n) \times S(k)$.
\end{proof}

\begin{Defn}
\label{Defn:T-I}
For $\eta \otimes F \in \mf{C}[S_0(n)] \otimes \mc{H}_{\mf{R}}^{\odot n}$, define
\[
\T{\eta \otimes F} = \W{e^{\mc{L}} (\eta \otimes F)} = \sum_{k=0}^\infty \frac{1}{k!} \W{\mc{L}^k (\eta \otimes F)} = \sum_{\pi \in \Part_{1,2}(n)} \W{C_\pi (\eta \otimes F)}.
\]
Then $T$ is also well-defined on $\mc{TP}(\mc{H}_{\mf{R}})$. Note that we cannot in general extend it to $\overline{\mc{TP}}(\mc{H}_{\mf{R}})$.
\end{Defn}

The following result follows immediately from Proposition~\ref{Prop:T-I-gr}.

\begin{Prop}
\label{Prop:T-I}
For $\alpha \in S_0(n)$ and $F \in \mc{H}_{\mf{R}}^{\odot n}$,
\[
\begin{split}
\W{\alpha \otimes F} = \T{e^{- \mc{L}} (\alpha \otimes F)} & = \sum_{\ell=0}^\infty (-1)^k \frac{1}{\ell!} T \left( \mc{L}^\ell (\alpha \otimes F) \right) \\
& = \sum_{\pi \in \Part_{1,2}(n)} (-1)^{\abs{\pi}} \T{C_\pi (\alpha \otimes F)}.
\end{split}
\]
\end{Prop}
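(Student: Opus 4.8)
The plan is simply to invert the defining relation. By Definition~\ref{Defn:T-I}, the operator $\mathrm{T}$ on $\mc{TP}(\mc{H}_{\mf{R}})$ is given by $\T{\xi} = \I{e^{\mc{L}}\xi}$, where $\mc{L}$ now acts simultaneously on the group-algebra leg and the Hilbert-space leg via the parallel contraction $C_\pi(\alpha\otimes F)=C_\pi(\alpha)\otimes C_\pi(F)$. On each graded component $\mf{C}[S_0(n)]\otimes_s\mc{H}_{\mf{R}}^{\otimes n}$ the operator $\mc{L}$ strictly lowers the degree by $2$ (each transposition contraction $C_\tau$ maps into $\mf{C}[S_0(n-2)]\otimes_s\mc{H}_{\mf{R}}^{\otimes n-2}$), hence is nilpotent there; therefore $e^{\mc{L}}$ is invertible on that component with inverse $e^{-\mc{L}}$, both given by terminating sums and satisfying $e^{\mc{L}}e^{-\mc{L}}=\mathrm{id}$ literally, not merely formally. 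Applying $e^{-\mc{L}}$ to $\T{\alpha\otimes F}=\I{e^{\mc{L}}(\alpha\otimes F)}$ then yields $\I{\alpha\otimes F}=\T{e^{-\mc{L}}(\alpha\otimes F)}$, exactly as in Proposition~\ref{Prop:T-I-gr}(a).

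The remaining two equalities are bookkeeping. Expanding $e^{-\mc{L}}=\sum_{k\ge 0}\frac{(-1)^k}{k!}\mc{L}^k$ (a finite sum on each component) gives the middle expression $\sum_k(-1)^k\frac{1}{k!}T\bigl(\mc{L}^k(\alpha\otimes F)\bigr)$. Then, by Notation~\ref{Notation:Laplacian}, $\mc{L}^k = k!\sum_{\pi\in\Part_{1,2}(n),\ \abs{\pi}=n-k}C_\pi$, so $\sum_k\frac{(-1)^k}{k!}\mc{L}^k = \sum_k(-1)^k\sum_{\abs{\pi}=n-k}C_\pi = \sum_{\pi\in\Part_{1,2}(n)}(-1)^{n-\abs{\pi}}C_\pi$, which produces the last expression. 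This is precisely the sense in which the result ``follows immediately from Proposition~\ref{Prop:T-I-gr}'': the argument there is purely formal in $\mc{L}$, and by the preceding Lemma $\mc{L}$ descends to $\mc{TP}(\mc{H}_{\mf{R}})$ and commutes with the symmetrization projections, so carrying the passive tensor leg along changes nothing.

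There is no substantive obstacle here; the only point requiring (minimal) care is simply flagging that $\mc{L}$ denotes the combined group-algebra-and-tensor contraction rather than the group-algebra one of Section~\ref{Sec:Group}, and that all the manipulations — nilpotency, inversion of the exponential, and the identity $\mc{L}^k = k!\sum C_\pi$ — take place within the finite-dimensional algebraic tensor product $\mf{C}[S_0(n)]\otimes\mc{H}_{\mf{R}}^{\otimes n}$, so that no convergence issues arise. All of the real content is in Proposition~\ref{Prop:T-I-gr}, of which this statement is the transcription to the tensor-valued setting.
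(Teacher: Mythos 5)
Your proof is correct and follows the paper's route exactly: the paper obtains this statement as an immediate consequence of Proposition~\ref{Prop:T-I-gr}, whose part (a) is itself proved ``by composing (terminating) power series in $\mc{L}$,'' and your argument simply makes that composition and the passage to the tensor-valued contractions explicit. The only quibble is that $\mf{C}[S_0(n)]\otimes\mc{H}_{\mf{R}}^{\otimes n}$ need not be finite-dimensional when $\mc{H}$ is infinite-dimensional; what actually matters --- and what you correctly identify --- is that $\mc{L}$ lowers the grading by $2$, hence is nilpotent on each component, so all the exponential series terminate and $e^{\mc{L}}e^{-\mc{L}}=\mathrm{id}$ holds literally.
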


\begin{Defn}
\label{Defn:Product-tensors}
Define the star-algebra structure on $\mc{TP}(\mc{H}_{\mf{R}})$ by
\[
\T{\alpha \otimes_s F} \T{\beta \otimes_s G} = \T{(\alpha \cup \beta) \otimes_s (F \otimes G)}
\]
and
\[
\T{\alpha \otimes_s (h_1 \otimes \ldots \otimes h_n)}^\ast
= \T{{\alpha^{-1}} \otimes_s ({h}_1 \otimes \ldots \otimes {h}_n)}.
\]
Here $\alpha \in S_0(n)$ and $\beta \in S_0(k)$, for any $n, k \geq 0$.
\end{Defn}

\begin{Prop}
The multiplication on $\mc{TP}(\mc{H}_{\mf{R}})$ is well defined, and
\[
[\T{\alpha \otimes_s F} \T{\beta \otimes_s G}]^\ast = \T{\beta \otimes_s F}^\ast \T{\alpha \otimes_s G}^\ast.
\]
\end{Prop}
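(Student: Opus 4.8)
The plan is to treat the two assertions separately, reducing each to the group-algebra computations of Section~\ref{Subsec:Algebra} together with the symmetrization relation~\eqref{Eq:Quotient}. For well-definedness, first I would verify that the product in Definition~\ref{Defn:Product-tensors} is independent of the chosen representatives. Replacing a representative of $\alpha \otimes_s F$ by $\sigma \alpha \sigma^{-1} \otimes U_\sigma F$ with $\sigma \in S(n)$, and one of $\beta \otimes_s G$ by $\tau \beta \tau^{-1} \otimes U_\tau G$ with $\tau \in S(k)$, let $\sigma \oplus \tau \in S(n+k)$ denote the block-diagonal permutation acting as $\sigma$ on $[n]$, as the shift of $\tau$ on $\{n+1, \ldots, n+k\}$, and fixing $0$. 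A direct check from~\eqref{Eq:Defn:cup} gives $(\sigma \oplus \tau)(\alpha \cup \beta)(\sigma \oplus \tau)^{-1} = (\sigma \alpha \sigma^{-1}) \cup (\tau \beta \tau^{-1})$ together with $U_{\sigma \oplus \tau}(F \otimes G) = U_\sigma F \otimes U_\tau G$. These two facts say exactly that the candidate products agree after the symmetrization~\eqref{Eq:Quotient} applied with $\sigma \oplus \tau$, so the product descends to $\mc{TP}(\mc{H}_{\mf{R}})$; bilinearity then extends this to the whole algebra.

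For the adjoint identity, I would first apply the adjoint of Definition~\ref{Defn:Product-tensors} to the product, obtaining
\[
(\T{\alpha \otimes_s F}\,\T{\beta \otimes_s G})^\ast = \T{(\alpha \cup \beta) \otimes_s (F \otimes G)}^\ast = \T{(\alpha \cup \beta)^{-1} \otimes_s (F \otimes G)},
\]
and then expand the right-hand side using the adjoint followed by the product formula,
\[
\T{\beta \otimes_s F}^\ast\,\T{\alpha \otimes_s G}^\ast = \T{\beta^{-1} \otimes_s F}\,\T{\alpha^{-1} \otimes_s G} = \T{(\beta^{-1} \cup \alpha^{-1}) \otimes_s (F \otimes G)}.
\]
Thus the identity reduces to showing that the two symmetrized vectors $\T{(\alpha \cup \beta)^{-1} \otimes_s (F \otimes G)}$ and $\T{(\beta^{-1} \cup \alpha^{-1}) \otimes_s (F \otimes G)}$ coincide. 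The core computation is the conjugation identity for the block transposition $\sigma_{n,k}$ of Notation~\ref{Notation:Union}: starting from $(\alpha \cup \beta)^{-1} = \alpha^{-1} \sigma_{n,k}^{-1} \beta^{-1} \sigma_{n,k}$ and using $\sigma_{k,n} = \sigma_{n,k}^{-1}$, one finds
\[
\sigma_{n,k}\,(\alpha \cup \beta)^{-1}\,\sigma_{n,k}^{-1} = \sigma_{n,k}\,\alpha^{-1} \sigma_{n,k}^{-1} \beta^{-1} = \beta^{-1} \cup \alpha^{-1}.
\]
So the two permutations are conjugate by $\sigma_{n,k}$, and I would invoke the symmetrization relation~\eqref{Eq:Quotient} with $\sigma = \sigma_{n,k}$ to pass from one representative to the other.

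The delicate point, and the step I expect to require the most care, is the bookkeeping on the tensor factor. Conjugation by $\sigma_{n,k}$ transports the permutation part exactly as above, but it simultaneously transports the tensor legs by $U_{\sigma_{n,k}}$, which interchanges the block $\overline{\mc{H}^{\otimes n}}$ carrying $F$ with the block $\overline{\mc{H}^{\otimes k}}$ carrying $G$. The crux is therefore to verify that, once this leg-relabeling is accounted for inside the symmetrization over $S(n+k)$ built into $\otimes_s$, the placement of $F$ and $G$ on the two sides is reconciled, so that the permutation labels $\alpha, \beta$ may be interchanged while the tensor arguments remain in their stated positions. Everything else is a formal consequence of the group-algebra identities of Section~\ref{Subsec:Algebra} and of the adjoint of Definition~\ref{Defn:Product-tensors}, which inverts the permutation while leaving the tensor legs in place.
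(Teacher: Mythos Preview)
Your well-definedness argument is correct and is exactly the paper's: the paper's conjugating element $\rho = \sigma\,\sigma_{n,k}^{-1}\tau\,\sigma_{n,k}$ is precisely your block-diagonal $\sigma \oplus \tau$.

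For the adjoint identity, the trouble you have flagged is real, but its source is a typo in the displayed statement, not a subtlety in the proof. As written, $\T{\beta \otimes_s F}$ does not even type-check when $\beta \in S_0(k)$ and $F \in \mc{H}_{\mf{R}}^{\otimes n}$ with $n \neq k$. The intended (and standard) assertion is the anti-homomorphism property
\[
(\T{\alpha \otimes_s F}\,\T{\beta \otimes_s G})^\ast = \T{\beta \otimes_s G}^\ast\,\T{\alpha \otimes_s F}^\ast,
\]
and this is what the paper's proof actually establishes (it computes $\T{\beta^{-1} \otimes_s G}\,\T{\alpha^{-1} \otimes_s F}$). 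Once the statement is corrected, your ``delicate point'' disappears: the right-hand side becomes $\T{(\beta^{-1} \cup \alpha^{-1}) \otimes_s (G \otimes F)}$, and your conjugation identity $\sigma_{n,k}(\alpha \cup \beta)^{-1}\sigma_{n,k}^{-1} = \beta^{-1} \cup \alpha^{-1}$ together with $U_{\sigma_{n,k}}(F \otimes G) = G \otimes F$ finishes the argument immediately via the symmetrization relation. No further reconciliation of the tensor legs is needed; the obstruction you encountered is an artefact of taking the misprint literally.
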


\begin{proof}
Let $\alpha \in S_0(n)$,  $\beta \in S_0(k)$, $F \in \mc{H}_{\mf{R}}^{\odot n}$, $G \in \mc{H}_{\mf{R}}^{\odot k}$, $\sigma \in S(n)$, $\tau \in S(k)$. Embed $\alpha, \beta$ in $S_0(n+k)$ and $\sigma, \tau$ in $S(n+k)$ as in Notation~\ref{Notation:Union}, and denote $\rho = \sigma \sigma_{n,k}^{-1} \tau \sigma_{n,k} \in S(n+k)$. Then
\[
\begin{split}
& \T{{\sigma \alpha \sigma^{-1}} \otimes U_\sigma F} \T{{\tau \beta \tau^{-1}} \otimes U_\tau G} \\
&\quad = \T{{\sigma_{n,k}^{-1} \tau \beta \tau^{-1} \sigma_{n,k} \sigma \alpha \sigma^{-1}} \otimes (U_\sigma F \otimes U_\tau G)} \\
&\quad = \T{{(\sigma_{n,k}^{-1} \tau \sigma_{n,k}) (\sigma_{n,k}^{-1} \beta \sigma_{n,k}) (\sigma_{n,k}^{-1} \tau^{-1} \sigma_{n,k}) \sigma \alpha \sigma^{-1}} \otimes (U_\sigma F \otimes U_\tau G)} \\
&\quad = \T{{ \sigma (\sigma_{n,k}^{-1} \tau \sigma_{n,k}) (\sigma_{n,k}^{-1} \beta \sigma_{n,k}) \alpha (\sigma_{n,k}^{-1} \tau^{-1} \sigma_{n,k}) \sigma^{-1}} \otimes (U_\sigma F \otimes U_\tau G)} \\
&\quad = \T{{\rho(\alpha \cup \beta) \rho^{-1}} \otimes U_\rho(F \otimes G)}.
\end{split}
\]
Similarly,
\[
\begin{split}
& \T{{\beta^{-1}} \otimes_s ({g}_1 \otimes \ldots \otimes {g}_k)} \T{{\alpha^{-1}} \otimes_s ({f}_1 \otimes \ldots \otimes {f}_n)} \\
&\quad = \T{{\sigma_{n, k} \alpha^{-1} \sigma_{n, k}^{-1} \beta^{-1}} \otimes_s ({g}_1 \otimes \ldots \otimes {g}_k \otimes {f}_1 \otimes \ldots \otimes {f}_n)} \\
&\quad = \T{{\sigma_{n, k} (\alpha \cup \beta)^{-1} \sigma_{n, k}^{-1}} \otimes_s ({g}_1 \otimes \ldots \otimes {g}_k \otimes {f}_1 \otimes \ldots \otimes {f}_n)} \\
&\quad = \T{{(\alpha \cup \beta)^{-1}} \otimes_s ({f}_1 \otimes \ldots \otimes {f}_n \otimes {g}_1 \otimes \ldots \otimes {g}_k)}. \qedhere
\end{split}
\]
\end{proof}

\begin{Prop}
\label{Prop:Contraction-kernel-vs}
Let $q \in \mc{Z} \setminus \set{0}$.
\begin{itemize}
    \item $\mc{N}_{vs, q}$ is an ideal for multiplication in Definition~\ref{Defn:Product-tensors}.
    \item For $\pi \in \mc{P}_{1,2}(n)$ with $\abs{\Pair{\pi}} = \ell$, $C_\pi$ maps $\mc{N}_{vs, q, n}$ to $\mc{N}_{vs, q, n - 2 \ell}$. 

\end{itemize} 
Consequently, $\mc{L}$ is defined as a map on $\mc{TP}_q(\mc{H}_{\mf{R}})$, and $\mc{L}_{n,k}^{(\ell)}$ on the appropriate quotient. Also, $\T{\eta}$ is well-defined for $\eta \in \mc{TP}_q(\mc{H}_{\mf{R}})$.
\end{Prop}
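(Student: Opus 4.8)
The plan is to deduce all the assertions from the corresponding group‑algebra statements already proved, transported through the tensor factor. The one structural tool used throughout is Proposition~\ref{Prop:kernel-vs}, which identifies $\mc{N}_{vs,q}$ with the span of the vectors $\eta \otimes_s F$ with $\eta \in \mc{N}_{gr,q}$ and $F$ a simple tensor; it therefore suffices to verify each claim on such spanning elements and extend bilinearly.

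For the ideal property I would take a spanning vector $\eta \otimes_s F$ of $\mc{N}_{vs,q}$ and an arbitrary $\beta \otimes_s G \in \mc{TP}(\mc{H}_{\mf{R}})$, and use Definition~\ref{Defn:Product-tensors} together with bilinearity of $\cup$ to compute
\[
\T{\eta \otimes_s F}\,\T{\beta \otimes_s G} = \T{(\eta \cup \beta) \otimes_s (F \otimes G)}.
\]
Since $\mc{N}_{gr,q}$ is an ideal for $\cup$ by Proposition~\ref{Prop:Multiplication-group}(c), $\eta \cup \beta \in \mc{N}_{gr,q}$, so the right‑hand side again lies in the spanning set of $\mc{N}_{vs,q}$. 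The mirror computation, using $\beta \cup \eta \in \mc{N}_{gr,q}$, handles left multiplication; since every element of $\mc{TP}(\mc{H}_{\mf{R}})$ is a finite sum of terms $\beta \otimes_s G$, bilinearity yields that $\mc{N}_{vs,q}$ is a two‑sided ideal, and it is $\ast$‑closed because $\mc{N}_{gr,q}$ is. Hence the product and the involution descend to $\mc{TP}_q(\mc{H}_{\mf{R}})$.

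For the contractions I would work on the unsymmetrized Fock space $\bigoplus_n \mf{C}[S_0(n)] \otimes \mc{H}_{\mf{R}}^{\otimes n}$, whose $\mc{K}_q$‑kernel is $\bigoplus_n \mc{N}_{gr,q,n} \otimes \mc{H}_{\mf{R}}^{\otimes n}$ (as in the proof of Proposition~\ref{Prop:kernel-vs}). There $C_\pi(\eta \otimes F) = C_\pi(\eta) \otimes C_\pi(F)$, and $C_\pi$ preserves $\mc{N}_{gr,q}$ by Lemma~\ref{Lemma:Contraction-kernel-gr}, so each $C_\pi$ sends this kernel into the kernel of lower degree. Using that $\mc{N}_{gr,q,m} = \mf{C}[S_0(m)]_{> N}$ is conjugation‑invariant (so that the symmetrizer $P_m$ keeps $\mc{N}_{gr,q,m} \otimes \mc{H}_{\mf{R}}^{\otimes m}$ inside $\mc{N}_{vs,q}$), and that the Lemma preceding Definition~\ref{Defn:T-I} already makes $\mc{L}$ and $\mc{L}_{n,k}^{(\ell)}$ well‑defined on $\mc{TP}(\mc{H}_{\mf{R}})$, I would conclude $\mc{L}(\mc{N}_{vs,q}) \subseteq \mc{N}_{vs,q}$, and the analogous statement for $\mc{L}_{n,k}^{(\ell)}$ in each variable (invoking $\eta \cup \beta \in \mc{N}_{gr,q}$ and Lemma~\ref{Lemma:Contraction-kernel-gr} once more). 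Thus $\mc{L}$ is defined on $\mc{TP}_q(\mc{H}_{\mf{R}})$ and $\mc{L}_{n,k}^{(\ell)}$ on the relevant quotient, and since $\T{\eta} = \I{e^{\mc{L}}\eta}$ is a terminating polynomial in $\mc{L}$ applied to $\eta$, it too descends, giving the last clause.

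The only real obstacle is the bookkeeping around the symmetrization $\otimes_s$. Note that a single $C_\pi$ is \emph{not} well‑defined on the quotient $\mc{TP}(\mc{H}_{\mf{R}})$ (it is not equivariant for $S(n-2\ell)$ in the required way), so the phrase ``preserved by each $C_\pi$'' must be read on the unsymmetrized Fock space, or equivalently absorbed into the statements about $\mc{L}$ and $\mc{L}_{n,k}^{(\ell)}$; the actual care goes into checking that the symmetrization projections commute appropriately with the contractions across the various degrees (this is exactly the content of Lemma~\ref{Lemma:Conjugation} and the Lemma preceding Definition~\ref{Defn:T-I}) and that they preserve the group‑algebra kernel factor (this is the conjugation‑invariance of $\mc{N}_{gr,q}$). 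With those two points in hand, everything else is a direct transfer of Proposition~\ref{Prop:Multiplication-group}(c) and Lemma~\ref{Lemma:Contraction-kernel-gr}.
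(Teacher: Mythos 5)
Your argument is correct and is exactly the paper's proof (which simply cites Lemma~\ref{Lemma:Contraction-kernel-gr} and Propositions~\ref{Prop:Multiplication-group} and~\ref{Prop:kernel-vs}), fleshed out with the routine verifications: transferring the ideal property of $\mc{N}_{gr,q}$ through the tensor factor via the spanning description of $\mc{N}_{vs,q}$, and using $C_\pi(\eta\otimes F)=C_\pi(\eta)\otimes C_\pi(F)$ together with the conjugation-invariance of $\mc{N}_{gr,q}$ to handle the contractions. Your caveat that a single $C_\pi$ only makes sense on the unsymmetrized space, with the symmetrization handled by Lemma~\ref{Lemma:Conjugation} and the equivariance of $\mc{L}$ and $\mc{L}_{n,k}^{(\ell)}$, is a fair and accurate reading of the statement.
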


\begin{proof}
Apply Lemma~\ref{Lemma:Contraction-kernel-gr} and Propositions~\ref{Prop:Multiplication-group} and \ref{Prop:kernel-vs}.
\end{proof}

The following result also follows from Proposition~\ref{Prop:T-I-gr}.

\begin{Prop}
\label{Prop:Product-I}
Let $\alpha \in S_0(n)$, $\beta \in S_0(k)$, $F \in \mc{H}_{\mf{R}}^{\odot n}$, $G \in \mc{H}_{\mf{R}}^{\odot k}$. Then
\[
\begin{split}
\W{\alpha \otimes_s F} \W{\beta \otimes_s G}
& = \W{{\alpha \cup \beta} \otimes_s (F \otimes G)} + \sum_{\ell =1}^{\min(n, k)} \frac{1}{\ell!} \W{\mc{L}_{n,k}^{(\ell)} ((\alpha \cup \beta) \otimes_s (F \otimes G))} \\
& = \sum_{\pi \in \Part_{1,2}(n, k)} \W{C_\pi({\alpha \cup \beta} \otimes_s (F \otimes G))}.
\end{split}
\]
\end{Prop}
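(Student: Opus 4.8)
The plan is to obtain this as an immediate consequence of the group-level product formula, Proposition~\ref{Prop:T-I-gr}(b), by carrying the Hilbert space tensor factors along as passive coefficients. Since all four expressions in the statement are bilinear in $(F,G)$, it suffices to treat simple tensors $F = f_1 \otimes \cdots \otimes f_n$ and $G = g_1 \otimes \cdots \otimes g_k$. The key structural observation is that, on $\bigoplus_m \mf{C}[S_0(m)] \otimes \mc{H}_{\mf{R}}^{\otimes m}$, every operation occurring in the statement is ``diagonal'' with respect to the group/tensor splitting: $C_\pi(\alpha \otimes F) = C_\pi(\alpha) \otimes C_\pi(F)$ by definition, the multiplication of Definition~\ref{Defn:Product-tensors} sends $(\alpha \otimes_s F, \beta \otimes_s G)$ to $(\alpha \cup \beta) \otimes_s (F \otimes G)$, and consequently $\I{\alpha \otimes F} = \sum_{\sigma \in \Part_{1,2}(n)} (-1)^{n - \abs{\sigma}} \T{C_\sigma(\alpha) \otimes C_\sigma(F)}$ by Proposition~\ref{Prop:T-I}, while $\T{\zeta \otimes H} = \sum_{\tau \in \Part_{1,2}(m)} \I{C_\tau(\zeta) \otimes C_\tau(H)}$ by Definition~\ref{Defn:T-I}.

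With these identities in hand, the computation is word-for-word the one in the proof of Proposition~\ref{Prop:T-I-gr}(b): expand $\I{\alpha \otimes F}\,\I{\beta \otimes G}$ using $\I{\,\cdot\,} = \T{e^{-\mc{L}}(\,\cdot\,)}$; collapse the product of the two resulting $\T{\,\cdot\,}$'s by the rule $\T{\alpha' \otimes_s F'}\T{\beta' \otimes_s G'} = \T{(\alpha' \cup \beta') \otimes_s (F' \otimes G')}$; re-expand each $\T{\,\cdot\,}$ back in terms of $\I{\,\cdot\,}$; and regroup the result by the partition $\pi \in \Part_{1,2}(n+k)$ into which the nested contractions assemble. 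The only difference from the purely group-theoretic case is that each permutation now also carries the corresponding tensor contraction of $F \otimes G$, which affects none of the combinatorics. Exactly as there, collecting the terms for a fixed $\pi$ produces the scalar factor $\sum_{S_1 \subset \Pair{\pi|_{[n]}},\, S_2 \subset \Pair{\pi|_{[n+1, n+k]}}} (-1)^{\abs{S_1} + \abs{S_2}}$, which vanishes unless $\Pair{\pi|_{[n]}} = \Pair{\pi|_{[n+1,n+k]}} = \emptyset$ --- that is, unless every pair of $\pi$ straddles the divide between $[n]$ and $[n+1, n+k]$, which is precisely the condition $\pi \in \Part_{1,2}(n,k)$. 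This yields the second equality in the statement; the first then follows by sorting the surviving partitions by their number $\ell$ of pairs and invoking the definition of $\mc{L}_{n,k}^{(\ell)}$ from Notation~\ref{Notation:Laplacian}.

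I do not anticipate a genuine obstacle, as all the real content sits in Proposition~\ref{Prop:T-I-gr}. The point that requires care is the legitimacy of manipulating un-symmetrized representatives $\alpha \otimes F$ and then descending to $\mc{TP}(\mc{H}_{\mf{R}})$, on which the multiplication, $\mc{L}$, and the $C_\pi$ are actually defined: this is licensed by the fact that $C_\pi$, the union $\cup$, and the product all intertwine the symmetrization projections $P_m$, which is Lemma~\ref{Lemma:Conjugation} together with the equivariance of $\mc{L}$ recorded just before Definition~\ref{Defn:T-I}. One should also keep in mind that all of this happens inside the \emph{algebraic} tensor product $\mc{H}_{\mf{R}}^{\otimes m}$, where the tensor contractions are defined --- they are unbounded and are not extended to $\overline{\mc{H}^{\otimes m}}$ --- consistently with the hypotheses $F \in \mc{H}_{\mf{R}}^{\otimes n}$, $G \in \mc{H}_{\mf{R}}^{\otimes k}$. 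Finally, for $q \in \mc{Z} \setminus \set{0}$ the identity also holds after passing to $\mc{TP}_q(\mc{H}_{\mf{R}})$, since $\mc{N}_{vs,q}$ is an ideal preserved by each $C_\pi$ by Proposition~\ref{Prop:Contraction-kernel-vs}.
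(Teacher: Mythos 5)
Your proposal is correct and matches the paper's approach exactly: the paper simply remarks that Proposition~\ref{Prop:Product-I} ``also follows from Proposition~\ref{Prop:T-I-gr}'', and your write-up supplies precisely the intended justification --- the contractions, the union, and the product all act diagonally on the group and tensor factors, so the combinatorial cancellation argument of Proposition~\ref{Prop:T-I-gr}(b) carries over verbatim. Your added remarks on equivariance (Lemma~\ref{Lemma:Conjugation}) and on staying within the algebraic tensor product are the right points of care and are consistent with the paper.
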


We obtain an extension of the familiar factorization property of Hermite polynomials. Compare also with Proposition~\ref{Prop:Linearization}.

\begin{Cor}
\label{Cor:Product}
If the components of $\set{F_j: 1 \leq j \leq k}$ are mutually orthogonal,
\[
\prod_{j=1}^k \W{\alpha_j \otimes_s F_j} = \W{(\alpha_1 \cup \ldots \cup \alpha_k) \otimes_s (F_1 \otimes \ldots \otimes F_k)}.
\]
\end{Cor}
\begin{proof}
    Use the preceding proposition recursively, and note that a contraction which pairs mutually orthogonal vectors produces a zero term. 
\end{proof}
The next proposition is the analog of the property that ordinary Hermite polynomial satisfies the differential equation
\[
x H_n' - 2 H_n'' = n H_n.
\]
Note that the Hermite polynomials in Section~\ref{Section:Gaussian-Hilbert} have non-standard normalization.
\begin{Prop}

\label{Prop:DE}
\

\begin{enumerate}
\item
By abuse of notation, define $\mc{L} \ \T{\eta \otimes F} = \T{\mc{L}(\eta \otimes F)}$. Then also $\mc{L} \ \W{\eta \otimes F} = \W{\mc{L}(\eta \otimes F)}$.
\item
Define the Euler operator on $\mc{TP}(\mc{H}_{\mf{R}})$ by
\[
E \T{\eta \otimes F} = n \T{\eta \otimes F}
\]
for $\eta \in \mf{C}[S_0(n)]$ and $F \in \mc{H}_{\mf{R}}^{\odot n}$. Then for such $\eta$,
\[
(E - 2 \mc{L}) \W{\eta \otimes F} = n \W{\eta \otimes F},
\]
and it is the unique eigenfunction of this operator with eigenvalue $n$ and leading term $\T{\eta \otimes F}$. In particular, it follows that $E$ maps $\mc{TP}_q(\mc{H}_{\mf{R}})$ to itself.
\end{enumerate}
\end{Prop}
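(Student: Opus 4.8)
The plan is to reduce everything to formal manipulations with the operator $\mc{L}$ on the graded vector space $\bigoplus_n \mf{C}[S_0(n)] \otimes \mc{H}_{\mf{R}}^{\otimes n}$, using the two defining relations $\I{\eta \otimes F} = \T{e^{-\mc{L}}(\eta \otimes F)}$ and $\T{\eta \otimes F} = \I{e^{\mc{L}}(\eta\otimes F)}$ from Proposition~\ref{Prop:T-I}. For part (a), I would first note that $\mc{L}$ is a well-defined graded operator (lowering degree by $1$ in the $n$-grading, since each $C_\tau$ removes a transposition), so $e^{\pm\mc{L}}$ are honest operators on the graded space and they commute with $\mc{L}$ as formal power series. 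Then $\mc{L}\,\I{\eta\otimes F} = \mc{L}\,\T{e^{-\mc{L}}(\eta\otimes F)} = \T{\mc{L}e^{-\mc{L}}(\eta\otimes F)} = \T{e^{-\mc{L}}\mc{L}(\eta\otimes F)} = \I{\mc{L}(\eta\otimes F)}$, where the middle equality is just the definition "$\mc{L}\,\T{\cdot} = \T{\mc{L}(\cdot)}$" applied to the argument $e^{-\mc{L}}(\eta\otimes F)$ and the outer equality is the definition of $\I{\cdot}$. One should check that the sums involved terminate on each homogeneous component, which they do because $\mc{L}$ is degree-lowering; this is the only subtlety and it is routine.

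For part (b), the key computation is the commutation relation $[E,\mc{L}] = -2\mc{L}$ on the graded space: if $\eta\otimes F$ is homogeneous of degree $n$, then $\mc{L}(\eta\otimes F)$ has degree $n-2$, so $E\mc{L}(\eta\otimes F) = (n-2)\mc{L}(\eta\otimes F) = \mc{L}E(\eta\otimes F) - 2\mc{L}(\eta\otimes F)$. From this and the formal identity $E e^{-\mc{L}} = e^{-\mc{L}}(E - 2\mc{L})$ (which follows from $[E,\mc{L}]=-2\mc{L}$ by the standard power-series argument $E\mc{L}^k = \mc{L}^k(E-2k) = \mc{L}^k(E - 2\mc{L})$ when applied after all the $\mc{L}$'s, so $E\mc{L}^k/k! = \mc{L}^k(E-2\mc{L})/k!$ summed), I compute, for $\eta$ of degree $n$,
\[
(E - 2\mc{L})\I{\eta\otimes F} = (E - 2\mc{L})\T{e^{-\mc{L}}(\eta\otimes F)}.
\]
Writing $\T{e^{-\mc{L}}(\eta\otimes F)} = \sum_{j} \T{(-1)^j\mc{L}^j(\eta\otimes F)/j!}$ with $\mc{L}^j(\eta\otimes F)$ of degree $n-2j$, applying $E$ pulls out the factor $n-2j$, i.e. $E\,\T{e^{-\mc{L}}(\eta\otimes F)} = \T{e^{-\mc{L}}(E - 2\mc{L})(\eta\otimes F)} = \T{e^{-\mc{L}}(n - 2\mc{L})(\eta\otimes F)}$; subtracting $2\mc{L}\,\I{\eta\otimes F} = 2\,\T{e^{-\mc{L}}\mc{L}(\eta\otimes F)}$ using part (a) leaves exactly $n\,\T{e^{-\mc{L}}(\eta\otimes F)} = n\,\I{\eta\otimes F}$.

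For uniqueness, suppose $\xi = \T{\eta\otimes F} + (\text{lower degree terms in the }\T{\cdot}\text{ basis})$ satisfies $(E - 2\mc{L})\xi = n\xi$. Expand $\xi = \sum_{m\le n}\T{\xi_m}$ with $\xi_m$ homogeneous of degree $m$, $\xi_n = \eta\otimes F$. Applying $E - 2\mc{L}$ and matching degree-$m$ components gives $m\,\T{\xi_m} - 2\,\T{\mc{L}\xi_{m+2}} = n\,\T{\xi_m}$, i.e. $(n-m)\,\T{\xi_m} = -2\,\T{\mc{L}\xi_{m+2}}$, which determines $\xi_m$ recursively downward from $\xi_n$ for all $m < n$ (and forces $\xi_m = 0$ unless $m \equiv n \bmod 2$); since $\I{\eta\otimes F}$ is one such solution with leading term $\T{\eta\otimes F}$, it is the only one. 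Finally, $E$ preserves $\mc{TP}_q(\mc{H}_{\mf{R}})$ because on that quotient $E = (E - 2\mc{L}) + 2\mc{L}$, and $E-2\mc{L}$ acts as multiplication by $n$ on the (well-defined) image of $\I{\cdot}$ restricted to degree $n$ — equivalently, $E$ is diagonalized by the basis $\set{\I{\eta\otimes F}}$, which descends to $\mc{TP}_q$ by Proposition~\ref{Prop:Contraction-kernel-vs}. The main obstacle is nothing deep — it is just bookkeeping to make sure the formal-power-series identities $Ee^{-\mc{L}} = e^{-\mc{L}}(E-2\mc{L})$ and $\mc{L}e^{-\mc{L}} = e^{-\mc{L}}\mc{L}$ are applied to the right arguments and that all series terminate degreewise; once the grading is used systematically, everything is automatic.
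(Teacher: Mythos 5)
Your overall strategy is the same as the paper's (which expands $\I{\eta\otimes F}=\sum_k\frac{(-1)^k}{k!}\T{\mc{L}^k(\eta\otimes F)}$ and lets $E$ act termwise), and part (a), the uniqueness recursion, and the descent to $\mc{TP}_q$ are all fine. But the central identity in your part (b) has a sign error that makes the displayed chain of equalities false as written. From $[E,\mc{L}]=-2\mc{L}$ one gets $E\mc{L}^k=\mc{L}^k(E-2k)$, which is correct; however, when you sum this against the \emph{alternating} weights $\frac{(-1)^k}{k!}$ of $e^{-\mc{L}}$, the reindexing $k\mapsto k-1$ picks up an extra minus sign:
\[
\sum_{k\ge 1}\frac{(-1)^k}{k!}(-2k)\,\mc{L}^k=-2\sum_{k\ge1}\frac{(-1)^k}{(k-1)!}\mc{L}^k=+2\,\mc{L}e^{-\mc{L}},
\]
so the correct identity is $E e^{-\mc{L}}=e^{-\mc{L}}(E+2\mc{L})$ (equivalently $Ee^{\mc{L}}=e^{\mc{L}}(E-2\mc{L})$), not $e^{-\mc{L}}(E-2\mc{L})$. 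Your step ``$\mc{L}^k(E-2k)=\mc{L}^k(E-2\mc{L})$'' is where this goes wrong. If you carry your version through literally, $(E-2\mc{L})\I{\eta\otimes F}$ comes out as $\T{e^{-\mc{L}}(n-4\mc{L})(\eta\otimes F)}=n\I{\eta\otimes F}-4\I{\mc{L}(\eta\otimes F)}$, not $n\I{\eta\otimes F}$. With the corrected sign, $E\I{\eta\otimes F}=\T{e^{-\mc{L}}(n+2\mc{L})(\eta\otimes F)}=n\I{\eta\otimes F}+2\I{\mc{L}(\eta\otimes F)}$, and subtracting $2\mc{L}\I{\eta\otimes F}=2\I{\mc{L}(\eta\otimes F)}$ from part (a) gives exactly the claim; this is literally the paper's termwise computation.

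Two smaller slips that do not affect the argument: $\mc{L}$ lowers the grading by $2$, not by $1$ (each $C_\tau$ deletes two points), and $E$ is \emph{not} diagonalized by $\set{\I{\eta\otimes F}}$ --- it is $E-2\mc{L}$ that is, as the corrected formula shows. Your descent argument for $\mc{TP}_q$ via $E=(E-2\mc{L})+2\mc{L}$, with both summands preserving the graded kernel, is nevertheless correct and is presumably what the paper intends by ``in particular.''
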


\begin{proof}
Part (a) follows from the expansion in Proposition~\ref{Prop:T-I}. For part (b), we first note that
\[
\begin{split}
& E \W{\eta \otimes F} - 2 \W{\mc{L}(\eta \otimes F)} \\
&\qquad = E \sum_{k=0}^n \frac{(-1)^k}{k!} \T{\mc{L}^k(\eta \otimes F)} - 2 \sum_{k=0}^n \frac{(-1)^k}{k!} \T{\mc{L}^{k+1}(\eta \otimes F)} \\
&\qquad = \sum_{k=0}^n \frac{(-1)^k (n - 2k)}{k!} \T{\mc{L}^k(\eta \otimes F)} + 2 \sum_{k=1}^n \frac{(-1)^k k}{k!} \T{\mc{L}^{k}(\eta \otimes F)} \\
&\qquad = \sum_{k=0}^n \frac{(-1)^k n}{k!} \T{\mc{L}^k(\eta \otimes F)} \\
&\qquad = n \W{{\eta} \otimes F}.
\end{split}
\]
In particular, anything of lower degree is in the sum of eigenspaces with eigenvalues $0, 1, \ldots, n-1$. It follows that specifying the leading term of an eigenfunction with a given eigenvalue determines it.
\end{proof}

\begin{Notation}
If $C_\pi(\eta \otimes F)$ is a scalar multiple of ${(0)}$ (which is the identity for the algebra), we will identify it with a scalar.
\end{Notation}

\begin{Thm}
\label{Thm:State}
Define a unital linear functional $\phi$ on $\bigoplus_{n=0}^\infty \mf{C}[S_0(n)] \otimes \mc{H}_{\mf{R}}^{\odot n}$ by
\[
\state{\W{{(0)}}} = 1, \quad \state{\W{\alpha \otimes F}} = 0
\]
for any $\alpha \in \mf{C}[S_0(n)]$, $F \in \mc{H}_{\mf{R}}^{\odot n}$, and $n \geq 1$. 
Then
\begin{enumerate}
\item
\[
\state{\W{\beta \otimes_s G}^\ast \W{\alpha \otimes_s F}} = \ip{(\alpha \otimes_s F)}{(\beta \otimes_s G)}_q.
\]
In particular,  $\phi$ is well-defined on the quotient $\mc{TP}_q(\mc{H}_{\mf{R}})$.
\item
$\varphi$ is tracial.
\item
$\varphi$ is positive for $q \in \mc{Z}$.
\item
For $q \in \mc{Z}$, $\set{A : \state{A^\ast A} = 0} = \Span{\W{\zeta} : \zeta \in \mc{N}_{vs, q}}$, and $\phi$ is faithful on $\mc{TP}_q(\mc{H}_{\mf{R}})$.
\item
For $\alpha \in S_0(2n)$,
\begin{equation}
\label{Eq:GUE-moment}
\state{\T{\alpha \otimes F}}
= \sum_{\pi \in \Part_2(2n)} q^{n - \cyc_0(\pi \alpha)} C_\pi(F)
= \sum_{\pi \in \Part_2(2n)} q^{\abs{\pi \alpha} - n} C_\pi(F)
\end{equation}
and it is zero if $\alpha \in S_0(2n+1)$.
\end{enumerate}
\end{Thm}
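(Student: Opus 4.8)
The plan is to establish (a) first, since parts (b)--(d) then follow formally from it together with the structure of $\chi_q$, while (e) is a parallel (and slightly simpler) computation governed by the same combinatorial identity. For (a), one first checks from Proposition~\ref{Prop:T-I} and Definition~\ref{Defn:Contraction-gr} that $\I{\eta \otimes F}^\ast = \I{\eta^\ast \otimes F}$, where $\eta^\ast$ is the image of $\eta$ under $\alpha \mapsto \alpha^{-1}$ (the mechanism is that each transposition contraction satisfies $C_\tau(\alpha^{-1}) = C_\tau(\alpha)^{-1}$, since $\tau$ fixes $\supp{\tau}^c$ pointwise). Then $\I{\beta \otimes G}^\ast \I{\alpha \otimes F} = \I{\beta^{-1} \otimes G}\I{\alpha \otimes F}$, which by the product formula Proposition~\ref{Prop:Product-I} equals $\sum_{\pi \in \Part_{1,2}(k,n)} \I{C_\pi((\beta^{-1} \cup \alpha) \otimes (G \otimes F))}$. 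Since $\phi$ vanishes on every $\I{\cdot}$ of positive tensor degree while $\state{\I{(0)}} = 1$, only the $\pi$ for which $C_\pi$ lands in degree $0$ survive; these exist only for $k = n$, and are exactly the matchings $\pi_\sigma$ pairing each $i \in [n]$ with $n + \sigma(i)$ for $\sigma \in S(n)$. For such $\pi_\sigma$ the tensor contraction produces the scalar $\ip{F}{U_\sigma G}$, while $C_{\pi_\sigma}(\beta^{-1} \cup \alpha)$ is, by Definition~\ref{Defn:Contraction-gr}, $q^{n - \cyc_0(\pi_\sigma(\beta^{-1} \cup \alpha))}$ times ${(0)}$ (the restriction to $\supp{\pi_\sigma}^c = \set{0}$ being trivial). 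Comparing with \eqref{Eq:Unsymmetrized-IP}, the identity (a) reduces to the purely combinatorial claim $\cyc_0(\pi_\sigma(\beta^{-1} \cup \alpha)) = \cyc_0(\alpha \sigma \beta^{-1} \sigma^{-1})$: closing up the concatenation of $\beta^{-1}$ and $\alpha$ along the matching $\pi_\sigma$ reproduces the non-$0$-cycle count of the group product $\alpha \sigma \beta^{-1} \sigma^{-1}$. This cyclic-surgery bookkeeping is the technical heart of the argument; I would verify it by following an orbit of $\pi_\sigma(\beta^{-1} \cup \alpha)$ element by element and reading off the matching orbit of $\alpha \sigma \beta^{-1} \sigma^{-1}$, and conversely.

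Granting (a), the rest is assembly. For (b), write $A = \I{\xi}$, $B = \I{\psi}$; then $A^\ast = \I{\xi^\ast}$, so $\state{AB} = \state{(\I{\xi^\ast})^\ast \I{\psi}} = \ip{\psi}{\xi^\ast}_q$ and likewise $\state{BA} = \ip{\xi}{\psi^\ast}_q$, and these coincide because $\chi_q$ is a class function with $\chi_q(g) = \chi_q(g^{-1})$ and the $h_i$ are real, making the right-hand side of \eqref{Eq:Unsymmetrized-IP} symmetric in the required sense (reindex $\sigma \mapsto \sigma^{-1}$ and use $\ip{F}{U_{\sigma^{-1}}G} = \ip{G}{U_\sigma F}$). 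For (c), specialize (a) to $\alpha \otimes F = \beta \otimes G$: $\state{A^\ast A} = \ip{\xi}{\xi}_q \geq 0$ for $q \in \mc{Z}$, since the inner product \eqref{Eq:Unsymmetrized-IP} arises from the positive semi-definite operator $\mc{K}_q$. For (d), the same computation gives $\state{A^\ast A} = 0$ iff $\xi \in \mc{N}_{vs, q}$ by Proposition~\ref{Prop:kernel-vs}; since $\I$ is a linear isomorphism preserving $\mc{N}_{vs, q}$ (the contractions do, by Proposition~\ref{Prop:Contraction-kernel-vs}), this null set is exactly $\Span{\I{\zeta} : \zeta \in \mc{N}_{vs, q}}$, which is the ideal being quotiented out, so $\phi$ descends to a faithful functional on $\mc{TP}_q(\mc{H}_{\mf{R}})$.

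Finally, for (e) I would expand $\T{\alpha \otimes F} = \sum_{\pi \in \Part_{1,2}(2n)} \I{C_\pi(\alpha \otimes F)}$ via Definition~\ref{Defn:T-I} and apply $\phi$: every term of positive tensor degree dies, leaving only the perfect matchings $\pi \in \Part_2(2n)$, each contributing $C_\pi(F)$ (the product of the pair inner products) times the group-algebra weight $q^{n - \cyc_0(\pi \alpha)} = q^{\abs{\pi \alpha}}$ read off from Definition~\ref{Defn:Contraction-gr} exactly as in (a), with the restriction to $\set{0}$ again trivial. If $\alpha \in S_0(2n+1)$, no matching can have full support, so every surviving contraction still has positive tensor degree and $\phi$ annihilates it, giving $0$. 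The only genuine obstacle anywhere in the proof is the cyclic-surgery identity underlying (a) and (e); everything else is bookkeeping with results already in hand.
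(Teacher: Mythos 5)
Your proposal is correct and follows essentially the same route as the paper: reduce (a) to the product formula, observe that only the inhomogeneous perfect matchings (indexed by $\sigma \in S(n)$) survive $\phi$, and match the resulting weight $q^{n - \cyc_0(\pi_\sigma(\beta^{-1}\cup\alpha))}$ against $\chi_q(\sigma^{-1}\alpha\sigma\beta^{-1})$, with (b)--(d) following formally and (e) by the same mechanism applied to the expansion of $\T{\cdot}$. The ``cyclic-surgery'' identity you flag as the technical heart is exactly the step the paper also handles by a brief computation, namely that every cycle of $\sigma^{-1}\sigma_{n,n}\sigma(\beta^{-1}\cup\alpha)$ meets $[0,n]$ and its restriction there is $\sigma^{-1}\alpha\sigma\beta^{-1}$, so your level of detail matches the paper's.
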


\begin{proof}
For $\alpha \in S_0(n)$, $\beta \in S_0(k)$, $\state{\W{\alpha \otimes F} \W{\beta \otimes G}} = 0$ if $n \neq k$. If $n=k$, using Proposition~\ref{Prop:Product-I}, the definition of $\varphi$, and Definition~\ref{Defn:Contraction-gr},
\[
\begin{split}
\state{\W{{\beta} \otimes_s G}^\ast \W{\alpha \otimes_s F}}
& = \sum_{\pi \in \Part_2(n, n)} \W{C_\pi((\beta^{-1} \cup \alpha) \otimes_s ({G} \otimes F))} \\
& = \sum_{\pi \in \Part_2(n, n)} q^{n - \cyc_0(\pi (\beta^{-1} \cup \alpha))} C_\pi({G} \otimes F) .
\end{split}
\]
The map $\sigma \mapsto \sigma^{-1} \sigma_{n,n} \sigma$ maps $S(n)$ bijectively onto $\Part_2(n, n)$. Clearly
\[
C_{\sigma^{-1} \sigma_{n,n} \sigma}({G} \otimes F) = \ip{G}{U_{\sigma^{-1}} F} = \ip{F}{U_\sigma G}.
\]
Moreover, each cycle of
\[
\sigma^{-1} \sigma_{n,n} \sigma (\beta^{-1} \cup \alpha)
= \sigma^{-1} \sigma_{n,n} \sigma \sigma_{n, n} \alpha \sigma_{n,n} \beta^{-1}
\]
intersects $[0, n]$. A calculation shows that its restriction to $[0,n]$ is $\sigma^{-1} \alpha \sigma \beta^{-1}$. Therefore
\[
q^{n - \cyc_0(\sigma^{-1} \sigma_{n,n} \sigma (\beta^{-1} \cup \alpha))}
= q^{n - \cyc_0(\sigma^{-1} \alpha \sigma \beta^{-1})}
= \chi_q(\sigma^{-1} \alpha \sigma \beta^{-1})
\]
It follows that
\[
\state{\W{{\beta} \otimes_s F}^\ast \W{\alpha \otimes_s G}}
= \sum_{\sigma \in S(n)} \chi_q(\sigma^{-1} \alpha \sigma \beta^{-1}) \ip{F}{U_\sigma G}_{\mc{H}_{\mf{R}}^{\otimes n}}
= \ip{(\alpha \otimes_s F)}{(\beta \otimes_s G)}_q.
\]
Since $\chi_q[\beta^{-1} \alpha] = \chi_q[\alpha^{-1} \beta]$, (b) follows from (a), as do (c) and (d). For (e), using the expansion in Proposition~\ref{Prop:T-I},
\[
\state{\T{\alpha \otimes F}}
= \sum_{\pi \in \Part_2(2n)} \W{C_\pi (\alpha \otimes F)}
= \sum_{\pi \in \Part_2(2n)} q^{n - \cyc_0(\pi \alpha)} C_\pi(F). \qedhere
\]
\end{proof}

\begin{Prop}
\label{Prop:Extended-algebra}
Let $\alpha \in S_0(n)$, $\beta \in S_0(k)$, $F \in \mc{H}_{\mf{R}}^{\odot n}$, $G \in \mc{H}_{\mf{R}}^{\odot k}$. Then
\[
\norm{\W{\alpha \otimes_s F} \W{\beta \otimes_s G}}_\varphi \leq (n + k)! (2 n)^k \norm{F} \ \norm{G}.
\]
Consequently, for $q \in \mc{Z}$, the star-algebra structure extends to $\overline{\mc{TP}}(\mc{H}_{\mf{R}})$ and $\overline{\mc{TP}}_q(\mc{H}_{\mf{R}})$.
\end{Prop}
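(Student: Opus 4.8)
The plan is to expand the product by the Hermite product formula, bound each term in the $\varphi$-norm, and sum. By Proposition~\ref{Prop:Product-I},
\[
\I{\alpha\otimes F}\I{\beta\otimes G}=\sum_{\pi\in\Part_{1,2}(n,k)}\I{C_\pi\big((\alpha\cup\beta)\otimes(F\otimes G)\big)},
\]
and for a straddling $\pi$ with $\ell$ pairs one has $C_\pi\big((\alpha\cup\beta)\otimes(F\otimes G)\big)=q^{w(\pi)}\,\gamma_\pi\otimes C_\pi(F\otimes G)$, where $\gamma_\pi\in S_0(n+k-2\ell)$ is a single permutation, $w(\pi)$ is the exponent of Definition~\ref{Defn:Contraction-gr}, and $\norm{C_\pi(F\otimes G)}\le\norm{F}\,\norm{G}$ because only straddling pairs are contracted, so the tensor contraction is a genuine contraction of Hilbert spaces (Remark preceding Lemma~\ref{Lemma:Conjugation}). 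I will also use that, by Theorem~\ref{Thm:State}(a) together with the estimate in the proof of Lemma~\ref{Lemma:L2-approximation}, $\norm{\I{\gamma\otimes H}}_\varphi^2=\ip{\gamma\otimes H}{\gamma\otimes H}_q\le m!\,\norm{H}^2$ for $\gamma\in S_0(m)$, $H\in\overline{\mc{H}^{\otimes m}}$, since $\abs{\chi_q(\rho)}\le 1$ for all $\rho$ as $\abs{q}\le 1$ on $\mc{Z}$.

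The decisive point is that $\abs{q^{w(\pi)}}\le 1$ for every straddling $\pi$. Rewriting the exponent, and using that $\supp\pi$ does not contain $0$,
\[
w(\pi)=\cyc_0\big((\pi\gamma)|_{\supp\pi^c}\big)-\cyc_0(\pi\gamma)+\ell=\ell-c,\qquad \gamma=\alpha\cup\beta,
\]
where $c$ is the number of cycles of $\pi\gamma$ contained in $\supp\pi$. The plan is to prove $c\le\ell$, so that $w(\pi)\ge 0$ and $\abs{q^{w(\pi)}}=\abs{q}^{w(\pi)}\le 1$. The inputs are the explicit description of the single-transposition weight in the Remark following Definition~\ref{Defn:Contraction-gr}, together with the structure of $\alpha\cup\beta$: no cycle of $\alpha\cup\beta$ meets both blocks $\{1,\dots,n\}$ and $\{n+1,\dots,n+k\}$ except the one through $0$, which is not a bare transposition. \textbf{I expect this combinatorial lemma to be the main obstacle.} The case $\ell=1$ is immediate from the Remark, but the general case is delicate: the ``no straddling short cycle'' property is not preserved by iterated contraction — a single contraction step can carry a weight $q^{-1}$, which must be compensated by another step — so one needs a global count of the cycles of $\pi\gamma$ sitting inside $\supp\pi$ rather than a step-by-step argument.

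Granting the weight bound, the triangle inequality gives
\[
\norm{\I{\alpha\otimes F}\I{\beta\otimes G}}_\varphi\le\sum_{\pi\in\Part_{1,2}(n,k)}\abs{q^{w(\pi)}}\,\norm{\I{\gamma_\pi\otimes C_\pi(F\otimes G)}}_\varphi\le\big|\Part_{1,2}(n,k)\big|\,\sqrt{(n+k)!}\,\norm{F}\,\norm{G}.
\]
A straddling $\pi$ is exactly an injective partial function from the $k$ indices of $\beta\otimes G$ to the $n$ indices of $\alpha\otimes F$ (the pairs), so $\big|\Part_{1,2}(n,k)\big|=\sum_\ell\binom{n}{\ell}\binom{k}{\ell}\ell!\le(n+1)^k$. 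Hence, for $n\ge 1$ (the case $n=0$ being immediate), $\norm{\I{\alpha\otimes F}\I{\beta\otimes G}}_\varphi\le(n+1)^k\sqrt{(n+k)!}\,\norm{F}\,\norm{G}\le(n+k)!\,(2n)^k\,\norm{F}\,\norm{G}$; in particular the stated constant is far from sharp.

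Finally, every ingredient above is available for $F\in\overline{\mc{H}^{\otimes n}}$, $G\in\overline{\mc{H}^{\otimes k}}$: each $(F,G)\mapsto C_\pi(F\otimes G)$ with $\pi$ straddling is a bounded bilinear map of the Hilbert-space tensor products, so the right-hand side of the product formula makes literal sense and the same estimate gives the stated bound. One therefore \emph{defines} the product $\I{\alpha\otimes F}\I{\beta\otimes G}$ for arbitrary $F,G$ in the Hilbert tensor products by the product formula; bilinearity, associativity and the relation $(\xi\zeta)^\ast=\zeta^\ast\xi^\ast$ extend from $\mc{TP}(\mc{H}_{\mf{R}})$ by continuity (using Lemma~\ref{Lemma:L2-approximation}(a) for the approximation), and $\mc{N}_{vs,q}$ stays an ideal by Proposition~\ref{Prop:Contraction-kernel-vs} together with continuity, yielding the $\ast$-algebras $\overline{\mc{TP}}(\mc{H})$ and $\overline{\mc{TP}}_q(\mc{H})$.
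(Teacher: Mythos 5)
Your argument follows the paper's own route: expand via Proposition~\ref{Prop:Product-I}, bound each summand by combining the estimate $\norm{\gamma\otimes H}_q^2\le m!\,\norm{H}^2$ from Lemma~\ref{Lemma:L2-approximation} with the fact that inhomogeneous tensor contractions are Hilbert-space contractions, and count $\abs{\Part_{1,2}(n,k)}$ (your bound $(n+1)^k$ is fine; the paper uses $n^k 2^k$, and like you it implicitly sets aside the degenerate case $n=0$). The one place you stop short is exactly the step you flag: showing that the scalar $q^{w(\pi)}$ carried by $C_\pi(\alpha\cup\beta)$ satisfies $\abs{q^{w(\pi)}}\le 1$, i.e.\ $w(\pi)\ge 0$ for straddling $\pi$. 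This is genuinely needed (otherwise a single term could contribute a factor $\abs{q}^{-\ell}=N^{\ell}$), and the paper's proof is silent about it, so you are right to isolate it --- but it is less delicate than you fear, and a direct global count closes it.

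Write $\gamma=\alpha\cup\beta$ and $w(\pi)=\ell-c$, where $c$ is the number of cycles of $\pi\gamma$ contained in $\supp{\pi}$, as in your reduction. Suppose some cycle counted by $c$ is a singleton $\set{x}$; then $\pi\gamma(x)=x$, i.e.\ $\gamma(x)=\pi(x)$, and since every pair of $\pi$ straddles the two blocks, $\gamma(x)$ lies in the block opposite to $x$. By the cycle description of $\alpha\cup\beta$ in Notation~\ref{Notation:Union}, the only $x\in[1,n]$ with $\gamma(x)\in[n+1,n+k]$ is $x=\alpha^{-1}(0)$ (with $\gamma(x)=\beta(0)+n$, and only when $\alpha(0)\neq 0\neq\beta(0)$), while no $x\in[n+1,n+k]$ has $\gamma(x)\in[1,n]$ (the $0$-cycle exits that block through $0$). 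Hence at most one cycle counted by $c$ is a singleton, every other one has at least two elements of the $2\ell$-element set $\supp{\pi}$, and so $c\le 1+\lfloor(2\ell-1)/2\rfloor=\ell$ in all cases; thus $w(\pi)\ge 0$ and your estimate goes through. With that supplied, the rest of your write-up --- the term-by-term bound, the count, and the extension of the product, involution, and the ideal $\mc{N}_{vs,q}$ to the completed tensor products by bilinear continuity --- is correct and matches the paper's intent.
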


\begin{proof}
Combining Proposition~\ref{Prop:Product-I} with the estimate in Lemma~\ref{Lemma:L2-approximation},
\[
\norm{\W{\alpha \otimes_s F} \W{\beta \otimes_s G}}_\varphi \leq (n + k)! \abs{\Part_{1,2}(n, k)} \norm{F} \ \norm{G}.
\]
$\abs{\Part_{1,2}(n, k)}$ is sequence A086885 in OEIS. Choosing the locations of the left and right endpoints of $\ell$ pairs, and $\ell!$ ways to pair them, gives  an easy estimate
\[
\abs{\Part_{1,2}(n, k)} = \sum_{\ell=0}^{\min(n,k)} \ell! \binom{n}{\ell} \binom{k}{\ell}.
\]
Thus we get a rough estimate
\[
\abs{\Part_{1,2}(n, k)}
= \sum_{\ell=0}^{\min(n,k)} \frac{n!}{(n-\ell)!} \binom{k}{\ell}
\leq n^k 2^k.  \qedhere
\]
\end{proof}

The proof of the following proposition is very similar to Proposition~\ref{Prop:Product-I}, and is omitted.

\begin{Prop}
\label{Prop:Linearization}
For $\alpha_i \in S_0(n_i)$, and $F_i \in \mc{H}_{\mf{R}}^{\otimes n_i}$,
\[
\state{\W{{\alpha_1} \otimes_s F_1} \ldots \W{{\alpha_k} \otimes_s F_k}}
= \sum_{\pi \in \Part_2(n_1, \ldots, n_k)} \W{C_\pi ((\alpha_1 \cup \ldots \cup \alpha_k) \otimes_s (F_1 \otimes \ldots \otimes F_k))}.
\]
Here $\Part_2(n_1, \ldots, n_k)$ are the inhomogeneous pair partitions \cite{dSCViennot}. That is, denote by $I(n_1, \ldots, n_k)$ the interval partition of $n_1 + \ldots + n_k$ with blocks of sizes $n_1, \ldots, n_k$. Then $\pi \in \Part_2(n_1, \ldots, n_k)$ if $\pi \in \Part_2(n_1 + \ldots + n_k)$ and the elements of each pair in $\pi$ belong to different blocks of $I(n_1, \ldots, n_k)$.
\end{Prop}

\begin{Remark}
\label{Remark:GNS}
Let $q \in \mc{Z}$,. By Theorem~\ref{Thm:State}(a,b,c), the GNS Hilbert space $L^2(\mc{TP}(\mc{H}_\mf{R}), \phi)$ is isomorphic to $\mc{F}_q(\mc{H})$, and  have the (right) star-representation of $\mc{TP}(\mc{H}_\mf{R})$ on this Hilbert space, for which the state vector ${(0)}$ is cyclic, and $\W{\alpha \otimes_s F} (0) = \alpha \otimes_s F \in \mc{F}_q(\mc{H})$. For the corresponding representation of $\mc{TP}_q(\mc{H}_\mf{R})$, it is cyclic and separating. It follows from Proposition~\ref{Prop:Extended-algebra} that this representation extends to $\overline{\mc{TP}}_q(\mc{H}_\mf{R})$. Similarly, we have the left representation, which commutes with the right one, and for which $(0)$ is also cyclic.
\end{Remark}

\begin{Prop}
\label{Prop:CE}
Let $\mc{H}'_{\mf{R}} \subset \mc{H}_{\mf{R}}$ be a closed subspace, let $\mc{H}' = \mf{C} \mc{H}_{\mf{R}}'$, and let $P_{\mc{H}'}$ be the orthogonal projection onto $\mc{H}'$. Fix $q \in \mc{Z}$.
\begin{enumerate}
\item
The map defined by $\mc{F}(P_{\mc{H}'}) (\alpha \otimes F) = (\alpha \otimes (P_{\mc{H}'}^{\otimes n} F))$ extends to the orthogonal projection $\mc{F}(P_{\mc{H}'}) : \mc{F}_q(\mc{H}) \rightarrow \mc{F}_q(\mc{H}')$.
\item
The map $\Gamma(P_{\mc{H}'}) : \overline{\mc{TP}}_q(\mc{H}_{\mf{R}}) \rightarrow \overline{\mc{TP}}_q(\mc{H}'_{\mf{R}})$ obtained by the linear extension of
\[
\Gamma(P_{\mc{H}'})(\W{\alpha \otimes F}) = \W{\alpha \otimes (P_{\mc{H}'}^{\otimes n} F)}
\]
is an algebraic conditional expectation, which we will denote by $\state{\cdot \ |\ \mc{H}'}$. In the GNS representation on $\mc{F}_q(\mc{H}')$, it is implemented by
\[
\Gamma(P_{\mc{H}'})(\W{\alpha \otimes F}) = \mc{F}(P_{\mc{H}'}) \W{\alpha \otimes F} \mc{F}(P_{\mc{H}'}).
\]
\end{enumerate}
\end{Prop}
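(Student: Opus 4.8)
The plan is to establish part (a) first — it carries the real content — and then obtain part (b) as a formal consequence of Proposition~\ref{Prop:Algebraic-CE}. The one fact used repeatedly is that $P_{\mc{H}'}^{\otimes n}$ is self-adjoint, idempotent, and contractive on $\overline{\mc{H}^{\otimes n}}$, and commutes with every $U_\sigma$ (applying the same operator to each tensor leg commutes with permuting the legs).

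For (a), I would first check that $\mc{F}(P_{\mc{H}'})$ is well defined on the dense subspace $\overline{\mc{TP}}_q(\mc{H})$. Since $P_{\mc{H}'}^{\otimes n}$ commutes with $U_\sigma$, the assignment $\alpha \otimes F \mapsto \alpha \otimes (P_{\mc{H}'}^{\otimes n} F)$ intertwines the $S(n)$-action in \eqref{Eq:Symm-proj} and hence descends to the symmetrized space; and since it acts only on the tensor factor, Proposition~\ref{Prop:kernel-vs} shows it preserves $\mc{N}_{vs, q} = \Span{\eta \otimes_s F : \eta \in \mc{N}_{gr, q}}$, so it descends to the quotient. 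Idempotency of $\mc{F}(P_{\mc{H}'})$ is immediate from $(P_{\mc{H}'}^{\otimes n})^2 = P_{\mc{H}'}^{\otimes n}$. Using the explicit formula \eqref{Eq:Unsymmetrized-IP} for the $q$-inner product and moving $P_{\mc{H}'}^{\otimes n}$ across $U_\sigma$ and onto the other argument, one gets $\ip{\mc{F}(P_{\mc{H}'})\xi}{\zeta}_q = \ip{\xi}{\mc{F}(P_{\mc{H}'})\zeta}_q$, so $\mc{F}(P_{\mc{H}'})$ is a symmetric idempotent; the standard estimate $\norm{\mc{F}(P_{\mc{H}'})\xi}_q^2 = \ip{\mc{F}(P_{\mc{H}'})\xi}{\xi}_q \leq \norm{\mc{F}(P_{\mc{H}'})\xi}_q \, \norm{\xi}_q$ then shows it is a contraction, hence extends to a bounded operator on $\mc{F}_q(\mc{H})$, necessarily the orthogonal projection onto its range. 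Finally the range is $\mc{F}_q(\mc{H}')$: by \eqref{Eq:IP-centralizer} the $q$-norm of $\sum_\alpha \alpha \otimes_s F_\alpha$ depends only on $\chi_q$ on $S_0(n)$ and the Hilbert-space pairings $\ip{F_\alpha}{U_\sigma G_\beta}$, and $\mc{N}_{vs, q}$ is described in the same way whether $F$ ranges over the full Fock space of $\mc{H}'$ or of $\mc{H}$ (Proposition~\ref{Prop:kernel-vs}), so $\mc{F}_q(\mc{H}')$ sits isometrically inside $\mc{F}_q(\mc{H})$; moreover $\mc{F}(P_{\mc{H}'})$ fixes this subspace, since $P_{\mc{H}'}^{\otimes n} F = F$ for $F \in \overline{\mc{H}'^{\otimes n}}$, and maps everything into it.

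For (b), I would invoke Proposition~\ref{Prop:Algebraic-CE} with $\mc{A} = \mc{TP}_q(\mc{H}_{\mf{R}})$ and $\mc{B} = \mc{TP}_q(\mc{H}'_{\mf{R}})$: the product formula in Definition~\ref{Defn:Product-tensors} makes $\mc{B}$ a unital star-subalgebra of $\mc{A}$, $\phi$ is a faithful tracial state by Theorem~\ref{Thm:State}(b),(d), and the GNS Hilbert spaces are $\mc{F}_q(\mc{H}') \subseteq \mc{F}_q(\mc{H})$ with common vacuum $\Omega = \I{{(0)}}$. By Theorem~\ref{Thm:State}(a), the GNS vector of $\I{\alpha \otimes F}$ is precisely $(\alpha \otimes_s F)$, so the map $a \Omega \mapsto \Gamma(P_{\mc{H}'})(a) \Omega$ coincides with the map $\mc{F}(P_{\mc{H}'})$ of part (a), which we have just shown extends to the orthogonal projection $\mc{F}_q(\mc{H}) \to \mc{F}_q(\mc{H}')$. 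Hence $\Gamma(P_{\mc{H}'})$ satisfies the hypotheses of Proposition~\ref{Prop:Algebraic-CE} and is an algebraic conditional expectation; the implementation $\Gamma(P_{\mc{H}'})(a) = \mc{F}(P_{\mc{H}'}) \, a \, \mc{F}(P_{\mc{H}'})$ on $\mc{F}_q(\mc{H}')$ is exactly the third bullet of that proposition. Since $P_{\mc{H}'}^{\otimes n} F \in \overline{\mc{H}'^{\otimes n}}$, the image lies in $\overline{\mc{TP}}_q(\mc{H}'_{\mf{R}})$; and since $\Gamma(P_{\mc{H}'})$ is a $\norm{\cdot}_\phi$-contraction, it extends from $\mc{TP}_q$ to $\overline{\mc{TP}}_q$ by Proposition~\ref{Prop:Extended-algebra}.

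I do not expect a genuine obstacle here. The only place needing care is part (a): verifying that $\mc{F}(P_{\mc{H}'})$ descends to the quotient by $\mc{N}_{vs, q}$, and that $\mc{F}_q(\mc{H}')$ genuinely embeds isometrically into $\mc{F}_q(\mc{H})$ so that "orthogonal projection onto $\mc{F}_q(\mc{H}')$" is meaningful. Once this is in place, part (b) is a routine application of Proposition~\ref{Prop:Algebraic-CE}, and the rest is the bookkeeping of commuting $P_{\mc{H}'}^{\otimes n}$ past the permutation operators $U_\sigma$.
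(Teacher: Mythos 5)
Your proposal is correct and takes essentially the same route as the paper: part (a) is an elementary verification using the explicit $q$-inner product formula and the fact that $P_{\mc{H}'}^{\otimes n}$ commutes with $U_\sigma$, and part (b) is exactly the intended application of Proposition~\ref{Prop:Algebraic-CE}. The only cosmetic difference is that the paper establishes (a) by directly checking $\ip{\alpha \otimes F}{\beta \otimes G}_q = \ip{\alpha \otimes (P_{\mc{H}'}^{\otimes n} F)}{\beta \otimes G}_q$ for $G \in (\mc{H}')^{\otimes k}$ (the defining property of the orthogonal projection onto $\mc{F}_q(\mc{H}')$), whereas you verify that $\mc{F}(P_{\mc{H}'})$ is a symmetric idempotent contraction with range $\mc{F}_q(\mc{H}')$ — an equivalent argument with more bookkeeping.
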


\begin{proof}
We first verify that for $F \in \mc{H}^{\otimes n}$ and $G \in (\mc{H}')^{\otimes k}$,
\[
\begin{split}
\ip{\alpha \otimes F}{\beta \otimes G}_q
& = \delta_{n=k} \sum_{\sigma \in S(n)} \chi_q(\alpha \sigma \beta \sigma^{-1}) \ip{F}{U_\sigma G} \\
& = \delta_{n=k} \sum_{\sigma \in S(n)} \chi_q(\alpha \sigma \beta \sigma^{-1}) \ip{P_{\mc{H}'}^{\otimes n} F}{U_\sigma G} \\
& = \ip{\alpha \otimes (P_{\mc{H}'}^{\otimes n} F)}{\beta \otimes G}_q,
\end{split}
\]
which implies part (a). Part (b) follows from Proposition~\ref{Prop:Algebraic-CE}.
\end{proof}

\begin{Prop}
\label{Prop:Single}
In the single-variable case, for $\alpha \in S_0(n)$, we have
\[
\state{\T{\alpha \otimes h^{\otimes n}} \ |\ \mc{H}'}
= \sum_{\pi \in \Part_{1,2}(n)} \norm{P_{(\mc{H}')^\perp} h}^{2 \abs{\Pair{\pi}}} \T{C_\pi(\alpha) \otimes (P_{\mc{H}'} h)^{\otimes \abs{\Sing{\pi}}}}.
\]
\end{Prop}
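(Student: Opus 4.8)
The plan is to compute both sides as explicit linear combinations of Hermite trace polynomials $\I{\cdot}$ and check that, after a reindexing of the summation, they agree term by term; in particular no linear independence of the $\I{\cdot}$'s will be needed.

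First I would invoke Proposition~\ref{Prop:CE}(b), which identifies $\state{\cdot \ |\ \mc{H}'}$ with the linear map $\Gamma(P_{\mc{H}'})$ acting by $\I{\alpha \otimes F} \mapsto \I{\alpha \otimes P_{\mc{H}'}^{\otimes n}F}$. Next, by Definition~\ref{Defn:T-I} together with the elementary computation $C_\pi(h^{\otimes n}) = \norm{h}^{2\abs{\Pair{\pi}}}\, h^{\otimes\abs{\Sing{\pi}}}$ (each pair of $\pi$ produces a factor $\ip{h}{h}$), one has
\[
\T{\alpha \otimes h^{\otimes n}} = \sum_{\pi \in \Part_{1,2}(n)} \norm{h}^{2\abs{\Pair{\pi}}}\, \I{C_\pi(\alpha) \otimes h^{\otimes\abs{\Sing{\pi}}}}.
\]
Applying $\Gamma(P_{\mc{H}'})$ termwise, writing $a = \norm{P_{\mc{H}'}h}$ and $b = \norm{P_{(\mc{H}')^\perp}h}$, and using $\norm{h}^2 = a^2 + b^2$ (Pythagoras) with the binomial theorem, I get
\[
\state{\T{\alpha \otimes h^{\otimes n}} \ |\ \mc{H}'} = \sum_{\pi \in \Part_{1,2}(n)} \ \sum_{S \subseteq \Pair{\pi}} b^{2\abs{S}}\, a^{2(\abs{\Pair{\pi}} - \abs{S})}\, \I{C_\pi(\alpha) \otimes (P_{\mc{H}'}h)^{\otimes\abs{\Sing{\pi}}}}.
\]

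On the other side, I would expand each summand $\T{C_\sigma(\alpha) \otimes (P_{\mc{H}'}h)^{\otimes\abs{\Sing{\sigma}}}}$ of the claimed right-hand side again by Definition~\ref{Defn:T-I}, producing an inner sum over pair partitions $\rho$ of the $\abs{\Sing{\sigma}}$ remaining slots, each $\rho$ contributing a scalar $\norm{P_{\mc{H}'}h}^{2\abs{\Pair{\rho}}} = a^{2\abs{\Pair{\rho}}}$ and a group-algebra contraction $C_\rho(C_\sigma(\alpha))$. The key structural input is the composition rule for contractions implicit in Notation~\ref{Notation:Laplacian} (that a $C_\pi$ is a product of transposition-contractions, whose $q$-weights multiply): concretely $C_\rho \circ C_\sigma = C_{\sigma \vee \rho}$, where $\sigma \vee \rho \in \Part_{1,2}(n)$ is obtained by adjoining to $\sigma$ the pairs of $\rho$ pulled back, via the relabeling $P^{[0,n]\setminus\supp{\sigma}}_{[0,\, n-2\abs{\Pair{\sigma}}]}$, to $\Sing{\sigma}$. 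Thus the right-hand side becomes
\[
\sum_{\sigma \in \Part_{1,2}(n)} \ \sum_{\rho} b^{2\abs{\Pair{\sigma}}}\, a^{2\abs{\Pair{\rho}}}\, \I{C_{\sigma \vee \rho}(\alpha) \otimes (P_{\mc{H}'}h)^{\otimes\abs{\Sing{\sigma \vee \rho}}}}.
\]

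Finally I would substitute $\pi := \sigma \vee \rho$ and $S := \Pair{\sigma} \subseteq \Pair{\pi}$; the assignment $(\sigma, \rho) \mapsto (\pi, S)$ is a bijection onto $\set{(\pi, S) : \pi \in \Part_{1,2}(n),\ S \subseteq \Pair{\pi}}$, under which $\abs{\Pair{\sigma}} = \abs{S}$, $\abs{\Pair{\rho}} = \abs{\Pair{\pi}} - \abs{S}$, and $\abs{\Sing{\sigma \vee \rho}} = \abs{\Sing{\pi}}$. Hence the last display equals, term for term, the displayed formula for $\state{\T{\alpha \otimes h^{\otimes n}} \ |\ \mc{H}'}$, which finishes the argument. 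I expect the only genuine obstacle to be making the composition identity $C_\rho \circ C_\sigma = C_{\sigma \vee \rho}$ — with the correct $q$-exponents and the careful tracking of the order-preserving relabelings $P^A_B$ — fully rigorous; the binomial bookkeeping and the bijection of index sets are then routine.
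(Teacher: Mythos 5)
Your proposal is correct and follows essentially the same route as the paper: both arguments rest on Proposition~\ref{Prop:CE}, the $\mathrm{T}\leftrightarrow\mathrm{I}$ expansions of Definition~\ref{Defn:T-I} and Proposition~\ref{Prop:T-I}, the composition rule for contractions implicit in Notation~\ref{Notation:Laplacian}, and a binomial resummation over subsets of $\Pair{\pi}$. The only cosmetic difference is that you expand both sides into $\I{\cdot}$'s and match terms using $\norm{h}^2 = \norm{P_{\mc{H}'}h}^2 + \norm{P_{(\mc{H}')^\perp}h}^2$, whereas the paper converts the left side back into $\T{\cdot}$'s and sums the signed binomial $\left(\norm{h}^2 - \norm{P_{\mc{H}'}h}^2\right)^{\abs{\Pair{\rho}}}$ directly.
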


\begin{proof}
We compute
\[
\begin{split}
& \state{\T{\alpha \otimes h^{\otimes n}} \ |\ \mc{H}'} \\
&\quad = \sum_{\pi \in \Part_{1,2}(n)} \norm{h}^{n - \abs{\Sing{\pi}}} \W{C_\pi(\alpha) \otimes (P_{\mc{H}'} h)^{\otimes \abs{\Sing{\pi}}}} \\
&\quad = \sum_{\pi \in \Part_{1,2}(n)} \norm{h}^{2 \abs{\Pair{\pi}}} \\
&\quad\qquad \sum_{\sigma \in \Part_{1,2}(\Sing{\pi})} (-1)^{\abs{\Pair{\sigma}}} \norm{P_{\mc{H}'} h}^{2 \abs{\Pair{\sigma}}}
\T{C_\sigma C_\pi(\alpha) \otimes (P_{\mc{H}'} h)^{\otimes \abs{\Sing{\sigma}}}} \\
&\quad = \sum_{\rho \in \Part_{1,2}(n)} \sum_{S \subset \Pair{\rho}} (-1)^{\abs{S}} \norm{h}^{2 \abs{\Pair{\rho}} - 2 \abs{S}} \norm{P_{\mc{H}'} h}^{2 \abs{S}} \T{C_\rho(\alpha) \otimes (P_{\mc{H}'} h)^{\otimes \abs{\Sing{\rho}}}} \\
&\quad = \sum_{\rho \in \Part_{1,2}(n)}\left(\norm{h}^2 - \norm{P_{\mc{H}'} h}^2 \right)^{\abs{\Pair{\rho}}} \T{C_\rho(\alpha) \otimes (P_{\mc{H}'} h)^{\otimes \abs{\Sing{\rho}}}} \\
&\quad = \sum_{\rho \in \Part_{1,2}(n)} \norm{P_{(\mc{H}')^\perp} h}^{2 \abs{\Pair{\rho}}} \T{C_\rho(\alpha) \otimes (P_{\mc{H}'} h)^{\otimes \abs{\Sing{\rho}}}}. \qedhere
\end{split}
\]
\end{proof}

The following could have been taken as the definition of $\W{\alpha \otimes F}$, with the more explicit expression in terms of contractions obtained a corollary; compare with Section~\ref{Section:Gaussian-Hilbert}. Since our algebraic definition works on $\overline{\mc{TP}}(\mc{H}_{\mf{R}})$, on which $\phi$ is not faithful and may not be positive, it is a little more general.

\begin{Prop}
\label{Prop:Projection}
For $q \in \mc{Z}$, let $\overline{\mc{TP}}_n(\mc{H}_{\mf{R}})$ be the image of $\bigoplus_{k=0}^n \mf{C}[S_0(k)]\otimes \mc{H}_{\mf{R}}^{\otimes k}$ in $\mc{TP}_q(\mc{H}_{\mf{R}})$. Denote by $P_n$ the orthogonal projection
\[
P_n : \overline{\mc{TP}}_n(\mc{H}_{\mf{R}}) \rightarrow \overline{\mc{TP}}_n(\mc{H}_{\mf{R}}) \ominus \overline{\mc{TP}}_{n-1}(\mc{H}_{\mf{R}}).
\]
Then for $\alpha \otimes F \in \mf{C}[S_0(n)]\otimes \mc{H}_{\mf{R}}^{\otimes n}$, 
\[
\W{\alpha \otimes F} = P_n \T{\alpha \otimes F}
\]
\end{Prop}

\begin{proof}
It suffices to note that for $\alpha \in S_0(n)$, $\W{\alpha \otimes F} - \T{\alpha \otimes F} \in \overline{\mc{TP}}_{n-1}(\mc{H}_{\mf{R}})$ by Proposition~\ref{Prop:T-I}, and $\W{\alpha \otimes F} \perp \overline{\mc{TP}}_{n-1}(\mc{H}_{\mf{R}})$ by definition of the $q$-inner product.
\end{proof}

We finish the section with the observation that all the constructions in the article are isomorphic for $q$ and $-q$. In particular, for $q \in \mc{Z}$, it suffices to consider $q = \frac{1}{N}$ or $q=0$. We will use the notation $C_\pi^{(q)}(\alpha)$, $T^{(q)}[\alpha \otimes F]$, $\phi_q$, and the multiplication $\cdot_q$ to indicate the dependence on $q$.

\begin{Prop}
\label{Prop:Negative}
Define the map $R : \mc{TP}(\mc{H}_{\mf{R}}) \rightarrow \mc{TP}(\mc{H}_{\mf{R}})$ by
\[
R(\alpha \otimes F) = (-1)^{\abs{\alpha}} \alpha \otimes F,
\]
so that $R(\W{\alpha \otimes F}) = (-1)^{\abs{\alpha}} \W{\alpha \otimes F}$. Then
\begin{itemize}
\item
$T^{(-q)}(R(\alpha \otimes F)) = R(T^{(q)}(\alpha \otimes F))$
\item
$R$ is a star-isomorphism from $(\mc{TP}(\mc{H}_{\mf{R}}), \cdot_q)$ to $(\mc{TP}(\mc{H}_{\mf{R}}), \cdot_{-q})$
\item
For $\alpha \in S_0(2n)$, $R$ satisfies
\[
\phi_{-q}[R(T^{(q)} (\alpha \otimes F))] = (-1)^n \varphi_{q}[T^{(q)}(\alpha \otimes F)].
\]
\item
For $q = 1/N$, $R$ is an isometry from $\mc{F}_{1/N}(\mc{H}_{\mf{R}})$ onto $\mc{F}_{-1/N}(\mc{H}_{\mf{R}})$.
\end{itemize}
\end{Prop}

\begin{proof}
It suffices to consider the case $\mc{H} = \mf{C}$. $R$ is clearly a bijection. We first compute
\[
\begin{split}
T^{(-q)}(\alpha)
& = \sum_{\pi \in \Part_{1,2}(n)} \W{C_\pi^{(-q)}(\alpha)} \\
& = \sum_{\pi \in \Part_{1,2}(n)} (-1)^{\cyc_0((\pi \alpha)|_{\supp{\pi}^c}) - \cyc_0(\pi \alpha) + \abs{\pi}} \W{C_\pi^{(q)}(\alpha)} \\
& = \sum_{\pi \in \Part_{1,2}(n)} (-1)^{\left(n - 2 \abs{\pi} - \abs{(\pi \alpha)|_{\supp{\pi}^c})}\right) -(n - \abs{\pi \alpha}) + \abs{\pi}} \W{C_\pi^{(q)}(\alpha)} \\
& =  \sum_{\pi \in \Part_{1,2}(n)} (-1)^{\abs{\pi} + \abs{\pi \alpha}} (-1)^{\abs{(\pi \alpha)|_{\supp{\pi}^c})}} \W{C_\pi^{(q)}(\alpha)} \\
& = (-1)^{\abs{\alpha}} \sum_{\pi \in \Part_{1,2}(n)} (-1)^{\abs{(\pi \alpha)|_{\supp{\pi}^c})}} \W{C_\pi^{(q)}(\alpha)} \\
& = (-1)^{\abs{\alpha}} \sum_{\pi \in \Part_{1,2}(n)} R \left(\W{C_\pi^{(q)}(\alpha)} \right) \\
& = (-1)^{\abs{\alpha}} R(T^{(q)}(\alpha))
\end{split}.
\]
Here we used the fact that for any $\alpha, \pi$, $-\abs{\alpha} + \abs{\pi \alpha} + \abs{\pi}$ is even. Since
\[
\begin{split}
R(T^{(q)}(\alpha)) \cdot_{-q} R(T^{(q)}(\beta))
& = (-1)^{\abs{\alpha} + \abs{\beta}} T^{(-q)}(\alpha) \cdot_{-q} T^{(-q)}(\beta) \\
& = (-1)^{\abs{\alpha \cup \beta}} T^{(-q)}(\alpha \cup \beta)
= R(T^{(q)}(\alpha) T^{(q)}(\alpha)),
\end{split}
\]
$R$ is a homomorphism. It clearly commutes with the adjoint operation. Since the action of the linear functional $\phi$ on $\W{\alpha \otimes F}$ does not depend on $q$, and $\ip{A}{B}_q = \phi_q[B^\ast A]$, the isometry property follows from the homomorphism property. Finally, for $\alpha \in S_0(2n)$,
\[
\begin{split}
\phi_{q}\left[(-1)^{\abs{\alpha}} T^{(q)}(\alpha \otimes F)\right]
& = \sum_{\pi \in \Part_2(2n)} (-1)^{\abs{\alpha}} q^{\abs{\pi \alpha}} C_\pi(F) \\
& = \sum_{\pi \in \Part_2(2n)} (-1)^{\abs{\pi}} (-q)^{{\pi \alpha}} C_\pi(F) \\
& = (-1)^n \sum_{\pi \in \Part_2(2n)} (-q)^{{\pi \alpha}} C_\pi(F) \\
& = (-1)^n \phi_{-q}\left[^{(-q)}(\alpha \otimes F)\right]. \qedhere
\end{split}
\]
\end{proof}

\subsection{Three subalgebras}

For $h \in \mc{H}_{\mf{R}}$, denote $r^+(h)$ the standard \emph{right} creation operator on the full Fock space $\bigoplus_{n=0}^\infty \overline{\mc{H}^{\otimes n}}$, and by $r^-_k(h)$ the annihilation operators
\[
r^-_k(h) F = C_{(k \ n+1)} (F \otimes h).
\]

\subsubsection{Gaussian subalgebra}
\label{Subsec:Gaussian}

\begin{Defn}
The Gaussian subalgebra of $\mc{TP}(\mc{H}_{\mf{R}})$ is
\[
\mc{G}(\mc{H}_{\mf{R}}) = \Alg{\T{{(0)(1)} \otimes h} : h \in \mc{H}_{\mf{R}}} = \Span{\T{{(0)(1)\ldots (n)} \otimes F} : h \in \mc{H}_{\mf{R}^{\otimes n}}}.
\]
\end{Defn}

\begin{Thm}
\label{Thm:Gaussian}
In the right GNS representation from Remark~\ref{Remark:GNS}, we may decompose
\[
\W{{(0)(1)} \otimes h} = \T{{(0)(1)} \otimes h} = a^+_{(0)(1)}(h) + a^-_{(0)(1)}(h),
\]
where
\[
a^+_{(0)(1)}(h) (\alpha \otimes F)
= {\alpha} \otimes r^+(h) F
\]
and
\[
\begin{split}
a^-_{(0)(1)}(h) (\alpha \otimes F)
&= \sum_{k : (k) \in \alpha} {P^{[0, n] \setminus \set{k}}_{[0, n-1]} \alpha|_{\set{k}^c}} \otimes r_k^-(h) F \\
&\qquad + q \sum_{k : (k) \not \in \alpha}  {P^{[0, n] \setminus \set{k}}_{[0, n-1]} \alpha|_{\set{k}^c}} \otimes r_k^-(h) F
\end{split}
\]
These operators are adjoints of each other.

The distribution of $\W{{(0)(1)} \otimes h}$ is Gaussian with mean $0$ and variance $\norm{h}$.
\end{Thm}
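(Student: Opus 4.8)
First, since the symmetric group $S(1)$ contains no transpositions, $\mc{L}$ vanishes on $\mf{C}[S_0(1)] \otimes \mc{H}_{\mf{R}}$, so $\I{{(0)(1)} \otimes h} = \T{{(0)(1)} \otimes h}$ by Definition~\ref{Defn:I-gr}, and this element is self-adjoint because $({(0)(1)})^{-1} = {(0)(1)}$ and $h \in \mc{H}_{\mf{R}}$. The plan is to realize its action in the right GNS representation by computing, for $\alpha \in S_0(n)$ and $F \in \mc{H}_{\mf{R}}^{\otimes n}$, the product $\I{\alpha \otimes F}\,\I{{(0)(1)} \otimes h}$ via Proposition~\ref{Prop:Product-I}:
\[
\I{\alpha \otimes F}\,\I{{(0)(1)} \otimes h} = \sum_{\pi \in \Part_{1,2}(n, 1)} \I{C_\pi\big((\alpha \cup {(0)(1)}) \otimes (F \otimes h)\big)}.
\]
Here $\alpha \cup {(0)(1)} = \alpha$ under the inclusion $S_0(n) \subset S_0(n+1)$, since the cycle $(0)$ of the second factor adds nothing to the merged $0$-cycle and its remaining point becomes the fixed point $(n+1)$. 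As $\min(n,1) \le 1$, a partition in $\Part_{1,2}(n,1)$ has at most one pair, necessarily of the form $(k\ n+1)$ with $k \in [n]$; so the sum consists of the all-singleton term together with $n$ one-pair terms. For the all-singleton partition $C_\pi$ is the identity and the term is the vector $\alpha \otimes_s (F \otimes h) = \alpha \otimes_s \ell^+(h) F$, which is exactly $a^+_{(0)(1)}(h)$ applied to $\alpha \otimes F$.

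For the one-pair partition with pair $(k\ n+1)$, the tensor contraction gives $C_{(k\ n+1)}(F \otimes h) = \ell^-_k(h) F$ directly from the definition of $\ell^-_k(h)$, and the output has degree $n-1$ so it equals its associated vector. For the permutation contraction $C_{(k\ n+1)}(\alpha)$, with $\alpha \in S_0(n+1)$ fixing $n+1$, we read the weight off the case list in the Remark after Definition~\ref{Defn:Contraction-gr}. If $(k)$ is a cycle of $\alpha$, then $(k)$ and $(n+1)$ are both cycles of $\alpha$, the weight is $1$, and $(\tau\alpha)|_{\supp{\tau}^c} = \alpha|_{\set{k}^c}$, which after relabelling is $P^{[0,n]\setminus\set{k}}_{[0,n-1]}\alpha|_{\set{k}^c}$ — this matches the first sum defining $a^-_{(0)(1)}(h)$. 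If $(k)$ is not a cycle of $\alpha$, then $k$ and the fixed point $n+1$ lie in different cycles, the weight is $q$, and $\tau\alpha$ merges the cycle $(n+1)$ into the cycle of $k$; restricting to $\supp{\tau}^c$ deletes $k$ from that cycle, again giving $P^{[0,n]\setminus\set{k}}_{[0,n-1]}\alpha|_{\set{k}^c}$, now with weight $q$ — this matches the second sum. Summing over $k$ produces $a^-_{(0)(1)}(h)(\alpha \otimes F)$, which establishes the decomposition and shows that $a^\pm_{(0)(1)}(h)$ descend to $\mc{F}_q(\mc{H})$ as the grading-shift components of the bounded operator $\I{{(0)(1)} \otimes h}$. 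Mutual adjointness follows immediately: $a^+_{(0)(1)}(h)$ raises the grading by one, $a^-_{(0)(1)}(h)$ lowers it by one, both are bounded, and comparing graded components in the relation $a^+_{(0)(1)}(h) + a^-_{(0)(1)}(h) = \big(a^+_{(0)(1)}(h) + a^-_{(0)(1)}(h)\big)^\ast$ forces $\big(a^+_{(0)(1)}(h)\big)^\ast = a^-_{(0)(1)}(h)$.

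For the distributional statement, note that $({(0)(1)})^{\cup m}$ is the identity permutation ${(0)(1)}\cdots(m) \in S_0(m)$ (again the $0$-cycles merge trivially and each factor adds one fixed point), hence $\I{{(0)(1)} \otimes h}^m = \T{{(0)(1)}\cdots(m) \otimes h^{\otimes m}}$. By Theorem~\ref{Thm:State}(e) its $\varphi$-value is $0$ when $m$ is odd, and for $m = 2n$ it equals $\sum_{\pi \in \Part_2(2n)} q^{n - \cyc_0(\pi)} C_\pi(h^{\otimes 2n})$. Viewed as an involution in $S_0(2n)$, a pair partition $\pi \in \Part_2(2n)$ has its $n$ transposition cycles together with the fixed point $0$, so $\cyc_0(\pi) = n$, the weight is $q^0 = 1$, and $C_\pi(h^{\otimes 2n}) = \norm{h}^{2n}$; thus the sum is $(2n-1)!!\,\norm{h}^{2n}$. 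These are exactly the moments of the centered Gaussian law with variance $\norm{h}^2$, which is determined by its moments; since $\varphi$ is a state for $q \in \mc{Z}$ by Theorem~\ref{Thm:State}(c), the law of $\I{{(0)(1)} \otimes h}$ under $\varphi$ is this Gaussian, with $\varphi[\I{{(0)(1)} \otimes h}] = 0$ and $\varphi[\I{{(0)(1)} \otimes h}^2] = \norm{h}^2$.

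The main obstacle is the permutation bookkeeping in the second paragraph: correctly identifying the $q$-weight of $C_{(k\ n+1)}(\alpha)$ in the two cases and tracking the order-preserving relabellings so that $P^{[0,n+1]\setminus\set{k,n+1}}_{[0,n-1]}$ reduces to $P^{[0,n]\setminus\set{k}}_{[0,n-1]}$. This is routine but must be done carefully, including checking that the sub-case in which $k$ lies in the cycle of $0$ behaves identically (the weight is still $q$ and the restriction still deletes $k$).
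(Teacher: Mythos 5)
Your proposal is correct and follows essentially the same route as the paper: expand $\I{\alpha\otimes F}\,\I{{(0)(1)}\otimes h}$ via Proposition~\ref{Prop:Product-I}, identify the singleton term as $a^+_{(0)(1)}(h)$ and the $(k\ n+1)$-pair terms (with weights $1$ or $q$ according to whether $(k)$ is a cycle of $\alpha$) as $a^-_{(0)(1)}(h)$, deduce adjointness from self-adjointness plus the grading shift, and read off the even moments $\abs{\Part_2(2n)}\,\norm{h}^{2n}$ from Theorem~\ref{Thm:State}(e). One small inaccuracy: $\I{{(0)(1)}\otimes h}$ is \emph{not} a bounded operator (its distribution is Gaussian), but this is inessential since the adjointness argument only needs the comparison of graded components on the algebraic domain, exactly as in the paper.
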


\begin{proof}
According to Proposition~\ref{Prop:Product-I},
\[
\begin{split}
& \W{\alpha \otimes F} \ \W{{(0)(1)} \otimes h} \\
&\quad = \W{{\alpha} \otimes r^+(h) F} \\
&\quad \qquad + \sum_{k=1}^n q^{\cyc_0(\alpha|_{\set{k}^c}) - \cyc_0(\alpha) + 1}  \W{{P^{[0, n] \setminus \set{k}}_{[0, n-1]} \alpha|_{\set{k}^c}} \otimes r^-_k(h) F} \\
&\quad = \W{{\alpha} \otimes r^+(h) F} \\
&\quad \qquad + \sum_{k : (k) \in \alpha} \W{{P^{[0, n] \setminus \set{k}}_{[0, n-1]} \alpha|_{\set{k}^c}} \otimes r^-_k(h) F}
+ q \sum_{k : (k) \not \in \alpha} \W{{P^{[0, n] \setminus \set{k}}_{[0, n-1]} \alpha|_{\set{k}^c}} \otimes r^-_k(h) F}
\end{split}
\]
Also
\[
\state{\W{{(0)(1)} \otimes h}^n}
= \state{\T{{(0)(1) \ldots (n)} \otimes h^{\otimes n}}}
=
\begin{cases}
\abs{\Part_2(n)} \ \norm{h}^{n}, & n \text{ even}, \\
0, & n \text{ odd}.
\end{cases}
\]
Thus the distribution of $\W{{(0)(1)} \otimes h}$ is Gaussian. Finally, since $\T{{(0)(1)} \otimes h}$ is symmetric, $a^+_{(0)(1)}(h)$ maps the $n$'th graded component into the $(n+1)$'st, and $a^-_{(0)(1)}(h)$ maps it into the $(n-1)$'st component, these operators have to be each other's adjoints.
\end{proof}

\begin{Remark}
The subspace generated by this algebra's action on ${(0)}$ is
\[
\Span{{(0)(1) \ldots (n)} \otimes_s F : F \in \mc{H}_{\mf{R}}^{\otimes n}, n \in \mf{N}}.
\]
Each element of this subspace has the same permutation component, which can be dropped. Moreover the action of $a^{\pm}_{(0)(1)}(h)$ on this subspace is independent of $q$, and the induced inner product is the usual symmetric inner product. Therefore this space is isomorphic to the symmetric Fock space $\mc{F}_s(\mc{H}_{\mf{R}})$, with the usual Bosonic creation and annihilation operators.
\end{Remark}

\begin{Remark}
For $q=1$, the inner product on the Fock space is
\[
\ip{(\alpha \otimes F)}{(\beta \otimes G)}_1 = \delta_{n=k} \sum_{\sigma \in S(n)} \ip{F}{U_\sigma G}_{\mc{H}_{\mf{R}}^{\otimes n}}.
\]
So the non-degenerate quotient of the space is isomorphic to the symmetric Fock space $\mc{F}_s(\mc{H})$, and $\mc{TP}_1(\mc{H}_{\mf{R}}) = \mc{G}(\mc{H}_{\mf{R}})$.
\end{Remark}

\subsubsection{Pure trace polynomial subalgebra}

\begin{Defn}
The pure trace polynomial subalgebra  of $\mc{TP}(\mc{H}_{\mf{R}})$ is $\Span{\W{\alpha \otimes_s F} : \alpha(0) = 0}$.
\end{Defn}

\begin{Remark}
Recall that
\[
\overline{\mc{TP}}(\mf{C}) = \bigoplus_{n=0}^\infty Z(\mf{C}[S_0(n)] : \mf{C}[S(n)])
\]
is commutative.
\end{Remark}

\begin{Thm}
\label{Thm:Center}
Let $\dim \mc{H} _{\mf{R}}\geq 2$. Then the pure trace polynomial subalgebra is the center of $\overline{\mc{TP}}(\mc{H}_{\mf{R}})$.
\end{Thm}

\begin{proof}
Recall from Notation~\ref{Notation:Union} that
\[
\sigma_{n, k} (\alpha \cup \beta) \sigma_{n, k}^{-1} = \beta \sigma_{n, k} \alpha \sigma_{n, k}^{-1}.
\]
If $\alpha \in S(n)$ and $\beta \in S_0(k)$, then $\beta$ and $\sigma_{n, k} \alpha \sigma_{n, k}^{-1}$ commute, and the expression above is $\beta \cup \alpha$. Since also $U_{\sigma_{n, k}}(F \otimes G) = G \otimes F$,
\[
(\alpha \cup \beta) \otimes_s (F \otimes G) = (\beta \cup \alpha) \otimes_s (G \otimes F).
\]
So using Lemma~\ref{Lemma:Conjugation}, for such $\alpha$,
\[
\begin{split}
\W{\alpha \otimes_s F} \W{\beta \otimes_s G}
& = \sum_{\pi \in \mc{P}_{1,2}(n, k)} \W{C_\pi(\alpha \cup \beta) \otimes_s C_\pi(F \otimes G)} \\
& = \sum_{\pi \in \mc{P}_{1,2}(n, k)} \W{C_\pi(\sigma_{n,k} ((\beta \cup \alpha) \sigma_{n,k}^{-1} \otimes_s U_{\sigma_{n,k}} (G \otimes F))} \\
& = \sum_{\pi \in \mc{P}_{1,2}(n, k)} \W{C_{ \sigma_{n,k}^{-1} \pi \sigma_{n,k}} ((\beta \cup \alpha) \otimes_s (G \otimes F))} \\
& = \W{\beta \otimes_s G} \W{\alpha \otimes_s F}.
\end{split}
\]

For the converse, denote $\set{\xi_i : i \in \Xi}$ an orthonormal basis for $\mc{H}_{\mf{R}}$. Suppose that for some $A \in \overline{\mc{TP}}(\mc{H}_{\mf{R}})$, $A \ \W{(01) \otimes \xi_i} = \W{(01) \otimes \xi_i} A$ for all $i \in \Xi$. $A$ has the form
\[
A = \sum_{n=0}^{k} \sum_{\alpha \in S_0(n)} \W{\alpha \otimes F_\alpha},
\]
where for each $n$ and $F_\alpha \in \mc{H}_{\mf{R}}^{\otimes n}$,
\begin{equation}
\label{Eq:Symmetrizer}
\frac{1}{n!} \sum_{\sigma \in S(n)} U_\sigma F_{\sigma^{-1} \alpha \sigma} = F_\alpha.
\end{equation}
Comparing only the terms in the $(n + 1)$'th component, it follows that
\[
\sum_{\alpha \in S_0(n)} \W{(\alpha \cup (01)) \otimes_s (F_\alpha \otimes \xi_i)} = \sum_{\beta \in S_0(n)} \W{((01) \cup \beta) \otimes_s (\xi_i \otimes F_\beta)}.
\]
Recall that $U_{\sigma_{1,n}}(h \otimes F_\beta) = F_\beta \otimes h$ and
\[
\sigma_{1,n} ((01) \cup \beta) \sigma_{1,n}^{-1} = \beta \sigma_{1, n} (01) \sigma_{1,n}^{-1} = \beta (0 \ n+1).
\]
Therefore
\[
\sum_{\alpha \in S_0(n)} \W{((0 \ n+1) \alpha) \otimes_s (F_\alpha \otimes \xi_i)}
= \sum_{\beta \in S_0(n)} \W{(\beta (0 \ n+1)) \otimes_s (F_\beta \otimes \xi_i)}.
\]
That is,
\begin{equation}
\label{Eq:Symmetrized}
\sum_{\sigma \in S(n+1)} \sum_{\alpha \in S_0(n)} \W{\sigma ((0 \ n+1) \alpha) \sigma^{-1} \otimes U_\sigma (F_\alpha \otimes \xi_i)}
= \sum_{\tau \in S(n+1)} \sum_{\beta \in S_0(n)} \W{\tau (\beta (0 \ n+1)) \tau^{-1} \otimes U_\tau (F_\beta \otimes \xi_i)}.
\end{equation}
Fix $\tilde{\alpha} \in S_0(n)$ with  $\tilde{\alpha}(0) \neq 0$,  and denote $\gamma = (0 \ n+1) \tilde{\alpha} $. Note that if $\gamma = \sigma (0 \ n+1) \alpha \sigma^{-1} $ with $\alpha \in S_0(n)$ and $\sigma \in S(n+1)$, then in fact $\sigma  \in S(n)$. This follows since 
\[
\sigma (n+1) = (\tilde{\alpha}^{-1} (0 \ n+1) \sigma (0 \ n+1) \alpha)(n+1) = n+1.
\]
Next, from \eqref{Eq:Symmetrized},
\[
\sum_{\substack{\sigma \in S(n+1), \alpha \in S_0(n) \\ \sigma (0 \ n+1) \alpha \sigma^{-1} = \gamma}} U_\sigma (F_\alpha \otimes \xi_i)
= \sum_{\substack{\tau \in S(n+1), \beta \in S_0(n) \\ \tau \beta (0 \ n+1) \tau^{-1} = \gamma}}  U_\tau (F_\beta \otimes \xi_i).
\]
The left-hand simplifies to
\[
\sum_{\substack{\sigma \in S(n), \alpha \in S_0(n) \\ \sigma \alpha \sigma^{-1} = \tilde{\alpha}}} U_\sigma (F_\alpha \otimes \xi_i)
= \sum_{\sigma \in S(n)} U_\sigma (F_{\sigma^{-1} \tilde{\alpha} \sigma} \otimes \xi_i)
=  n! F_{\tilde{\alpha}} \otimes \xi_i,
\]
where in the last step we used \eqref{Eq:Symmetrizer}. Note also that for  $\tau, \beta$ as in the sum above,
\begin{equation}
\label{Eq:Zeros}
\gamma(0) = \tilde{\alpha}(0) = \tau(n+1),
\end{equation}
where we recall that $\tilde{\alpha}(0) \neq 0, n+1$. Therefore
\[
n! F_{\tilde{\alpha}} \otimes \xi_i = \sum_{\substack{\tau \in S(n+1), \beta \in S_0(n) \\ \tau \beta (0 \ n+1) \tau^{-1} = \gamma}}  U_\tau (F_\beta \otimes \xi_i) \in U_{(\tilde{\alpha}(0) \ n+1)} (\mc{H}_{\mf{R}}^{\otimes n} \otimes \Span{\xi_i}).
\]
It follows that $F_{\tilde{\alpha}} \in  U_{(\tilde{\alpha}(0) \ n)} (\mc{H}_{\mf{R}}^{\otimes (n-1)} \otimes \Span{\xi_i})$. Since this is true for each $i \in \Xi$, which has at least 2 elements, we conclude that $F_{\tilde{\alpha}} = 0$. 
\end{proof}

\subsubsection{Polynomial subalgebra}
\label{Subsec:Polynomial-aubalgebra}

\begin{Defn}
The polynomial subalgebra  of $\mc{TP}(\mc{H}_{\mf{R}})$ is
\[
\mc{P}(\mc{H}_{\mf{R}})  = \Alg{\T{{(01)} \otimes h}} = \Span{\T{\alpha \otimes_s F} : \cyc_0(\alpha) = 0}
\]
\end{Defn}

\begin{Remark}
Denote $X(h) = \T{{(01)} \otimes h}$, so that $\mc{P}(\mc{H}_{\mf{R}})$ is generated by these operators explaining the name. Also, 
\[
X(h_1) \ldots X(h_n) = \T{{(0 1 \ldots n)} \otimes (h_1 \otimes \ldots \otimes h_n)}
\]
and any other long cycle is conjugate to the standard one, so the two expressions in the preceding definition are equivalent.  $\mc{P}(\mc{H}_{\mf{R}})$ is a unital star-subalgebra of $\mc{TP}(\mc{H}_{\mf{R}})$. It is not closed under contractions or conditional expectations. In particular, the corresponding $\W{{(0 1 \ldots n)} \otimes (h_1 \otimes \ldots \otimes h_n)}$ operators do not in general belong to this subalgebra.

Clearly $\mc{P}(\mc{H}_{\mf{R}})$ and the center $\Span{\T{\alpha \otimes_s F} : \alpha(0) = 0}$ together generate $\mc{TP}(\mc{H}_{\mf{R}})$.
\end{Remark}

\begin{Thm}
Suppose $\mc{H}_{\mf{R}}$ is infinite-dimensional. Then $\mc{TP}(\mc{H}_{\mf{R}})$ is generated (as an algebra) by the conditional expectations
\[
\set{\state{A \ |\ \mc{H}'} : A \in \mc{P}(\mc{H}_{\mf{R}}), \mc{H}'_{\mf{R}} \subset \mc{H}_{\mf{R}}}.
\]
\end{Thm}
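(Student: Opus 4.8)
The plan is two easy reductions followed by one explicit ``loop‑closing'' computation. Write $\mathcal{G} \subseteq \mc{TP}(\mc{H}_{\mf{R}})$ for the subalgebra generated by $\set{\state{A \ |\ \mc{H}'} : A \in \mc{P}(\mc{H}_{\mf{R}}),\ \mc{H}'_{\mf{R}} \subseteq \mc{H}_{\mf{R}}}$. Taking $\mc{H}'_{\mf{R}} = \mc{H}_{\mf{R}}$ gives $\state{A \ |\ \mc{H}} = A$, so $\mc{P}(\mc{H}_{\mf{R}}) \subseteq \mathcal{G}$. By the remark preceding the theorem, $\mc{TP}(\mc{H}_{\mf{R}})$ is generated by $\mc{P}(\mc{H}_{\mf{R}})$ together with its center $\Span{\T{\alpha \otimes_s F} : \alpha(0) = 0}$; and this center is itself generated as an algebra by the pure single traces $\T{(0)(1\,2\,\ldots\,k) \otimes_s (g_1 \otimes \ldots \otimes g_k)}$, since the product rule $\T{\alpha}\T{\beta} = \T{\alpha \cup \beta}$ merges two such factors only at the fixed point $0$, i.e.\ simply juxtaposes their nonidentity cycles, and the $\otimes_s$-averaging absorbs the relabeling. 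Hence it suffices to show $\T{(0)(1\,2\,\ldots\,k) \otimes_s (g_1 \otimes \ldots \otimes g_k)} \in \mathcal{G}$ for all $k \ge 1$ and all $g_1, \ldots, g_k \in \mc{H}_{\mf{R}}$.

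Fix such vectors, choose a closed subspace $\mc{H}'_{\mf{R}} \supseteq \Span{g_1, \ldots, g_k}$, and --- here is where infinite-dimensionality enters --- a unit vector $e \in \mc{H}_{\mf{R}}$ with $e \perp \mc{H}'_{\mf{R}}$. Consider the polynomial element
\[
A = X(e)\, X(g_1) \cdots X(g_k)\, X(e) = \T{(0\,1\,2\,\ldots\,k+2) \otimes (e \otimes g_1 \otimes \ldots \otimes g_k \otimes e)} \in \mc{P}(\mc{H}_{\mf{R}}),
\]
and compute $\state{A \ |\ \mc{H}'}$ by the multilinear form of Proposition~\ref{Prop:Single} (the general-tensor version, obtained by the same calculation): it is a sum over $\pi \in \Part_{1,2}(k+2)$ of terms
\[
\Bigl(\,\prod_{(ij) \in \Pair{\pi}} \ip{P_{(\mc{H}')^\perp} h_i}{P_{(\mc{H}')^\perp} h_j}\,\Bigr)\ \T{C_\pi((0\,1\,2\,\ldots\,k+2)) \otimes_s \textstyle\bigotimes_{(l) \in \Sing{\pi}} P_{\mc{H}'} h_l},
\]
where $(h_1, \ldots, h_{k+2}) = (e, g_1, \ldots, g_k, e)$. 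Since $P_{(\mc{H}')^\perp}$ annihilates every $g_i$, any pair other than $(1, k+2)$ contributes the factor $0$; and a singleton at position $1$ or $k+2$ produces the vanishing slot $P_{\mc{H}'} e = 0$. Thus the only surviving partition is $\pi_0 = \set{(1, k+2)} \cup \set{(2), (3), \ldots, (k+1)}$, with weight $\ip{e}{e} = 1$.

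It remains to evaluate $C_{\pi_0}$ on the long cycle. With $\tau = (1, k+2)$ and $\alpha = (0\,1\,2\,\ldots\,k+2)$ one computes $\tau\alpha = (0\ k+2)(1\,2\,\ldots\,k+1)$, so $\cyc_0(\tau\alpha) = 1$; the restriction of $\tau\alpha$ to $[0, k+2] \setminus \set{1, k+2}$ is $(0)(2\,3\,\ldots\,k+1)$ with $\cyc_0 = 1$, which after the order-preserving relabeling onto $[0, k]$ becomes $(0)(1\,2\,\ldots\,k)$, and the weight is $q^{\,\cyc_0(\cdot) - \cyc_0(\tau\alpha) + 1} = q^{\,1 - 1 + 1} = q$. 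The surviving slots $2, \ldots, k+1$ carry $g_1, \ldots, g_k$ and are relabeled to $1, \ldots, k$, so
\[
\state{A \ |\ \mc{H}'} = q\ \T{(0)(1\,2\,\ldots\,k) \otimes_s (g_1 \otimes \ldots \otimes g_k)}.
\]
As $q \neq 0$, dividing by $q$ exhibits the required element in $\mathcal{G}$; combined with the reductions this yields $\mathcal{G} = \mc{TP}(\mc{H}_{\mf{R}})$.

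The only genuinely nontrivial idea is to conjugate by $X(e)$ for a \emph{fresh} vector $e$ orthogonal to everything in play and then condition onto a subspace missing $e$: this forces all but one partition in the conditional expectation to drop out and turns the open string $X(g_1)\cdots X(g_k)$ into the closed loop $\T{(0)(1\,2\,\ldots\,k)\otimes_s(\cdots)}$, with the infinite dimension of $\mc{H}_{\mf{R}}$ used precisely to supply $e$. The remaining points --- the multilinear form of Proposition~\ref{Prop:Single}, the vanishing of the other $\pi$, and the explicit value of $C_{\pi_0}$ on the long cycle (in particular the exponent of $q$) --- are routine bookkeeping. Note that the argument requires $q \neq 0$; at $q = 0$ this contraction weight vanishes, consistent with pure traces being scalar-valued in the free case.
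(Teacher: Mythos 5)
Your proof is correct and uses the paper's key idea verbatim: sandwich the open word $X(g_1)\cdots X(g_k)$ between two copies of $X(e)$ for a fresh vector $e$ orthogonal to everything in play, then condition onto a subspace missing $e$, so that the only surviving contraction is the one closing the loop. The differences from the paper's argument are cosmetic: by working at the level of $\T{\cdot}$ via the multilinear form of Proposition~\ref{Prop:Single}, whose pair weights $\ip{P_{(\mc{H}')^\perp}h_i}{P_{(\mc{H}')^\perp}h_j}$ kill every competing partition outright, you avoid the paper's induction on degree and its ``lower order terms,'' and you correctly record the contraction weight $q$ on the long cycle --- a factor the paper's displayed computation drops, harmlessly since $q\neq 0$.
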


\begin{proof}
By the preceding remark, it suffices to show that the algebra generated by conditional expectations contains the center $\Span{\T{\alpha \otimes_s F} : \alpha(0) = 0}$. By Definition~\ref{Defn:Product-tensors} and linearity, the center is generated by $\T{\beta \otimes_s F}$, where $\beta \in S(n)$ is a single cycle not containing $0$, and $F$ is a simple tensor. We will use induction on $n$.  Suppose that for each $\beta \in S(k)$, $k < n$, $\T{\beta \otimes_s F}$ is in the algebra generated by the conditional expectations of elements of $\mc{P}(\mc{H}_{\mf{R}})$.  It then suffices to show that $\T{{(0)(1 \ldots n)} \otimes (h_1 \otimes \ldots \otimes h_n)}$ is in it. Let $h \perp \mc{H}'_{\mf{R}} = \Span{h_1, \ldots, h_n}$ be a non-zero vector. Then
\[
\begin{split}
& \state{X(h) \W{{(0 1 \ldots n)} \otimes (h_1 \otimes \ldots \otimes h_n)} X(h) \ |\ \mc{H}'} \\
&\quad = \state{C_{(1 \ n+2)} \W{{(0 1 \ldots n+2)} \otimes (h \otimes h_1 \otimes \ldots \otimes h_n \otimes h)} \ |\ \mc{H}'} \\
&\quad = \norm{h}^2 \W{{(0)(1 \ldots n)} \otimes (h_1 \otimes \ldots \otimes h_n)}.
\end{split}
\]
Therefore using Definition~\ref{Defn:T-I},
\[
\begin{split}
& \state{X(h) \T{{(0 1 \ldots n)} \otimes (h_1 \otimes \ldots \otimes h_n)} X(h) \ |\ \mc{H}'} \\
&\quad \state{X(h) (\W{{(0 1 \ldots n)} \otimes (h_1 \otimes \ldots \otimes h_n)} + \text{ lower order terms}) X(h) \ |\ \mc{H}'} \\
&\quad = \norm{h}^2 \W{{(0)(1 \ldots n)} \otimes (h_1 \otimes \ldots \otimes h_n)} + \text{ lower order terms} \\
&\quad = \norm{h}^2 \T{{(0)(1 \ldots n)} \otimes (h_1 \otimes \ldots \otimes h_n)} + \text{ lower order terms}.
\end{split}
\]
The result follows by induction.
\end{proof}

It is also clear that $\mc{TP}(\mc{H}_{\mf{R}})$ is the smallest algebra generated by $\mc{P}(\mc{H}_{\mf{R}})$ which is closed under the center-valued trace operation in the next corollary.

\begin{Cor}
\label{Cor-Center-trace}
The $\phi$-preserving conditional expectation from $\overline{\mc{TP}}(\mc{H}_{\mf{R}})$ onto its center is the map
\[
\W{\alpha \otimes_s F} \mapsto \W{{\alpha|_{\set{0}^c}} \otimes_s F}.
\]
Here for $\alpha \in S_0(n)$, the permutation $\alpha|_{\set{0}^c}$ is obtained by removing $0$ from its cycle in $\alpha$, similarly to Notation~\ref{Notation:Restriction}. On $\mc{TP}(\mc{H}_{\mf{R}})$ this conditional expectation is implemented by
\[
\state{X(h) \W{ \alpha \otimes_s (h_1 \otimes \ldots \otimes h_n)} X(h) \ |\ \mc{H}'},
\]
where $\mc{H}'_{\mf{R}} = \Span{h_1, \ldots, h_n}$ and $h \perp \mc{H}'_{\mf{R}}$ is a unit vector.
\end{Cor}

A similar representation holds for general contractions.

\begin{Lemma}
\label{Lemma:Orthogonal-contractions}
Let $\pi = \set{(v_1, w_1), \ldots, (v_\ell, w_\ell), (u_1) \ldots (u_{n - 2 \ell} )} \in \Part_{1,2}(n)$, and $\mc{H}'_{\mf{R}} \subset \mc{H}_{\mf{R}}$ a closed subspace. Let $h_1, \ldots, h_n \in \mc{H}_{\mf{R}}$ be vectors such that
\begin{itemize}
\item
$h_{v_i} = h_{w_i}$, $1 \leq i \leq \ell$.
\item
The vectors $\set{h_{v_1}, \ldots, h_{v_\ell}}$ are an orthonormal subset of $(\mc{H}'_{\mf{R}})^\perp$.
\item
$\set{h_{u_1}, \ldots, h_{u_{n - 2 \ell}}} \subset \mc{H}'_{\mf{R}}$.
\end{itemize}
Then
\[
\state{\T{\alpha \otimes (h_1 \otimes \ldots \otimes h_n)} \ |\ \mc{H}'} = \T{C_\pi(\alpha \otimes (h_1 \otimes \ldots \otimes h_n))}.
\]
\end{Lemma}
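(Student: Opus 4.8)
The plan is to expand the trace monomial over partitions, apply the conditional expectation termwise, discard the terms that vanish, and recognize the remainder as the partition expansion of the right-hand side. Write $F = h_1 \otimes \cdots \otimes h_n$. By Definition~\ref{Defn:T-I},
\[
\T{\alpha \otimes F} = \sum_{\rho \in \Part_{1,2}(n)} \I{C_\rho(\alpha) \otimes C_\rho(F)},
\]
where, writing $\Pair{\rho}$ for the two-element blocks and $\Sing{\rho}$ for the singletons, $C_\rho(F) = \big(\prod_{(a,b) \in \Pair{\rho}} \ip{h_a}{h_b}\big)\, \bigotimes_{c \in \Sing{\rho}} h_c$. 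Applying $\state{\cdot \mid \mc{H}'} = \Gamma(P_{\mc{H}'})$ and Proposition~\ref{Prop:CE}, the $\rho$-term becomes
\[
\Big(\prod_{(a,b) \in \Pair{\rho}} \ip{h_a}{h_b}\Big)\ \I{C_\rho(\alpha) \otimes \bigotimes_{c \in \Sing{\rho}} P_{\mc{H}'} h_c}.
\]

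The next, and main, step is the combinatorial identification of the surviving $\rho$. Under the three hypotheses on the $h_i$: $P_{\mc{H}'} h_{v_i} = P_{\mc{H}'} h_{w_i} = 0$, so a term vanishes unless $\supp{\pi} \cap \Sing{\rho} = \emptyset$; and each factor $\ip{h_a}{h_b}$ with $(a,b) \in \Pair{\rho}$ must be nonzero, while $h_{v_i}$ is a unit vector orthogonal to every other $h_{v_j}$, $h_{w_j}$ and to all $h_{u_k} \in \mc{H}'_{\mf{R}}$. Hence each $v_i$ and each $w_i$ lies in a block of $\rho$, and that block can only be $\set{v_i, w_i}$, so $\Pair{\pi} \subseteq \Pair{\rho}$; consequently $\rho = \pi \cup \tau$, where $\tau$ is a partition into pairs and singletons of the complement of $\supp{\pi}$, i.e., of $\Sing{\pi} = \set{u_1, \dots, u_{n-2\ell}}$. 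For such $\rho$ we have $\prod_{(a,b)\in\Pair{\rho}}\ip{h_a}{h_b} = \prod_{i}\norm{h_{v_i}}^2 \prod_{(a,b)\in\tau}\ip{h_a}{h_b} = \prod_{(a,b)\in\tau}\ip{h_a}{h_b}$, and $P_{\mc{H}'} h_c = h_c$ for $c \in \Sing{\rho} = \Sing{\tau} \subseteq \Sing{\pi}$.

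Therefore, identifying $\Sing{\pi}$ order-preservingly with $[n-2\ell]$ so that $\tau$ runs over $\Part_{1,2}(n - 2\ell)$,
\[
\state{\T{\alpha \otimes F} \mid \mc{H}'} = \sum_{\tau \in \Part_{1,2}(n - 2\ell)} \Big(\prod_{(a,b)\in\tau}\ip{h_{u_a}}{h_{u_b}}\Big)\, \I{C_\tau(C_\pi(\alpha)) \otimes \bigotimes_{c \in \Sing{\tau}} h_{u_c}},
\]
using that contractions of disjoint support compose, $C_{\pi \cup \tau} = C_\tau \circ C_\pi$, on both the permutation and the tensor factors. Since $C_\pi(F) = \prod_i \norm{h_{v_i}}^2 \cdot (h_{u_1} \otimes \cdots \otimes h_{u_{n-2\ell}}) = h_{u_1} \otimes \cdots \otimes h_{u_{n-2\ell}}$ and $C_\tau(h_{u_1} \otimes \cdots \otimes h_{u_{n-2\ell}}) = \prod_{(a,b)\in\tau}\ip{h_{u_a}}{h_{u_b}} \cdot \bigotimes_{c\in\Sing{\tau}} h_{u_c}$, the right-hand side is precisely the Definition~\ref{Defn:T-I} expansion of $\T{C_\pi(\alpha) \otimes (h_{u_1}\otimes\cdots\otimes h_{u_{n-2\ell}})} = \T{C_\pi(\alpha \otimes F)}$, which is the claim.

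The step I expect to be the real obstacle is making the composition law $C_{\pi\cup\tau} = C_\tau \circ C_\pi$ precise. On the permutation side this requires tracking the order-preserving relabelings $P^A_B$ in Definition~\ref{Defn:Contraction-gr} and verifying additivity of the weight exponents $\cyc_0((\pi\alpha)|_{\supp{\pi}^c}) - \cyc_0(\pi\alpha)$ under iterated contraction (the telescoping of cycle counts that already underlies $\mc{L}^\ell = \ell!\sum_{\abs{\pi}=n-\ell} C_\pi$ and the definitions of $\I{\cdot}$ in Definitions~\ref{Defn:I-gr} and \ref{Defn:T-I}). This is routine but notation-heavy, and I would either invoke it from the earlier development or dispatch it with a short separate cycle-counting computation.
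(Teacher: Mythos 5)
Your proposal is correct and follows essentially the same route as the paper's proof: expand $\T{\alpha \otimes F}$ via Definition~\ref{Defn:T-I}, apply $\Gamma(P_{\mc{H}'})$ termwise, observe that the hypotheses on the $h_i$ kill every term except those $\sigma$ with $\Pair{\pi} \subset \Pair{\sigma}$, and resum over partitions of $\Sing{\pi}$ to recover $\T{C_\pi(\alpha \otimes F)}$. The composition law $C_{\pi \cup \tau} = C_\tau \circ C_\pi$ that you flag as the technical point is indeed used silently in the paper's third equality, but it is already available from the earlier development (Notation~\ref{Notation:Laplacian} treats $C_\pi$ as a product of transposition contractions), so your argument is complete.
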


\begin{proof}
Using the assumptions on the vectors,
\[
\begin{split}
& \state{\T{\alpha \otimes (h_1 \otimes \ldots \otimes h_n)} \ |\ \mc{H}'} \\
&\quad = \state{\sum_{\sigma \in \Part_{1,2}(n)} \W{C_\sigma(\alpha \otimes (h_1 \otimes \ldots \otimes h_n))} \ |\ \mc{H}'} \\
&\quad = \sum_{\substack{\sigma \in \Part_{1,2}(n) \\ \Pair{\pi} \subset \Pair{\sigma}}} \W{C_\sigma(\alpha \otimes (h_1 \otimes \ldots \otimes h_n))} \\
&\quad = \sum_{\rho \in \Part_{1,2}([\abs{\Sing{\pi}}])} \W{C_\rho C_\pi(\alpha \otimes (h_1 \otimes \ldots \otimes h_n))} \\
&\quad = \T{C_\pi(\alpha \otimes (h_1 \otimes \ldots \otimes h_n))}. \qedhere
\end{split}
\]
\end{proof}

\begin{Prop}
For $q \in \mc{Z}$, in its representation on $\mc{F}_q(\mc{H})$, for $h \in \mc{H}_{\mf{R}}$, $X(h)$ is essentially self-adjoint.
\end{Prop}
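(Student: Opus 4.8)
The plan is to apply Nelson's analytic vector theorem. Since $h \in \mc{H}_{\mf{R}}$ we have $X(h)^\ast = \T{(01)^{-1} \otimes h} = X(h)$, so by Theorem~\ref{Thm:State}(a) the operator $X(h)$ is symmetric on the dense, $X(h)$-invariant domain $\mc{TP}(\mc{H}_{\mf{R}})\,\Omega \subset \mc{F}_q(\mc{H})$; it therefore suffices to exhibit a dense set of analytic vectors for it.

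First I would fix a convenient dense set. The vacuum $\Omega$ is cyclic (Remark~\ref{Remark:GNS}), and since simple tensors span $\mc{H}^{\otimes k}$, the vectors $A\,\Omega$ with $A = \T{\beta \otimes_s (h_1 \otimes \cdots \otimes h_k)}$, $\beta \in S_0(k)$, $h_1,\dots,h_k \in \mc{H}_{\mf{R}}$, span a dense subspace; by multilinearity it is enough to treat one such $A$. Write $G = h_1 \otimes \cdots \otimes h_k$. Using $X(h)^m = \T{(0\,1\,\cdots\,m) \otimes h^{\otimes m}}$ from Section~\ref{Subsec:Polynomial-aubalgebra}, the product rule of Definition~\ref{Defn:Product-tensors}, and associativity, one checks that $X(h)^n A$ is a single trace monomial $\T{\gamma \otimes_s H}$ with $H = h^{\otimes n} \otimes G$ a simple tensor; consequently $(X(h)^n A)^\ast (X(h)^n A)$ collapses to $\T{\delta \otimes_s K}$ for some $\delta \in S_0(2(n+k))$ and a simple tensor $K$ of norm $\norm{K} = \norm{G}^2 \norm{h}^{2n}$ (and likewise for $(A\,X(h)^n)^\ast (A\,X(h)^n)$, which is the relevant product in the right GNS representation).

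Next I would estimate $\norm{X(h)^n A\,\Omega}^2 = \state{\T{\delta \otimes_s K}}$. By the moment formula Theorem~\ref{Thm:State}(e) this equals $\sum_{\pi \in \Part_2(2(n+k))} q^{\abs{\pi \delta}} C_\pi(K)$, a sum of $(2(n+k)-1)!!$ terms. For each term $\abs{q}^{\abs{\pi\delta}} \le 1$, since $q \in \mc{Z} \subseteq [-1,1]$ and $\abs{\pi\delta} \ge 0$; and writing $K = k_1 \otimes \cdots \otimes k_{2(n+k)}$ we get $\abs{C_\pi(K)} = \bigl| \prod_{\{a,b\} \in \pi} \ip{k_a}{k_b} \bigr| \le \prod_j \norm{k_j} = \norm{K}$ by Cauchy--Schwarz, one pair at a time. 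Hence $\norm{X(h)^n A\,\Omega}^2 \le (2(n+k)-1)!!\,\norm{G}^2 \norm{h}^{2n} \le 2^{n+k}(n+k)^k\, n!\,\norm{G}^2 \norm{h}^{2n}$. Since
\[
\sum_{n=0}^\infty \frac{\norm{X(h)^n A\,\Omega}}{n!}\, t^n \;\le\; 2^{k/2}\norm{G} \sum_{n=0}^\infty \frac{(n+k)^{k/2}\,(\sqrt{2}\,\norm{h}\,t)^n}{\sqrt{n!}} \;<\; \infty
\]
for every $t > 0$ by the ratio test, $A\,\Omega$ is an entire analytic vector; Nelson's theorem then gives that $X(h)$ is essentially self-adjoint.

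The convergence estimate is elementary once everything is in place; the part that needs care is the bookkeeping that puts it in place — verifying that $X(h)^m$ is a single trace monomial, that the relevant product collapses to one $\T{\delta \otimes_s K}$, and, the one genuinely delicate point, that $K$ may be taken to be a simple tensor. The last is essential because the bound $\abs{C_\pi(K)} \le \norm{K}$ is false for general $K$ (tensor contractions are unbounded on $\overline{\mc{H}^{\otimes m}}$), and it is precisely why the dense set above is chosen to consist of vectors with simple-tensor arguments.
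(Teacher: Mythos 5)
Your proof is correct and follows essentially the same route as the paper: symmetry plus a dense set of analytic vectors, with $\norm{X(h)^n A\,\Omega}^2$ evaluated by the moment formula of Theorem~\ref{Thm:State}(e) and bounded by the number of pairings $(2(n+k)-1)!!$ times $\norm{h}^{2n}$, yielding entire vectors. Your one refinement --- restricting to simple-tensor arguments so that $\abs{C_\pi(K)} \leq \norm{K}$ holds pairwise by Cauchy--Schwarz --- is a legitimate point of care (the paper applies the same bound to general $F$ in the algebraic tensor product, where strictly one should either reduce to simple tensors as you do or absorb an $n$-independent constant depending on $F$), but it does not change the argument's structure.
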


\begin{proof}
Clearly $X({h})$ is symmetric. So by Nelson's analytic vector theorem, it suffices to show that its domain contains a subset of analytic vectors which is total (i.e. their span is dense). We verify that for each $\alpha \in S_0(k)$, $\T{\alpha \otimes F} {(0)}$ is an analytic vector for it. Indeed, using the relation in Theorem~\ref{Thm:State} between the inner product on $\mc{F}_q(\mc{H})$ and the state on $\mc{T P}(\mc{H}_{\mf{R}})$,
\[
\begin{split}
& \frac{1}{n} \norm{X(h)^n \T{\alpha \otimes F} {(0)}}_q^{1/n} \\
&\quad = \frac{1}{n} \state{\T{{\alpha^{-1} \cup (0 1 \ldots n) \cup \alpha} \otimes (\bar{F} \otimes h^{\otimes 2 n} \otimes F)}}^{1/2n} \\
&\quad = \frac{1}{n} \left( \sum_{\pi \in \Part_2(2n + 2 k)} q^{(n + k) - \cyc_0(\pi (\alpha^{-1} \cup (0 1 \ldots n) \cup \alpha))} C_\pi(\bar{F} \otimes h^{\otimes 2 n} \otimes F) \right)^{1/2n} \\
&\quad \leq \frac{1}{n} \left( \frac{(2 n + 2k)!}{2^{n+k} (n+k)!} \norm{F}^2 \norm{h}^{2n} \right)^{1/2n} \\
&\quad \sim \frac{1}{n} 2^{1/2} (n + k)^{1/2} e^{-1/2} \norm{h} \rightarrow 0,
\end{split}
\]
where we used the fact that $\abs{q} \leq 1$ and $\abs{\Part_2(2n)} = \frac{(2n!)}{2^n n!}$.
\end{proof}

\begin{Thm}
\label{Thm:GUE}
In the right GNS representation, we may decompose $X(h) = a^+_{(01)}(h) + a^-_{(01)}(h)$, where
\[
a^+_{(01)}(h) (\alpha \otimes F)
= {(0 \ n+1) \alpha} \otimes r^+(h) F
\]
and
\[
\begin{split}
a^-_{(01)}(h) (\alpha \otimes F)
&= q \sum_{k \neq \alpha^{-1}(0)} P^{[0, n] \setminus \set{k}}_{[0, n-1]} ((0k)\alpha)|_{\set{k}^c} \otimes r^-_k(h) F \\
&\qquad+ \delta_{\alpha(0) \neq 0}  P^{[0, n] \setminus \set{\alpha^{-1}(0)}}_{[0, n-1]} \alpha|_{\set{\alpha^{-1}(0)}^c} \otimes r^-_{\alpha^{-1}(0)}(h) F.
\end{split}
\]
The distribution of $\W{{(01)} \otimes h}$ is the unnormalized average empirical distribution of a GUE matrix with mean $0$ and variance $\norm{h}$.
\end{Thm}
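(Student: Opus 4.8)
I would prove the two assertions in turn.

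\emph{The decomposition $X(h)=a^+_{(01)}(h)+a^-_{(01)}(h)$.} This parallels the proof of Theorem~\ref{Thm:Gaussian}. Since $\mc{L}_1=0$ we have $\I{{(01)}\otimes h}=\T{{(01)}\otimes h}=X(h)$, and Proposition~\ref{Prop:Product-I} gives
\[
\I{\alpha\otimes F}\,X(h)=\I{(\alpha\cup{(01)})\otimes(F\otimes h)}+\sum_{k=1}^n\I{C_{(k\ n+1)}\bigl((\alpha\cup{(01)})\otimes(F\otimes h)\bigr)},
\]
because $\Part_{1,2}(n,1)$ contains only the all-singleton partition and the $n$ partitions with a single pair $(k\ n+1)$. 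By Notation~\ref{Notation:Union}, $\alpha\cup{(01)}={(0\ n+1)}\alpha$, so the first term is $\I{{(0\ n+1)}\alpha\otimes\ell^+(h)F}=a^+_{(01)}(h)(\alpha\otimes F)$. For each $k$ the tensor part of $C_{(k\ n+1)}$ produces $\ell^-_k(h)F$ by definition, and I would evaluate the permutation part $C_{(k\ n+1)}({(0\ n+1)}\alpha)$ using the weight table following Definition~\ref{Defn:Contraction-gr}: the pair $(k\ n+1)$ is never a $2$-cycle of ${(0\ n+1)}\alpha$, while $k$ and $n+1$ lie consecutively in a (necessarily length $\geq 3$) cycle of ${(0\ n+1)}\alpha$ precisely when $\alpha(k)=0$. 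Hence for $k\neq\alpha^{-1}(0)$ the weight is $q$ and, since $(k\ n+1){(0\ n+1)}\alpha={(0\ k\ n+1)}\alpha$, erasing $\set{k,n+1}$ leaves ${((0k)\alpha)|_{\set{k}^c}}$ after the order-preserving relabelling; for $k=\alpha^{-1}(0)$ the weight is $1$ and the same erasure leaves ${\alpha|_{\set{\alpha^{-1}(0)}^c}}$. This reproduces the stated formula for $a^-_{(01)}(h)$. Because $X(h)$ is symmetric while $a^+_{(01)}(h)$ raises and $a^-_{(01)}(h)$ lowers the grading by one, the two are mutually adjoint, exactly as in Theorem~\ref{Thm:Gaussian}.

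\emph{The distribution of $X(h)$.} Here I take $q=\frac{1}{N}$. From $X(h_1)\cdots X(h_k)=\T{{(0\,1\,\ldots\,k)}\otimes(h_1\otimes\cdots\otimes h_k)}$ we get $X(h)^k=\T{{(0\,1\,\ldots\,k)}\otimes h^{\otimes k}}$, so Theorem~\ref{Thm:State}(e) yields $\state{X(h)^{2m+1}}=0$ and, writing $\gamma={(0\,1\,\ldots\,2m)}$,
\[
\state{X(h)^{2m}}=\sum_{\pi\in\Part_2(2m)}q^{\,m-\cyc_0(\pi\gamma)}\,C_\pi(h^{\otimes 2m})=\norm{h}^{2m}\sum_{\pi\in\Part_2(2m)}N^{\cyc_0(\pi\gamma)-m}.
\]
It then remains to recognise the right-hand side as $\Exp{\frac{1}{N}\Tr Y^{2m}}$ for an $N\times N$ GUE matrix $Y$ with $\Exp{Y_{ij}Y_{k\ell}}=\frac{\norm{h}^2}{N}\delta_{i=\ell}\delta_{j=k}$. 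This is the classical topological expansion: Wick's formula writes $\Exp{\Tr Y^{2m}}$ as a sum over pairings $\pi$ of the $2m$ matrix factors, each contributing $\norm{h}^{2m}N^{-m}$ times $N$ to the power of the number of index loops, and this loop count is $\cyc(\pi\gamma')$ for $\gamma'$ the $2m$-cycle on $\set{1,\ldots,2m}$; since $\pi$ fixes $0$, the restriction $(\pi\gamma)|_{\set{0}^c}$ equals $\pi\gamma'$ and has the same number of cycles as $\pi\gamma$ (the point $0$ is never fixed by $\pi\gamma$), so $\cyc(\pi\gamma')=\cyc(\pi\gamma)=\cyc_0(\pi\gamma)+1$. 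Dividing the Wick sum by $N$ gives exactly $\norm{h}^{2m}\sum_\pi N^{\cyc_0(\pi\gamma)-m}=\state{X(h)^{2m}}$. For a fixed $N$ these moments satisfy Carleman's condition (the spectrum of $Y$ lies almost surely in a fixed compact set), so the moment problem is determinate and the $\phi$-law of $X(h)$ is the average empirical spectral distribution of $Y$.

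The weight bookkeeping in the first part is routine once one has the table after Definition~\ref{Defn:Contraction-gr}. The real obstacle is the matching in the second part: identifying the purely group-theoretic sum $\sum_\pi q^{\,m-\cyc_0(\pi\gamma)}$ with a Gaussian matrix moment. This is standard Wick/ribbon-graph combinatorics (as in Harer--Zagier), but it requires a careful correspondence between $\Part_2(2m)$ and Wick pairings together with the check that the marked point $0$ of $S_0(2m)$ plays the role of the distinguished boundary vertex of the associated unicellular map, so that the face (loop) counts line up. The remaining cosmetic points---that the variance is $\norm{h}^2$ (the displayed ``variance $\norm{h}$'' standing for the standard deviation) and the precise sense of ``unnormalized average empirical distribution''---do not affect the argument.
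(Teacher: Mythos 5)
Your proposal is correct and follows essentially the same route as the paper: apply Proposition~\ref{Prop:Product-I} with $\beta=(01)$, note that $\Part_{1,2}(n,1)$ consists of the empty pairing and the $n$ single pairs $(k\ n{+}1)$, and split the contraction sum according to whether $k=\alpha^{-1}(0)$ (weight $1$) or not (weight $q$). For the distribution, the paper likewise reduces to the moment formula $\state{X(h)^{2m}}=\sum_{\pi\in\Part_2(2m)}q^{m-\cyc_0((0\ldots 2m)\pi)}\norm{h}^{2m}$ and simply cites Theorem~22.12 of \cite{Nica-Speicher-book} for the GUE identification; your explicit Wick/genus bookkeeping is a spelled-out version of what is already available in the paper as Proposition~\ref{Prop:GUE-moments} combined with Theorem~\ref{Thm:E-map}(c).
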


\begin{proof}
\[
\begin{split}
& \W{\alpha \otimes F} \ \W{(01) \otimes h} \\
&\quad = \W{(0 \ n+1) \alpha \otimes r^+(h) F} \\
&\quad \quad + \sum_{k=1}^n q^{\cyc_0(((0k)\alpha)|_{\set{k}^c}) - \cyc_0((0k)\alpha) + 1} \W{P^{[0, n] \setminus \set{k}}_{[0, n-1]} ((0k)\alpha)|_{\set{k}^c} \otimes r^-_k(h) F} \\
&\quad = \W{(0 \ n+1) \alpha \otimes r^+(h) F} \\
&\quad \quad + q \sum_{k \neq \alpha^{-1}(0)} \W{P^{[0, n] \setminus \set{k}}_{[0, n-1]} ((0k)\alpha)|_{\set{k}^c} \otimes r^-_k(h) F} \\
&\quad \quad + \delta_{\alpha(0) \neq 0}  \W{P^{[0, n] \setminus \set{\alpha^{-1}(0)}}_{[0, n-1]} \alpha|_{\set{\alpha^{-1}(0)}^c} \otimes r^-_{\alpha^{-1}(0)}(h) F}.
\end{split}
\]
Also,
\[
\state{\W{{(01)} \otimes h}^n}
= \state{\T{{(0 1 \ldots n)} \otimes h^{\otimes n}}}
= \begin{cases}
\sum_{\pi \in \Part_2(n)} q^{(n/2) - \cyc_0((0 \ldots n) \pi)} \norm{h}^{n}, & n \text{ even}, \\
0, & n \text{ odd},
\end{cases}
\]
which should be compared with Theorem 22.12 in \cite{Nica-Speicher-book}.
\end{proof}

\subsection{The relation to a construction by Bo\.{z}ejko and Gu\c{t}\u{a}}

We contrast the algebra $\mc{P}(\mc{H}_{\mf{R}})$ with a construction from \cite{Bozejko-Guta}. In section 5 of that paper, Bo\.{z}ejko and Gu\c{t}\u{a} considered the Fock space with the inner product
\[
\ip{f_1 \otimes \ldots \otimes f_n}{g_1 \otimes \ldots \otimes g_k}_q = \delta_{n=k} \sum_{\sigma \in S(n)} \chi_q[\sigma] \prod_{i=1}^n \ip{f_i}{g_{\sigma(i)}}
\]
for $q = \pm \frac{1}{N}$. On this space, they defined the creation operator $a^+(h)$ in the usual way, and the annihilation operator as its adjoint, which comes out to be
\[
a^-(h)(h_1 \otimes \ldots \otimes h_n) = \ip{h_1}{h} + q \sum_{k=2}^n \ip{h_k}{h} (h_2 \otimes \ldots \otimes h_{i-1} \otimes h_1 \otimes h_{i+1} \otimes \ldots \otimes h_n)
\]
(compare with Theorem~\ref{Thm:GUE}). Then (Lemma~5.1) the operators $\omega(h) = a^+(h) + a^-(h)$ satisfy (with our notation)
\[
\ip{\Omega}{\omega(h_1) \ldots \omega(h_{2n}) \Omega} = \sum_{\pi \in \Part_2(2n)} q^{n - c(\pi)} C_\pi(h_1 \otimes \ldots \otimes h_{2n})
\]
and the corresponding expression is zero for an odd number of factors (compare with equation~\eqref{Eq:GUE-moment}). Here $c(\pi)$ is again the number of cycles of a permutation corresponding to a partition $\pi$, but this correspondence is more subtle. For a pair partition $\pi$, there is a unique non-crossing partition $\tilde{\pi}$ with the same openers and closers as $\pi$. If $i \stackrel{\pi}{\sim} j$ and $i \stackrel{\tilde{\pi}}{\sim} k$, then for the corresponding permutation $\sigma$, $\sigma(i) = j$ if $i < j$, and $\sigma(i) = k$ if $k < i$ (it is easy to check that this is an alternative). In other words, $\sigma$ is a permutation with an upper partition $\pi$ and the lower partition $\tilde{\pi}$ in the sense of Corteel \cite{Corteel-Crossings-permutations}. Then $c(\pi)$ is the number of cycles of $\sigma$.

Moreover (Lemma~5.1), the creation and annihilation operators satisfy a commutation relation
\[
a^-(f) a^+(g) = \ip{f}{g} + q \ d\Gamma(|g \rangle \langle f |),
\]
where $d\Gamma(A)$ is the standard second quantization operator. We prove an analog of this relation in our context below. Note however that in other aspects, our construction behaves quite differently. For example, there is no simple commutation relation between $a^+_{(01)}$ and $d\Gamma(A)$ below. Also, for $q = - \frac{1}{N}$, the distribution of $\omega(h)$ only has finite support, in contrast to Theorem~\ref{Thm:State} and Propositions~\ref{Prop:Negative}.

\begin{Remark}
Both our construction and the one in \cite{Bozejko-Guta} are quite different from the more familiar $q$-deformed free Fock space, for which $q = \pm \frac{1}{N}$ hold no special meaning.
\end{Remark}

\begin{Remark}
Combining our construction with \cite{Bozejko-Guta}, one could consider a Fock space with the inner product
\[
\ip{f_1 \otimes \ldots \otimes f_n}{g_1 \otimes \ldots \otimes g_k}_q = \delta_{n=k} \sum_{\sigma \in S(n)} \chi_q[\sigma \alpha \sigma^{-1} \alpha^{-1}] \prod_{i=1}^n \ip{f_i}{g_{\sigma(i)}}
\]
for a fixed permutation $\alpha$, for example for $\alpha = (0 1 \ldots n)$. It is easy to see that for $q \in \mc{Z}$, this inner product is positive semi-definite for any $\alpha$. For particular choices of $\alpha$, it is positive semi-definite for a wider range of $q$.
\end{Remark}

\begin{Defn}
Let $A$ be a (bounded for simplicity) linear operator on $\mc{H}$. Define its differential second quantization
\[
d\Gamma(A)(\alpha \otimes (h_1 \otimes \ldots \otimes h_n))
= \sum_{i=1}^n {(0 i) \alpha} \otimes (h_1 \otimes \ldots \otimes A h_i \otimes \ldots \otimes h_n).
\]
\end{Defn}

Note that
\[
d\Gamma(I)(\alpha \otimes F)
=  \sum_{i=1}^n {(0 i)} \alpha \otimes F,
\]
where $\sum_{i=1}^n {(0 i)}$ is the Jucys-Murphy element.

\begin{Lemma}
\

\begin{enumerate}
\item
For $P_n$ the symmetrizing projection from equation~\eqref{Eq:Symm-proj}, $d\Gamma(A) \ P_n = P_n \ d\Gamma(A)$. Therefore $d\Gamma(A)$ restricts to an operator on $\mc{TP}(\mc{H})$ and $\mc{TP}_q(\mc{H})$.
\item
$(d\Gamma(A))^\ast = d\Gamma(A^\ast)$.
\end{enumerate}
\end{Lemma}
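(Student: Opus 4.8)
The plan is to settle both parts by direct computation: part (a) comes from the $S(n)$-equivariance of $d\Gamma(A)$, and part (b) from the trace property $\chi_q(xy)=\chi_q(yx)$.

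For (a), write $d\Gamma(A)=\sum_{i=1}^n L_{(0i)}\otimes R^A_i$ as an operator on the algebraic tensor product $\mf{C}[S_0(n)]\otimes\mc{H}^{\otimes n}$, where $L_{(0i)}$ is left multiplication by the transposition $(0i)$ and $R^A_i$ applies $A$ in the $i$-th tensor leg (this also explains why $d\Gamma(A)$ lives only on the algebraic tensor product, $R^A_i$ being unbounded). For $\sigma\in S(n)$ set $V_\sigma(\alpha\otimes F)=\sigma\alpha\sigma^{-1}\otimes U_\sigma F$, so that $P_n=\frac1{n!}\sum_{\sigma\in S(n)}V_\sigma$. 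First I would check the single relation $V_\sigma\,d\Gamma(A)=d\Gamma(A)\,V_\sigma$: on the permutation leg $\sigma(0i)\sigma^{-1}=(0\,\sigma(i))$ since $\sigma$ fixes $0$, and on the tensor leg $U_\sigma R^A_i=R^A_{\sigma(i)}U_\sigma$, so after reindexing $j=\sigma(i)$ both sides equal $\sum_j (0j)\sigma\alpha\sigma^{-1}\otimes R^A_j(U_\sigma F)$. Averaging over $\sigma$ gives $P_n\,d\Gamma(A)=d\Gamma(A)\,P_n$, hence $d\Gamma(A)$ preserves the image $\mc{TP}(\mc{H})$ of $P_n$. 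Finally, since $\mc{N}_{gr,q,n}$ is an ideal of $\mf{C}[S_0(n)]$, each $L_{(0i)}$ preserves it, so $d\Gamma(A)$ preserves the kernel of $\mc{K}_q$ (Proposition~\ref{Prop:kernel-vs}) and descends to $\mc{TP}_q(\mc{H})$.

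For (b), I would verify $\ip{d\Gamma(A)\xi}{\zeta}_q=\ip{\xi}{d\Gamma(A^\ast)\zeta}_q$ on the unsymmetrized Fock space with the inner product~\eqref{Eq:Unsymmetrized-IP}; since $d\Gamma(A)$ commutes with the isometric projection $P_n$ by part (a), this restricts to the adjoint relation on $\mc{F}_q(\mc{H})$ with~\eqref{Eq:IP-centralizer}. Expanding on simple tensors $F=h_1\otimes\cdots\otimes h_n$, $G=g_1\otimes\cdots\otimes g_n$ via~\eqref{Eq:Unsymmetrized-IP} and using $\ip{Ah_i}{g}=\ip{h_i}{A^\ast g}$, the left-hand side is a double sum over $i\in[n]$, $\sigma\in S(n)$ of
\[
\chi_q\big((0i)\,\alpha\sigma\beta^{-1}\sigma^{-1}\big)\;\ip{h_i}{A^\ast g_{\sigma^{-1}(i)}}\prod_{j\neq i}\ip{h_j}{g_{\sigma^{-1}(j)}},
\]
while the right-hand side (using $((0m)\beta)^{-1}=\beta^{-1}(0m)$) is a double sum over $m\in[n]$, $\sigma\in S(n)$ of
\[
\chi_q\big(\alpha\sigma\beta^{-1}(0m)\sigma^{-1}\big)\;\ip{h_{\sigma(m)}}{A^\ast g_m}\prod_{l\neq m}\ip{h_{\sigma(l)}}{g_l}.
\]
Matching, for each fixed $\sigma$, the index $i$ on the left with $m=\sigma^{-1}(i)$ on the right, the tensor factors agree after reindexing the products, and the character factors agree because $\sigma(0m)\sigma^{-1}=(0i)$ and $\chi_q$ is a trace give $\chi_q\big(\alpha\sigma\beta^{-1}(0m)\sigma^{-1}\big)=\chi_q\big((\alpha\sigma\beta^{-1}\sigma^{-1})(0i)\big)=\chi_q\big((0i)\,\alpha\sigma\beta^{-1}\sigma^{-1}\big)$; hence the two sums coincide termwise.

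Everything here is routine bookkeeping; the only step needing care is the reindexing in (b) that aligns the tensor leg into which $A$ (resp.\ $A^\ast$) is inserted with the transposition $(0i)$ (resp.\ $(0m)$), after which the trace property of $\chi_q$ closes the argument. I do not expect a genuine obstacle.
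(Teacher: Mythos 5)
Your proof is correct and follows essentially the same route as the paper's: part (a) via the conjugation identity $\sigma(0i)\sigma^{-1}=(0\,\sigma(i))$ together with the intertwining of $U_\sigma$ with the $i$-th-leg action of $A$ (the paper computes $d\Gamma(A)P_n$ directly and reindexes, which is the same calculation as your single-$\sigma$ equivariance followed by averaging), and part (b) via the reindexing $m=\sigma^{-1}(i)$ combined with the fact that $\chi_q$ is a class function, exactly as in the paper. One immaterial aside: since $A$ is assumed bounded, each $R^A_i$ is in fact bounded; what keeps $d\Gamma(A)$ unbounded is the growth of the number of summands with $n$, not the individual legs.
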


\begin{proof}
For part (a), we note that
\[
\begin{split}
& d\Gamma(A) \ P_n (\alpha \otimes (h_1 \otimes \ldots \otimes h_n)) \\
&\quad = \sum_{\sigma \in S(n)} \sum_{i=1}^n {(0i) \sigma \alpha \sigma^{-1}} \otimes (h_{\sigma^{-1}(1)} \otimes \ldots \otimes A h_{\sigma^{-1}(i)} \otimes \ldots \otimes h_{\sigma^{-1}(n)}) \\
&\quad = \sum_{\sigma \in S(n)} \sum_{i=1}^n {\sigma (0 \sigma^{-1}(i)) \alpha \sigma^{-1}} \otimes (h_{\sigma^{-1}(1)} \otimes \ldots \otimes A h_{\sigma^{-1}(i)} \otimes \ldots \otimes h_{\sigma^{-1}(n)}) \\
&\quad = P_n \ d\Gamma(A) (\alpha \otimes (h_1 \otimes \ldots \otimes h_n)).
\end{split}
\]
The restriction to $\mc{TP}_q(\mc{H})$ follows since $\mc{N}_{gr, q}$ is an ideal. Similarly, for part (b),
\[
\begin{split}
& \ip{ d\Gamma(A) (\alpha \otimes (f_1 \otimes \ldots \otimes f_n))}{\beta \otimes (g_1 \otimes \ldots \otimes g_n)}_q \\
&\quad = \sum_{\sigma \in S(n)} \sum_{i=1}^n \chi_q((0 i) \alpha \sigma \beta^{-1} \sigma^{-1}) \ip{A f_{i}}{g_{\sigma^{-1}(i)}} \prod_{j \neq i} \ip{f_j}{g_{\sigma^{-1}(j)}} \\
&\quad = \sum_{\sigma \in S(n)} \sum_{i=1}^n \chi_q(\alpha \sigma ((0 \sigma^{-1}(i)) \beta)^{-1} \sigma^{-1}) \ip{f_i}{A^\ast g_{\sigma^{-1}(i)}} \prod_{j \neq i} \ip{f_j}{g_{\sigma^{-1}(j)}} \\
&\quad = \ip{\alpha \otimes (f_1 \otimes \ldots \otimes f_n)}{ d\Gamma(A^\ast) (\beta \otimes (g_1 \otimes \ldots \otimes g_n))}. \qedhere
\end{split}
\]
\end{proof}

\begin{Prop}
Splitting $X(h)$ into the creation operator $a^+(h)$ and annihilation operator $a^-(h)$ as in Theorem~\ref{Thm:GUE}, we have
\[
a^-(f) a^+(g) = \ip{f}{g} + q \ d\Gamma(|g \rangle \langle f |).
\]
\end{Prop}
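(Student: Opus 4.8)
The plan is to verify the operator identity by testing both sides against a spanning set. Each side is a densely defined operator on $\mc{F}_q(\mc{H})$ determined by its values on the vectors $\alpha \otimes F$ with $\alpha \in S_0(n)$ and $F$ a simple tensor, and since $\ip{\cdot}{\cdot}_q$ is non-degenerate on $\mc{F}_q(\mc{H})$ it suffices to show that for all such $\alpha \otimes F$ and all $\beta \otimes E$ of the same degree $n$,
\[
\ip{a^-(f) a^+(g)(\alpha \otimes F)}{\beta \otimes E}_q = \ip{f}{g}\, \ip{\alpha \otimes F}{\beta \otimes E}_q + q\, \ip{d\Gamma(|g\rangle\langle f|)(\alpha \otimes F)}{\beta \otimes E}_q .
\]
First I would record that $a^-(f) = (a^+(f))^\ast$: this is forced because $a^+(f)$ raises the grading by one, $a^-(f)$ lowers it by one, and $X(f) = a^+(f) + a^-(f)$ is symmetric (its essential self-adjointness is the preceding Proposition), so taking adjoints and comparing graded components gives $(a^+(f))^\ast = a^-(f)$.

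Hence the left-hand side equals $\ip{a^+(g)(\alpha \otimes F)}{a^+(f)(\beta \otimes E)}_q$, and inserting the explicit creation operator of Theorem~\ref{Thm:GUE} together with the inner product formula of Theorem~\ref{Thm:State}(a) gives
\[
\ip{a^+(g)(\alpha \otimes F)}{a^+(f)(\beta \otimes E)}_q = \sum_{\tau \in S(n+1)} \chi_q\big(\tau^{-1}(0\ n{+}1)\alpha\,\tau\,\beta^{-1}(0\ n{+}1)\big)\, \ip{F \otimes g}{U_\tau(E \otimes f)} .
\]
Next I would split this sum according to whether $\tau$ fixes the fresh index $n+1$. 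For $\tau \in S(n)$ the permutation $\tau$ commutes with $(0\ n{+}1)$, so conjugating by $(0\ n{+}1)$ (which $\chi_q$ does not see) reduces the character argument to $\chi_q(\tau^{-1}\alpha\tau\beta^{-1})$, an element of $S_0(n)$; simultaneously $U_\tau(E \otimes f) = (U_\tau E)\otimes f$, so the tensor factor is $\ip{F}{U_\tau E}\,\ip{g}{f}$, and summing over $\tau \in S(n)$ reproduces exactly $\ip{g}{f}\,\ip{\alpha \otimes F}{\beta \otimes E}_q$. For $f,g$ in $\mc{H}_{\mf{R}}$ — the only case in which $X(h)$ is defined — inner products are symmetric, so this is the first term on the right; the general complex statement follows by sesquilinearity of both sides. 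For $\tau \notin S(n)$ I would parametrize $\tau = (n{+}1\ k)\rho$ with $\rho \in S(n)$ and $k = \tau(n{+}1) \in [n]$, compute $U_\tau(E \otimes f)$ explicitly (it puts $f$ in position $k$ and the displaced vector $e_{\rho^{-1}(k)}$ in position $n+1$), substitute $i = \rho^{-1}(k)$, and check that the resulting contraction matches, term by term in $i$, the summand of $\ip{d\Gamma(|g\rangle\langle f|)(\alpha \otimes F)}{\beta \otimes E}_q = \ip{\alpha\otimes F}{d\Gamma(|f\rangle\langle g|)(\beta\otimes E)}_q$ (using $d\Gamma(A)^\ast = d\Gamma(A^\ast)$ from the preceding Lemma).

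The one genuinely arithmetic point — which I expect to be the crux — is matching the character weights in the $\tau \notin S(n)$ part. After pushing the two copies of $(0\ n{+}1)$ through $\tau$ and $\alpha$ by conjugations invisible to $\chi_q$, the weight takes the shape $\chi_q^{n+2}\big((0\ i)(n{+}1\ j)\gamma\big)$ with $\gamma \in S_0(n)$ (fixing $n+1$) and $j \in [0,n]$ determined by $\alpha,\rho,i$, while the corresponding $d\Gamma$ weight is $q\,\chi_q^{n+1}\big((0\ i)\gamma\big)$. So the proof reduces to the length identity $\abs{(0\ i)(n{+}1\ j)\gamma} = \abs{(0\ i)\gamma} + 1$, the first length taken in $S_0(n+1)$ and the second in $S_0(n)$. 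This holds because left-multiplying $\gamma$ by $(n{+}1\ j)$ merely inserts the new point $n+1$ into the $\gamma$-cycle through $j$: it adds one point and raises the length by one, and it does not change the partition of $[0,n]$ into $\gamma$-cycles, in particular whether $0$ and $i$ share a cycle, so the subsequent left-multiplication by $(0\ i)$ changes the length by the same $\pm 1$ for $(n{+}1\ j)\gamma$ as for $\gamma$. Equivalently $\chi_q[(0\ i)(n{+}1\ j)\gamma] = q\,\chi_q[(0\ i)\gamma]$, which is precisely the extra factor of $q$ in the commutation relation. The hardest part is thus the careful conjugation bookkeeping that brings the $\tau \notin S(n)$ character argument into the form $(0\ i)(n{+}1\ j)\gamma$ with the indices correctly identified; everything after that is the one-line length count. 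An alternative to the whole argument is a direct computation from the explicit formula for $a^-(f)$ in Theorem~\ref{Thm:GUE}, using that $\big((0\ n{+}1)\alpha\big)^{-1}(0) = n{+}1$ and $\big((0\ n{+}1)\alpha\big)\big|_{\{n+1\}^c} = \alpha$, which immediately isolates the $\ip{f}{g}\,\mathrm{Id}$ term and leaves the $q$-weighted sum to be matched with $d\Gamma(|g\rangle\langle f|)$ by the same length count.
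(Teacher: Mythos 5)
Your proposal is correct, but its primary route is genuinely different from the paper's. The paper proves the identity by a direct computation in the right GNS representation: it applies $a^+(g)$ to get $(0\ n{+}1)\alpha \otimes_s (F \otimes g)$ and feeds this into the explicit formula for $a^-(f)$ from Theorem~\ref{Thm:GUE}. Since $\bigl((0\ n{+}1)\alpha\bigr)^{-1}(0) = n+1$, the distinguished weight-one term of that formula contracts $g$ against $f$ and restores $\alpha \otimes_s F$, producing $\ip{f}{g}$; the remaining terms $k \in [n]$ already carry the factor $q$, and the only work left is the conjugation identity $\sigma_k \tau_k \bigl((0k)(0\ n{+}1)\alpha\bigr)\big|_{\set{k}^c}\, \tau_k^{-1}\sigma_k^{-1} = (0k)\alpha$, which matches them with the summands of $d\Gamma(|g\rangle\langle f|)$ via the symmetry relation defining $\otimes_s$. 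This is precisely the ``alternative'' you sketch in your final sentence. Your main argument instead verifies the identity weakly, using $a^-(f) = (a^+(f))^\ast$ (correctly forced by the grading together with the symmetry of $X(f)$, which follows from $X(f)^\ast = X(f)$ and traciality of $\phi$) and the inner product \eqref{Eq:Unsymmetrized-IP}. The deferred bookkeeping does close: writing $\tau = (n{+}1\ k)\rho$ with $\rho \in S(n)$, using cyclic invariance of $\chi_q$ and the fact that $\rho$ commutes with $(0\ n{+}1)$, the character weight collapses to $\chi_q^{n+2}\bigl(\delta\,(0k)\,\alpha\,(n{+}1\ k)\bigr)$ with $\delta = \rho\beta^{-1}\rho^{-1}$; right-multiplying an element of $S_0(n)$ by $(n{+}1\ k)$ inserts the fresh point $n+1$ into its cycle through $k$, so the number of cycles is unchanged while the number of points grows by one, giving $\abs{\gamma(n{+}1\ k)} = \abs{\gamma}+1$ and hence the weight $q\,\chi_q^{n+1}\bigl(\delta(0k)\alpha\bigr)$, which is exactly the $i = k$ summand of the $d\Gamma$ term. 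The trade-off between the two approaches: your route re-derives the factor of $q$ from scratch by a length count, duplicating work already packaged into the weights of $a^-$ in Theorem~\ref{Thm:GUE}, so it is longer; on the other hand it never touches the relabelling maps $P^A_B$ or the $\otimes_s$-conjugation step, which is where the paper's shorter computation hides its own bookkeeping, and it makes the origin of the commutation relation (one extra point inserted into one cycle) completely transparent.
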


\begin{proof}
With the notation from Theorem~\ref{Thm:GUE},
\[
\begin{split}
& a^-(f) a^+(g) (\alpha \otimes_s (h_1 \otimes \ldots \otimes h_n)) \\
&\quad = a^-(f) ({(0 \ n+1) \alpha} \otimes_s h_1 \otimes \ldots \otimes h_n \otimes g) \\
&\quad = q \sum_{k = 1}^n \ip{h_k}{f} {P^{[0, n+1] \setminus \set{k}}_{[0, n]} ((0k)(0 \ n+1) \alpha)|_{\set{k}^c}} \otimes_s (h_1 \otimes \ldots \otimes \hat{h}_k \otimes \ldots \otimes h_n \otimes g) \\
&\quad \qquad + \ip{f}{g} {\alpha} \otimes_s (h_1 \otimes \ldots \otimes h_n)
\end{split}
\]
Denoting $\sigma_k = (k \ k+1 \ldots n-1 \ n)$, we have
\[
U_{\sigma_k} (h_1 \otimes \ldots \otimes \hat{h}_k \otimes \ldots \otimes h_n \otimes g) = h_1 \otimes \ldots \otimes h_{k-1} \otimes g \otimes h_{k+1} \otimes \ldots \otimes h_n.
\]
On the other hand, the bijection
\[
\tau_k = P^{[0, n+1] \setminus \set{k}}_{[0, n]} : i \mapsto
\begin{cases}
i, & 1 \leq i \leq k - 1 \\
i-1, & k+1 \leq i \leq n+1.
\end{cases}
\]
and so $\sigma_k \tau_k(i) = i$ for $i \neq n+1$, $\sigma_k \tau_k(n+1) = k$. Therefore
\[
\sigma_k \tau_k ((0k)(0 \ n+1) \alpha)|_{\set{k}^c} \tau_k^{-1} \sigma_k^{-1}
= (0 k) \alpha. \qedhere
\]
\end{proof}

\begin{Remark}
\[
\exp(d\Gamma(I)) = \sum_{k=0}^\infty \frac{1}{k!} (d\Gamma(I))^k
= \sum_{\beta \in S_0(n)} \beta \sum_{k=0}^\infty \frac{1}{k!} \abs{\set{\mb{i} \in [n]^k : \beta = (0 i(k)) \ldots (0 i(1))}}.
\]
Here the coefficient of $\beta$ is the generating function of the number of primitive factorizations of $\beta$. See \cite{Matsumoto-Novak-primitive}.
\end{Remark}

\section{Trace polynomials in GUE matrices}
\label{Sec:GUE}

\subsection{Background}

\label{Subsec:Trace-background}

Abstract trace polynomials, and in particular the expression $\tr_\alpha[x_1, \ldots, x_n]$ for $\alpha \in S_0(n)$, were defined in the introduction. Let $\mc{A}$ be a unital algebra, $\mc{C}$ its center, and $F: \mc{A} \rightarrow \mc{C}$ a unital, tracial, $\mc{C}$-bimodule linear map. For any $\alpha \in S_0(n)$, we can similarly form $F_\alpha(a_1, \ldots, a_n) \in \mc{A}$, and consider it as the application of the trace monomial $\tr_\alpha[x_1, \ldots, x_n]$ to the elements $a_1, \ldots, a_n$. See \cite{Cebron-Free-convolution}. We extend the notation to $F_\eta$ for $\eta \in \mf{C}[S_0(n)]$ by linearity.

\subsubsection{Invariant theory of $N \times N$ matrices}

Let $\set{x_{ij}^{(k)} : k \in S, 1 \leq i, j \leq N}$ be formal commuting variables subject to the relation $x_{ji}^{(k)} = (x_{ij}^{(k)})^\ast$. For each $k$, form a matrix $X^{(k)} = (x_{ij}^{(k)})_{i, j = 1}^N$. Let $\mc{A}_{N, S}$ be the collection of all matrices with polynomial entries
\[
\mc{A}_{N, S} = M_N(\mf{C}) \otimes \mf{C}[x_{ij}^{(k)} : k \in S, 1 \leq i, j \leq N].
\]

Let $Y \in \mc{A}_{N, S}$, $Y = P(X^{(k)} : k \in S)$, where each entry $P_{ab}$ is a polynomial in the entries of its argument. We say that $Y$ is equivariant if for any $U \in U_N(\mf{C})$,
\[
P(U X^{(k)} U^\ast : k \in S) = U Y U^\ast.
\]
Denote
\[
\mc{A}_{N, S}^{\text{equiv}}
= \set{\text{equivariant } Y \in \mc{A}_{N, S}}.
\]
Then
\begin{equation}
\label{Eq:Equivariant}
\mc{A}_{N, S}^{\text{equiv}} = \Span{\Tr_\alpha(X^{(k(1))}, \ldots, X^{(k(n))}) : \alpha \in S_0(n), n \geq 0, k(1), \ldots, k(n) \in S},
\end{equation}
where $\Tr$ is the (un-normalized) trace on $M_N(\mf{C})$. Indeed, since for the purposes of this expansion, $x_{ij}^{(k)}$ and $(x_{ij}^{(k)})^\ast$ can be considered as independent variables, this follows directly from the first Procesi-Razmyslov theorem \cite{Procesi,Razmyslov}. The result is usually formulated using $GL(n)$-invariance. However, the argument ultimately reduces to Schur-Weyl duality, for which (in the case of inner product spaces) unitary invariance is sufficient.

\subsubsection{Hermitian Brownian motion}

Let $\set{b_{ij}(h) : h \in \mc{H}_{\mf{R}}}$ be $N^2$ standard Gaussian processes indexed by the same real Hilbert space as in Section~\ref{Section:Gaussian-Hilbert}, independent for different $(i,j)$, represented on the same probability space. Define the $N \times N$ Hermitian Gaussian process $\set{X(h) : h \in \mc{H}_{\mf{R}}}$ by
\[
X(h)_{ij} =
\begin{cases}
\frac{1}{\sqrt{2N}} (b_{ij}(h) + \sqrt{-1} b_{ji}(h)), & i < j, \\
\frac{1}{\sqrt{N}} b_{ij}(h), & i = j, \\
\frac{1}{\sqrt{2N}} (b_{ij}(h) - \sqrt{-1} b_{ji}(h)), & i > j. \\
\end{cases}
\]
Equivalently, each $X(h)$ is a Hermitian random matrix, whose entries are centered Gaussian variables with the joint covariance
\[
\Exp{X(f)_{ij} X(g)_{k \ell}} = \frac{1}{N} \delta_{i=\ell} \delta_{j=k} \ip{f}{g},
\]
so that
\[
(I \otimes \mf{E})[X(f) X(g)] = \ip{f}{g} I_N.
\]
Note that $(I \otimes \mf{E})[\Tr_\alpha(X(h_1), \ldots, X(h_n))]$ is always a scalar. Indeed, since the distribution of this random matrix is unitarily invariant, so is the distribution of its entry-wise expectation, which then has to be a multiple of identity. By a slight abuse of notation, we will denote this scalar-valued functional by $\mf{E}$ again.

\begin{Remark}
\label{Remark:Tensor-Hilbert-space}
Let $\mc{H}_{\mf{R}}$ be a real Hilbert space, $M_N^{sa}(\mf{C})$ the space of complex Hermitian matrices, $\mc{K}_{\mf{R}} = M_N^{sa}(\mf{C}) \otimes \mc{H}_{\mf{R}}$ a real Hilbert space, and $\mc{K} = M_N(\mf{C}) \otimes \mc{H}_{\mf{R}}$ its complexification, with the inner product
\[
\ip{A \otimes f}{B \otimes g} = \frac{1}{N} \Tr[A B^\ast] \ip{f}{g}.
\]
Let $\set{X(f) : f \in \mc{K}}$ be the complex Gaussian Hilbert space indexed by $\mc{K}$. Then for $h \in \mc{H}_{\mf{R}}$ and $X(h)$ the corresponding Gaussian random matrix, we may identify
\[
X(h)_{ij} = X(E_{ij} \otimes h).
\]
\end{Remark}

The following Proposition is a slight extension of a well-known result (see for example Lemma~22.31 of \cite{Nica-Speicher-book}) to the case of a general permutation $\alpha \in S_0(n)$ and GUE matrices indexed by a Hilbert space. We provide the proof for completeness.

\begin{Prop}
\label{Prop:GUE-moments}
Let $\set{D_i: i \in I}$ be non-random $N \times N$ matrices. For even $n$,
\[
D^{(0)} (I \otimes \mf{E})\left[\Tr_\alpha[X(h_1) D^{(1)}, X(h_2) D^{(2)}, \ldots, X(h_n) D^{(n)}]\right]
= \frac{1}{N^{n/2}} \sum_{\pi \in \Part_2(n)} C_\pi(h_1 \otimes \ldots \otimes h_n) D^{(0)} \Tr_{\pi \alpha}[D^{(1)}, D^{(2)}, \ldots, D^{(n)}].
\]
In particular,
\[
\Exp{\Tr_\alpha[X(h_1), X(h_2), \ldots, X(h_n)]}
=  \sum_{\pi \in \Part_2(n)} \frac{1}{N^{n/2 - \cyc_0(\pi \alpha)}} C_\pi(h_1 \otimes \ldots \otimes h_n).
\]
\end{Prop}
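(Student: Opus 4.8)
I would prove this by a Wick (Isserlis) computation carried out in matrix components. First I would note that the second (``in particular'') identity is just the special case $D^{(0)} = D^{(1)} = \cdots = D^{(n)} = I_N$ of the first: there $\Tr_{\pi\alpha}[I_N, \ldots, I_N] = N^{\cyc_0(\pi\alpha)} I_N$, since the cycle of $\pi\alpha$ through $0$ contributes $I_N$ and each of the other $\cyc_0(\pi\alpha)$ cycles contributes $\Tr[I_N] = N$; this converts $\frac{1}{N^{n/2}}$ into $\frac{1}{N^{n/2 - \cyc_0(\pi\alpha)}}$, and the left-hand side becomes a scalar multiple of $I_N$, identified with that scalar as in the remark preceding the statement. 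For odd $n$ the first identity reads $0 = 0$ --- the left side is the entrywise expectation of a monomial of odd degree in jointly centered Gaussian variables, and $\Part_2(n) = \emptyset$ --- so I may assume $n$ even.

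For the main identity I would expand $\bigl(D^{(0)} \Tr_\alpha[X(h_1) D^{(1)}, \ldots, X(h_n) D^{(n)}]\bigr)_{ab}$ into a sum of products of matrix entries, introducing one summation index per matrix multiplication, the sharing of these indices being dictated by the cycle structure of $\alpha$ (the cycle through $0$ being left open and carrying $a$, via $D^{(0)}$, and $b$). Since the entries of $X(h_1), \ldots, X(h_n)$ are jointly centered Gaussian and each $X(h_i)$ occurs to the first power, every monomial in this expansion involves exactly one entry of each $X(h_i)$, and Wick's formula rewrites the entrywise expectation as $\sum_{\pi \in \Part_2(n)}$ of a product of second moments over the pairs of $\pi$, times the remaining sum over the entries of $D^{(1)}, \ldots, D^{(n)}$. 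For a fixed $\pi$, the pair $\set{p,q}$ contributes $\mf{E}[X(h_p)_{ij} X(h_q)_{k\ell}] = \frac1N \ip{h_p}{h_q}\,\delta_{i=\ell}\delta_{j=k}$, so the $n/2$ scalar factors multiply to $\frac{1}{N^{n/2}} \prod_{\set{p,q} \in \pi} \ip{h_p}{h_q} = \frac{1}{N^{n/2}} C_\pi(h_1 \otimes \cdots \otimes h_n)$ by the definition of the tensor contraction, while the accompanying Kronecker deltas only identify some summation indices with one another.

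The remaining task --- and the combinatorial heart of the argument --- is to recognize the surviving sum over the entries of $D^{(1)}, \ldots, D^{(n)}$, after these identifications, as $\bigl(D^{(0)} \Tr_{\pi\alpha}[D^{(1)}, \ldots, D^{(n)}]\bigr)_{ab}$. Regarding $\alpha$ as the permutation that records the cyclic arrangement of the $X$-slots (with $D^{(i)}$ sitting on the edge leaving slot $i$ and $0$ the open endpoint), the local fact one needs is that pairing $X(h_p)$ with $X(h_q)$ deletes both and re-splices the incident loop segments exactly as left multiplication of this permutation by the transposition $(p\,q)$ does; running over all pairs of $\pi$ then reorganizes the surviving $D$-edges into the cycle structure of $\pi\alpha$, with the open cycle and the coefficient $D^{(0)}$ preserved, yielding $\Tr_{\pi\alpha}[D^{(1)}, \ldots, D^{(n)}]$. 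Hence the contribution of $\pi$ is $\frac{1}{N^{n/2}} C_\pi(h_1 \otimes \cdots \otimes h_n)\, D^{(0)} \Tr_{\pi\alpha}[D^{(1)}, \ldots, D^{(n)}]$, and summing over $\pi \in \Part_2(n)$ gives the claim. I expect this index bookkeeping to be the main obstacle: it is the familiar ``gluing of cycles'' behind the genus expansion, with the matrix coefficients carried along passively (cf.\ Notation~22.29 and Theorem~22.12 of \cite{Nica-Speicher-book}), and the only genuine subtlety is to fix index conventions so that ``a Wick pair acts by left multiplication by a transposition'' is transparent --- which is also what pins down the side $\pi\alpha$ rather than $\alpha\pi$.
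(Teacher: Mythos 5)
Your proposal is correct and follows essentially the same route as the paper's own proof: an entrywise expansion of the trace monomial, Wick's formula to sum over pair partitions, the covariance $\frac1N\delta_{i=\ell}\delta_{j=k}\ip{f}{g}$ producing the factor $\frac{1}{N^{n/2}}C_\pi(h_1\otimes\cdots\otimes h_n)$, and the resulting index identifications $u(i)=v(\pi(i))$ reorganizing the surviving $D$-sum into $\Tr_{\pi\alpha}$. The paper carries out exactly this index bookkeeping explicitly with doubled index vectors $\mb{u},\mb{v}$, which is the concrete realization of the ``Wick pair acts as left multiplication by a transposition'' step you correctly identify as the heart of the argument.
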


\begin{proof}
\[
\begin{split}
& \left(D^{(0)} (I \otimes \mf{E})\left[\Tr_\alpha[X(h_1) D^{(1)}, X(h_2) D^{(2)}, \ldots, X(h_n) D^{(n)}]\right]\right)_{\ell r} \\
&\quad = \sum_{\substack{\mb{u}, \mb{v} \in [0,n]^N \\ u(0) = r, v(0) = \ell}}  \prod_{i=0}^n D_{v(i), u(\alpha(i))}^{(i)} \Exp{\prod_{i=1}^n X_{u(i), v(i)}(h_i)} \\
&\quad = \sum_{\pi \in \Part_2(n)} \sum_{\substack{\mb{u}, \mb{v} \in [0,n]^N \\ u(i) = v(\pi(i)) \\ u(0) = r, v(0) = \ell}}  \prod_{i=0}^n D_{v(i), u(\alpha(i))}^{(i)} \prod_{(i, \pi(i)) \in \pi} \Exp{X_{v(\pi(i)), v(i)}(h_i) X_{v(i), v(\pi(i))}(h_{\pi(i)})} \\
&\quad = \sum_{\pi \in \Part_2(n)} \sum_{\substack{\mb{u}, \mb{v} \in [0,n]^N \\ u(i) = v(\pi(i)) \\ u(0) = r, v(0) = \ell}}  \prod_{i=0}^n D_{v(i), u(\alpha(i))}^{(i)} \frac{1}{N^{n/2}} C_\pi(h_1 \otimes \ldots \otimes h_n) \\
&\quad = \sum_{\pi \in \Part_2(n)} \sum_{\substack{\mb{v} \in [0,n]^N \\ u(0) = r, v(0) = \ell}}  \prod_{i=0}^n D_{v(i), v(\pi \alpha(i))}^{(i)} \frac{1}{N^{n/2}} C_\pi(h_1 \otimes \ldots \otimes h_n) \\
&\quad = \sum_{\pi \in \Part_2(n)} \left( D^{(0)} \Tr_{\pi \alpha}[D^{(1)}, D^{(2)}, \ldots, D^{(n)}]\right)_{\ell r} \frac{1}{N^{n/2}} C_\pi(h_1 \otimes \ldots \otimes h_n). \qedhere
\end{split}
\]
\end{proof}

\begin{Notation}
We now restrict the index set $S$ to be a real Hilbert space $\mc{H}_{\mf{R}}$. Denote
\[
\begin{split}
\mc{A}_{N}(\mc{H}) & = M_N(\mf{C}) \otimes \mf{C}[b_{ij}(h) : h \in \mc{H}_{\mf{R}}, 1 \leq i, j \leq N] \\
& = \set{P(b_{ij}(h): h \in \mc{H}_{\mf{R}}, 1 \leq i, j \leq N) : P \in \mc{A}_{N, \mc{H}_{\mf{R}}}}
\end{split}
\]
and
\[
\begin{split}
\mc{A}_{N}^{\text{equiv}}(\mc{H})
& = \set{P(b_{ij}(h): h \in \mc{H}_{\mf{R}}, 1 \leq i, j \leq N) : P \in \mc{A}_{N, \mc{H}_{\mf{R}}}^{\text{equiv}}} \\
& = \Span{\Tr_\alpha(X(h_1), \ldots, X(h_n) : \alpha \in S_0(n), n \geq 0, h_1, \ldots, h_n \in \mc{H}_{\mf{R}})}.
\end{split}
\]
Denote $\mc{A}_{N}^{2, \text{equiv}}(\mc{H})$ the $L^2$ completion of $\mc{A}_{N}^{\text{equiv}}(\mc{H})$ with respect to the expectation functional $\mf{E}$.
\end{Notation}

\subsection{The isomorphisms}

\begin{Thm}
\label{Thm:E-map}
Let $q = 1/N$. Define the evaluation map $\mc{E}$ from $\bigoplus_{n=0}^\infty \mf{C}[S_0(n)] \otimes \mc{H}_{\mf{R}}^{\otimes n}$ to the algebra $\mc{A}_N^{\text{equiv}}(\mc{H})$ by the linear extension of
\[
\mc{E}[\T{\alpha \otimes (h_1 \otimes \ldots \otimes h_n)}] = \Tr_\alpha(X(h_1), \ldots, X(h_n)).
\]
\begin{enumerate}
\item
This map factors through to $\mc{TP}(\mc{H}_{\mf{R}})$.
\item
$\mc{E}$ is a star-homomorphism of algebras.
\item
$\mf{E} \circ \mc{E} = \phi$. It follows that $\mc{E}$ extends to a homomorphism from $\overline{\mc{TP}}(\mc{H}_{\mf{R}})$, and to an isometry from $\mc{F}_{1/N}(\mc{H})$ to $\mc{A}_N^{2, \text{equiv}}(\mc{H})$.
\item
$\mc{E}$ intertwines the center-valued trace and the ordinary trace:
\[
\mc{E}[\W{\alpha|_{\set{0}^c} \otimes_s (h_1 \otimes \ldots \otimes h_n)}]
= \Tr{\mc{E}[\W{\alpha \otimes_s (h_1 \otimes \ldots \otimes h_n)}]} I,
\]
\item
For $A \in \overline{\mc{TP}}_{1/N}(\mc{H}_{\mf{R}})$,
\[
\Exp{\mc{E}[A] \ |\ X(h) : h \in \mc{H}'_{\mf{R}}}
= \mc{E}[\state{A \ |\ \mc{H}'}],
\]
so in particular,
\[
\Exp{\Tr_\alpha(X(h_1), \ldots, X(h_n)) \ |\ X(h) : h \in \mc{H}_{\mf{R}}'}
= \mc{E}[\state{\T{\alpha \otimes_s F} \ |\ \mc{H}'}].
\]
\end{enumerate}
\end{Thm}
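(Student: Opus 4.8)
The plan is to dispose of (a) and (b) by bookkeeping on trace monomials, to put all of the analytic weight into (c), and then to deduce (d) by transporting structure along the isometry produced in (c). For (a) I would check only that $\mc{E}$ is constant on $S(n)$-orbits: for $\sigma \in S(n)$,
\[
\Tr_{\sigma \alpha \sigma^{-1}}(X(h_{\sigma^{-1}(1)}), \ldots, X(h_{\sigma^{-1}(n)})) = \Tr_\alpha(X(h_1), \ldots, X(h_n)),
\]
since conjugating $\alpha$ by $\sigma$ merely relabels the entries of each cycle — fixing the cycle through $0$ because $\sigma(0) = 0$ — and the simultaneous relabeling of the arguments cancels this; hence $\mc{E}$ descends to $\mc{TP}(\mc{H}_{\mf{R}})$. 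For (b), the $\ast$-compatibility holds because each $X(h_i)$, $h_i \in \mc{H}_{\mf{R}}$, is Hermitian, so the adjoint of $\Tr_\alpha(X(h_1), \ldots, X(h_n))$ reverses the matrix product along the $0$-cycle and conjugates — hence reverses — each scalar trace, replacing $\alpha$ by $\alpha^{-1}$, in agreement with $\T{\alpha \otimes_s F}^\ast = \T{\alpha^{-1} \otimes_s F}$. Multiplicativity is purely algebraic: by Notation~\ref{Notation:Union}, $\alpha \cup \beta$ concatenates the $0$-cycles of $\alpha$ and of $\beta$ (shifted by $n$) and retains the other cycles, so $\Tr_{\alpha \cup \beta}(X(h_1), \ldots, X(h_n), X(h'_1), \ldots, X(h'_k)) = \Tr_\alpha(X(h_1), \ldots, X(h_n)) \, \Tr_\beta(X(h'_1), \ldots, X(h'_k))$ as matrices, which is precisely $\mc{E}(\T{\alpha \otimes_s F}\,\T{\beta \otimes_s G}) = \mc{E}(\T{\alpha \otimes_s F})\,\mc{E}(\T{\beta \otimes_s G})$.

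The core of (c) is the scalar identity $\mf{E}[\mc{E}(\T{\alpha \otimes F})] = \state{\T{\alpha \otimes F}}$ on the spanning set. For $\alpha \in S_0(2m)$ this is exactly the comparison of Proposition~\ref{Prop:GUE-moments}, taken with all $D^{(i)} = I_N$, which computes $\mf{E}[\Tr_\alpha(X(h_1), \ldots, X(h_{2m}))]$ as $\sum_{\pi \in \Part_2(2m)} N^{-(m - \cyc_0(\pi\alpha))} C_\pi(h_1 \otimes \cdots \otimes h_{2m})$, against Theorem~\ref{Thm:State}(e), which gives $\state{\T{\alpha \otimes F}} = \sum_{\pi \in \Part_2(2m)} q^{\,m - \cyc_0(\pi\alpha)} C_\pi(F)$; these coincide because $q = 1/N$, and both vanish for $\alpha \in S_0(2m+1)$. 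By linearity $\mf{E} \circ \mc{E} = \phi$ on all of $\mc{TP}(\mc{H}_{\mf{R}})$, and then with (b), $\mf{E}[\mc{E}(A)^\ast \mc{E}(A)] = \mf{E}[\mc{E}(A^\ast A)] = \state{A^\ast A}$, so $\mc{E}$ is an isometry for the $\norm{\cdot}_\phi$-seminorm; in particular it annihilates $\mc{N}_{vs, q}$ (any polynomial vanishing in $L^2$ vanishes identically) by Theorem~\ref{Thm:State}(d) and factors through $\mc{TP}_{1/N}(\mc{H}_{\mf{R}})$. Since $\mc{TP}(\mc{H}_{\mf{R}})$ is $\norm{\cdot}_\phi$-dense in $\overline{\mc{TP}}(\mc{H}_{\mf{R}})$ and $\mc{E}(\mc{TP}(\mc{H}_{\mf{R}}))$ equals the span $\mc{A}_N^{\text{equiv}}(\mc{H})$ of the $\Tr_\alpha$ by \eqref{Eq:Equivariant}, which is $L^2$-dense in $\mc{A}_N^{2, \text{equiv}}(\mc{H})$ by definition, this isometry extends uniquely to a unitary $\mc{F}_{1/N}(\mc{H}) \to \mc{A}_N^{2, \text{equiv}}(\mc{H})$; it stays a $\ast$-homomorphism on $\overline{\mc{TP}}_{1/N}(\mc{H}_{\mf{R}})$ (equivalently on $\overline{\mc{TP}}(\mc{H}_{\mf{R}})$, factoring through the quotient) since multiplication of fixed-degree elements is $L^2$-continuous by Proposition~\ref{Prop:Extended-algebra} and the homomorphism relations already hold on a dense subalgebra.

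For (d), I would apply (c) with $\mc{H}$ replaced by the closed subspace $\mc{H}'$; because the $q$-inner product on $\mc{TP}(\mc{H}'_{\mf{R}})$ is the restriction of the one on $\mc{TP}(\mc{H}_{\mf{R}})$, the unitary $\mc{E}$ maps $\mc{F}_{1/N}(\mc{H}')$ onto $\mc{A}_N^{2, \text{equiv}}(\mc{H}')$ and hence intertwines the orthogonal projection $\mc{F}(P_{\mc{H}'}) : \mc{F}_{1/N}(\mc{H}) \to \mc{F}_{1/N}(\mc{H}')$ of Proposition~\ref{Prop:CE}(a) with the orthogonal projection $E'$ of $\mc{A}_N^{2, \text{equiv}}(\mc{H})$ onto $\mc{A}_N^{2, \text{equiv}}(\mc{H}')$. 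It then remains to identify $E'$ with $Y \mapsto \Exp{Y \mid X(h) : h \in \mc{H}'_{\mf{R}}}$: for equivariant $Y$ this conditional expectation is again polynomial in $\set{X(h)_{ij} : h \in \mc{H}'_{\mf{R}}}$ and equivariant, since the joint law of $Y$ and $\set{X(h) : h \in \mc{H}'_{\mf{R}}}$ is invariant under simultaneous conjugation by $U_N(\mf{C})$, so it lies in $\mc{A}_N^{2, \text{equiv}}(\mc{H}')$, while the tower property gives $\mf{E}[Z^\ast Y] = \mf{E}[Z^\ast \Exp{Y \mid X(h) : h \in \mc{H}'_{\mf{R}}}]$ for $Z \in \mc{A}_N^{2, \text{equiv}}(\mc{H}')$, which characterizes the orthogonal projection. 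Since $\state{\cdot \mid \mc{H}'} = \Gamma(P_{\mc{H}'})$ implements $\mc{F}(P_{\mc{H}'})$ on the GNS vector by Proposition~\ref{Prop:CE}(b), transporting along $\mc{E}$ yields $\Exp{\mc{E}[A] \mid X(h) : h \in \mc{H}'_{\mf{R}}} = \mc{E}[\state{A \mid \mc{H}'}]$, and the displayed special case for $A = \T{\alpha \otimes_s F}$ is immediate.

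I expect the main obstacle to be the identity $\mf{E} \circ \mc{E} = \phi$ in (c): it requires recognizing that the Gaussian Wick expansion of matrix moments in Proposition~\ref{Prop:GUE-moments} is, term by term, the combinatorial definition of $\phi$ through the weighted contractions $C_\pi$ of Definition~\ref{Defn:Contraction-gr} at $q = 1/N$, and then handling the passage from the algebraic to the Hilbert-space tensor product carefully enough that $\mc{E}$ is simultaneously an isometry of Fock spaces and a homomorphism of the completed $\ast$-algebras. By comparison, the equivariance and orthogonal-projection bookkeeping needed in (d) is routine once (c) is in hand.
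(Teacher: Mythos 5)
Your proposal is correct and follows essentially the same route as the paper: parts (a) and (b) via the same trace-monomial identities for conjugation, products, and adjoints; part (c) by comparing Proposition~\ref{Prop:GUE-moments} with Theorem~\ref{Thm:State}(e) at $q = 1/N$ and then extending by the isometry/density argument; and part (d) by the same $L^2$-duality against elements $\Tr_\beta(X(h_1), \ldots, X(h_k))$ with $h_i \in \mc{H}'_{\mf{R}}$ together with faithfulness of $\mf{E}$ on the equivariant algebra. Your repackaging of (d) as an intertwining of orthogonal projections, together with the explicit observation that the conditional expectation of an equivariant element is again equivariant and hence lies in $\mc{A}_N^{2, \text{equiv}}(\mc{H}')$, actually supplies a detail the paper's proof leaves implicit (namely, that the difference of the two candidate elements lies in the subspace against which the orthogonality test is performed, so that the test is conclusive).
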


\begin{proof}
(a) follows from
\[
\Tr_{\sigma \alpha \sigma^{-1}}(X(h_{\sigma^{-1}(1)}), \ldots, X(h_{\sigma^{-1}(n)})) = \Tr_\alpha(X(h_1), \ldots, X(h_n)),
\]
and (b) from
\[
\Tr_\alpha(X(h_1), \ldots, X(h_n)) \Tr_\beta(X(h_{n+1}), \ldots, X(h_{n+k})) = \Tr_{\alpha \cup \beta}(X(h_1), \ldots, X(h_{n+k}))
\]
and
\[
\Tr_\alpha(X(h_1), \ldots, X(h_n))^\ast = \Tr_{\alpha^{-1}} (X({h}_1), \ldots, X({h}_n)).
\]
(c) follows by comparing Theorem~\ref{Thm:State}(e) and Proposition~\ref{Prop:GUE-moments}. (d) is clear.

For part (e), for $h_1, \ldots, h_k \in \mc{H}'_{\mf{R}}$, using earlier parts and properties of conditional expectations,
\[
\begin{split}
& \Exp{\Exp{\mc{E}[A] \ |\ X(h) : h \in \mc{H}'_{\mf{R}}} \Tr_\beta(X(h_1), \ldots, X(h_k))} \\
&\quad = \Exp{\mc{E}[A] \Tr_\beta(X(h_1), \ldots, X(h_k))} \\
&\quad = \Exp{\mc{E}[A] \mc{E}[\T{\beta \otimes (h_1 \otimes \ldots \otimes h_n)}]} \\
&\quad = \state{A \T{\beta \otimes (h_1 \otimes \ldots \otimes h_n)}} \\
&\quad = \state{\state{A \ |\ \mc{H}'} \T{\beta \otimes (h_1 \otimes \ldots \otimes h_n)}} \\
&\quad = \Exp{\mc{E}[\state{A \ |\ \mc{H}'}] \Tr_\beta(X(h_1), \ldots, X(h_k))}.
\end{split}
\]
Since $\mf{E}$ is faithful on $\mc{A}_N^{\text{equiv}}(\mc{H})$, the result follows.
\end{proof}

Numerous corollaries follow by combining Theorem~\ref{Thm:E-map} with results from earlier in the article. We only list a few of them explicitly. Others include Proposition~\ref{Prop:Chaos-II} (chaos decomposition in the univariate case), \ref{Prop:T-I} (expansion of the Hermite polynomial and stochastic integral), \ref{Prop:Product-I} (product formula for Hermite polynomials and stochastic integrals), \ref{Prop:Linearization} (linearization coefficients), and Corollary~\ref{Cor:Product} (Factorization of the Hermite polynomial for orthogonal arguments).

\begin{Remark}
\label{Remark:Hermite}
Define the $\alpha$-Hermite polynomial
\[
H_\alpha(X(h_1), \ldots, X(h_n)) = \mc{E}[\W{\alpha \otimes (h_1 \otimes \ldots \otimes h_n)}],
\]
considered either as a random matrix or a trace polynomial in formal variables $X(h_1), \ldots, X(h_n)$. To drop the dependence on the Hilbert space, we would only consider arguments belonging to some orthonormal basis. For example, for $\alpha = (012)(34)$,
\begin{align*}
H_\alpha(x_1, x_1, x_1, x_1) & = x_1^2 \Tr[x_1^2] - \Tr[x_1^2] - (N + 4\frac{1}{N}) x_1^2 + (N + 2 \frac{1}{N}), \\
H_\alpha(x_1, x_1, x_2, x_2) & = x_1^2 \Tr[x_2^2] - \Tr[x_2^2] - N x_1^2 + N
= (x_1^2 - 1) (\Tr[x_2^2] - N), \\
H_\alpha(x_1, x_2, x_1, x_2) & = x_1 x_2 \Tr[x_1 x_2] - \frac{1}{N} x_1^2 - \frac{1}{N} x_2^2 + \frac{1}{N}, \\
H_\alpha(x_1, x_2, x_3, x_4) & = x_1 x_2 \Tr[x_3 x_4].
\end{align*}
\end{Remark}

\begin{Cor}
(Compare with Proposition~\ref{Prop:CE}.) Let $\mc{H} = L^2(\mf{R}_+, dx)$. Then for each $\alpha$, $H_\alpha(X(\chf{[0,t]}))$ is a martingale.
\end{Cor}

For Hermite polynomials of matrix argument, the martingale property was proved in \cite{Lawi} by generating function methods. See Section~\ref{Subsec:Hermite} for the connection.

\begin{Cor}
(Compare with Proposition~\ref{Prop:Single})
\[
\Exp{\Tr_\alpha(X(h)) \ |\ X(h) : h \in \mc{H}'_{\mf{R}}}
= \sum_{\rho \in \Part_{1,2}(n)} \norm{P_{(\mc{H}')^\perp} h}^{2 \abs{\Pair{\rho}}} \Tr_{C_\rho(\alpha)}(X(P_{\mc{H}'} h)).
\]
In particular, for $\mc{H} = L^2(\mf{R}_+, dx)$ and $\mc{H}_s = L^2([0,s], dx)$, for $s \leq t$,
\[
\Exp{\Tr_\alpha(X(\chf{[0,t]})) \ |\ X(h) : h \in \mc{H}_s}
= \sum_{\rho \in \Part_{1,2}(n)} (t-s)^{\abs{\Pair{\rho}}} \Tr_{C_\rho(\alpha)}(X(\chf{[0,s]})).
\]
It follows that $\mc{L}$ is the generator of the process $\set{X(t) = X(\chf{[0,t]}) : t \geq 0}$, in the sense that for a formal univariate trace polynomial $\tr_\alpha(x)$,
\[
\left.\frac{d}{dt}\right|_{t=s} \Exp{\tr_\alpha(X(t)) \ |\ X(h) : h \in \mc{H}_s}
= \tr_{\mc{L} \alpha}(X(s)).
\]
\end{Cor}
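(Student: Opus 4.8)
The plan is to deduce all three identities from the evaluation isomorphism of Theorem~\ref{Thm:E-map} together with the single-variable conditional expectation formula of Proposition~\ref{Prop:Single}.

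First I would establish the opening identity. By Theorem~\ref{Thm:E-map}(d) applied to $A = \T{\alpha \otimes h^{\otimes n}}$, the left-hand side $\Exp{\Tr_\alpha(X(h)) \ |\ X(h) : h \in \mc{H}'_{\mf{R}}}$ equals $\mc{E}[\state{\T{\alpha \otimes h^{\otimes n}} \ |\ \mc{H}'}]$. Substituting the expression for $\state{\T{\alpha \otimes h^{\otimes n}} \ |\ \mc{H}'}$ furnished by Proposition~\ref{Prop:Single} and applying $\mc{E}$ term by term — using linearity of $\mc{E}$, that the scalars $\norm{P_{(\mc{H}')^\perp} h}^{2\abs{\Pair{\rho}}}$ pass through unchanged, and that $\mc{E}[\T{C_\rho(\alpha) \otimes (P_{\mc{H}'} h)^{\otimes \abs{\Sing{\rho}}}}] = \Tr_{C_\rho(\alpha)}(X(P_{\mc{H}'} h))$ by the very definition of $\mc{E}$ — yields the claimed formula.

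Next I would specialize to $\mc{H}_{\mf{R}} = L^2(\mf{R}_+, dx)$, $\mc{H}'_{\mf{R}} = \mc{H}_s = L^2([0,s], dx)$, and $h = \chf{[0,t]}$ with $s \le t$. Here the orthogonal projection onto $\mc{H}_s$ sends $\chf{[0,t]}$ to $\chf{[0,s]}$ and the complementary projection sends it to $\chf{(s,t]}$, so $X(P_{\mc{H}'} h) = X(\chf{[0,s]})$ and $\norm{P_{(\mc{H}')^\perp} h}^{2\abs{\Pair{\rho}}} = (t-s)^{\abs{\Pair{\rho}}}$; inserting these into the first formula gives the second one. The summand with $\Pair{\rho} = \emptyset$ is the constant $\Tr_\alpha(X(\chf{[0,s]}))$, which recovers the martingale property of $H_\alpha(X(\chf{[0,t]}))$.

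Finally, for the generator statement I would differentiate the second formula in $t$ at $t=s$; since the sum over $\rho \in \Part_{1,2}(n)$ is finite, one may differentiate term by term. A summand with $\abs{\Pair{\rho}} = 0$ is constant, a summand with $\abs{\Pair{\rho}} \ge 2$ has derivative vanishing at $t=s$, and a summand with $\abs{\Pair{\rho}} = 1$ contributes exactly $\Tr_{C_\rho(\alpha)}(X(\chf{[0,s]}))$. The partitions $\rho \in \Part_{1,2}(n)$ with a single pair are precisely the transpositions $\tau \in S(n)$, with $C_\rho = C_\tau$, so by Notation~\ref{Notation:Laplacian} the derivative collapses to $\sum_\tau \Tr_{C_\tau(\alpha)}(X(\chf{[0,s]})) = \Tr_{\mc{L}_n \alpha}(X(\chf{[0,s]}))$; rewriting everything with the normalized trace (the contraction weights carrying exactly the powers of $N$ needed for this rescaling to be consistent) gives the stated formula $\tr_{\mc{L}\alpha}(X(s))$. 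I expect no genuine difficulty; the only point meriting attention is this last step — confirming that differentiation annihilates all but the single-transposition partitions, that these reassemble into $\mc{L}$, and keeping track of the trace normalization.
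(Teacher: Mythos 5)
Your argument is exactly the one the paper intends: the corollary is stated without proof as an immediate combination of Theorem~\ref{Thm:E-map}(d) (applied to $A = \T{\alpha\otimes h^{\otimes n}}$) with Proposition~\ref{Prop:Single}, followed by the specialization $h = \chf{[0,t]}$, $\mc{H}'_{\mf{R}} = L^2([0,s],dx)$ and term-by-term differentiation at $t=s$, which isolates the single-pair partitions and reassembles them into $\mc{L}$. Your proposal matches this route, including the correct observation that the only point requiring care is the passage from $\Tr$ to the normalized $\tr$ in the generator formula.
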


\begin{Remark}
In the case $\mc{H}_{\mf{R}} = L^2(\mf{R}_+, dx)$ and $F \in L^2(\mf{R}_+^n, dx^{\otimes n})$, we may identify $\mc{E}[\W{\eta \otimes_s F}]$ with a stochastic integral
\[
\int F(t_1, \ldots, t_n) \,\Tr_\eta[dX(t_1), \ldots, dX(t_n)].
\]
Indeed, consider first $F = \chf{J_1} \otimes \ldots \otimes \chf{J_n}$, where all $J_j$ are disjoint. Then from Proposition~\ref{Prop:T-I},
\[
\W{\eta \otimes (\chf{J_1} \otimes \ldots \otimes \chf{J_n})} = \T{\eta \otimes (\chf{J_1} \otimes \ldots \otimes \chf{J_n})}
\]
and so
\[
\mc{E}[\W{\eta \otimes (\chf{J_1} \otimes \ldots \otimes \chf{J_n})}]
 = \Tr_\eta[X(\chf{J_1}), \ldots, X(\chf{J_n})] \\
\]
which we define to be
\[
\int \chf{J_1}(t_1) \ldots \chf{J_n}(t_n) \,\Tr_\eta[dX(t_1), \ldots, dX(t_n)].
\]
A general $F \in L^2(\mf{R}_+^n, dx^{\otimes n})$ can be approximated by linear combinations of such function in the $L^2$ norm, and by Lemma~\ref{Lemma:L2-approximation} we also get the approximation of $\W{\eta \otimes_s F}$.
\end{Remark}

\begin{Cor}[Chaos decomposition IV; compare with Proposition~\ref{Chaos:III}]
Each element $A \in \mc{A}_N^{2, \text{equiv}}(L^2(\mf{R}_+, dx))$ has a unique decomposition
\[
A = \sum_{n=0}^\infty \sum_{\lambda \in \Par(n+1; \leq N)} \sum_{i, j = 1}^{d_\lambda} \int F_{ij}^{\lambda}(t_1, \ldots, t_n) \,\Tr_{\mc{W}(E_{ij}^\lambda)}[dX(t_1), \ldots, dX(t_n)],
\]
where $F_{ij}^\lambda \in L^2(\Delta(\mf{R}_+^n), dx^{\otimes n})$ and
\[
\sum_{n=0}^\infty \sum_{\lambda \in \Par(n+1; \leq N)} n_\lambda \sum_{i, j = 1}^{d_\lambda} \norm{F_{i j}^\lambda}^2 < \infty
\]
for $n_\lambda = \frac{\abs{SS_N(\lambda)}}{N^{n+1}}$.
\end{Cor}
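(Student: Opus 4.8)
The plan is to transport Chaos decomposition III (Proposition~\ref{Chaos:III}) across the isometry $\mc{E}$ of Theorem~\ref{Thm:E-map}. First I would observe that, for $q = 1/N$, the map $\mc{E}$ of Theorem~\ref{Thm:E-map}(c) is in fact a \emph{unitary isomorphism} $\mc{F}_{1/N}(\mc{H}) \to \mc{A}_N^{2, \text{equiv}}(\mc{H})$: it is an isometry by Theorem~\ref{Thm:E-map}(c), and its range contains $\mc{A}_N^{\text{equiv}}(\mc{H}) = \mc{E}[\mc{TP}(\mc{H}_{\mf{R}})]$, which is dense in $\mc{A}_N^{2,\text{equiv}}(\mc{H})$ by the definition of the latter as an $L^2$ completion; since the range of an isometry from a complete space is closed, $\mc{E}$ is onto. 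Consequently any orthogonal decomposition of $\mc{F}_{1/N}(\mc{H})$, together with its attendant norm identity and uniqueness statement, transfers verbatim to $\mc{A}_N^{2,\text{equiv}}(\mc{H})$.

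Next I would identify the images of the building blocks. Under the GNS identification $L^2(\mc{TP}(\mc{H}_{\mf{R}}), \phi) \simeq \mc{F}_{1/N}(\mc{H})$ furnished by Theorem~\ref{Thm:State}(a), the Fock-space vector $W(E_{ij}^\lambda) \otimes_s F$ is the GNS vector of the algebra element $\I{W(E_{ij}^\lambda) \otimes_s F}$, so $\mc{E}$ carries it to $\mc{E}[\I{W(E_{ij}^\lambda) \otimes_s F}]$; for $\mc{H}_{\mf{R}} = L^2(\mf{R}_+, dx)$ this is precisely the stochastic integral $\int F(t_1, \ldots, t_n)\,\Tr_{W(E_{ij}^\lambda)}[dX(t_1), \ldots, dX(t_n)]$ by the preceding Remark (first on simple tensors $\chf{J_1} \otimes \ldots \otimes \chf{J_n}$ with pairwise disjoint supports, where $\I{\eta \otimes F}$ coincides with $\T{\eta \otimes F}$, and then in general by the $L^2$ approximation of Lemma~\ref{Lemma:L2-approximation}). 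Extending by linearity and $L^2$-continuity gives the same identification for the convergent infinite sums occurring in Proposition~\ref{Chaos:III}(b).

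Finally I would simply apply $\mc{E}$ to Proposition~\ref{Chaos:III}(b): given $A \in \mc{A}_N^{2,\text{equiv}}(L^2(\mf{R}_+, dx))$, write $A = \mc{E}[B]$ for the unique $B \in \mc{F}_{1/N}(\mc{H})$, expand $B$ by Proposition~\ref{Chaos:III}(b) with its $F_{ij}^\lambda \in L^2(\Delta(\mf{R}_+^n), dx^{\otimes n})$ and the bound $\sum_n \sum_{\lambda \in \Par(n+1; \leq N)} n_\lambda \sum_{i,j} \norm{F_{ij}^\lambda}^2 < \infty$ for $n_\lambda = \frac{\abs{SS_N(\lambda)}}{N^{n+1}}$, and apply $\mc{E}$ termwise using the identification above. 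Uniqueness of the coefficient functions is inherited from uniqueness in Proposition~\ref{Chaos:III}(b) via injectivity of $\mc{E}$, and the support condition on the simplex $\Delta(\mf{R}_+^n)$ is already built into that proposition. Since $\mc{E}$ is an exact isometric isomorphism and all the ingredients are in place, no real obstacle arises; the only point needing a little care is the identification in the second paragraph — matching $\mc{E}$ on the abstract Fock-space generators with iterated stochastic integrals, and checking that the approximation argument is compatible with the chaos grading.
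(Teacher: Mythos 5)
Your proposal is correct and follows essentially the route the paper intends: the corollary is stated as a direct consequence of combining the isometric isomorphism $\mc{E}$ of Theorem~\ref{Thm:E-map} with Proposition~\ref{Chaos:III}, using the preceding Remark to identify $\mc{E}[\I{W(E_{ij}^\lambda) \otimes_s F}]$ with the iterated stochastic integral. Your explicit verification that $\mc{E}$ is onto $\mc{A}_N^{2,\text{equiv}}$ (dense range of an isometry defined on a complete space) is a worthwhile detail the paper leaves implicit, but it does not change the argument.
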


Finally, we derive an explicit formula for the entries of the matrix $H_\alpha(X(h_1), \ldots, X(h_n))$.

\begin{Lemma}
\label{Lemma:Equivariant}
Let $\mc{H}_{\mf{R}}$ be a real Hilbert space, $\mc{K}_{\mf{R}} = M_N^{sa}(\mf{C}) \otimes \mc{H}_{\mf{R}}$, and $\mc{K} = M_N(\mf{C}) \otimes \mc{H}_{\mf{R}}$ its complexification. Let $Y \in \mc{A}^{\mathrm{equiv}}_{N}(\mc{K})$, with the inner product induced by the (Gaussian) expectation. For a polynomial variable $X$ in the Gaussian Hilbert space indexed by $\mc{H}_{\mf{R}}$, recall the notation $\Wick{X}$ from Section~\ref{Section:Gaussian-Hilbert}. Extend this operation to $Y$ entry-wise: 
\[
(\Wick{Y})_{ij} = \Wick{(Y_{ij})}.
\]
Then $\Wick{Y}$ is also equivariant.
\end{Lemma}

\begin{proof}
It suffices to prove the result when each $Y_{ij}$ is a homogeneous polynomial of degree $n$. Let $Y = P(X(h): h \in \mc{K})$ and $\Wick{Y} = Q(X(h): h \in \mc{K})$. Let $U \in U_N(\mf{C})$. Since each $:(Y_{ij}):$ is orthogonal to $\mc{P}_{n-1}(\mc{K})$, for any such polynomial $R$,
\[
\Exp{(U^\ast Q(U X(h) U^\ast : h \in \mc{K}) U)_{k \ell} R(X(h) : h \in \mc{K})}
= \sum_{i, j = 1}^N \bar{U}_{ik} U_{j \ell} \Exp{\Wick{(Y_{ij})} R(U^\ast X(h) U : h \in \mc{K})} = 0.
\]
Since $Y$ is equivariant,
\[
\begin{split}
U^\ast Q(U X(h) U^\ast : h \in \mc{K}) U
& = U^\ast \Bigl(P(U X(h) U^\ast : h \in \mc{K}) + (\text{terms of degree } < n) \Bigr) U \\
& = Y + (\text{terms of degree } < n).
\end{split}
\]
By the uniqueness of entry-wise projection, it follows that $U^\ast Q(U X(h) U^\ast : h \in \mc{K}) U  = \Wick{Y}$, i.e. it is equivariant.
\end{proof}

\begin{Thm}
\label{Thm:Entrywise}
Using the notation from Section~\ref{Section:Gaussian-Hilbert} and Remark~\ref{Remark:Tensor-Hilbert-space},
\[
H_\alpha(X(h_1), \ldots, X(h_n))_{\ell r}
= \sum_{\substack{\mb{u} \in [N]^{[0,n]} \\ u(0) = r, \\ u(\alpha(0)) = \ell}} \W{\otimes_{i=1}^n (E_{u(i), u(\alpha(i))} \otimes h_i)}.
\]
That is, matrix entries of the Hermite polynomial over $\mc{H}_{\mf{R}}$ are the Hermite polynomials over $M_N(\mf{C}) \otimes \mc{H}_{\mf{R}}$.
\end{Thm}

\begin{proof}
It suffices to show that
\[
\mc{E}[\W{\alpha \otimes (h_1 \otimes \ldots \otimes h_n)}]_{\ell r}
= \Wick{(\Tr_\alpha[X(h_1), \ldots, X(h_n)])_{\ell r}}.
\]
The matrix with entries $\Wick{(\Tr_\alpha[X(h_1), \ldots, X(h_n)])_{\ell r}}$ has leading term $\Tr_\alpha[X(h_1), \ldots, X(h_n)]$, polynomial entries, and (by Lemma~\ref{Lemma:Equivariant}) is equivariant. Therefore it is of the form
\[
\mc{E}[\W{\alpha \otimes (h_1 \otimes \ldots \otimes h_n)} + R]
\]
for some $R \in \overline{\mc{TP}}_{n-1}(\mc{H}_{\mf{R}})$ (see Proposition~\ref{Prop:Projection}). Since each entry of this matrix is orthogonal to $\mc{P}_{n-1}(\mc{H}_{\mf{R}})$, while $\W{\alpha \otimes (h_1 \otimes \ldots \otimes h_n)}$ is orthogonal to $\overline{\mc{TP}}_{n-1}(\mc{H}_{\mf{R}})$, it follows that $\Exp{\mc{E}[R]^\ast \mc{E}[R]} = 0$, and so $\mc{E}[R] = 0$.
\end{proof}

\begin{Remark}
$\mc{E}$ also intertwines the contraction $C_\pi$ with the ordinary entry-wise contraction on Hermite polynomials, as can be seen directly or using Lemma~\ref{Lemma:Orthogonal-contractions}.
\end{Remark}

\begin{Remark}
In \cite{Bia97b}, Biane defined the matricial Segal-Bargmann transform. We outline the construction, and send the reader to the original article for details. The ordinary Segal-Bargmann transform is defined in Section~\ref{Section:Gaussian-Hilbert}. Apply that construction with $\mc{H}_{\mf{R}} = M_N^{sa}(\mf{C})$. Upgrade the Segal-Bargmann transform to its matricial version by applying it entry-by-entry. Then it follows from Theorem~\ref{Thm:Entrywise} that, as in Lemma 7 and Theorem 7 of \cite{Bia97b},
\[
\mc{S}(H_\alpha(X(h_1), \ldots, X(h_n))) = \Tr_\alpha[Z(h_1), \ldots, Z(h_n)].
\]
\end{Remark}

\subsubsection{Hermite polynomials of matrix argument.}
\label{Subsec:Hermite}

\begin{Remark}
If $\alpha \in S(n)$ rather than $S_0(n)$, in the case of a single variable, $\Tr_\alpha[X]$ depends only on the conjugacy class of $\alpha$, in other words on the number partition $\lambda \in \Par(n)$. Moreover
\[
\Tr_\lambda[X] = p_\lambda(x_1, \ldots, x_N),
\]
where $\set{x_1, \ldots, x_N}$ are (random) eigenvalues of $X$ and $p_\lambda$ is the power sum symmetric polynomial. For $X = X(h)$, we also get non-homogeneous symmetric polynomials
\begin{equation}
\label{Eq:Hermite-matrix}
h_\lambda(x_1, \ldots, x_N) = \mc{E}[\W{\lambda \otimes h^{\otimes n}}].
\end{equation}
With respect to the inner product induced from $\mc{F}_{1/N}(\mf{C})$, these polynomials are orthogonal for different $n$ but not necessarily for different $\lambda \in \Par(n)$. We now recall a different and more familiar basis of polynomials which are fully orthogonal with respect to this inner product.
\end{Remark}

\begin{Defn}
Fix $N \in \mf{N}$, and denote
\[
D^\ast = \sum_{i=1}^N \frac{\partial^2}{\partial x_i^2} + \sum_{i \neq j} \frac{1}{x_i - x_j} \left( \frac{\partial}{\partial x_i} - \frac{\partial}{\partial x_j} \right)
\]
and
\[
E^\ast = \sum_{i=1}^N x_i \frac{\partial}{\partial x_i}.
\]
For $\lambda \in \Par(n)$, the Hermite polynomial of matrix argument (for $\beta = 2$) is the symmetric polynomial in $\set{x_1, \ldots, x_N}$ with leading term $\frac{\abs{\lambda}!}{c_\lambda} s_\lambda$ which is an eigenfunction of the operator $D^\ast - E^\ast$ with eigenvalue $-n$ (note a misprint in \cite{Dumitriu-MOPS}). Here (see Corollary~7.1.7.4 in \cite{Stanley-volume-2})
\[
s_\lambda = \frac{1}{n!}\sum_{\nu \in \Par(n)} \frac{n!}{z_\nu} \chi^\lambda(\nu) p_\nu = \frac{1}{n!} \sum_{\alpha \in S(n)} \chi^\lambda(\alpha) p_\alpha
\]
is the Schur polynomial and $c_\lambda = \prod (\lambda_i + \lambda_j' - i - j + 1)$ is the hook length. See \cite{Baker-Forrester-CS-model,Dumitriu-MOPS,Forrester-book} for more details.
\end{Defn}

\begin{Prop}
For $\eta \in \mf{C}[S(n)]$,
\[
D^\ast \mc{E}[\T{\eta \otimes h^{\otimes n}}]
= 2 N \mc{E}[\T{\mc{L}(\eta) \otimes h^{\otimes (n-2)}}].
\]
and
\[
E^\ast \mc{E}[\T{\eta \otimes h^{\otimes n}}]
= \mc{E}[ E\T{\eta \otimes h^{\otimes n}}].
\]
\end{Prop}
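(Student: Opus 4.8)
The plan is to identify $D^\ast$ and $E^\ast$ as the radial parts of the Euclidean Laplacian and of the Euler (degree) operator on the space of $N\times N$ Hermitian matrices, and then push the computation through the evaluation homomorphism $\mc{E}$ of Theorem~\ref{Thm:E-map}, using that $\mc{E}[\T{\eta\otimes h^{\otimes n}}] = \Tr_\eta(X(h),\ldots,X(h))$.

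The statement for $E^\ast$ is essentially immediate. The matrix $\Tr_\eta(X(h),\ldots,X(h))$ is homogeneous of degree $n$ in the entries of $X(h)$, hence an eigenfunction of the matrix Euler operator $\sum_{i,j}X_{ij}\,\partial_{X_{ij}}$ with eigenvalue $n$; restricting that operator to $U(N)$-invariant functions gives exactly $E^\ast$, so $E^\ast\mc{E}[\T{\eta\otimes h^{\otimes n}}] = n\,\mc{E}[\T{\eta\otimes h^{\otimes n}}]$. On the other hand $E\T{\eta\otimes h^{\otimes n}} = n\,\T{\eta\otimes h^{\otimes n}}$ is the definition of $E$ in Proposition~\ref{Prop:DE}(b), and applying $\mc{E}$ and using its linearity yields the same quantity.

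For $D^\ast$, the first step is to recall the classical fact (see \cite{Forrester-book}; or conjugate $\sum_i \partial_{x_i}^2$ by the Vandermonde determinant) that $D^\ast$ is the radial part of $\Delta := \sum_{i,j}\partial_{X_{ij}}\partial_{X_{ji}}$, the Euclidean Laplacian on Hermitian matrices for the trace inner product; equivalently $\Delta$ equals $2N$ times the generator of the Hermitian Brownian motion $\set{X(h)}$. Since $\Delta$ commutes with conjugation it preserves $U(N)$-equivariant matrix functions, so it acts on $\mc{A}_N^{\text{equiv}}(\mc{H})$ and agrees with $D^\ast$ on the invariant (symmetric-function) part; it therefore suffices to prove the polynomial identity
\[
\Delta\,\Tr_\eta(X(h),\ldots,X(h)) = 2N\,\Tr_{\mc{L}(\eta)}(X(h),\ldots,X(h)).
\]
By linearity we may take $\eta=\alpha$ a single permutation. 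Writing $\Tr_\alpha(X,\ldots,X)$ out as a product of matrix entries of $X$ organized by the cycles of $\alpha$ and applying the Leibniz rule twice, $\Delta$ becomes a sum of $\binom{n}{2}$ terms indexed by the unordered pairs $\set{i,j}\subset[n]$ of copies of $X$ hit by the two derivatives; evaluating each with $\partial_{X_{ab}}X_{cd}=\delta_{ac}\delta_{bd}$ (the Wirtinger convention, under which $\Delta$ is the Laplacian on Hermitian matrices) glues the $i$-th and $j$-th slots and performs the corresponding trace contraction — which is precisely the cycle surgery defining $C_{(ij)}$ in Definition~\ref{Defn:Contraction-gr}. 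The prefactor $2N$ comes from the two orderings of the two derivatives together with one free index loop, and the remaining powers $N^{\pm 1}=q^{\mp 1}$ reproduce the weight $q^{\cyc_0((\tau\alpha)|_{\supp{\tau}^c}) - \cyc_0(\tau\alpha)+1}$ depending on whether the pair lies in one cycle or two and on whether a scalar-trace loop is created, destroyed, or split. Summing over $\set{i,j}$, i.e. over transpositions $\tau\in S(n)$, and using $\mc{L}=\sum_\tau C_\tau$ gives the displayed identity.

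The main obstacle I anticipate is the bookkeeping of the powers of $N$ in the Laplacian computation: verifying that the closed index loops produced by the double contraction match the weight in Definition~\ref{Defn:Contraction-gr} in each of the four cases listed in the Remark following it. This is the same combinatorics that underlies Proposition~\ref{Prop:GUE-moments}, and a practical way to manage it is to check the identity first on $\Tr[X^k]$ and products thereof, where the surgery is transparent, and then reduce the general cycle structure to that case by conjugation-equivariance. An alternative, once $D^\ast=2N\cdot(\text{generator})$ is available, is to deduce the identity directly from the corollary identifying $\mc{L}$ with the generator of $\set{X(t)}$ acting on trace polynomials, after reconciling the normalized and unnormalized traces.
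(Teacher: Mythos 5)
Your proposal is correct, but it reaches the $D^\ast$ identity by a genuinely different route than the paper. The paper stays entirely on the eigenvalue side: it computes $D^\ast(p_\mu)$ explicitly as a differential operator on power-sum symmetric functions (a long expansion with seven types of terms), separately enumerates the eight combinatorial cases of what a transposition contraction $C_\tau$ does to a permutation of cycle type $\mu$ together with its weight $q^{\pm 1}$ or $1$, and then matches the two lists term by term. You instead work on the matrix-entry side: identify $D^\ast$ as the radial part of the flat Laplacian $\Delta=\sum_{i,j}\partial_{X_{ij}}\partial_{X_{ji}}$ on Hermitian matrices, and prove the polynomial identity $\Delta\,\Tr_\eta(X,\ldots,X)=2N\,\Tr_{\mc{L}(\eta)}(X,\ldots,X)$ by the Leibniz rule, recognizing each double derivative as the cycle surgery $C_{(ij)}$ with the power of $N$ counted by closed index loops --- the same Wick-type combinatorics as Proposition~\ref{Prop:GUE-moments}. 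Your approach buys a conceptual explanation of \emph{why} the identity holds (it is the transfer of the flat Laplacian through the radial-part theorem) and avoids the explicit $D^\ast(p_\mu)$ computation; its cost is that it imports the radial-part identification from \cite{Forrester-book} and still requires the loop-counting bookkeeping you flag, which is genuinely the crux (your slogan that the prefactor is ``$2N$ from two orderings and one free index loop, with the remaining powers of $N$ giving the weight'' is correct but needs to be verified case by case, exactly as in the Remark after Definition~\ref{Defn:Contraction-gr}; I checked it on $p_2$, $p_3$, $p_4$ and $p_1^2$ and it comes out right). The $E^\ast$ statement is handled identically in both arguments, via homogeneity of degree $n$. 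One small caveat: both proofs really operate on the pure trace part ($\eta$ supported on permutations fixing $0$), since $D^\ast$ and $E^\ast$ act on symmetric functions of the eigenvalues; your matrix-side $\Delta$ would in fact make sense entrywise for general $\eta\in\mf{C}[S_0(n)]$, but the identification with $D^\ast$ only applies to the scalar-valued invariant part.
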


\begin{proof}
By linearity, it suffices to prove the result for $\eta = \alpha \in S(n)$ a single permutation. Suppose the cycle structure of $\alpha$ is given by the partition $\mu = 1^{k_1} 2^{k_2} \ldots m^{k_m}$. The second relation follows from $E^\ast p_\mu = n p_\mu$. Recall also that for a permutation $\alpha$, the Laplacian $\mc{L}(\alpha)$ is the sum of contractions of $\alpha$ with respect to all transpositions. Thus the first relation reduces to showing that
\[
D^\ast p_\mu(x_1, \ldots, x_N) = 2 N \sum_{\substack{\tau \in S(n) \\ \text{ a transposition}}} p_{\widetilde{C_\tau(\alpha)}}(x_1, \ldots, x_N),
\]
where $\widetilde{C_\tau(\alpha)}$ is the cycle type partition of $C_\tau(\alpha)$. For the left-hand side, a (long) computation (see \cite{Ans-Gai-Han-He-Mehl} for a similar computation in the unitary case) shows that
\begin{multline*}
D^\ast(p_\mu)
= 2 p_\mu \Biggl[ k_2 \frac{N^2}{p_2} + N \sum_{l=3}^{m} k_l l \frac{p_{l-2}}{p_l}
+ \sum_{l=2}^m \frac{k_l(k_l-1)l^2}{2} \frac{p_{2l-2}}{p_l^{2}} + \frac{k_1 (k_1 - 1)}{2} \frac{N}{p_1^2} \\
+ \sum_{2 \leq l_1 < l_2\leq m}^{ } k_{l_1}k_{l_2}l_1l_2 \frac{p_{l_1+l_2-2}} {p_{l_1}p_{l_2}}
+ \sum_{l = 2}^m k_1 k_l l \frac{p_{l-1}}{p_1 p_l}
+ \sum_{l=3}^{m} \frac{k_l l}{2} \sum_{a=1}^{l-3} \frac{p_ap_{l-2-a}}{p_l}\Biggr]
\end{multline*}
For the right-hand side, for a permutation $\alpha$ of cycle type $\mu = 1^{k_1} 2^{k_2} \ldots m^{k_m}$ and $q = 1/N$, a contraction by a transposition
\begin{itemize}
\item Eliminates a cycle of length $2$: in $k_2$ cases, with weight $N$.
\item Turns a cycle of length $l > 2$ into a cycle of length $l-2$: in $k_l l$ cases, with weight $1$.
\item Glues together two cycles of equal length $l \geq 2$: in $k_l (k_l - 1) l^2/2$ cases, with weight $N^{-1}$.
\item Eliminates two cycles of length $1$: in $k_1 (k_1 - 1)/2$ cases, with weight $1$.
\item Glues together two cycles of different lengths $2 \leq l_1 < l_2$: in $k_{l_1} l_{l_2} l_1 l_2$ cases, with weight $N^{-1}$.
\item Eliminates a cycle of length $1$ and turns a cycle of length $l$ into a cycle of length $l-1$: in $k_1 k_2 l$ cases, with weight $N^{-1}$.
\item Splits a cycle of length $l$ into those of different lengths $a < l-2-a$: in $k_l l$ cases, with weight $N^{-1}$.
\item Splits a cycle of length $l$ into those of equal lengths $(l-2)/2$: in $k_l l/2$ cases, with weight $N^{-1}$.
\end{itemize}

The expressions match up term-by-term.
\end{proof}

\begin{Cor}
Let $\norm{h}^2 = \frac{1}{N}$. Then $\mc{E}[\W{\chi^\lambda \otimes h^{\otimes n}}]$ is a multiple of the Hermite polynomial of matrix argument.
\end{Cor}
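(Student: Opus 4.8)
The plan is to combine the preceding proposition (the identities $D^\ast \mc{E}[\T{\eta \otimes h^{\otimes n}}] = 2N\,\mc{E}[\T{\mc{L}(\eta) \otimes h^{\otimes(n-2)}}]$ and $E^\ast \mc{E}[\T{\eta \otimes h^{\otimes n}}] = \mc{E}[E\,\T{\eta \otimes h^{\otimes n}}]$) with Proposition~\ref{Prop:DE}(b), which characterizes $\I{\eta \otimes h^{\otimes n}}$ as the unique eigenfunction of $E - 2\mc{L}$ with eigenvalue $n$ and leading term $\T{\eta \otimes h^{\otimes n}}$. First I would fix $\lambda \in \Par(n)$, regarded inside $S_0(n)$ via $S(n) \subset S_0(n)$, and apply $\mc{E}$ to the eigenfunction equation $(E - 2\mc{L})\I{\chi^\lambda \otimes h^{\otimes n}} = n\,\I{\chi^\lambda \otimes h^{\otimes n}}$. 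Using the proposition termwise on the expansion $\I{\chi^\lambda \otimes h^{\otimes n}} = \sum_{k} (\text{signs})\,\T{\mc{L}^k(\chi^\lambda) \otimes h^{\otimes(n-2k)}}$, the operator $E^\ast - \frac{1}{2N} D^\ast$ acting on $\mc{E}[\I{\chi^\lambda \otimes h^{\otimes n}}]$ picks up exactly $\mc{E}[(E - 2\mc{L})\I{\chi^\lambda \otimes h^{\otimes n}}]$ when $\norm{h}^2 = \tfrac1N$ (so that the contraction $\ip{h}{h}$ contributes the factor $\tfrac1N$ that converts $D^\ast$-scaling into $\mc{L}$-scaling). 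Hence $\mc{E}[\I{\chi^\lambda \otimes h^{\otimes n}}]$ is an eigenfunction of $E^\ast - \frac{1}{2N}D^\ast$, equivalently of $\frac{1}{2N}(D^\ast - 2N E^\ast)$; multiplying by the constant $2N$ does not change eigenfunctions, so it is an eigenfunction of $D^\ast - E^\ast$ with eigenvalue $-n$ — matching the defining equation for the Hermite polynomial of matrix argument.

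Next I would identify the leading term. By Definition~\ref{Defn:I-gr}/\ref{Defn:T-I}, the top-degree part of $\I{\chi^\lambda \otimes h^{\otimes n}}$ is $\T{\chi^\lambda \otimes h^{\otimes n}}$, so the leading term of $\mc{E}[\I{\chi^\lambda \otimes h^{\otimes n}}]$ is $\mc{E}[\T{\chi^\lambda \otimes h^{\otimes n}}] = \Tr_{\chi^\lambda}[X(h)] = \sum_{\alpha \in S(n)} \chi^\lambda(\alpha)\,\Tr_\alpha[X(h)] = \sum_{\alpha} \chi^\lambda(\alpha)\,p_\alpha(x_1,\dots,x_N)$, which by the cited formula (Corollary~7.1.7.4 in \cite{Stanley-volume-2}) equals $n!\, s_\lambda(x_1,\dots,x_N)$. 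The Hermite polynomial of matrix argument is normalized to have leading term $\frac{n!}{c_\lambda} s_\lambda$, so $\mc{E}[\I{\chi^\lambda \otimes h^{\otimes n}}]$ and $c_\lambda$ times the Hermite polynomial of matrix argument have the same leading term and satisfy the same eigenvalue equation. Since the eigenfunction with prescribed leading term is unique (this uniqueness is exactly the content of the second half of Proposition~\ref{Prop:DE}(b), transported through $\mc{E}$, or can be seen directly from triangularity of $D^\ast - E^\ast$ in the Schur basis), the two must be proportional: $\mc{E}[\I{\chi^\lambda \otimes h^{\otimes n}}] = c_\lambda \cdot (\text{Hermite polynomial of matrix argument for }\lambda)$, which is the claim.

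The main obstacle I anticipate is the bookkeeping in the first step — verifying that $\mc{E}$ intertwines $E - 2\mc{L}$ on $\mc{TP}(\mf{C})$ with $E^\ast - \frac{1}{2N}D^\ast$ on symmetric polynomials \emph{after} substituting $\norm{h}^2 = \tfrac1N$. This requires checking that the tensor contraction $C_\pi$ on $h^{\otimes n}$ produces precisely the power $\norm{h}^{2|\Pair{\pi}|} = N^{-|\Pair{\pi}|}$ needed to match the $N$-dependence in the proposition's formula $D^\ast \mc{E}[\T{\eta \otimes h^{\otimes n}}] = 2N\,\mc{E}[\T{\mc{L}(\eta)\otimes h^{\otimes(n-2)}}]$ under iteration; one must be careful that the $2N$ from a single transposition-contraction, raised to the $k$-th power over $\mc{L}^k$, combines correctly with $(\tfrac1N)^k$ to leave the clean factor matching $(2\mc{L})^k$. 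A secondary point is confirming that $\I{\chi^\lambda \otimes h^{\otimes n}}$ is nonzero (so the proportionality constant is genuinely $c_\lambda$ and not $0$); this follows since $\chi^\lambda \notin \mc{N}_{gr,1/N}$ for $N$ large and, more carefully, since the leading term $\Tr_{\chi^\lambda}[X(h)] = n!\,s_\lambda$ is a nonzero polynomial whenever $\lambda$ has at most $N$ rows — and for $q = \tfrac1N$ with more than $N$ rows, both sides vanish, so the statement is vacuously consistent there.
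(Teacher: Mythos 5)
Your overall strategy is the same as the paper's: apply $\mc{E}$ to the eigenfunction identity $(E - 2\mc{L})\I{\chi^\lambda \otimes h^{\otimes n}} = n\,\I{\chi^\lambda \otimes h^{\otimes n}}$ from Proposition~\ref{Prop:DE}(b), use the preceding proposition to convert $\mc{L}$ and $E$ into $D^\ast$ and $E^\ast$, and then match leading terms ($\mc{E}[\T{\chi^\lambda \otimes h^{\otimes n}}] = n!\,s_\lambda$ versus $\frac{\abs{\lambda}!}{c_\lambda}s_\lambda$, a ratio of $\frac{1}{c_\lambda}$, exactly as in the paper). The leading-term and uniqueness parts of your argument are fine.

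However, there is a genuine error in your central scaling step. You conclude that $\mc{E}[\I{\chi^\lambda \otimes h^{\otimes n}}]$ is an eigenfunction of $E^\ast - \frac{1}{2N}D^\ast$, and then assert that this is ``equivalently'' an eigenfunction of $\frac{1}{2N}(D^\ast - 2N E^\ast)$, which after ``multiplying by the constant $2N$'' becomes an eigenfunction of $D^\ast - E^\ast$. But $2N \cdot \bigl(\tfrac{1}{2N}D^\ast - E^\ast\bigr) = D^\ast - 2N E^\ast$, which is \emph{not} $D^\ast - E^\ast$, and an eigenfunction of $D^\ast - 2N E^\ast$ is not in general an eigenfunction of $D^\ast - E^\ast$: rescaling the whole operator by a constant only rescales the eigenvalue, whereas rescaling \emph{one summand} of the operator changes which functions are eigenfunctions at all. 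So if your intertwining constant $\frac{1}{2N}$ were taken at face value, the argument would prove the wrong eigenvalue equation and the proof would not close. The paper avoids this entirely by establishing, at the normalization $\norm{h}^2 = \frac{1}{N}$, the \emph{exact} identity $(D^\ast - E^\ast)\,\mc{E}[\T{\eta \otimes h^{\otimes n}}] = \mc{E}[(2\mc{L} - E)\T{\eta \otimes h^{\otimes n}}]$ with no leftover constant; only because the constant is exactly $1$ can one then transport $(2\mc{L}-E)\I{\cdot} = -n\,\I{\cdot}$ to $(D^\ast - E^\ast)\mc{E}[\I{\cdot}] = -n\,\mc{E}[\I{\cdot}]$. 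To repair your proof you must carry out the bookkeeping you flagged as the ``main obstacle'' — combining the tensor contraction factor $\norm{h}^{2}$ per transposition with the $2N$ in $D^\ast \mc{E}[\T{\eta\otimes h^{\otimes n}}] = 2N\,\mc{E}[\T{\mc{L}(\eta)\otimes h^{\otimes(n-2)}}]$ — and verify that the resulting constant in front of $D^\ast$ is exactly $1$ at the stated normalization; you cannot wave away a non-unit constant afterwards.
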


\begin{proof}
Since their leading terms differ by a factor of $\frac{\abs{\lambda}!}{c_\lambda n!}$, it suffices to verify that $\mc{E}[\W{\chi^\lambda \otimes h^{\otimes n}}]$ is an eigenfunction of $D^\ast - E^\ast$ with eigenvalue $-n$. Indeed, for $\norm{h}^2 = \frac{1}{N}$,
\[
(D^\ast - E^\ast) \mc{E}[\T{\eta \otimes h^{\otimes n}}]
= \mc{E}[(2 \mc{L} - E) \T{\eta \otimes h^{\otimes n}}].
\]
So using Proposition~\ref{Prop:DE},
\[
(D^\ast - E^\ast) \mc{E}[\W{\eta \otimes h^{\otimes n}}]
= \mc{E}[(2 \mc{L} - E) \W{\eta \otimes h^{\otimes n}}]
= - n \mc{E}[\W{\eta \otimes h^{\otimes n}}]. \qedhere
\]
\end{proof}

\subsubsection{The case of $q=0$}
\label{Subsec:q=0}

The scaling used throughout most of the article (corresponding to the un-normalized trace) gives well-defined inner products and Fock space structure for $q=0$, see equation~\eqref{Eq:q=0}. However the contractions, and so the operators $\T{\alpha \otimes F}$, may not be defined. In this section we consider a different scaling, corresponding to the normalized trace. Under this normalization, we get well-defined contractions, but a more degenerate Fock space structure.

\begin{Defn}
In this section, we will denote $\alpha \otimes_s F$ by $\tilde{W}(\alpha \otimes_s F)$. For $\ell = \abs{\Pair{\pi}} = \abs{\pi}$, define the normalized contractions
\[
\begin{split}
\tilde{C}_\pi (\alpha)
& = q^{\cyc_0(\alpha) - \cyc_0((\pi \alpha)|_{\supp{\pi}^c})} C_\pi(\alpha) \\
& = q^{\cyc_0(\alpha) - \cyc_0(\pi \alpha) + \ell}
{P^{[0, n] \setminus \supp{\pi}}_{[0, n-2 \ell]} (\pi \alpha)|_{\supp{\pi}^c}},
\end{split}
\]
and the operators
\begin{equation}
\label{Eq:Normalized-relation}
\tilde{T}(\alpha \otimes_s F)
= \sum_{\pi \in \Part_{1,2}(n)} \tilde{W}(\tilde{C}_\pi (\alpha) \otimes_s C_\pi(F)),
\end{equation}
so that
\begin{equation}
\label{Eq:Normalized-W}
\tilde{W}(\alpha \otimes_s F)
= \sum_{\pi \in \Part_{1,2}(n)} (-1)^{\abs{\pi}} \tilde{T}(\tilde{C}_\pi (\alpha) \otimes_s C_\pi(F))
\end{equation}
and
\[
\tilde{T}(\alpha \otimes_s F) \tilde{T}(\beta \otimes_s G) = \tilde{T}((\alpha \cup \beta) \otimes_s (F \otimes G)).
\]
Finally, define the linear functional $\phi$ by
\[
\state{\tilde{W}(\alpha \otimes_s F)} = 0, \quad \state{\tilde{W}({(0)})} = 1.
\]
\end{Defn}

\begin{Remark}
If we assume that $\tilde{T}(\alpha \otimes F) = q^{\cyc_0(\alpha)} \T{\alpha \otimes F}$, then for $q=0$, such a multiple is zero unless $\alpha$ is a single cycle (containing $0$). The definition above is a little more flexible.
\end{Remark}

\begin{Example}
With this normalization, the examples in Remark~\ref{Remark:Hermite}, for $\alpha = (012)(34)$, become
\begin{align*}
\tilde{H}_\alpha(x_1, x_1, x_1, x_1) & = x_1^2 \tr[x_1^2] - \tr[x_1^2] - (1 + 4\frac{1}{N^2}) x_1^2 + (1 + 2 \frac{1}{N^2}), \\
\tilde{H}_\alpha(x_1, x_1, x_2, x_2) & = x_1^2 \tr[x_2^2] - \tr[x_2^2] - x_1^2 + 1
= (x_1^2 - 1) (\tr[x_2^2] - 1), \\
\tilde{H}_\alpha(x_1, x_2, x_1, x_2) & = x_1 x_2 \tr[x_1 x_2] - \frac{1}{N^2} x_1^2 - \frac{1}{N^2} x_2^2 + \frac{1}{N^2}, \\
\tilde{H}_\alpha(x_1, x_2, x_3, x_4) & = x_1 x_2 \tr[x_3 x_4].
\end{align*}
\end{Example}

\begin{Remark}
\label{Remark:Standard-form}
Let $\lambda$ be an interval partition of $[0,n]$. As discussed in Section~\ref{Sec:Centralizer}, each permutation $\alpha \in S_0(n)$ is conjugate, under the action of $S(n)$, to a permutation with cycle structure $\lambda$ in which the elements in each cycle, as well as the cycles, appear in increasing order. Such a permutation is not unique. In the results below, we will only consider permutations of this type; the results are easily extended to general $\alpha \in S_0(n)$, but the notation gets heavier.
\end{Remark}

\begin{Remark}
Let $\mc{F}_f(\mc{H}) = \mf{C} \Omega \oplus \bigoplus_{n=1}^\infty \overline{\mc{H}^{\otimes n}}$ be the full Fock space of $\mc{H}_{\mf{R}}$. For $h \in \mc{H}_{\mf{R}}$, denote by $S(h)$ the semicircular element corresponding to $h$ in its standard representation on $\mc{F}_f(\mc{H}_{\mf{R}})$. Denote by $\Gamma_f(\mc{H}_{\mf{R}})$ the algebra generated by $\set{X(h) : h \in \mc{H}_{\mf{R}}}$, and by $\Phi$ the vacuum state on it. Denote by $U(h_1 \otimes \ldots \otimes h_n)$ the multivariate Chebyshev polynomials (of the second kind), which are the elements of $\Gamma_f(\mc{H}_{\mf{R}})$ that satisfy $U(h_1 \otimes \ldots \otimes h_n) \Omega = h_1 \otimes \ldots \otimes h_n$. They are also determined by the recursion
\begin{equation}
\label{Eq:Chebyshev}
U(f \otimes h_1 \otimes \ldots \otimes h_n) = S(f) U(h_1 \otimes \ldots \otimes h_n) - \ip{f}{h_1} U(h_2 \otimes \ldots \otimes h_n).
\end{equation}
More generally, for $F \in \overline{\mc{H}_{\mf{R}}^{\otimes n}}$, $U(F)$ is the unique element of the weak closure of $\Gamma_f(\mc{H}_{\mf{R}})$ satisfying $U(F) \Omega = F$.

See Lecture 7 of \cite{Nica-Speicher-book} and Section 5 of \cite{BiaSpeBrownian} for additional background on the full Fock space, Chebyshev polynomials, and free analysis.
\end{Remark}

\begin{Prop}
Let $q=0$, and $\lambda \in S_0(n)$ as in Remark~\ref{Remark:Standard-form}. Denote $V_0$ the block of $\lambda$ that contains $0$.
\begin{enumerate}
\item
$\tilde{C}_\pi ({\lambda}) = 0$ unless $\pi$ is non-crossing and $\pi \leq {\lambda}$ as a cycle partition, in which case
\[
\tilde{C}_\pi ({\lambda})
= {P^{[0, n] \setminus \supp{\pi}}_{[0, n-2 \ell]} (\pi {\lambda})|_{\supp{\pi}^c}}.
\]
\item Define the map $\mc{E}_0 : \mc{TP}(\mc{H}_{\mf{R}}) \rightarrow \Gamma_f(\mc{H}_{\mf{R}})$ by
\begin{equation}
\label{Eq:T-tilde}
\mc{E}_0[\tilde{T}(\lambda \otimes (h_1 \otimes \ldots \otimes h_n))] = \Phi_\lambda[S(h_1), \ldots, S(h_n)] = S(h_1) \ldots S(h_{\abs{V_0}}) \prod_{\substack{V \in \lambda \\ V \neq V_0}} \sum_{\pi \in \mc{NC}_2(V)}  C_\pi(\bigotimes_{i \in V} h_i),
\end{equation}
where the notation $\Phi_\lambda$ is defined at the beginning of Section~\ref{Sec:GUE}. Then $\mc{E}_0$ extends to a homomorphism from $\overline{\mc{TP}}(\mc{H}_{\mf{R}})$, and to an isometric isomorphism from $\mc{F}_0(\mc{H})$ to $L^2(\Gamma_f(\mc{H}_{\mf{R}}), \Phi) \simeq \mc{F}_f(\mc{H})$.
\item If $\lambda$ is not a single cycle (containing $0$), then $\mc{E}_0[\tilde{W}({\lambda} \otimes F)] = 0$. Otherwise,
\[
\mc{E}_0[\tilde{W}({(01 \ldots n)} \otimes (h_1 \otimes \ldots \otimes h_n))] = U(h_1 \otimes \ldots \otimes h_n)
\]
and more generally
\[
\mc{E}_0[\tilde{W}({(01 \ldots n)} \otimes F)] = U(F).
\]
\end{enumerate}
\end{Prop}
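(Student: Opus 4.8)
Parts (1)–(3) are of three different natures: (1) is the ``non-crossing contraction lemma'' for the normalized contractions $\tilde{C}_\pi$, (2) is the $q=0$ analogue of Theorem~\ref{Thm:E-map} with the free semicircular family $\set{S(h) : h \in \mc{H}_{\mf{R}}}$ playing the role of the Hermitian Brownian motion, and (3) identifies the $\tilde{I}$-vectors with free Wick/Chebyshev polynomials. For part (1), write the weight of $\tilde{C}_\pi(\lambda)$ as $q^{e}$ with $e = \cyc(\lambda) - \cyc(\pi\lambda) + \ell$ (the $\cyc_0$ and $\cyc$ forms agree, the two $-1$'s cancelling). Factoring $\pi$ into its $\ell$ constituent transpositions in any order, each left-multiplication into $\lambda$ changes the number of cycles by exactly $\pm1$, so $\cyc(\pi\lambda) \leq \cyc(\lambda) + \ell$, giving $e \geq 0$, with $e = 0$ iff every step is a ``split''. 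If some pair of $\pi$ joins two cycles of $\lambda$, performing it first is not a split, so $e = 0$ forces $\pi \leq \lambda$; and when $\pi \leq \lambda$ the cycle count is additive over the cycles of $\lambda$, namely $e = \sum_c \left(1 + \ell_c - \cyc((\pi|_c)\, c)\right)$ with nonnegative summands, so $e = 0$ iff on each interval cycle $c$ the restricted pairing $\pi|_c$ splits $c$ maximally — which is exactly non-crossingness of $\pi|_c$ (the standard single-cycle fact, by induction on $\ell_c$: an innermost pair of a non-crossing pairing splits off an arc of singletons, while a crossing $(i,j),(k,l)$ with $i<k<j<l$ forces a non-split at the second step). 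When the weight is $1$ the permutation produced is the displayed one by definition of $\tilde{C}_\pi$, so (1) follows. This is the genus-expansion bookkeeping; compare \cite{Nica-Speicher-book}.

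For part (2), well-definedness, multiplicativity and $\ast$-compatibility of $\mc{E}_0$ are established verbatim as in Theorem~\ref{Thm:E-map}(a),(b) — traciality of $\Phi$, compatibility of $\cup$ with products of trace monomials, and self-adjointness of $S(h)$ — and $\mc{E}_0$ lands in the $C^\ast$-algebra $\Gamma_f(\mc{H}_{\mf{R}})$ since each $\Phi_\alpha[S(h_1), \ldots, S(h_n)]$ is a polynomial in bounded operators. The substance is $\Phi \circ \mc{E}_0 = \phi$: expanding $\tilde{T}(\alpha \otimes F) = \sum_\pi \tilde{I}(\tilde{C}_\pi(\alpha) \otimes C_\pi(F))$ and using $\phi[\tilde{I}(\eta \otimes \cdot)] = 0$ when $\eta$ is not a scalar multiple of $(0)$, together with part (1), gives $\phi[\tilde{T}(\alpha \otimes F)] = \sum_{\pi \in \Part_2(n),\ \cyc(\pi\alpha) = \cyc(\alpha) + n/2} C_\pi(F)$; on the other side $\Phi[\Phi_\alpha[S(h_1),\dots,S(h_n)]] = \prod_{c \text{ cycle of } \alpha} \Phi[\text{ordered product of } S(h_i),\ i \in c]$, and the free Wick formula expands each factor as a sum over non-crossing pair partitions of the positions along that cycle, so assembling over all cycles of $\alpha$ reproduces precisely the same sum — a pair partition non-crossing within every cycle of $\alpha$ and joining no two cycles is exactly one with $\cyc(\pi\alpha) = \cyc(\alpha) + n/2$. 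Given $\Phi \circ \mc{E}_0 = \phi$, the extension of $\mc{E}_0$ to $\overline{\mc{TP}}(\mc{H}_{\mf{R}})$ and the isometry onto its image in $L^2(\Gamma_f(\mc{H}_{\mf{R}}), \Phi)$ follow as in Theorem~\ref{Thm:E-map}(c), using the $q=0$ version of the estimate of Proposition~\ref{Prop:Extended-algebra} (the contractions now being $0/1$-valued) and faithfulness of $\phi$ and of $\Phi$; the image is dense because it is an algebra containing every $S(h) = \mc{E}_0[\tilde{T}((01) \otimes h)]$, and these generate $\Gamma_f(\mc{H}_{\mf{R}})$; finally $L^2(\Gamma_f(\mc{H}_{\mf{R}}), \Phi) \simeq \mc{F}_f(\mc{H})$ via $a \mapsto a\Omega$.

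Both identities in part (3) are multilinear and norm-continuous in $F$, so by Lemma~\ref{Lemma:L2-approximation} it suffices to treat $F = h_1 \otimes \cdots \otimes h_n$ with $h_1, \ldots, h_n$ mutually orthogonal (we first enlarge $\mc{H}$ to be infinite-dimensional, harmless since only finitely many vectors occur, or argue by compatibility with an isometric inclusion $\mc{H} \hookrightarrow \mc{K}$). Then $C_\pi(F) = 0$ for $\pi \neq \mathrm{id}$, so $\mc{E}_0[\tilde{I}(\lambda \otimes F)] = \Phi_\lambda[S(h_1), \ldots, S(h_n)]$. If $\lambda$ is not a single cycle containing $0$ it has a cycle $c$ avoiding $0$, and $\Phi[\text{ordered product of } S(h_i),\ i \in c] = 0$ (the $h_i$ with $i \in c$ being distinct and orthogonal, no non-crossing pairing survives), hence $\Phi_\lambda[S(h_1),\dots,S(h_n)] = 0$ and $\tilde{I}(\lambda \otimes F) = 0$ in the quotient defining the normalized Fock space (equivalently $\mc{E}_0[\tilde{I}(\lambda \otimes F)] = 0$, using injectivity of $\mc{E}_0$ from part (2)). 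If $\lambda = (01 \ldots n)$, then $\mc{E}_0[\tilde{I}((01\ldots n) \otimes F)] = S(h_1) \cdots S(h_n)$, and for mutually orthogonal $h_i$ the free annihilation contributions vanish, so $S(h_1) \cdots S(h_n) \Omega = h_1 \otimes \cdots \otimes h_n$; by the defining property of $U$ this gives $S(h_1) \cdots S(h_n) = U(h_1 \otimes \cdots \otimes h_n)$, and since both $F \mapsto \mc{E}_0[\tilde{I}((01\ldots n) \otimes F)]$ and $F \mapsto U(F)$ are bounded and agree on a dense set, $\mc{E}_0[\tilde{I}((01\ldots n) \otimes F)] = U(F)$ for all $F$.

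The one genuinely substantive step is the trace identity $\Phi \circ \mc{E}_0 = \phi$ of part (2): matching the non-crossing pair partitions produced cycle-by-cycle by the free Wick formula against the condition $\cyc(\pi\alpha) = \cyc(\alpha) + n/2$ coming from the $q = 0$ contractions — which is exactly the point at which part (1) is invoked. Parts (1) and (3) are standard in spirit (the non-crossing contraction lemma and a computation on the free Fock space), the only delicate point in (3) being the reduction to mutually orthogonal vectors and the attendant enlargement of $\mc{H}$.
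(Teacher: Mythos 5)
Your proof is correct, and for parts (1) and (2) it is essentially the paper's own argument: the paper proves (1) via the same cycle-counting inequality, written as $-\abs{\alpha} + \abs{\pi^{-1}\alpha} + \abs{\pi} \geq 0$ with equality characterized exactly as you characterize it (you merely supply more of the elementary detail that the paper asserts), and (2) by the same matching of $\state{\tilde{T}(\lambda \otimes F)} = \sum_{\pi \in \mc{NC}_2(n),\, \pi \leq \lambda} C_\pi(F)$ against the cycle-by-cycle free Wick expansion of $\Phi[\Phi_\lambda[S(h_1), \ldots, S(h_n)]]$, followed by the homomorphism and density arguments borrowed from Theorem~\ref{Thm:E-map}.

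Part (3) is where you genuinely diverge. The paper stays inside the algebra: for a cycle $V$ of $\lambda$ not containing $0$, it factors the expansion $\tilde{I}(\lambda \otimes F) = \sum_{\pi \in \mc{NC}_{1,2}(n),\, \pi \leq \lambda} (-1)^{n - \abs{\pi}} \tilde{T}(\tilde{C}_\pi(\lambda) \otimes C_\pi(F))$ over $V$ and $V^c$, uses that $\tilde{T}(V \otimes F_V)$ is the scalar $\sum_{\rho \in \mc{NC}_2(V)} C_\rho(F_V)$, and shows the resulting alternating sum over $\mc{NC}_{1,2}(V)$ telescopes to zero; for the identification with $U$ it computes $\phi$ of products of two $\tilde{I}$'s (only one non-crossing inhomogeneous pair partition survives) and combines this with the leading-term observation. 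You instead reduce to mutually orthogonal $h_i$ — where every contraction dies, so $\tilde{I} = \tilde{T}$ — and read both claims off on the free Fock space through the isometry of part (2). Your route is shorter and makes the free-probabilistic content transparent, at the cost of (i) requiring the full isometric isomorphism of (2) before (3), whereas the paper's telescoping for the vanishing is purely combinatorial and independent of $\mc{E}_0$, and (ii) the density/enlargement step for orthogonal simple tensors, which you correctly flag (Lemma~\ref{Lemma:L2-approximation}(b) needs $\mc{H}$ infinite-dimensional, and the inclusion $\mc{H} \hookrightarrow \mc{K}$ is compatible with all the inner products, so the reduction is indeed harmless). One small caveat worth stating explicitly if you write this up: both your argument and the paper's establish $\tilde{I}(\lambda \otimes F) = 0$ only in the quotient by the kernel of the normalized $q=0$ seminorm (equivalently, in the GNS representation), not as a literal identity of vectors in $\mf{C}[S_0(n)] \otimes_s \mc{H}^{\otimes n}$; this is the intended meaning, since the normalized scaling produces a degenerate structure, but your phrase ``using injectivity of $\mc{E}_0$'' should be read as injectivity on the Hilbert-space quotient.
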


\begin{proof}
Note first that for $\ell = \abs{\pi}$,
\[
\cyc_0(\alpha) - \cyc_0(\pi \alpha) + \ell = - \abs{\alpha} + \abs{\pi^{-1} \alpha} + \abs{\pi} \geq 0.
\]
Moreover, for $\alpha = \lambda$ as in Remark~\ref{Remark:Standard-form}, this is equal to $0$ if and only if $\pi \leq \lambda$, and on each block of $\lambda$, $\pi$ is non-crossing (see Lecture 23 of \cite{Nica-Speicher-book}). Therefore equation~\eqref{Eq:Normalized-relation} reduces to 
\begin{equation}
\label{Eq:T-NC}
\tilde{T}({\lambda} \otimes F)
= \sum_{\substack{\pi \in \mc{NC}_{1,2}(n) \\ \pi \leq {\lambda}}} \tilde{W}(\tilde{C}_\pi ({\lambda}) \otimes C_\pi(F)),
\end{equation}
and so for $F = h_1 \otimes \ldots \otimes h_n$,
\[
\state{\tilde{T}({\lambda} \otimes F)}
= \sum_{\substack{\pi \in \mc{NC}_{2}(n) \\ \pi \leq {\lambda}}} C_\pi(F)
= \Phi[\Phi_\lambda[S(h_1), \ldots, S(h_n)]],
\]
where the second equality follows from the properties of semicircular elements. The homomorphism property follows as before, so $\mc{E}_0$ is an isometric isomorphism. In particular, to prove the second equality in~\eqref{Eq:T-tilde}, it suffices to note that if $(0)$ is a cycle in $\lambda$, then
\begin{equation}
\label{Eq:T-scalar}
\mc{E}_0[\tilde{T}({\lambda} \otimes F)] = \Phi_\lambda[S(h_1), \ldots, S(h_n)] = \sum_{\substack{\pi \in \mc{NC}_{2}(n) \\ \pi \leq {\lambda}}} C_\pi(F)
\end{equation}
is a scalar. Next, for $q=0$, the relation~\eqref{Eq:Normalized-W} reduces to
\[
\tilde{W}(\lambda \otimes F)
= \sum_{\substack{\pi \in \mc{NC}_{1,2}(n) \\ \pi \leq \lambda}} (-1)^{\abs{\pi}} \tilde{T}(\tilde{C}_\pi ({\lambda}) \otimes C_\pi(F)).
\]
Suppose $\lambda$ contains a cycle $V$ that does not contain $0$. Then  the sum above is
\[
\begin{split}
\tilde{W}(\lambda \otimes F)
& = \sum_{\substack{\pi \in \mc{NC}_{1,2}([n] \setminus V) \\ \pi \leq (\lambda|_{V^c})}} (-1)^{\abs{\pi}} \tilde{T}(\tilde{C}_{\pi}({\lambda|_{V^c}}) \otimes C_{\pi}(\bigotimes _{i \in [n] \setminus V} h_i)) \\
&\qquad \cdot \sum_{\tau \in \mc{NC}_{1,2}(V)} (-1)^{\abs{\tau}} \tilde{T}(\tilde{C}_{\tau}({V}) \otimes C_{\tau}( \bigotimes_{i \in V} h_i)).
\end{split}
\]
Since $V$ does not contain $0$, denoting $F_V = \bigotimes_{i \in V} h_i$ and using \eqref{Eq:T-scalar}, $\mc{E}_0$ applied to the second of these sums gives
\[
\sum_{\tau \in \mc{NC}_{1,2}(V)} (-1)^{\abs{\tau}} \sum_{\substack{\sigma \in \mc{NC}_{2}(\abs{V} - 2 \abs{\tau}) \\ \sigma \leq \tilde{C}_\tau(V)}} C_\sigma C_\tau F_V
= \sum_{\rho \in \mc{NC}_2(V)} \sum_{S \subset \Pair{\rho}} (-1)^{\abs{S}} C_\rho F_V = 0.
\]
Finally,  $\mc{E}_0[\tilde{W}({(01 \ldots n)} \otimes (h_1 \otimes \ldots \otimes h_n))]$ is a polynomial in $\set{S(h_1), \ldots, S(h_n)}$, with the leading term $S(h_1) \ldots S(h_n)$, and
\[
\begin{split}
S(f) \mc{E}_0[\tilde{W}((01\ldots n) \otimes (h_1 \otimes \ldots \otimes h_n))]
& = \mc{E}_0[\tilde{W}((01\ldots n (n+1)) \otimes (f \otimes h_1 \otimes \ldots \otimes h_n))] \\
&\quad - \sum_{k=1}^n \ip{f}{h_k} \mc{E}_0[\tilde{W}((0 k \ldots (n-1)) (1 \ldots (k-1)) \otimes (f \otimes h_1 \otimes \ldots \otimes \hat{h}_k \otimes \ldots \otimes h_n))] \\
& = \mc{E}_0[\tilde{W}((01\ldots n (n+1)) \otimes (f \otimes h_1 \otimes \ldots \otimes h_n))] \\
&\quad - \ip{f}{h_1} \mc{E}_0[\tilde{W}((0 1 \ldots (n-1)) \otimes (f \otimes h_2 \otimes \ldots \otimes h_n))]
\end{split}
\]
since the remaining terms in the sum are zero. Comparing with the recursion~\eqref{Eq:Chebyshev}, we conclude that these are indeed the Chebyshev polynomials.
\end{proof}

\begin{acknowledgments}
The first author has discussed various aspects of this project with a number of people. He is especially grateful to Todd Kemp, Andu Nica, JM Landsberg, Marek Bo{\.z}ejko, and Jurij Vol\v{c}i\v{c}. The authors also thank the referees for an incredibly careful reading of the manuscript and numerous comments. Their suggestions resulted in substantial improvements, including the Gaussian Hilbert spaces approach and Theorem~\ref{Thm:Entrywise}, as well as a better (in particular, correct) proof of Theorem~\ref{Thm:Center}. Both authors were supported by the Simons Foundation Collaboration Grant for Mathematicians \#527486 .
\end{acknowledgments}

\section*{Author declarations}

\subsection*{Conflict of interest}

The authors have no conflicts to disclose.

\subsection*{Author Contributions}

\textbf{Michael Anshelevich:} Conceptualization (equal), funding acquisition (lead), writing - original draft preparation (lead), writing - review \& editing (lead) . \textbf{David Buzinski:} Conceptualization (equal), writing - original draft preparation (supporting).

\section*{Data Availability Statement}

Data sharing not applicable to this article as no datasets were generated or analyzed during the current study.


\begin{thebibliography}{10}

\bibitem{Ans-Gai-Han-He-Mehl}
Michael Anshelevich, Matthew Gaikema, Madeline Hansalik, Songyu He, and Nathan
  Mehlhop, \emph{Expansion of permutations as products of transpositions},
  arXiv:1702.06093 [math.CO], 2017.

\bibitem{Baker-Forrester-CS-model}
T.~H. Baker and P.~J. Forrester, \emph{The {C}alogero-{S}utherland model and
  generalized classical polynomials}, Comm. Math. Phys. \textbf{188} (1997),
  no.~1, 175--216. \MR{1471336 (99c:33012)}

\bibitem{Bia97b}
Philippe Biane, \emph{Segal-{B}argmann transform, functional calculus on matrix
  spaces and the theory of semi-circular and circular systems}, J. Funct. Anal.
  \textbf{144} (1997), no.~1, 232--286.

\bibitem{Biane-Approx-factorization-characters}
\bysame, \emph{Approximate factorization and concentration for characters of
  symmetric groups}, Internat. Math. Res. Notices (2001), no.~4, 179--192.
  \MR{1813797}

\bibitem{BiaSpeBrownian}
Philippe Biane and Roland Speicher, \emph{Stochastic calculus with respect to
  free {B}rownian motion and analysis on {W}igner space}, Probab. Theory
  Related Fields \textbf{112} (1998), no.~3, 373--409. \MR{99i:60108}

\bibitem{Bozejko-Guta}
Marek Bo\.{z}ejko and M\u{a}d\u{a}lin Gu\c{t}\u{a}, \emph{Functors of white
  noise associated to characters of the infinite symmetric group}, Comm. Math.
  Phys. \textbf{229} (2002), no.~2, 209--227. \MR{1923173}

\bibitem{Structure-symmetric}
Murray~R. Bremner, Sara Madariaga, and Luiz~A. Peresi, \emph{Structure theory
  for the group algebra of the symmetric group, with applications to polynomial
  identities for the octonions}, Comment. Math. Univ. Carolin. \textbf{57}
  (2016), no.~4, 413--452. \MR{3583300}

\bibitem{Cebron-Free-convolution}
Guillaume C{\'e}bron, \emph{Free convolution operators and free {H}all
  transform}, J. Funct. Anal. \textbf{265} (2013), no.~11, 2645--2708.
  \MR{3096986}

\bibitem{Corteel-Crossings-permutations}
Sylvie Corteel, \emph{Crossings and alignments of permutations}, Adv. in Appl.
  Math. \textbf{38} (2007), no.~2, 149--163. \MR{2290808}

\bibitem{Dabrowski-Guionnet-Shl-Convex}
Yoann Dabrowski, Alice Guionnet, and Dima Shlyakhtenko, \emph{Free transport
  for convex potentials}, New Zealand J. Math. \textbf{52} (2021), 259--359.
  \MR{4378155}

\bibitem{dSCViennot}
Myriam de~Sainte-Catherine and G{\'e}rard Viennot, \emph{Combinatorial
  interpretation of integrals of products of {H}ermite, {L}aguerre and
  {T}chebycheff polynomials}, Orthogonal polynomials and applications
  (Bar-le-Duc, 1984), Lecture Notes in Math., vol. 1171, Springer, Berlin,
  1985, pp.~120--128. \MR{87g:05007}

\bibitem{Driver-Hall-Kemp}
Bruce~K. Driver, Brian~C. Hall, and Todd Kemp, \emph{The large-{$N$} limit of
  the {S}egal-{B}argmann transform on {$\Bbb{U}_N$}}, J. Funct. Anal.
  \textbf{265} (2013), no.~11, 2585--2644. \MR{3096985}

\bibitem{Dumitriu-MOPS}
Ioana Dumitriu, Alan Edelman, and Gene Shuman, \emph{M{OPS}: multivariate
  orthogonal polynomials (symbolically)}, J. Symbolic Comput. \textbf{42}
  (2007), no.~6, 587--620. \MR{2325917}

\bibitem{Forrester-book}
P.~J. Forrester, \emph{Log-gases and random matrices}, London Mathematical
  Society Monographs Series, vol.~34, Princeton University Press, Princeton,
  NJ, 2010. \MR{2641363 (2011d:82001)}

\bibitem{Gill-Rep}
Graham Gill, \emph{Representation theory of the symmetric group: Basic
  elements}, \\
  http://www.math.toronto.edu/murnaghan/courses/mat445/Symmetric.pdf, 2005.

\bibitem{Gnedin-Gorin-Kerov-Block-characters}
Alexander Gnedin, Vadim Gorin, and Sergei Kerov, \emph{Block characters of the
  symmetric groups}, J. Algebraic Combin. \textbf{38} (2013), no.~1, 79--101.
  \MR{3070121}

\bibitem{Graczyk-Vostrikova}
P.~Graczyk and L.~Vostrikova, \emph{The moments of {W}ishart processes via
  {I}t\^o calculus}, Teor. Veroyatn. Primen. \textbf{51} (2006), no.~4,
  732--751. \MR{2338064}

\bibitem{Guta-Maassen-BM}
M\u{a}d\u{a}lin Gu\c{t}\u{a} and Hans Maassen, \emph{Generalised {B}rownian
  motion and second quantisation}, J. Funct. Anal. \textbf{191} (2002), no.~2,
  241--275. \MR{1911186}

\bibitem{Huber-trace}
Felix Huber, \emph{Positive maps and trace polynomials from the symmetric
  group}, J. Math. Phys. \textbf{62} (2021), no.~2, Paper No. 022203, 27.
  \MR{4221255}

\bibitem{Janson-GHS}
Svante Janson, \emph{Gaussian {H}ilbert spaces}, Cambridge Tracts in
  Mathematics, vol. 129, Cambridge University Press, Cambridge, 1997.
  \MR{1474726}

\bibitem{Jekel-Elementary}
David Jekel, \emph{An elementary approach to free entropy theory for convex
  potentials}, Anal. PDE \textbf{13} (2020), no.~8, 2289--2374. \MR{4201981}

\bibitem{Jekel-Li-Shl-Wasserstein}
David Jekel, Wuchen Li, and Dimitri Shlyakhtenko, \emph{Tracial smooth
  functions of non-commuting variables and the free {W}asserstein manifold},
  Dissertationes Math. \textbf{580} (2022), 150. \MR{4451910}

\bibitem{Kemp-large-N-GL}
Todd Kemp, \emph{The large-{$N$} limits of {B}rownian motions on
  {$\Bbb{GL}_N$}}, Int. Math. Res. Not. IMRN (2016), no.~13, 4012--4057.
  \MR{3544627}

\bibitem{Kemp-Heat-kernel}
\bysame, \emph{Heat kernel empirical laws on {$\Bbb{U}_N$} and {$\Bbb{GL}_N$}},
  J. Theoret. Probab. \textbf{30} (2017), no.~2, 397--451. \MR{3647064}

\bibitem{Kerov-book}
S.~V. Kerov, \emph{Asymptotic representation theory of the symmetric group and
  its applications in analysis}, Translations of Mathematical Monographs, vol.
  219, American Mathematical Society, Providence, RI, 2003, Translated from the
  Russian manuscript by N. V. Tsilevich, With a foreword by A. Vershik and
  comments by G. Olshanski. \MR{1984868}

\bibitem{Klep-Pascoe-Volcic}
Igor Klep, James~Eldred Pascoe, and Jurij Vol\v{c}i\v{c}, \emph{Positive
  univariate trace polynomials}, J. Algebra \textbf{579} (2021), 303--317.
  \MR{4241240}

\bibitem{Klep-Spenko-Free-function-theory}
Igor Klep and {\v S}pela {\v S}penko, \emph{Free function theory through matrix
  invariants}, Canad. J. Math. \textbf{69} (2017), no.~2, 408--433.
  \MR{3612091}

\bibitem{Kostler-Nica-CLT-S-infty}
Claus K\"{o}stler and Alexandru Nica, \emph{A central limit theorem for
  star-generators of {$S _\infty$}, which relates to the law of a {GUE}
  matrix}, J. Theoret. Probab. \textbf{34} (2021), no.~3, 1248--1278.
  \MR{4289885}

\bibitem{Lawi}
Stephan Lawi, \emph{Hermite and {L}aguerre polynomials and matrix-valued
  stochastic processes}, Electron. Commun. Probab. \textbf{13} (2008), 67--84.
  \MR{2386064 (2009a:60090)}

\bibitem{Levy-Schur-Weyl}
Thierry L{\'e}vy, \emph{Schur-{W}eyl duality and the heat kernel measure on the
  unitary group}, Adv. Math. \textbf{218} (2008), no.~2, 537--575. \MR{2407946}

\bibitem{Matsumoto-Novak-primitive}
Sho Matsumoto and Jonathan Novak, \emph{Unitary matrix integrals, primitive
  factorizations, and {J}ucys-{M}urphy elements}, 22nd {I}nternational
  {C}onference on {F}ormal {P}ower {S}eries and {A}lgebraic {C}ombinatorics
  ({FPSAC} 2010), Discrete Math. Theor. Comput. Sci. Proc., AN, Assoc. Discrete
  Math. Theor. Comput. Sci., Nancy, 2010, pp.~403--411. \MR{2673853}

\bibitem{Nica-Speicher-book}
Alexandru Nica and Roland Speicher, \emph{Lectures on the combinatorics of free
  probability}, London Mathematical Society Lecture Note Series, vol. 335,
  Cambridge University Press, Cambridge, 2006. \MR{2266879 (2008k:46198)}

\bibitem{Procesi}
C.~Procesi, \emph{The invariant theory of {$n\times n$} matrices}, Advances in
  Math. \textbf{19} (1976), no.~3, 306--381. \MR{0419491 (54 \#7512)}

\bibitem{Raicu-Generic-tensor}
Claudiu Raicu, \emph{Products of {Y}oung symmetrizers and ideals in the generic
  tensor algebra}, J. Algebraic Combin. \textbf{39} (2014), no.~2, 247--270.
  \MR{3159252}

\bibitem{Rains}
E.~M. Rains, \emph{Combinatorial properties of {B}rownian motion on the compact
  classical groups}, J. Theoret. Probab. \textbf{10} (1997), no.~3, 659--679.
  \MR{1468398}

\bibitem{Razmyslov}
Ju.~P. Razmyslov, \emph{Identities with trace in full matrix algebras over a
  field of characteristic zero}, Izv. Akad. Nauk SSSR Ser. Mat. \textbf{38}
  (1974), 723--756. \MR{0506414 (58 \#22158)}

\bibitem{Stanley-volume-2}
Richard~P. Stanley, \emph{Enumerative combinatorics. {V}ol. 2}, Cambridge
  Studies in Advanced Mathematics, vol.~62, Cambridge University Press,
  Cambridge, 1999, With a foreword by Gian-Carlo Rota and appendix 1 by Sergey
  Fomin. \MR{1676282}

\bibitem{Okounkov-Vershik}
A.~M. Vershik and A.~Yu. Okun\cprime~kov, \emph{A new approach to
  representation theory of symmetric groups. {II}}, Zap. Nauchn. Sem.
  S.-Peterburg. Otdel. Mat. Inst. Steklov. (POMI) \textbf{307} (2004),
  no.~Teor. Predst. Din. Sist. Komb. i Algoritm. Metody. 10, 57--98, 281.
  \MR{2050688}

\end{thebibliography}

\def\cprime{$'$} \def\cprime{$'$}
\providecommand{\bysame}{\leavevmode\hbox to3em{\hrulefill}\thinspace}
\providecommand{\MR}{\relax\ifhmode\unskip\space\fi MR }
\providecommand{\MRhref}[2]{%
  \href{http://www.ams.org/mathscinet-getitem?mr=#1}{#2}
}
\providecommand{\href}[2]{#2}

\end{document}